\theoremstyle{plain}
\newtheorem{thm}{Theorem}[section]
\newtheorem{lem}[thm]{Lemma}
\newtheorem{prop}[thm]{Proposition}
\newtheorem{cor}[thm]{Corollary}
\theoremstyle{definition}
\newtheorem{defn}{Definition}[section]
\newtheorem{conj}{Conjecture}[section]
\newtheorem{exmp}{Example}[section]
\theoremstyle{remark}
\newtheorem{rem}{Remark}
\DeclareMathOperator{\rank}{\text{\rm rank}}
\def\cleardoublepage{\clearpage\if@twoside \ifodd\c@page\else
\hbox{}
\thispagestyle{empty}
\newpage
\if@twocolumn\hbox{}\newpage\fi\fi\fi}
\begin{document}

\title{A Survey on Hyperbolicity of Projective Hypersurfaces}
\author{Simone Diverio and Erwan Rousseau \\ IMPA, Rio de Janeiro, September 2010}
\date{}

\maketitle

\frontmatter

\chapter{Introduction}

These are lecture notes of a course held at IMPA, Rio de Janiero, in september 2010: the purpose was to present recent results on Kobayashi hyperbolicity in complex geometry. 

This area of research is very active, in particular because of the fascinating relations between analytic and arithmetic geometry. After Lang and Vojta, we have precise conjectures between analytic and arithmetic hyperbolicity, \textsl{e. g.} existence of Zariski dense entire curves should correspond to the density of rational points. Our ultimate goal is to describe the results obtained in \cite{D-M-R10} on questions related to the geometry of entire curves traced in generic complex projective hypersurfaces of high degree. 

For the convenience of the reader, this survey tries to be as self contained as possible. Thus, we start by recalling the basic definitions and concepts of complex hyperbolic geometry. Our presentation will focus later on the concepts of jet bundles and jet differentials which turn out to be the crucial tools that have been applied successfully in the last decades. These ideas date back to the work of Bloch \cite{Blo} and have been developed later by many other people (let use cite for example, Green and Griffiths \cite{G-G80}, Siu and Yeung \cite{S-Y97}, Demailly \cite{Dem97}...).

The presentation of the main techniques is certainly inspired by the notes \cite{Dem97} but many progress have been achieved since these notes were written and it seemed to us quite useful to update them.

Let us now describe the contents of this survey. In chapter one we introduce the classical Poincar distance on the complex unit disc and, following Kobayashi, we use it to construct an invariant pseudodistance on any complex space $X$ by means of chains of holomorphic discs. The complex space $X$ will be said (Kobayashi) hyperbolic if this pseudodistance is actually a true distance. We then present an infinitesimal form of this pseudodistance which reveals to be very useful in order to characterize the hyperbolicity of compact complex manifold in terms of the existence or not of entire curves (non-constant holomorphic maps from the entire complex plane) in it: this is the content of the famous Brody's criterion for hyperbolicity. We then end the chapter with a couple of applications of Brody's criterion to deformations of compact complex manifolds and to hyperbolicity of complex tori and with a general discussion about uniformization and hyperbolicity in complex dimension one in order to put in perspective the new difficulties which come up in higher dimension.

Chapter 2 deals with the notion of algebraic hyperbolicity. In the case of projective varieties, people have looked for a characterization of hyperbolicity depending only on algebraic subvarieties. Here, we focus on the so called algebraic hyperbolicity as in Demailly \cite{Dem97}, which is by definition a uniform lower bound on the genus of algebraic curves in terms of their degree once a polarization is fixed. We first discuss a nowadays classical result of Bogomolov about the finiteness of rational and elliptic curves on algebraic surfaces of general type with positive Segre class. Then, motivated by the Kobayashi conjecture which predicts the hyperbolicity of generic projective hypersurfaces of high degree, we explain an algebraic analogue of this conjecture which has been proved in the works of Clemens \cite{Cle86}, Ein \cite{Ein88}, Voisin \cite{Voi96} and Pacienza \cite{Pac04}. We focus here on the approach coming from ideas of Voisin which makes an essential use of the universal family $\mathcal X \subset \mathbb P^{n+1}\times\mathbb P(H^0(\mathbb P^{n+1},\mathcal O_{\mathbb P^{n+1}}(d)))$ of projective hypersurfaces in $\mathbb P^{n+1}$ of a given degree $d>0$. This object turns out to be very useful because of the positivity properties of its tangent bundle. The existence of sufficiently many vector fields with controlled pole order is used to prove that generic projective hypersurfaces satisfy the conjecture of Lang claiming that a projective manifold is hyperbolic if and only if all its subvarieties are of general type.

Starting from Chapter 3, we enter in the core of this survey, turning to the study of transcendental objects. We describe in detail the constructions of jet bundles (introduced in this formalism by \cite{G-G80}) following closely the presentation of projectivized jet bundles of Demailly \cite{Dem97} as an inductive procedure in the category of directed manifolds $(X,V)$ where $V$ is a holomorphic subbundle of the tangent bundle $T_X$. This tower of projectivized bundles is naturally endowed with tautological line bundles at each stage. Considering the sheaf of sections of the direct images of these line bundles leads to the concepts of (invariant) jet differentials which are more concretely interpreted as algebraic differential operators $Q(f',f'',\dots,f^{(k)})$ acting on jets of germs of holomorphic curves. The algebraic structure of these vector bundles $E_{k,m}T_X^*$ of invariant jet differentials leads to interesting (and difficult) questions in invariant theory which were intensively investigated recently \cite{Rou06a}, \cite{Mer08}, \cite{B-K10}.

In Chapter 4, we begin by recalling classical notions of hermitian geometry, such as curvature and positivity of hermitian line bundles on complex manifolds. A basic idea is that Kobayashi hyperbolicity is somehow related with suitable properties of negativity of the curvature of the manifold even in dimension greater than one. We formalize this heuristic concept by means of the Ahlfors-Schwarz lemma in connection with invariant jet differentials:  we illustrate the general philosophy whose key point is that global jet differentials vanishing along an ample divisor provide algebraic differential equations which every entire curve must satisfy.

It is then possible to state a general strategy which leads to sufficient conditions in order to have algebraic degeneracy of entire curves in a given compact complex manifold. The first step consists in finding a global section of the bundle of jet differentials vanishing on an ample divisor. The second step should produce much more differential equations, enough to impose sufficiently many conditions on the entire curves to force their algebraic degeneracy. One way to do this is to generalize the ideas described in Chapter 2 about vector fields. Following the strategy of Siu \cite{Siu04}, one should now consider vector fields tangent to the jet space. As a jet differential is after all a function on the jet space, one can differentiate it with vector fields and obtain new jet differentials. Of course, one has to guarantee that these new differential operators still vanish on an ample divisor. So, one is forced to have a precise control of the pole order of the vector fields constructed (which, for example in the case of projective hypersurface, should not depend on the degree of the hypersurface itself). 

The general strategy presented in Chapter 4 is not directly applicable to deal with projective hypersurfaces. To illustrate the modification needed in order to be able to run it, we present in Chapter 5 the solution of the Kobayashi conjecture for generic surfaces in projective $3$-space,  after \cite{McQ99}, \cite{D-EG00} and \cite{Pau08}.
In particular, we show how to find global invariant jet differentials vanishing along an ample divisor on a projective surface of general type by means of Riemann-Roch-type computations together with a vanishing theorem for the higher cohomolgy groups by Bogomolov. Then we explain in great details how to produce meromorphic vector fields of controlled pole order on the universal family of degree $d$ surfaces in $\mathbb P^3$. Finally, with these two ingredients available, we adapt the aforesaid general strategy to obtain the conclusion that very generic projective surfaces of degree greater than or equal to $90$ in projective $3$-space are Kobayashi hyperbolic.
This is far from being an optimal bound and it is even far from the bound obtained independently by Mc Quillan, Demailly-El Goul and P{\u{a}}un, but the strategy presented here is the only one which we were able to generalize in higher dimension.

The last chapter is devoted to the recent result on algebraic degeneracy of entire curves in generic projective hypersurfaces of high degree obtained in \cite{D-M-R10}.
In the higher dimensional case, the non-vanishing of the higher cohomology groups creates new conceptual difficulties. On the other hand, the extension to all dimensions of the existence of lots of meromorphic vector fields with controlled pole order presents \lq\lq only\rq\rq{} new technical difficulties while the conceptual nature of the construction remains the same of the one described in Chapter 5. Therefore, we have decided to concentrate ourself more on the general proof of the existence of global invariant jet differentials --- first in dimension three, then in the general case.

One way to control the cohomology is to use the holomorphic Morse inequalities of Demailly. If one can compute the Euler characteristic of the bundle of jet differentials $E_{k,m}T_X^*$ and then find upper bounds for the higher even cohomology groups $H^{2i}(X,E_{k,m}T_X^*)$ using the weak Morse inequalities, the first step is achieved as in dimension $3$ \cite{Rou06a}. 
Unfortunately, in general the control of the cohomology is quite involved, thus one try to apply directly the strong Morse inequalities to twisted tautological bundles on the projectivized jet bundles. This permits to obtain global jet differentials on hypersurfaces of sufficiently high degree \cite{Div09} in every dimension. 

Then Siu's strategy of exhibiting vector fields is realized on the jet spaces of the universal hypersurfaces \cite{Rou07}, \cite{Mer09}. Finally, the full strategy is used to obtain the algebraic degeneracy of entire curves in generic projective hypersurfaces of degree larger than $2^{n^5}$ \cite{D-M-R10}.

\bigskip

Last but not least, we would like to warmly thank Alcides Lins Neto, Jorge Vitrio Pereira, Paulo Sad and all the people of the IMPA for having organized this course and our stay in Rio. These have been very stimulating, interesting and, why not, funny days.

\tableofcontents

\mainmatter

\chapter{Kobayashi hyperbolicity: basic theory}

{\small\textsc{Abstract}. In this first chapter we state and describe the basic definitions of complex hyperbolic geometry, basically following \cite{Kob98} and \cite{Dem97}. Then, we state and prove the classical Brody's lemma and Picard's theorem. We conclude by giving a brief account of elementary examples and describing the case of Riemann surfaces.}

\section{The Kobayashi distance}

Let $\Delta\subset\mathbb C$ be the unit disc in the complex plane, with complex coordinate $\zeta$. On $\Delta$, there exists a particular (non-euclidean) metric, whose infinitesimal form is given by
$$
ds^2=\frac{d\zeta\otimes d\overline\zeta}{(1-|\zeta|^2)^2},
$$
which enjoys several interesting properties: its name is \emph{Poincar metric}. This is the starting point of the theory of complex hyperbolicity: the idea is to give to each complex space an intrinsic metric built by means of holomorphic mapping from the unit complex disc together with the Poincar metric to the given space.

More precisely, call $\rho$ the integrated form of the Poincar metric; we write here its explicit form even if we shall rarely use it:
$$
\rho(a,b)=\tanh^{-1}\left|\frac{a-b}{1-a\overline b}\right|,\quad a,b\in\Delta.
$$
The distance $\rho$ is complete on $\Delta$. Next, let $X$ be a complex space. We call a \emph{holomorphic disc} in $X$ a holomorphic map from $\Delta$ to $X$. Given two points $p,q\in X$, consider a \emph{chain of holomorphic discs} from $p$ to $q$, that is a chain of points $p=p_0,p_1,\dots,p_k=q$ of $X$, pairs of point $a_1,b_1,\dots,a_k,b_k$ of $\Delta$ and holomorphic maps $f_1,\dots,f_k\colon\Delta\to X$ such that
$$
f_i(a_i)=p_{i-1},\quad f_i(b_i)=p_i,\quad i=1,\dots,k.
$$
Denoting this chain by $\alpha$, define its length $\ell(\alpha)$ by
$$
\ell(\alpha)=\rho(a_1,b_1)+\cdots+\rho(a_k,b_k)
$$ 
and a pseudodistance $d_X$ on $X$ by 
$$
d_X(p,q)=\inf_{\alpha}\ell(\alpha).
$$
This is the \emph{Kobayashi pseudodistance} of $X$.

\begin{defn}
The complex space $X$ is said to be \emph{Kobayashi hyperbolic} if the pseudodistance $d_X$ is actually a distance.
\end{defn}

For $\Delta$ the complex unit disc, it is easy to see using the usual Schwarz-Pick lemma\footnote{We recall here that the usual Schwarz-Pick lemma says that for $f\colon\Delta\to\Delta$ a holomorphic map, one has the following inequality:
$$
\frac{|f'(\zeta)|}{1-|f(\zeta)|^2}\le\frac{1}{1-|\zeta|^2}.
$$
This means exactly that holomorphic maps contract the Poincar metric.} in one direction and the identity transformation in the other that $d_X=\rho$. Then $\Delta$ is hyperbolic. The entire complex plane is not hyperbolic: indeed the Kobayashi pseudodistance is identically zero. To see this from the very definition, take any two point $z_1,z_2\in\mathbb C$ and consider a sequence of holomorphic discs 
$$
\begin{aligned}
f_j\colon & \Delta\to\mathbb C \\
& \zeta\to z_1+j\zeta(z_2-z_1).
\end{aligned}
$$
It is important to remark here that the non hyperbolicity of the complex plane is connected to the possibility of taking larger and larger discs in $\mathbb C$. 

It is immediate to check that the Kobayashi pseudodistance has the fundamental property of being contracted by holomorphic maps: given two complex spaces $X$ and $Y$ and a holomorphic map $f\colon X\to Y$ one has for every pair of point $x,y$ in $X$
$$
d_Y(f(x),f(y))\le d_X(x,y).
$$ 
In particular, biholomorphisms are isometry for the Kobayashi metric.

\subsection{Infinitesimal form}

Let us now come at the infinitesimal analogue of the Koboyashi pseudodistance introduced above. For simplicity, we shall suppose that $X$ is a \emph{smooth} complex manifold but most of the things would work on an arbitrary singular complex space.

So, fix an arbitrary holomorphic tangent vector $v\in T_{X,x_0}$, $x_0\in X$: we want to give it an intrinsic length. Thus, define
$$
\mathbf{k}_{X}(v)=\inf\{\lambda>0\mid\exists f\colon\Delta\to X,\,f(0)=x_0,\,\lambda f'(0)=v\},
$$
where $f\colon\Delta\to X$ is holomorphic. Even with this infinitesimal form, it is straightforward to check that holomorphic maps between complex manifolds contract it and that in the case of the complex unit disc, it agrees with the Poincar metric.

One can give a similar definition in the setting of complex directed manifolds, that is pairs $(X,V)$ where $X$ is a complex manifold and $V\subset T_X$ a holomorphic subbundle of the tangent bundle: in this case one only considers vectors and maps which are tangent to $V$.

\begin{defn}
Let $(X,V)$ be a complex directed manifold and $\omega$ an arbitrary hermitian metric on $V$. We say that $(X,V)$ is \emph{infinitesimally Kobayashi hyperbolic} if $\mathbf{k}_{(X,V)}$ is positive definite on each fiber and satisfies a uniform lower bound
$$
\mathbf{k}_{(X,V)}(v)\ge\varepsilon ||v||_{\omega}
$$ 
when $v\in V_x$ and $x\in X$ describes a compact subset of $X$.
\end{defn}

The Kobayashi pseudodistance is the integrated form of the corrseponding infinitesimal pseudometric (this is due to Royden).

\begin{thm}
Let $X$ be a complex manifold. Then
$$
d_X(p,q)=\inf_\gamma\int_\gamma\mathbf{k}_{X}(\gamma'(t))\,dt,
$$
where the infimum is taken over all piecewise smooth curves joining $p$ to $q$.
\end{thm}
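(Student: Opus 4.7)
The plan is to prove both inequalities between $d_X(p,q)$ and the right-hand side, which I denote by $\delta_X(p,q):=\inf_\gamma \int_\gamma \mathbf{k}_X(\gamma'(t))\,dt$.

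For $\delta_X\le d_X$, I would convert any chain $\alpha=\{f_i,a_i,b_i\}$ of holomorphic discs from $p$ to $q$ into a piecewise smooth curve by joining $a_i$ to $b_i$ through the Poincar\'e geodesic $\sigma_i$ in $\Delta$ and concatenating the curves $\gamma_i:=f_i\circ\sigma_i$ into a path $\gamma$ from $p$ to $q$. Since the holomorphic maps $f_i\colon\Delta\to X$ contract the Poincar\'e metric (which coincides with $\mathbf{k}_\Delta$) to $\mathbf{k}_X$, one has $\mathbf{k}_X(\gamma_i'(t))\le|\sigma_i'(t)|_{\mathrm{Poinc}}$, whence $\int_\gamma\mathbf{k}_X(\gamma'(t))\,dt\le\sum_i\rho(a_i,b_i)=\ell(\alpha)$. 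Taking infima yields $\delta_X(p,q)\le d_X(p,q)$.

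For the reverse inequality, given a piecewise smooth curve $\gamma\colon[0,L]\to X$ from $p$ to $q$, I would build an approximating chain of small holomorphic discs. Fix $\varepsilon>0$. Since $\mathbf{k}_X(\gamma'(\cdot))$ is upper semi-continuous and bounded on $[0,L]$, Vitali--Carath\'eodory provides a continuous $h\ge\mathbf{k}_X(\gamma'(\cdot))$ on $[0,L]$ with $\int_0^L h(t)\,dt\le\int_0^L\mathbf{k}_X(\gamma'(t))\,dt+\varepsilon$. Take a partition $0=t_0<\cdots<t_N=L$ of mesh $\delta$, and at each node $t_j$ invoke the definition of $\mathbf{k}_X$ to select a disc $f_j\colon\Delta\to X$ with $f_j(0)=\gamma(t_j)$, $\lambda_j f_j'(0)=\gamma'(t_j)$ and $\lambda_j\le h(t_j)+\varepsilon$. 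Setting $s_j:=(t_{j+1}-t_j)\lambda_j$, the disc $f_j$ reaches $q_j:=f_j(s_j)$ at Poincar\'e cost $\tanh^{-1}(s_j)=s_j+O(s_j^3)$, and a Taylor expansion in a local chart around $\gamma(t_j)$ gives $q_j-\gamma(t_{j+1})=O((t_{j+1}-t_j)^2)$; covering the compact image $\gamma([0,L])$ by finitely many coordinate charts, one finds a small affine disc joining $q_j$ to $\gamma(t_{j+1})$ at Kobayashi cost $O((t_{j+1}-t_j)^2)$, uniformly in $j$.

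Summing the Poincar\'e and correction costs over $j$ yields
$$
d_X(p,q)\le\sum_j(t_{j+1}-t_j)\lambda_j+C\cdot L\cdot\delta\le\sum_j(t_{j+1}-t_j)h(t_j)+\varepsilon L+C\cdot L\cdot\delta.
$$
As $\delta\to 0$ the Riemann sum for the continuous function $h$ tends to $\int_0^L h(t)\,dt$, so in the limit $d_X(p,q)\le\int_0^L h(t)\,dt+\varepsilon L\le\int_0^L\mathbf{k}_X(\gamma'(t))\,dt+\varepsilon(1+L)$; letting $\varepsilon\to 0$ and taking the infimum over $\gamma$ closes the argument. The main technical obstacle is the mere upper semi-continuity (not continuity) of $\mathbf{k}_X$: left-endpoint Riemann sums of $\mathbf{k}_X(\gamma'(\cdot))$ need not converge to the integral, which is what forces the Vitali--Carath\'eodory detour. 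A secondary issue is obtaining the $O((t_{j+1}-t_j)^2)$ correction bound uniformly in $j$, handled by covering the compact image of $\gamma$ with finitely many charts in which small-disc estimates for $d_X$ are readily available.
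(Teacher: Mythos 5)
The paper itself cites Royden for this theorem and gives no proof, so I assess your argument on its own. The first inequality $\delta_X\le d_X$ is correct. The reverse inequality has a genuine gap at exactly the step you call a "secondary issue": the claim that $q_j-\gamma(t_{j+1})=O\bigl((t_{j+1}-t_j)^2\bigr)$ \emph{uniformly in $j$}. The implied constant in that Taylor remainder is governed by the second-order behavior of $f_j$ near $0$ (equivalently, by the radius of the largest disc on which $f_j$ stays inside a fixed chart), and the definition of $\mathbf{k}_X$ constrains only $f_j(0)$ and $f_j'(0)$. A near-extremal disc $f_j\colon\Delta\to X$ can have arbitrarily large second derivative at the origin and can escape any fixed chart at an arbitrarily small radius, so Cauchy estimates give no uniform bound; the finitely many charts you invoke do give a uniform local Lipschitz estimate $d_X(y,z)\lesssim|y-z|$, but that is applied \emph{after} the problematic Taylor step, not in its place. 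Summing $\sim 1/\delta$ corrections whose constants are not uniformly controlled need not vanish as $\delta\to 0$.

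Two standard ways to close the gap. (1) Replace the global discretization with a pointwise derivative argument: show that $\phi(t):=d_X(p,\gamma(t))$ is locally Lipschitz (via the chart bound on $d_X$), hence absolutely continuous, and prove for each \emph{fixed} $t$ that $\limsup_{h\to 0}d_X(\gamma(t),\gamma(t+h))/|h|\le\mathbf{k}_X(\gamma'(t))$; here the disc $f$ realizing $\mathbf{k}_X(\gamma'(t))$ up to $\varepsilon$ is held fixed while $h\to 0$, so the uncontrolled Taylor constant is harmless, and one concludes by $d_X(p,q)=\phi(L)-\phi(0)=\int_0^L\phi'(t)\,dt\le\int_0^L\mathbf{k}_X(\gamma'(t))\,dt$. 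This route also renders the Vitali--Carath\'eodory detour unnecessary. (2) Alternatively, invoke the quantitative form of Royden's upper semi-continuity, which furnishes, for $(x,v)$ ranging in a neighborhood in $T_X$, almost-extremal discs depending holomorphically on the parameters and hence with locally uniform derivative bounds on compacta; that is precisely the uniformity your construction tacitly assumes, and it must be stated and used explicitly.
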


In particular, if $X$ is infinitesimally hyperbolic, then it is hyperbolic.

\section{Brody's criterion for hyperbolicity}

The distance decreasing property together with the fact that the Kobayashi pseudodistance is identically zero on $\mathbb C$, implies immediately

\begin{prop}
If $X$ is a hyperbolic complex space, then every holomorphic map $f\colon\mathbb C\to X$ is constant.
\end{prop}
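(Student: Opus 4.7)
The plan is to combine the two facts explicitly highlighted immediately before the proposition: on one hand $d_{\mathbb C}\equiv 0$ (as was just shown by exhibiting the sequence of discs $f_j(\zeta)=z_1+j\zeta(z_2-z_1)$), and on the other hand the Kobayashi pseudodistance is contracted by any holomorphic map. So I would simply chain these two ingredients.

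More precisely, let $f\colon\mathbb C\to X$ be holomorphic and fix arbitrary points $z_1,z_2\in\mathbb C$. The distance-decreasing property applied to $f$ gives
$$
d_X\bigl(f(z_1),f(z_2)\bigr)\le d_{\mathbb C}(z_1,z_2).
$$
The right-hand side equals $0$, hence $d_X(f(z_1),f(z_2))=0$. But by assumption $X$ is Kobayashi hyperbolic, meaning the pseudodistance $d_X$ is a genuine distance; in particular it separates points. Therefore $f(z_1)=f(z_2)$, and since $z_1,z_2$ were arbitrary, $f$ is constant.

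There is really no obstacle here: the proposition is essentially a tautology given the two preceding observations. The only subtle point worth being explicit about in the write-up is that the distance-decreasing property, stated earlier for pairs of points in the source, is what lets us transfer the vanishing of $d_{\mathbb C}$ to the vanishing of $d_X$ on the image of $f$; everything else is just unwinding the definition of hyperbolicity.
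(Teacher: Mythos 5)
Your proof is correct and is essentially the same as the paper's: apply the distance-decreasing property of $f$ to get $d_X(f(z_1),f(z_2))\le d_{\mathbb C}(z_1,z_2)=0$, then invoke that $d_X$ is a true distance to conclude $f(z_1)=f(z_2)$. Nothing to add.
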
 

\begin{proof}
For $a,b\in\mathbb C$ one has
$$
d_X(f(a),f(b))\le d_{\mathbb C}(a,b)=0.
$$
Hence $f(a)=f(b)$.
\end{proof}

Next theorem, which is due to Brody, is the simplest and most useful criterion for hyperbolicity. It is a converse of the preceding proposition in the case where the target $X$ is compact. Fix any hermitian metric $\omega$ on the compact complex manifold $X$; we say that a holomorphic map $f\colon\mathbb C\to X$ is an \emph{entire curve} if it is non constant and that it is a \emph{Brody curve} if it is an entire curve with bounded derivative with respect to $\omega$ (or, of course, any other hermitian metric).

\begin{thm}[Brody]\label{Brody}
Let $X$ be a compact complex manifold. If $X$ is not (infinitesimally) hyperbolic then there exists a Brody curve in $X$.
\end{thm}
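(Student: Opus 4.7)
The strategy is to exploit the failure of infinitesimal hyperbolicity to produce holomorphic discs in $X$ whose derivative at the origin blows up, then to rescale and reparametrize them so that the derivative is globally controlled on larger and larger discs, and finally to extract a limit entire curve by a normal-family argument.

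First, I would use the hypothesis together with the compactness of $X$ to find points $x_k\in X$ and unit tangent vectors $v_k\in T_{X,x_k}$ with $\mathbf{k}_{X}(v_k)\to 0$. Unwinding the definition of $\mathbf{k}_{X}$, this yields holomorphic discs $f_k\colon\Delta\to X$ with $f_k(0)=x_k$ and $c_k:=\|f_k'(0)\|_\omega\to+\infty$. There is no real content here: it is just a direct translation of the hypothesis.

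The main obstacle is the next step, Brody's reparametrization lemma. The idea is to consider the continuous function $u_k(\zeta)=(1-|\zeta|^2)\|f_k'(\zeta)\|_\omega$ on $\Delta$; after a routine retreat to a slightly smaller concentric disc to guarantee continuity up to the boundary (where the factor $(1-|\zeta|^2)$ forces $u_k$ to vanish), $u_k$ attains a maximum at some interior point $\zeta_k^\star$. Composing $f_k$ with the automorphism $\phi_k\in\mathrm{Aut}(\Delta)$ sending $0$ to $\zeta_k^\star$ produces $\tilde f_k:=f_k\circ\phi_k$ satisfying
$$
(1-|\zeta|^2)\,\|\tilde f_k'(\zeta)\|_\omega\le \|\tilde f_k'(0)\|_\omega\quad\text{and}\quad \|\tilde f_k'(0)\|_\omega\ge c_k.
$$
The crucial ingredient is the Poincar\'e isometry identity $|\phi_k'(\zeta)|(1-|\phi_k(\zeta)|^2)^{-1}=(1-|\zeta|^2)^{-1}$, which turns the extremality of $u_k$ at $\zeta_k^\star$ into a pointwise bound at every $\zeta$ after reparametrization.

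Finally I would dilate. Setting $s_k:=\|\tilde f_k'(0)\|_\omega\ge c_k\to\infty$ and $g_k(\zeta):=\tilde f_k(\zeta/s_k)$ defines holomorphic maps $g_k\colon\Delta_{s_k}\to X$ satisfying $\|g_k'(0)\|_\omega=1$ and $\|g_k'(\zeta)\|_\omega\le s_k^2/(s_k^2-|\zeta|^2)$. On any fixed disc $\Delta_r\subset\mathbb{C}$ the maps $g_k$ are eventually defined with derivative bounded by $s_k^2/(s_k^2-r^2)\to 1$, and their images lie in the compact $X$; an Ascoli--Arzel\`a/Montel argument then extracts a subsequence converging uniformly on compact subsets of $\mathbb{C}$ to a holomorphic limit $g\colon\mathbb{C}\to X$. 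Passing to the limit in the two estimates yields $\|g'(0)\|_\omega=1$ and $\|g'(\zeta)\|_\omega\le 1$ for all $\zeta\in\mathbb{C}$, so $g$ is non-constant and has bounded derivative --- a Brody curve in $X$, as required.
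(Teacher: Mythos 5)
Your proposal follows exactly the same route as the paper: extract discs $f_k$ with $\|f_k'(0)\|_\omega\to\infty$, apply Brody reparametrization by maximizing the Poincar\'e-norm $(1-|\zeta|^2)\|f_k'(\zeta)\|_\omega$ (with the same $(1-\varepsilon)$-retreat, the paper writing it as $\psi=(1-\varepsilon)\phi(t/R)$ where you split it into the automorphism $\phi_k$ followed by the dilation $\zeta\mapsto\zeta/s_k$), and then conclude by Ascoli--Arzel\`a with a diagonal extraction. This matches the paper's reparametrization lemma and the proof of Theorem~\ref{Brody} in both structure and estimates.
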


A first direct consequence of this theorem is that in the compact case, hyperbolicity and infinitesimal hyperbolicity are equivalent, since if $X$ is not infinitesimally hyperbolic then there exists an entire curve in $X$ and then two distinct points on this curve will have zero distance with respect to $d_X$. 
For more information on the localization of such a curve, we refer the reader to the very recent and remarkable results of \cite{Duv08}.

We shall start with the so called Brody reparametrization lemma.

\begin{lem}[Reparametrization lemma]
Let $X$ be a hermitian manifold with hermitian metric $\omega$ and $f\colon\Delta\to X$ a holomorphic map. Then, for every $\varepsilon>0$ there exists a radius $R\ge(1-\varepsilon)||f'(0)||_\omega$ and a homographic transformation $\psi$ of the disc $\Delta_R$ of radius $R$ onto $(1-\varepsilon)\Delta$ such that
$$
||(f\circ\psi)'(0)||_{\omega}=1\quad\text{and}\quad ||(f\circ\psi)'(t)||_\omega\le\frac 1{1-|t|^2/R^2},
$$
for every $t\in\Delta_R$.
\end{lem}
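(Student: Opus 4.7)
My plan is to build $\psi$ as an automorphism centered at a well-chosen point, namely the maximum of a natural ``variational'' quantity on the smaller disc. Setting $r = 1-\varepsilon$, I would consider the continuous function
$$
u(\zeta) = (r^2 - |\zeta|^2)\,||f'(\zeta)||_\omega
$$
on the closed disc $\overline{r\Delta}$. Since $u$ vanishes on $\partial(r\Delta)$, it attains its maximum $M$ at some interior point $a\in r\Delta$, and evaluating at $0$ gives $M \ge u(0) = r^2\,||f'(0)||_\omega$.

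Next, I would construct the required homography $\psi\colon \Delta_R \to r\Delta$ by composing the rescaling $t \mapsto tr/R$ with the automorphism $\varphi(\zeta) = (\zeta+a)/(1+\overline{a}\zeta/r^2)$ of $r\Delta$ that sends $0$ to $a$. A short computation yields $\psi'(0) = (r^2-|a|^2)/(Rr)$, and the choice
$$
R = \frac{M}{r} = \frac{(r^2 - |a|^2)\,||f'(a)||_\omega}{r}
$$
is then forced by the normalization $||(f\circ\psi)'(0)||_\omega = 1$. Because $M \ge r^2\,||f'(0)||_\omega$, this immediately produces the lower bound $R \ge r\,||f'(0)||_\omega = (1-\varepsilon)\,||f'(0)||_\omega$.

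For the pointwise estimate, I would exploit the fact that $\psi$ is a biholomorphism between $\Delta_R$ and $r\Delta$, hence an isometry for the respective Poincar\'e metrics $R\,|dt|/(R^2-|t|^2)$ and $r\,|d\zeta|/(r^2-|\zeta|^2)$. This gives the explicit identity
$$
|\psi'(t)| = \frac{R\bigl(r^2 - |\psi(t)|^2\bigr)}{r\bigl(R^2 - |t|^2\bigr)}.
$$
Coupling it with the maximality inequality $u(\psi(t)) \le M$, which reads $(r^2 - |\psi(t)|^2)\,||f'(\psi(t))||_\omega \le Rr$, I would conclude
$$
||(f\circ\psi)'(t)||_\omega = ||f'(\psi(t))||_\omega\,|\psi'(t)| \le \frac{R^2}{R^2 - |t|^2} = \frac{1}{1 - |t|^2/R^2},
$$
which is precisely the required bound.

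I do not foresee any serious obstacle here: the whole argument is driven by the single idea of \emph{reparametrizing around the worst point} of the function $u$, after which everything boils down to the standard isometry property of Poincar\'e metrics on discs. The only genuinely clever step --- and hence the conceptual heart of the lemma --- is identifying the correct quantity $u(\zeta) = (r^2-|\zeta|^2)\,||f'(\zeta)||_\omega$ to maximize; the degenerate case $f'(0) = 0$ requires only a brief comment, as the statement is then vacuous.
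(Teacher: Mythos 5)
Your proof is correct and follows essentially the same approach as the paper: both maximize the conformally invariant quantity given by the Poincar\'e norm of $f'$ on the shrunken disc, then build $\psi$ as a rescaling composed with a M\"obius automorphism centered at that maximum, and finish with the Poincar\'e-isometry (equivalently Schwarz--Pick) identity for $|\psi'|$. The only cosmetic difference is that you maximize $(r^2-|\zeta|^2)\|f'(\zeta)\|_\omega$ directly on $r\Delta$ whereas the paper maximizes $(1-|\zeta|^2)\|f'((1-\varepsilon)\zeta)\|_\omega$ on the unit disc, which is the same function after the change of variables $\zeta\mapsto r\zeta$.
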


\begin{proof}
Consider the norm of the differential
$$
f'((1-\varepsilon)\zeta)\colon T_\Delta\to T_X
$$
with respect to the Poincar metric $|d\zeta|^2/(1-|\zeta|^2)^2$ on the unit disc, which is conformally invariant under $\operatorname{Aut}(\Delta)$: it is given by $(1-|\zeta|^2)||f'((1-\varepsilon)\zeta)||_\omega$. Pick a $\zeta_0\in\Delta$ such that this norm is maximum. We let for the moment $R$ to be determined and we chose an automorphism $\phi$ of the unit disc such that $\phi(0)=\zeta_0$; finally, set $\psi=(1-\varepsilon)\phi(t/R)$. Thus, $\psi'(0)=(1-\varepsilon)\phi'(0)/R=(1-\varepsilon)(1-|\zeta_0|^2)/R$, since $\phi$ is an isometry of the unit disc for the Poincar metric. On the other hand, we want $||(f\circ\psi)'(0)||_{\omega}=1$, which imposes $\psi'(0)=1/||f'(\psi(0))||_{\omega}$. This gives 
$$
\begin{aligned}
R & =(1-\varepsilon)(1-|\zeta_0|^2)||f'(\psi(0))||_{\omega} \\
& =(1-\varepsilon)(1-|\zeta_0|^2)||f'((1-\varepsilon)\zeta_0)||_{\omega} \\
& \ge (1-\varepsilon)||f'(0)||_\omega,
\end{aligned}
$$
since $\zeta_0$ is the maximum for $(1-|\zeta|^2)||f'((1-\varepsilon)\zeta)||_\omega$. Finally, we have
$$
\begin{aligned}
||(f\circ\psi)'(t)||_\omega & = |\psi'(t)|\,||f'(\psi(t))||_\omega \\
& = |\psi'(t)|\,||f'((1-\varepsilon)\phi(t/R))||_\omega \\
& \le |\psi'(t)|\frac{(1-|\zeta_0|^2)||f'((1-\varepsilon)\zeta_0)||_\omega}{1-|\phi(t/R)|^2} \\
& =\frac{1-\varepsilon}R|\phi'(t/R)|\frac{(1-|\zeta_0|^2)||f'((1-\varepsilon)\zeta_0)||_\omega}{1-|\phi(t/R)|^2} \\
& = \frac{|\phi'(t/R)|}{1-|\phi(t/R)|^2}\le\frac 1{1-|t|^2/|R|^2},
\end{aligned}
$$
by the choice of $R$ and the (rescaled) Schwarz-Pick lemma.
\end{proof}

We can now prove Brody's theorem.

\begin{proof}[Proof of Theorem \ref{Brody}]
By hypothesis, there exists a sequence of holomorphic discs $f_j\colon\Delta\to X$ such that $f_j(0)=x_0\in X$ and $||f'_j(0)||_\omega$ tends to infinity. We shall now apply the reparametrization lemma to this sequence in order to construct an entire curve. Fix for instance $\varepsilon=1/2$ and for each $j$, select a $\psi_j$ with the corresponding $R_j\ge||f'_j(0)||_\omega/2$ and call the composition $g_j=f_j\circ\psi_j\colon\Delta_{R_j}\to X$. So, we have that 
$$
||g_j'(0)||_\omega=1,\quad ||g_j'(t)||_\omega\le\frac 1{1-|t|^2/R_j^2}\quad\text{and}\quad R_j\to\infty.
$$
The conclusion will follow from the Ascoli-Arzel theorem: the family $\{g_j\}$ is with values in a compact space (hence pointwise bounded) and the estimate on its derivatives shows that it is equiLipschitz on any given compact subset of a fixed $\Delta_{R_j}$, hence equicontinuous in there. Thus, by a diagonal process, we have a subsequence which converges to an entire map $g\colon\mathbb C\to X$; moreover, $||g'(0)||_\omega=\lim ||g_j'(0)||_\omega=1$, so that $g$ is non constant and also $||g'(t)||_\omega=\lim ||g'_j(t)||_\omega\le 1$.
\end{proof}

The absence of entire holomorphic curves in a given complex manifold is often referred as \emph{Brody hyperbolicity}. Thus, in the compact case, Brody hyperbolicity and Kobayashi hyperbolicity coincide.  

The following example shows that in the non compact case one may have Brody hyperbolic domains which are not Kobayashi hyperbolic.

\begin{exmp}
Consider the domain in $\mathbb C^2$ defined by
$$
D=\{(z,w)\in\mathbb C^2\mid |z|<1,|zw|<1\}\setminus\{(0,w)\mid |w|\ge 1\}.
$$
The mapping $h\colon D\to\mathbb C^2$ which sends $(z,w)\mapsto (z,zw)$ has as image the unit bidisc and is one-to-one except on the set $\{z=0\}$. If $f\colon\mathbb C\to D$ is holomorphic, then $h\circ f$ is constant by Liouville's theorem. Thus, either $f$ is constant or $f$ maps $\mathbb C$ into the set $\{(0,w)\in D\}$. But this set is equivalent to the unit disc, hence $f$ is constant in any case.  

Since $h$, being holomorphic, is distance decreasing with respect to the Kobayashi pseudodistances, we have that $d_D(p,q)>0$ for $p\ne q$ unless both $p$ and $q$ lie in the subset $\{(0,w)\in D\}$. Suppose then that we are in this case and let $p=(0,b)$, $q=(0,0)$ and $p_n=(1/n,b)$. Since, as it is straightforward to see, the Kobayashi pseudodistance is always continuous as a map from a complex space times itself to the real numbers, we have $d_D(p,q)=\lim d_D(p_n,q)$. Call $a_n=\min\{n,\sqrt{n/|b|}\}$. Then the mapping $\Delta\ni \zeta\mapsto (a_n/n\,\zeta,a_nb\,\zeta)\in D$ maps $1/a_n$ to $p_n$. Hence
$$
\lim d_D(p_n,q)\le\lim\rho(1/a_n,0)=0.
$$
\end{exmp}

\subsection{Applications}

Let us show two immediate consequences of Brody's criterion for hyperbolicity: the openness property of hyperbolicity and the study of entire curves in complex tori.

A holomorphic family of compact complex manifolds is a holomorphic proper submersion $\mathcal X\to S$ between two complex manifolds. 

\begin{prop}
Let $\pi\colon\mathcal X\to S$ a holomorphic family of compact complex manifolds. Then the set of $s\in S$ such that the fiber $X_s=\pi^{-1}(s)$ is hyperbolic is open in the Euclidean topology.
\end{prop}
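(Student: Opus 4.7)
The plan is to argue by contradiction using Brody's criterion (Theorem \ref{Brody}). Fix $s_0 \in S$ with $X_{s_0}$ hyperbolic and suppose for contradiction that there exists a sequence $s_n \to s_0$ in $S$ such that $X_{s_n}$ is not hyperbolic. Choose a relatively compact neighborhood $U$ of $s_0$ in $S$ and a hermitian metric $\omega$ on the total space $\mathcal X|_{\overline U}$ (which exists by partition of unity and then restricts to each fiber). By properness of $\pi$, the preimage $\pi^{-1}(\overline U)$ is compact.

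For each $n$ large enough so that $s_n \in U$, Brody's theorem applied to $(X_{s_n}, \omega|_{X_{s_n}})$ produces a Brody curve $g_n \colon \mathbb C \to X_{s_n}$ with $\|g_n'(0)\|_\omega = 1$ and $\|g_n'(t)\|_\omega \le 1$ for every $t \in \mathbb C$. After possibly translating in the source $\mathbb C$ (a biholomorphism of the domain, which preserves these bounds), one can moreover assume $g_n(0)$ lies in a chosen compact cross-section, or more simply one just notes that $g_n(0) \in \pi^{-1}(s_n) \subset \pi^{-1}(\overline U)$, which is compact.

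The main step is a direct invocation of Ascoli-Arzelà, exactly as in the proof of Brody's theorem. The family $\{g_n\}$ takes values in the compact set $\pi^{-1}(\overline U)$, hence is pointwise bounded. The uniform derivative bound $\|g_n'(t)\|_\omega \le 1$ on all of $\mathbb C$ makes the family equi-Lipschitz (hence equicontinuous) on every compact subset of $\mathbb C$. A diagonal extraction therefore yields a subsequence converging locally uniformly to a holomorphic limit $g \colon \mathbb C \to \mathcal X$. Because $\pi \circ g_n \equiv s_n \to s_0$, the image of $g$ lies entirely in the fiber $X_{s_0}$.

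The anticipated obstacle lies in ensuring that the limit $g$ is non-constant: one needs to pass the normalization $\|g_n'(0)\|_\omega = 1$ to the limit. Since convergence is locally uniform and the $g_n$ are holomorphic on a common neighborhood of $0$, Cauchy's estimates give convergence of the derivatives, and hence $\|g'(0)\|_\omega = \lim \|g_n'(0)\|_\omega = 1$. Thus $g$ is a genuine entire curve in $X_{s_0}$. By the distance-decreasing property, $d_{X_{s_0}}$ vanishes on any two distinct points of the image of $g$, contradicting the hyperbolicity of $X_{s_0}$. This contradiction proves that the hyperbolic locus is open.
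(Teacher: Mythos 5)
Your argument is correct and follows the same approach as the paper: Brody's criterion produces a sequence of Brody curves in the non-hyperbolic nearby fibers, Ascoli--Arzel\`a yields a limiting entire curve, and the normalization $\|g'(0)\|_\omega=1$ survives the limit, forcing the limit curve to be non-constant in $X_{s_0}$. Your version is slightly more careful than the paper's, which takes a metric on all of $\mathcal X$ and applies Ascoli directly; you explicitly restrict to a relatively compact $\overline U\subset S$ so that $\pi^{-1}(\overline U)$ is compact and pointwise boundedness is guaranteed, which is a genuine (if minor) gap in the terse original.
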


\begin{proof}
Let $\omega$ be an arbitrary hermitian metric on $\mathcal X$ and $\{X_{s_n}\}_{s_n\in S}$ a sequence of non hyperbolic fibers, and let $s=\lim s_n$. By Brody's criterion one obtains a sequence of entire maps $f_n\colon\mathbb C\to X_{s_n}$ such that $||f'_n(0)||_\omega=1$ and $||f'_n||_\omega\le 1$. Ascoli's theorem shows that there is a subsequence of $f_n$ converging uniformly to a limit $f\colon\mathbb C\to X_s$, with $||f'(0)||_\omega=1$. Hence $X_s$ is not hyperbolic and the collection of non hyperbolic fibers is closed in $S$.
\end{proof}

Consider now an $n$-dimensional complex torus $T$, that is the additive quotient of $\mathbb C^n$ modulo a real lattice $\Lambda$ of rank $2n$. In particular, $\mathbb C^n$ is the universal Riemannian cover of $T$ with the flat metric and by the projection we obtain plenty of entire curves in $T$.

\begin{thm}
Let $X\subset T$ be a compact complex submanifold of a complex torus. Then $X$ is hyperbolic if and only if it does not contain any translated of a subtorus.
\end{thm}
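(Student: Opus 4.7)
The plan is to establish the two implications separately.

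For the ``if'' direction, if $X$ contains $c+T'$ for a complex subtorus $T'\subset T$, then choosing any non-zero $v$ in the Lie algebra of $T'$, the map $z\mapsto c+\pi(zv)$ is a non-constant holomorphic map $\mathbb{C}\to X$; the distance-decreasing property of the Kobayashi pseudodistance together with $d_\mathbb{C}\equiv 0$ shows that $X$ is not hyperbolic.

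For the ``only if'' direction, I assume $X$ is not hyperbolic. Theorem~\ref{Brody} provides a Brody curve $f\colon\mathbb{C}\to X$, and by compactness of $X$ I may take $\omega$ to be the restriction to $X$ of the translation-invariant flat metric on $T$ coming from the standard hermitian form on the universal cover $\pi\colon\mathbb{C}^n\to T$. The first key step would be to show that $f$ is affine: lift $f$ to a holomorphic $\tilde f\colon\mathbb{C}\to\mathbb{C}^n$ (possible since $\mathbb{C}$ is simply connected), and use the triviality of $T_T\simeq T\times\mathbb{C}^n$ to realize $\tilde f'$, through the flat trivialization, as a bounded entire map $\mathbb{C}\to\mathbb{C}^n$; Liouville's theorem then forces $\tilde f'\equiv v\in\mathbb{C}^n\setminus\{0\}$, and integration yields $f(z)=c+\pi(zv)$ for some $c\in T$.

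The second step, which I expect to be the main obstacle, is to upgrade this affine image to a translated complex subtorus lying inside $X$. Let $A\subset T$ be the smallest closed complex analytic subset containing $f(\mathbb{C})$; since $X$ is itself such a subset, $A\subset X$ automatically. I would then consider the stabilizer $S=\{a\in T\mid a+A=A\}$, which is a closed subgroup of $T$ and also complex analytic (obtained as an intersection of translates of $A$), so that its identity component $T':=S^0$ is a compact connected complex Lie subgroup of $T$, hence a complex subtorus. For each $w\in\mathbb{C}$ the translate $\pi(wv)+A$ is closed complex analytic and still contains $f(\mathbb{C})$, because $f(z)+\pi(wv)=f(z+w)$; minimality of $A$ therefore forces $\pi(wv)+A=A$, i.e.\ $\pi(wv)\in S$, and by connectedness $\pi(\mathbb{C}\cdot v)\subset T'$. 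Consequently $c+T'$ is closed complex analytic and contains $f(\mathbb{C})$, so $A\subset c+T'$ by minimality, while the reverse inclusion follows because $A$ is $T'$-invariant and contains $c=f(0)$. Thus $A=c+T'$, and $X\supset A=c+T'$ exhibits the desired translate of a complex subtorus. The delicate point is exactly this stabilizer construction, which is what turns a potentially ill-behaved one-parameter complex subgroup $\pi(\mathbb{C}\cdot v)$ into an honest complex subtorus via the complex-analytic Zariski closure in $T$.
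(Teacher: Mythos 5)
Your argument is correct and follows the same route as the paper's: Brody's criterion yields an entire curve with bounded derivative in the flat metric on $T$, Liouville's theorem forces the lift $\tilde f$ to be affine, and one must then identify the image $c+\pi(\mathbb C\cdot v)$ and its analytic Zariski closure with a translated subtorus of $T$ contained in $X$. The paper simply asserts this last step (``the image of $f$ and its Zariski closure are subgroup[s] of $T$''), while your stabilizer construction $S=\{a\in T\mid a+A=A\}$, with $A$ the analytic Zariski closure, supplies the missing justification; it is essentially the same device the paper itself deploys later in the proof of Bloch's theorem, where the subgroup $W=\{s\in Z\mid s+X=X\}$ plays the role of your $S$.
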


\begin{proof}
If $X$ contains some translated of a subtorus, then it contains lots of entire curves and it is not hyperbolic.

Conversely, suppose $X$ is not hyperbolic. Then by Brody's criterion there exists an entire map $f\colon\mathbb C\to X$ such that $||f'||_\omega\le ||f'(0)||_\omega=1$, where $\omega$ is the flat metric on $T$ inherited from $\mathbb C^n$. This means that any lifting $\widetilde f=(\widetilde f,\dots,\widetilde f_n)\colon\mathbb C\to\mathbb C^n$ is such that
$$
\sum_{j=1}^n|f_j'|^2\le 1.
$$
Then, by Liouville's theorem, $\widetilde f'$ is constant and therefore $\widetilde f$ is affine. But then, up to translation $\widetilde f$ is linear and so the image of $f$ and its Zariski closure are subgroup of $T$.
\end{proof}

Another interesting application of Brody's criterion in connection with the algebraic properties of complex manifolds is the hyperbolicity of varieties with ample cotangent bundle: this will be treated in the next chapters.

\section{Riemann surfaces and uniformization}

If a rational function $f(z)$ is not a constant, then for any $\alpha\in\mathbb C$ the equation $f(z)=\alpha$ has a solution $z\in\mathbb C$ and if $f$ has a pole, then $f$ takes the value $\infty$ there. However, this is not the case in general, if $f$ is a holomorphic or meromorphic function on $\mathbb C$. For instance, the entire function $f(z)=e^z$ does not take the value $0$ nor $\infty$. Thus, the number of points which $f$ misses is two. In general, if a meromorphic function $f$, regarded as a holomorphic mapping into $\mathbb P^1$ misses a point of $\mathbb P^1$, this point is called an \emph{exceptional value}. The next very classical theorem shows that the above \lq\lq two\rq\rq{} is the maximum.

\begin{thm}[Little Picard's theorem]
The number of exceptional values of a non constant meromorphic function on $\mathbb C$ is at most two.
\end{thm}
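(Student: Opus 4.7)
The plan is to translate the statement into a hyperbolicity assertion about thrice-punctured spheres and then invoke the principle already established above that a hyperbolic target admits no entire curves. Suppose for contradiction that a non-constant meromorphic function $f$ on $\mathbb{C}$, viewed as a holomorphic map $f\colon\mathbb{C}\to\mathbb{P}^1$, omits three distinct values of $\mathbb{P}^1$. Postcomposing with a Möbius transformation (which, being a biholomorphism of $\mathbb{P}^1$, is a Kobayashi isometry), I may assume that these values are $0,1,\infty$. Then $f$ defines a non-constant holomorphic map $\mathbb{C}\to Y$, where $Y:=\mathbb{P}^1\setminus\{0,1,\infty\}$.

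The heart of the proof is therefore to show that $Y$ is Kobayashi hyperbolic; once this is available, the Proposition recalled earlier (hyperbolic targets kill entire curves) forces $f$ to be constant, a contradiction. For hyperbolicity of $Y$, I would appeal to the uniformization theorem for Riemann surfaces: since $Y$ has negative Euler characteristic ($2-3=-1$) and is not compact, its universal cover is biholomorphic to the unit disc $\Delta$. Let $\pi\colon\Delta\to Y$ denote the associated covering.

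From here there are two equivalent endings. The shortest one uses lifting theory directly: since $\mathbb{C}$ is simply connected, the putative non-constant holomorphic map $f\colon\mathbb{C}\to Y$ lifts through $\pi$ to a holomorphic map $\tilde f\colon\mathbb{C}\to\Delta$. Being bounded, $\tilde f$ is constant by Liouville, hence so is $f=\pi\circ\tilde f$. Alternatively, one may argue intrinsically: the covering $\pi$ is a local biholomorphism, so it equips $Y$ with the push-down of the Poincaré metric of $\Delta$, and the Kobayashi pseudodistance $d_Y$ coincides with this honest distance by the distance-decreasing property applied to $\pi$ and to local sections. Either way, $Y$ is hyperbolic and $f$ cannot exist.

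The main obstacle is of course the uniformization theorem, a deep ingredient. In practice one does not need its full strength: the only input required is the existence of \emph{some} holomorphic covering $\Delta\to Y$, and this is supplied concretely by the classical modular function $\lambda\colon\mathbb{H}\to\mathbb{P}^1\setminus\{0,1,\infty\}$ (after precomposing with the Cayley transform to identify $\mathbb{H}$ with $\Delta$). This construction is essentially the content of Picard's original argument, and fits naturally with the theme of the section.
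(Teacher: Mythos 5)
Your proposal is correct and follows essentially the same route as the paper: reduce to hyperbolicity of $\mathbb{P}^1\setminus\{0,1,\infty\}$, recognize that its universal cover is the unit disc (the paper invokes the modular curve $\mathbb{H}/\Gamma(2)$, which is the same as your modular function $\lambda$), lift the entire curve to the disc by simple connectivity of $\mathbb{C}$, and conclude by Liouville. The only cosmetic difference is that you also mention the general uniformization theorem as an alternative to the explicit modular covering, but the underlying argument is identical.
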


One possible proof relies on the fact that the universal covering of, say $\mathbb C\setminus\{0,1\}$, is the unit disc and on the Liouville theorem; for the former property, the theory of modular curves, easily deduced from basic Riemann surface theory, tells us that the quotient of the upper half plane by the group $\Gamma(2)$ (i.e., the kernel of the map $SL_2(\mathbb Z)\to SL_2(\mathbb Z/2\mathbb Z)$) under the induced action of the modular group $SL_2(\mathbb Z)$ is analytically isomorphic to the complex plane minus two points. In particular, the universal cover of the plane minus two points is the upper plane. As the upper half plane is conformally equivalent to the unit disc, we could just as well have taken the disc. 

In other words, the theorem says that $\mathbb P^1$ minus three points or, equivalently, $\mathbb C$ minus two points is Brody hyperbolic. In the following example, we describe a nice consequence of the Little Picard's theorem in dimension two.

\begin{figure}[htbp]
\begin{center}
\includegraphics[height=6cm]{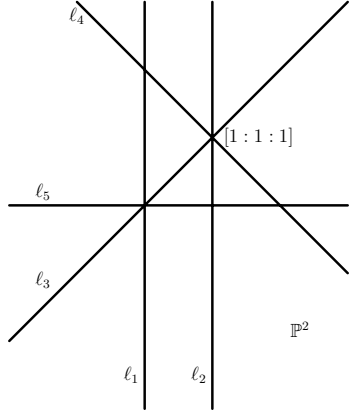}
\caption{A configuration of $5$ lines in $\mathbb P^2$ whose complement is hyperbolic}
\label{fig:p2minus5lines}
\end{center}
\end{figure}

\begin{exmp}
In the projective plane $\mathbb P^2$ with homogeneous coordinates $[Z_0:Z_1:Z_2]$, consider the following five lines as in \figurename~\ref{fig:p2minus5lines}:
$$
\begin{aligned}
& \ell_1\colon Z_1=0, \\
& \ell_2\colon Z_1-Z_0=0, \\
& \ell_3\colon Z_1-Z_2=0, \\
& \ell_4\colon Z_1+Z_2-2Z_0=0, \\
& \ell_5\colon Z_2=0
\end{aligned}
$$
and call $X=\mathbb P^2\setminus\bigcup_{j=1}^5\ell_j$. We claim that $X$ is Brody hyperbolic. To see this, let $\mathcal P$ be the pencil of lines $[\alpha:\beta]\mapsto \alpha\,Z_1+\beta\,Z_2-(\alpha+\beta)Z_0=0$ through the point $[1:1:1]$, restrict it to $X$ and consider the associated projection $\pi\colon X\to\mathbb P^1$. Then $\pi(X)=\mathbb P^1\setminus\{[1:0],[1:-1],[1:1]\}$, so that by the Little Picard's theorem $\pi(X)$ is Brody hyperbolic. Thus, if $f\colon\mathbb C\to X$ is holomorphic the composition $\pi\circ f$ must be constant and hence $f(\mathbb C)$ is contained in a fiber of $\pi$. Now, if a line in the pencil $\mathcal P$ is not contained in one of the three lines of the configuration passing through $[1:1:1]$, its intersection with $X$ is $\mathbb P^1$ minus three points (the point $[1:1:1]$ and the other two intersections with $\ell_1$ and $\ell_5$) so that each fiber of $\pi$ is again Brody hyperbolic by the Little Picard's theorem. The constancy of $f$ follows.

This example is not so special, since any configuration of five lines in $\mathbb P^2$ in generic position leads to the same result, see next chapter.
\end{exmp}

From a hyperbolic point of view, the case of one dimensional complex manifold is, in principle, completely settled by the following

\begin{thm}[Uniformization]
Let $X$ be a connected and simply connected one dimensional complex manifold. Then $X$ is biholomorphic either to the Riemann sphere, the complex plane or the complex unit disc.
\end{thm}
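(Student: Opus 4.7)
The plan is to split the problem into the compact and non-compact cases and, within the latter, to use classical potential theory on Riemann surfaces to distinguish between parabolic and hyperbolic behaviour of $X$. In the compact case I would first appeal to the topological classification: a connected compact simply connected real surface is homeomorphic to $S^2$, so $X$ has genus zero in any reasonable sense. The analytic input needed to produce a biholomorphism $X \to \mathbb P^1$ is the existence of a non-constant meromorphic function with a single simple pole at a chosen point $z_0 \in X$. This is provided, in the spirit of Riemann, by solving a Dirichlet-type problem: construct a harmonic function on $X \setminus \{z_0\}$ with a prescribed logarithmic singularity at $z_0$; by simple connectedness its harmonic conjugate exists globally, yielding the desired meromorphic function. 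A short degree-and-ramification argument then shows that the resulting holomorphic map $X \to \mathbb P^1$ has degree one, hence is a biholomorphism.

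For non-compact $X$, I would exhaust $X = \bigcup_n U_n$ by relatively compact open sets with smooth boundary and, at a fixed point $z_0$, attempt to construct a Green's function $g(\cdot, z_0)$ with a logarithmic pole at $z_0$ by Perron's method, taking the supremum of all non-positive subharmonic functions on $X$ with appropriate singularity at $z_0$. Two alternatives occur. In the \emph{hyperbolic} case this envelope is finite, producing an honest Green's function $g$; since $X$ is simply connected the harmonic function $-g$ admits a global harmonic conjugate $h$, and $f = e^{-g + i h}$ defines a holomorphic map $X \to \Delta$ that one shows to be a biholomorphism by a combination of the maximum principle and a monodromy argument. In the \emph{parabolic} case the envelope is identically $+\infty$, and normalized Perron limits on the $U_n$ converge instead to an Evans-Selberg potential $u$, a harmonic function with logarithmic singularity at $z_0$ and no upper bound; once again simple connectedness supplies a globally defined harmonic conjugate $v$, and $u + iv$ furnishes a biholomorphism $X \to \mathbb C$.

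The main obstacle is precisely this non-compact part: one has to construct the relevant harmonic functions by hand and then verify that the associated holomorphic maps are biholomorphisms. The sharp properties -- injectivity and surjectivity of the candidate uniformizing map -- are not formal consequences of Perron-type constructions and rely on simple connectedness in an essential way, both through the monodromy theorem used to globalize harmonic conjugates and through a careful analysis of the set of regular values. The compact case, by contrast, is much closer to standard compact Riemann surface theory once a single non-constant meromorphic function has been produced, and the trichotomy between $\mathbb P^1$, $\mathbb C$ and $\Delta$ is exactly the analytic incarnation of the dichotomy between compactness, vanishing capacity of $X \setminus \{z_0\}$, and strictly positive capacity.
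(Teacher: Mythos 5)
The paper itself does not prove the Uniformization Theorem; it quotes it as a classical background result and immediately applies it to classify Riemann surfaces via their universal covers, so there is no argument in the text for you to compare against.

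Your outline follows the classical potential-theoretic route (Koebe, Poincar\'e), and the dichotomy via the Perron envelope for the Green's function is set up correctly; the gaps you honestly flag --- well-definedness of the Evans--Selberg potential, and injectivity and properness of the candidate maps together with the monodromy argument for globalizing harmonic conjugates --- are indeed where the serious work lies. There is, however, a genuine flaw in the compact case as you have written it: on a compact Riemann surface there can be \emph{no} harmonic function on $X\setminus\{z_0\}$ with an isolated logarithmic singularity. The outward flux of $\nabla u$ across a small circle $|\zeta|=\varepsilon$ around $z_0$ equals $\pm 2\pi$, while the divergence theorem applied to the boundaryless complement $X\setminus B_\varepsilon(z_0)$ forces that same flux to vanish, a contradiction. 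What the compact case requires is a \emph{dipole} singularity: one seeks $u$ harmonic on $X\setminus\{z_0\}$ with $u-\operatorname{Re}(1/\zeta)$ bounded (hence removable) in a local coordinate $\zeta$ vanishing at $z_0$. The total flux of such a singularity is zero, so the obstruction disappears and the Dirichlet-type problem becomes solvable. Since $X\setminus\{z_0\}$ is simply connected (topologically $\mathbb R^2$, once one knows $X\cong S^2$), the harmonic conjugate $v$ is globally single-valued, and $u+iv$ is directly a meromorphic function on $X$ with a single simple pole at $z_0$, with no exponentiation required; its degree is one, so it is the sought biholomorphism onto $\mathbb P^1$. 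With this substitution your compact case becomes consistent with the rest of the argument.
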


Consider a one dimensional complex manifold $X$ and its universal cover $\widetilde X$. Being $\widetilde X$ simply connected, the uniformization theorem gives us just three possibility. According to that, $X$ is said to be
\begin{itemize}
\item of \emph{elliptic} type, if $\widetilde X$ is $\mathbb P^1$,
\item of \emph{parabolic} type, if $\widetilde X$ is $\mathbb C$,
\item of \emph{hyperbolic} type, if $\widetilde X$ is $\Delta$.
\end{itemize} 
If $f\colon\mathbb C\to X$ is an entire curve then, being $\mathbb C$ simply connected, there exists a well defined holomorphic lifting $\widetilde f\colon\mathbb C\to\widetilde X$. Thus, if $X$ is of hyperbolic type, then $\widetilde f\colon\mathbb C\to\Delta$ must be constant by Liouville's theorem. So, $X$ is Brody hyperbolic and, if it is compact, it is Kobayashi hyperbolic: this is the case if and only if the geometric genus of $X$ is grater than or equal to two (the other possibility being the genus zero case $\mathbb P^1$ and the genus one case $\mathbb C/\Lambda$, the quotient of the complex plane by a lattice of rank two, that is elliptic curves).

In higher dimensions, the situation is much more involved: there is no longer any general uniformization property available.

Anyway, from an algebraic point of view the aforesaid trichotomy is governed by the asymptotic behavior of the (pluri)canonical system of the curve: it is a classical result that the growth rate of the space of global sections of the pluricanonical line bundle $h^0(X,K_X^{\otimes m})$ is linear with $m$ if and only if $X$ has genus greater than or equal to two.

In general, a manifold $X$ is said to be of \emph{general type} if its canonical line bundle $K_X$ is \emph{big}. We recall 

\begin{defn} 
A line bundle $L$ on a projective manifold $X$ of dimension $n$ is said to be
\begin{itemize}
\item \emph{big} if $h^0(X,L^{\otimes m}) \sim  m^n$ for $m \gg 0$,
\item \emph{very ample} if there is an embedding $\iota\colon X \hookrightarrow \mathbb{P}^N$ such that $L\simeq\iota^*\mathcal{O}_{\mathbb{P}^N}(1)$,
\item \emph{ample} if for some $m>0$, $L^{\otimes m}$ is very ample,
\item \emph{nef} if it has nonnegative degree $\int_Cc_1(L)\ge 0$ on any algebraic curve $C \subset X$.
\end{itemize}
\end{defn}

In particular, by Hirzebruch-Riemann-Roch formula and the Kodaira vanishing, manifolds with ample canonical bundle are of general type. These will be the main objects of study in the understanding of the links between hyperbolicity and algebraic properties of projective manifold.

\chapter{Algebraic hyperbolicity}

{\small\textsc{Abstract}. This chapter deals with an algebraic analogue of the Kobayashi hyperbolicity, introduced in \cite{Dem97}. We shall explain how the Kobayashi hyperbolicity implies restrictions on the ratio between the genus and the degree of algebraic curves contained in a hyperbolic projective algebraic manifold, and shall take this property as a definition. Then, we will discuss some general conjecture and related results, in particular Bogomolov's proof of the finiteness of rational and elliptic curves on surfaces whose Chern classes satisfy a certain inequality. In the second part, an account of known results about algebraic hyperbolicity of generic projective hypersurfaces of high degree will be done.}

\section{Hyperbolicity and genus of curves}

We shall make things here in the absolute case, but everything still works in the more general framework of directed manifolds.

Let $X$ be a compact Kobayashi hyperbolic manifold. Then $X$ is Brody hyperbolic and thus it cannot contain any holomorphic image of $\mathbb C$. In particular, from the algebraic point of view, $X$ cannot contain any rational nor elliptic curve (and, more generally, any complex torus). Hence, curves of genus $0$ and $1$ are prohibited by hyperbolicity. In fact, one can say something stronger.

\begin{prop}[\cite{Dem97}]
Let $X$ be a compact hermitian manifold, with hermitian metric $\omega$. If $X$ is (infinitesimally) hyperbolic then there exists a $\varepsilon_0> 0$ such that for every curve $C\subset X$ one has
$$
-\chi(\widehat C)=2\,g(\widehat C)-2\ge\varepsilon_0\deg_\omega C,
$$
where $\widehat C$ is the normalization of $C$ and $\deg_\omega C=\int_C\omega$.
\end{prop}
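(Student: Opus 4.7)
The plan is to bound the Kobayashi--Royden area of $\widehat C$ simultaneously from below by a multiple of $\deg_\omega C$ and from above by a multiple of $-\chi(\widehat C)$, so that the desired inequality falls out of comparing the two. Because $X$ is compact and infinitesimally hyperbolic, there is a uniform constant $\varepsilon>0$ with $\mathbf{k}_X(v)\ge\varepsilon\|v\|_\omega$ on the whole of $TX$. Let $f\colon\widehat C\to X$ denote the composition of the normalization with the inclusion $C\hookrightarrow X$. Since $X$ is Brody hyperbolic, it receives no non-constant entire curve, and in particular $\widehat C$ cannot be $\mathbb{P}^1$ or an elliptic curve; hence $g(\widehat C)\ge 2$ automatically.

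Now I would apply the distance-decreasing property of the Kobayashi--Royden pseudometric to the holomorphic map $f$, which gives
$$
\mathbf{k}_{\widehat C}(v)\ge\mathbf{k}_X(df(v))\ge\varepsilon\,\|df(v)\|_\omega\qquad\text{for every }v\in T_{\widehat C}.
$$
Squaring the norms and passing to the induced area forms on the compact Riemann surface $\widehat C$, this pointwise inequality integrates to
$$
\operatorname{Area}_{\mathbf{k}_{\widehat C}}(\widehat C)\ge\varepsilon^2\int_{\widehat C}f^*\omega=\varepsilon^2\deg_\omega C.
$$

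To close the loop I would invoke uniformization: since $g(\widehat C)\ge 2$, the universal cover of $\widehat C$ is the unit disc, and as $\mathbf{k}_\Delta$ coincides with the Poincar\'e metric and Kobayashi metrics are preserved by holomorphic covers, $\mathbf{k}_{\widehat C}$ is itself a genuine smooth hermitian metric of constant negative Gaussian curvature. Gauss--Bonnet then gives $\operatorname{Area}_{\mathbf{k}_{\widehat C}}(\widehat C)=c\cdot(2g(\widehat C)-2)$ for a universal positive constant $c$ depending only on the curvature normalization. Combining with the previous estimate and taking $\varepsilon_0=\varepsilon^2/c$ yields the claim. The main subtlety is precisely this last identification: one must know that on a compact Riemann surface of genus $\ge 2$ the Kobayashi--Royden infinitesimal pseudometric is a \emph{bona fide} smooth metric whose total area is computed by Gauss--Bonnet, rather than merely comparable to one. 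Once this fact is granted, the rest of the argument is just the squared distance-decreasing inequality integrated over $\widehat C$.
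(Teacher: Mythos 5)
Your proof is correct and is essentially the same as the paper's: both rely on the uniform lower bound for $\mathbf{k}_X$ coming from compactness, the distance-decreasing property applied to the normalization map to compare area forms, the uniformization of $\widehat C$ by the disc to identify $\mathbf{k}_{\widehat C}$ with the Poincar\'e-descended metric of constant negative curvature, and Gauss--Bonnet to convert total hyperbolic area into $-\chi(\widehat C)$. The one subtlety you flag (that $\mathbf{k}_{\widehat C}$ is a genuine smooth constant-curvature metric computed via the covering $\Delta\to\widehat C$) is exactly what the paper makes explicit when it writes $\pi^*\mathbf{k}_{\widehat C}=\mathbf{k}_\Delta$ and normalizes the curvature to $-4$.
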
  

\begin{proof}
Let $C\subset X$ be a curve in $X$ and $\nu\colon\widehat C\to C\subset X$ its normalization. Since $X$ is (infinitesimally) hyperbolic and compact, there is an absolute constant $\varepsilon>0$ such that
the infinitesimal Kobayashi pseudometric satisfies a uniform lower bound
$$
\mathbf{k}_X(v)\ge\varepsilon ||v||_\omega
$$
for every $v\in T_X$. Now, the universal Riemannian cover of $\widehat C$ is necessarily the complex unit disc, by the hyperbolicity of $X$: let it be $\pi\colon\Delta\to\widehat C$. We shall endow $\widehat C$ by the induced metric of constant negative Gaussian curvature $-4$ such that
$$
\pi^*\mathbf{k}_{\widehat C}=\mathbf{k}_\Delta=\frac{|d\zeta|}{1-|\zeta|^2}.
$$
Call $\sigma_\Delta=\frac i2\,d\zeta\wedge d\overline\zeta /(1-|\zeta|^2)^2$ and $\sigma_{\widehat C}$ the corresponding area measures. Then the classical Gauss-Bonnet formula yields
$$
-4\int_{\widehat C}\sigma_{\widehat C}=\int_{\widehat C}\Theta(T_{\widehat C},\mathbf{k}_{\widehat C})=2\pi\,\chi(\widehat C),
$$
where $\Theta(T_{\widehat C},\mathbf{k}_{\widehat C})$ is the curvature of $T_{\widehat C}$ with respect to the metric $\mathbf{k}_{\widehat C}$.

Next, if $\iota\colon C\to X$ is the inclusion, the distance decreasing property of the Kobayashi pseudometric applied to the holomorphic map $\iota\circ\nu\colon\widehat C	\to X$ gives
$$
\mathbf{k}_{\widehat C}(\xi)\ge\mathbf{k}_{X}((\iota\circ\nu)_*\,\xi)\ge\varepsilon\,||(\iota\circ\nu)_*\,\xi||_\omega,
$$
for all $\xi\in T_{\widehat C}$. From this, we infer that $\sigma_{\widehat C}\ge\varepsilon^2(\iota\circ\nu)^*\omega$, hence
$$
-\frac\pi 2\,\chi(\widehat C)=\int_{\widehat C}\sigma_{\widehat C}\ge\int\varepsilon^2(\iota\circ\nu)^*\omega=\varepsilon^2\int_C\omega.
$$
The assertion follows by putting $\varepsilon_0=2\,\varepsilon^2/\pi$.
\end{proof}

In other words, for $X$ a hyperbolic manifold, the ratio between the genus of curves and their degrees with respect to any hermitian metric (or any ample divisor) is bounded away from zero: this, following \cite{Dem97}, can be taken as a definition of \lq\lq algebraic\rq\rq{} hyperbolicity.

\begin{defn}
Let $X$ be a projective algebraic manifold endowed with any hermitian metric $\omega$ (for instance $\omega$ can be taken to be the curvature of any ample line bundle on $X$). We say that it is \emph{algebraically hyperbolic} if there exists a constant $\varepsilon_0 > 0$ such that for every algebraic curve $C\subset X$ one has
$$
2\,g(\widehat C)-2\ge\varepsilon_0\,\deg_\omega C.
$$
\end{defn}
When $\omega=i\,\Theta(A)$, where $A$ is any hermitian ample line bundle and $i\,\Theta(A)$ its Chern curvature, the right hand side of the inequality is just the usual degree of a curve in terms of its intersection product $C\cdot A$: in this case the inequality is purely algebraic.

By Riemann-Hurwitz formula, one can take, in the previous inequality of the definition of algebraic hyperbolicity, any finite morphism $f: C \to X$ from a smooth projective curve.

This algebraic counterpart of hyperbolicity satisfies an analogue of the openness property of the Kobayashi hyperbolicity, this time with respect to the Zariski topology.

\begin{prop}
Let $\mathcal X\to S$ an algebraic family of projective algebraic manifolds, given by a projective morphism. Then the set of $s\in S$ such that the fiber $X_s$ is algebraically hyperbolic is open with respect to the countable Zariski topology of $S$ (by definition, this is the topology for which closed sets are countable unions of algebraic sets).
\end{prop}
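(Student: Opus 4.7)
The plan is to decompose $S_{\mathrm{alg.hyp}}$ as an increasing countable union indexed by the hyperbolicity constant, describe each complementary piece as a countable union of Zariski-closed subsets by means of stable maps, and then conclude by a Noetherian reduction.

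First, algebraic hyperbolicity of $X_s$ holds if and only if some rational $\varepsilon_0=1/k$ with $k\in\mathbb Z_{\ge 1}$ works. Setting
$$
S_k=\bigl\{s\in S\mid 2g(\widehat C)-2\ge\deg_\omega C/k\ \text{for every curve}\ C\subset X_s\bigr\},
$$
one has $S_{\mathrm{alg.hyp}}=\bigcup_{k\ge 1}S_k$ and $S\setminus S_{\mathrm{alg.hyp}}=\bigcap_{k\ge 1}(S\setminus S_k)$. After replacing $\omega$ by the first Chern class of a relatively ample polarization so that degrees become intersection numbers, only the countable index set $(g,d)\in\mathbb Z_{\ge 0}\times\mathbb Z_{\ge 1}$ is ever relevant.

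Next, for each such pair I would introduce the Kontsevich moduli space $\overline{\mathcal M}_g(\mathcal X/S,d)$ of stable maps of arithmetic genus $g$ and degree $d$ into the fibers of $\mathcal X\to S$; it is proper over $S$, so its image $T_{g,d}\subset S$ is Zariski-closed. The inclusion $S\setminus S_k\subset\bigcup_{(g,d):\,2g-2<d/k}T_{g,d}$ is immediate, upon sending a witness curve $C\subset X_s$ to its normalization map $\widehat C\to X_s$. For the reverse inclusion, given $s\in T_{g,d}$ with $2g-2<d/k$, witnessed by $f\colon C'\to X_s$ with $p_a(C')=g$ and $\deg f^*\omega=d$, I would split $C'=\bigcup_iC'_i$ into irreducible components and write $n_i=\deg(C'_i\to\widehat C_i)$, $d_i=\deg_\omega C_i$ for the non-contracted components (indexed by $I$), so that $d=\sum_{i\in I}n_id_i$. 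Combining Riemann--Hurwitz with the adjunction formula $2p_a(C')-2=\sum_i(2g(C'_i)-2)+2\delta$ for a nodal curve, and using that each stable contracted rational component forces $\ge 3$ incident nodes, one gets
$$
\sum_{i\in I}n_i\bigl(2g(\widehat C_i)-2\bigr)\ \le\ \sum_{i\in I}\bigl(2g(C'_i)-2\bigr)\ \le\ 2g-2\ <\ \tfrac{1}{k}\sum_{i\in I}n_id_i.
$$
A pigeonhole then forces some $i_0\in I$ with $2g(\widehat C_{i_0})-2<d_{i_0}/k$, producing a genuine irreducible curve $C_{i_0}\subset X_s$ witnessing $s\in S\setminus S_k$. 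Hence $S\setminus S_k$ is exactly the countable union $\bigcup_{(g,d):\,2g-2<d/k}T_{g,d}$ of Zariski-closed subsets.

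Finally, decomposing each $T_{g,d}$ into its irreducible components yields $S\setminus S_{\mathrm{alg.hyp}}=\bigcap_k\bigcup_jF_{k,j}$ with each $F_{k,j}\subset S$ irreducible and closed. For any sequence $(j_k)_{k\ge 1}$ the descending chain $F_{1,j_1}\supset F_{1,j_1}\cap F_{2,j_2}\supset\cdots$ stabilizes by Noetherianness of $S$, so $\bigcap_kF_{k,j_k}$ coincides with a finite subintersection and is itself Zariski-closed. Since the collection of finite intersections drawn from the countable family $\{F_{k,j}\}$ is itself countable, the identity
$$
S\setminus S_{\mathrm{alg.hyp}}\ =\ \bigcup_{(j_k)_{k\ge 1}}\,\bigcap_k F_{k,j_k}
$$
exhibits the bad locus as a countable union of Zariski-closed subsets of $S$, which is exactly what it means for $S_{\mathrm{alg.hyp}}$ to be open in the countable Zariski topology. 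The main technical obstacle is the second inclusion in the reduction to the $T_{g,d}$: extracting from a possibly reducible nodal stable map a single irreducible curve whose geometric-genus-to-degree ratio remains controlled. Balancing Riemann--Hurwitz against the adjunction for $p_a(C')$ and the stability of contracted rational components is the heart of the argument.
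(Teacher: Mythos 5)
Your proof is correct, and it differs from the paper's in two real respects. The paper works with the relative Chow variety: it defines $A_{d,g}\subset S$ as the locus of fibers carrying an algebraic $1$-cycle of $\omega$-degree $d$ and geometric genus $\le g$, gets closedness of $A_{d,g}$ from properness of the cycle space and lower semicontinuity of geometric genus, and then writes the non-hyperbolic locus as $\bigcap_{k>0}\bigcup_{2g-2<d/k}A_{d,g}$. You replace the cycle space with the Kontsevich moduli $\overline{\mathcal M}_g(\mathcal X/S,d)$: properness is then immediate, but the cost is the reverse inclusion, where you must extract from a nodal stable map an \emph{irreducible} witness curve of controlled genus-to-degree ratio. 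Your Riemann--Hurwitz and adjunction bookkeeping, together with the stability condition giving $2\delta\ge 3\,|\{j\in J:\,g(C'_j)=0\}|$ (and hence $\sum_{j\in J}(2g(C'_j)-2)+2\delta\ge 0$), is exactly what is needed there; in the paper's cycle-theoretic language the corresponding pigeonhole is essentially trivial, since a $1$-cycle decomposes into its components by definition. More significantly, your final paragraph supplies a step the paper leaves unspoken: the displayed expression $\bigcap_k\bigcup_j F_{k,j}$ is \emph{a priori} a countable intersection of countable unions of algebraic sets, and turning this into a countable \emph{union} of algebraic sets (which is what closedness in the countable Zariski topology requires) is not automatic. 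Your rewriting as $\bigcup_{(j_k)}\bigcap_k F_{k,j_k}$, followed by the descending chain condition to reduce each $\bigcap_kF_{k,j_k}$ to a finite subintersection drawn from a countable family, is the right way to finish, and it is absent from the paper's proof as written.
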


\begin{proof}
Without loss of generality, we can suppose that the total space $\mathcal X$ is quasi-projective. Let $\omega$ be the K\"ahler metric on $\mathcal X$ obtained by pulling-back the Fubini-Study metric via an embedding in a projective space. Fix integers $d>0$ and $g\ge 0$ and call $A_{d,g}$ the set of $s\in S$ such that $X_s$ contains an algebraic $1$-cycle $C=\sum m_j\,C_j$ with $\deg_\omega C=d$ and $g(\widehat C)=\sum m_j\,g(\widehat C_j)\le g$.

This set is closed in $S$, by the existence of a relative cycle space of curves of given degree and the lower semicontinuity with respect to the Zariski topology of the geometric genus.
But then, the set of non hyperbolic fibers is by definition
$$
\bigcap_{k>0}\quad\bigcup_{2g-2<d/k}A_{d,g}.
$$ 
\end{proof}

An interesting property of algebraically hyperbolic manifolds is
\begin{prop}
Let $X$ be an algebraically hyperbolic projective manifold and $V$ be an abelian variety. Then any holomorphic map $f\colon V \to X$ is constant.
\end{prop}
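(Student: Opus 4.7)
My plan is to derive a contradiction from the existence of a non-constant $f\colon V\to X$ by exploiting the multiplication-by-$n$ endomorphisms of the abelian variety $V$: I will produce, from a single smooth projective curve $C$, a sequence of morphisms $\psi_n\colon C\to X$ whose source (hence genus) is fixed but for which $\int_C\psi_n^*\omega\to\infty$. This will contradict the algebraic hyperbolicity inequality $2g(C)-2\ge\varepsilon_0\int_C\psi^*\omega$, valid for any non-constant morphism from a smooth projective curve by the definition combined with Riemann-Hurwitz, as noted right before the statement.

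First I would produce the curve: fix a projective embedding of $V$ and let $C$ be a smooth complete intersection of $\dim V - 1$ sufficiently general hyperplane sections of $V$ (by Bertini). Denote by $\varphi\colon C\hookrightarrow V$ the inclusion. Since $f$ is non-constant, its fibers are proper subvarieties of $V$, so a generic such $C$ is not contained in any fiber of $f$, and hence $f\circ\varphi$ is non-constant. I then set $\psi_n:=f\circ[n]\circ\varphi\colon C\to X$ for each positive integer $n$; the genus of the source curve $C$ is fixed once and for all.

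The heart of the argument will be to show that $\int_C\psi_n^*\omega\to\infty$. Choose $\omega=i\,\Theta(L)$ for some ample line bundle $L$ on $X$. The Mumford formula on the abelian variety $V$ asserts that every line bundle $M$ on $V$ satisfies
\[
[n]^*M\cong M^{\otimes\frac{n^2+n}{2}}\otimes([-1]^*M)^{\otimes\frac{n^2-n}{2}}.
\]
Applied to $M=f^*L$ and pulled back to $C$ via $\varphi$, this decomposes $\int_C\psi_n^*\omega$ as a linear combination with coefficients $\tfrac{n^2+n}{2}$ and $\tfrac{n^2-n}{2}$ of $\int_C\varphi^*f^*\omega$ and $\int_C\varphi^*[-1]^*f^*\omega$, respectively. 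Both integrals are non-negative because $f^*L$ and $[-1]^*f^*L$ are nef (pullbacks of an ample line bundle), and the first is strictly positive because $L$ is ample and $f\circ\varphi$ is non-constant. Hence $\int_C\psi_n^*\omega$ grows at least quadratically in $n$.

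The main obstacle I anticipate lies precisely in this quadratic-growth step: one needs both summands in the Mumford decomposition to contribute non-negatively, so no cancellation can defeat the growth, and one of them to contribute strictly positively. The nefness of $f^*L$ and of its pullback by $[-1]$, together with the ampleness of $L$, yield exactly this. Once the lower bound is in hand, the refined algebraic hyperbolicity inequality gives $2g(C)-2\ge\varepsilon_0\int_C\psi_n^*\omega$ for every $n$, which is impossible because the left-hand side is a fixed integer.
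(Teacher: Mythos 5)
Your proof is correct and follows essentially the same strategy as the paper: both pull back by the multiplication-by-$n$ endomorphism of $V$ to make the degree of the composed morphism $C\to X$ grow quadratically while the genus of $C$ stays fixed, contradicting the algebraic hyperbolicity inequality for finite morphisms from smooth projective curves. The paper's version is a bit leaner on two counts: it works directly with an arbitrary smooth curve $C\subset V$ (concluding $C\cdot f^*A=0$ for every curve, hence $f$ constant, so no Bertini argument or careful choice of $C$ is needed), and it uses that $[-1]^*$ acts trivially on the N\'eron--Severi group of $V$, so $[n]^*f^*A$ is numerically equivalent to $n^2\,f^*A$ and $C\cdot([n]^*f^*A)=n^2\,C\cdot f^*A$ exactly---making the detour through Mumford's formula and the nefness of its two factors unnecessary.
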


\begin{proof}
Let $m$ be a positive integer and $m_V\colon V \to V$, $s \mapsto m\cdot s$. Consider $f_m:= f \circ m_V$ and $A$ an ample line bundle on $X$. Let $C$ be a smooth curve in $V$ and ${f_m}|_{C}\colon C \to X$. Then
$$2g(C)-2 \geq \varepsilon \,C\cdot f_m^*A=\varepsilon m^2\,C\cdot f^*A.$$ Letting $m$ go to infinity, we obtain that necessarily $C\cdot f^*A=0$. Thus $f$ is constant on all curves in $V$ and therefore $f$ is constant on $V$.
\end{proof}

It is worthwhile here to mention that in the projective algebraic case, Kobayashi hyperbolicity and algebraic hyperbolicity are expected to be equivalent, but not much is known about it. Both of these properties should be equivalent to the following algebraic property.

\begin{conj}[Lang]
Let $X$ be a projective manifold. Then $X$ is hyperbolic if and only if there are no nontrivial holomorphic maps $V \to X$ where $V=\mathbb C^p/\Lambda$ is a compact complex torus.
\end{conj}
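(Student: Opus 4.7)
The plan is to split the biconditional into its two directions, of which only the easy direction is susceptible to a direct argument; the hard direction is the actual unresolved content of the conjecture.

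For the easy implication ($X$ hyperbolic $\Rightarrow$ no nontrivial $V\to X$), suppose $f\colon V\to X$ is a non-constant holomorphic map from a complex torus $V=\mathbb{C}^p/\Lambda$. Since the tangent bundle $T_V$ is trivial, I can pick a point $q\in V$ and a translation-invariant vector field on $V$ whose image under $df$ at $q$ is nonzero. Lifting to $\mathbb{C}^p$ and restricting to the complex line through a preimage of $q$ in the chosen direction yields an entire map $\mathbb{C}\to V$ along which $f$ has nonzero derivative at $0$. Post-composing with $f$ produces a non-constant entire curve $\mathbb{C}\to X$, contradicting the Kobayashi hyperbolicity of $X$ via Theorem \ref{Brody}, which identifies Brody and Kobayashi hyperbolicity in the compact case.

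For the reverse implication, the natural strategy in the known partial cases is the following. Start from a non-hyperbolic compact $X$ and use Theorem \ref{Brody} to extract a Brody curve $g\colon\mathbb{C}\to X$. Consider the Albanese morphism $\alpha\colon X\to\operatorname{Alb}(X)$. The composition $\alpha\circ g$ is encoded by the bounded holomorphic $1$-forms $g^*\alpha^*\omega$ on $\mathbb{C}$, which must be constant by Liouville, so $\alpha\circ g$ is affine and its image is contained in a translate of a complex one-parameter subgroup. Running an Ueno-type fibration argument inside $\operatorname{Alb}(X)$, and invoking Bloch's theorem --- the Zariski closure of any entire curve in an abelian variety is a translate of an abelian subvariety --- one would, in the best case, exhibit a translated abelian subvariety contained in $X$ and hence a nontrivial map from a torus into $X$.

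The main obstacle, and the reason the conjecture is open, is that this Albanese-based strategy collapses as soon as $h^0(X,\Omega^1_X)=0$: for instance on smooth hypersurfaces $X\subset\mathbb{P}^{n+1}$ one has $\operatorname{Alb}(X)=0$ by Lefschetz, so there are no nontrivial maps from tori to $X$ to begin with, and the reverse direction reduces to showing that such $X$ must be hyperbolic --- this is precisely Kobayashi's conjecture, only partially known through the jet-differential techniques developed in the remaining chapters of the survey. A complete proof would therefore have to combine the Albanese/Bloch argument sketched above, valid when $X$ admits holomorphic $1$-forms, with the jet-differential machinery of Chapters 4--6 to handle the case of vanishing irregularity, and the latter is not yet available in full.
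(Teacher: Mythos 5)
The statement you are addressing is labeled a \emph{conjecture} in the survey, and the paper offers no proof of it; your recognition that only the forward implication admits a proof, while the converse is precisely the open content of Lang's conjecture, is therefore exactly the right reading. Your argument for the easy direction is correct, though it is slightly heavier than necessary: since any torus $V=\mathbb C^p/\Lambda$ is holomorphically dominated by $\mathbb C^p$ and the Kobayashi pseudodistance of $\mathbb C^p$ vanishes, the distance-decreasing property gives $d_V\equiv 0$ directly, so any $f\colon V\to X$ with $X$ hyperbolic satisfies $d_X(f(p),f(q))\le d_V(p,q)=0$ and is constant, with no need to produce a tangent direction or invoke Brody's theorem. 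Also, the step \lq\lq nonconstant entire curve contradicts hyperbolicity\rq\rq{} requires only the elementary proposition preceding Theorem~\ref{Brody} (hyperbolic implies every map $\mathbb C\to X$ is constant), not Brody's theorem itself, which is the deeper converse; your citation is thus a small overreach even though the logic goes through.

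On the converse you are right that the Albanese/Bloch mechanism only sees the irregular part of $X$, and that for simply connected or zero-irregularity $X$ (e.g.\ hypersurfaces in $\mathbb P^{n+1}$, $n\ge 2$, where $\operatorname{Alb}(X)=0$ by Lefschetz) it gives nothing, reducing the problem to Kobayashi's conjecture. Your proposal is therefore best read not as a proof but as a correct identification of what is and is not known, together with a sound proof of the one implication that is actually a theorem; since the paper itself presents the statement with no proof, there is no discrepancy to reconcile.
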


One may be tempted to extend the conjecture to non-projective manifolds but then it becomes false, as shown by the following

\begin{exmp}[\cite{Can}]
Let $X$ be a non-projective $K3$ surface\footnote{A $K3$ surface is a simply connected surface $X$ with irregularity $q(X)=h^1(X,\mathcal O_X)=0$ and trivial canonical bundle $K_X\simeq\mathcal O_X$. } with no algebraic curves (the existence of such a surface is a classical result on K3 surfaces). Then there exists a non-constant entire curve $f\colon \mathbb C \to X$. On the other hand, if $V$ is a compact torus every holomorphic map $F\colon V \to X$ is constant.

Let us justify briefly the claims of the example. The existence of non-constant entire curves is a consequence of the density of Kummer surfaces\footnote{Let $T$ be a two dimensional complex torus with a base point chosen. The involution $\iota\colon T\to T$ has exactly 16 fixed point, namely the points of order 2 on $T$, so that the quotient $T/\langle 1,\iota\rangle$ has sixteen ordinary double points. Resolving the double points we obtain a smooth surface $X$, the Kummer surface $Km(T)$ of $T$. Kummer surfaces are special case of $K3$ surfaces.} in the moduli space of $K3$ surfaces. Since Kummer surfaces contain lots of entire curves (inherited from the starting torus), one just has to apply Brody's theorem. The second claim follows from the non-existence of surjective maps $F\colon V \to X$. Indeed, considering $\Omega$ a non-vanishing holomorphic $2$-form on $X$, if $F$ is surjective, then $F^*\Omega$ is a non-zero section of the trivial bundle $\Lambda^2 T^*_V$: the rank of this $2$-form is therefore constant, equal to $2$. Then, one obtains that $F$ factors through a $2$-dimensional compact torus and induces a covering $V \to X$ which contradicts the fact that $X$ is simply connected.
\end{exmp}

Another characterization of hyperbolicity should be the following.

\begin{conj}[Lang]
Let $X$ be a smooth projective algebraic manifold. Then $X$ is hyperbolic if and only if all subvarieties of $X$ including $X$ itself are of general type.
\end{conj}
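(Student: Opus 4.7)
The plan is to treat the two implications separately, both of which remain wide open. What I would aim for is a reduction to the two major conjectures lurking in the background: the Green-Griffiths-Lang conjecture on one side, and the abundance/uniruledness results of the minimal model program on the other, and I would not expect a full unconditional proof.

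For the direction \emph{hyperbolic $\Rightarrow$ all subvarieties are of general type}: the distance-decreasing property of the Kobayashi pseudodistance shows that every subvariety $Y\subset X$ is itself hyperbolic, so it suffices to prove that a hyperbolic smooth projective manifold is of general type. Assume $K_X$ is not big. When $K_X$ is not even pseudoeffective, results of Boucksom-Demailly-P\u{a}un-Peternell imply that $X$ is uniruled, which is incompatible with hyperbolicity since rational curves are forbidden. The genuinely delicate range is when $K_X$ is pseudoeffective but not big; there the abundance conjecture predicts a nontrivial Iitaka fibration whose general fibers have Kodaira dimension zero and, after a finite \'etale cover, should admit surjections from abelian varieties. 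This would then contradict the proposition proven just before the conjecture, which forbids nontrivial holomorphic maps from abelian varieties to algebraically hyperbolic manifolds (granting the expected equivalence between Kobayashi and algebraic hyperbolicity, itself conjectural).

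For the reverse direction \emph{all subvarieties of general type $\Rightarrow$ hyperbolic}: suppose $X$ is not hyperbolic. By Brody's Theorem~\ref{Brody} there is an entire curve $f\colon\mathbb{C}\to X$. Let $Y$ denote the Zariski closure of $f(\mathbb{C})$; then $Y$ is a subvariety of $X$, hence by hypothesis of general type, and by construction $f$ lifts to a Zariski-dense entire curve in $Y$. One is reduced to the statement that no projective manifold of general type carries a Zariski-dense entire curve, which is precisely the Green-Griffiths-Lang conjecture. I would attack this along the lines developed in the remaining chapters of the survey: produce a global section of the invariant jet differential bundle $E_{k,m}T_Y^*$ vanishing along an ample divisor (so that $f$ satisfies a nontrivial algebraic differential equation, via the Ahlfors-Schwarz principle), then generate further independent differential equations by differentiating with meromorphic vector fields of controlled pole order on a suitable jet space, in the spirit of Siu, enough to force the image of $f$ into a proper subvariety of $Y$ --- the desired contradiction.

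The principal obstacle is that both directions are essentially equivalent to presently open conjectures. The reverse direction is a strengthening of Green-Griffiths-Lang; the forward direction, though more accessible in appearance, depends on unproved portions of the MMP, most notably abundance, together with a structural description of Kodaira-dimension-zero varieties. The theorems of the later chapters should be read as partial evidence for this conjecture in the special case where $X$ is a very generic projective hypersurface of sufficiently high degree.
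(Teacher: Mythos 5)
This statement is labeled as a conjecture in the paper---it is Lang's conjecture, stated without proof---so there is no ``paper's own proof'' against which to compare your argument. You correctly recognized this, and your write-up is not a proof but a sound road map of the conjectural reductions. That is exactly the right reading of the passage: the authors state Lang's conjecture immediately after the proposition forbidding nonconstant maps from abelian varieties to algebraically hyperbolic manifolds, and immediately before the Green--Griffiths conjecture, precisely to situate it among these open problems.

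A few remarks on your reduction. In the forward direction, your use of the distance-decreasing property to pass hyperbolicity to subvarieties is correct and matches the paper's earlier Proposition. The step ``hyperbolic $\Rightarrow$ general type'' is indeed where the difficulty lies, and your appeal to the BDPP uniruledness criterion (when $K_X$ is not pseudoeffective) and to abundance plus the structure of Kodaira-dimension-zero manifolds (when $K_X$ is pseudoeffective but not big) reflects the standard expert heuristic, but none of it is developed in the survey; you are right to flag that the whole chain is conditional, and in particular that closing it via the abelian-variety proposition presupposes the (also conjectural) equivalence between Kobayashi and algebraic hyperbolicity. In the reverse direction, reducing to the nonexistence of Zariski-dense entire curves in varieties of general type is precisely how the paper frames things: the next conjecture stated in the text is Green--Griffiths--Lang, and the later chapters produce exactly the kind of evidence (jet differentials plus Siu-style vector fields, for very generic hypersurfaces) that you allude to. The only caveat worth sharpening: Green--Griffiths--Lang alone gives a proper exceptional locus $Y\subsetneq X$, whereas hyperbolicity of $X$ requires iterating the argument down through $Y$ and ultimately excluding all entire curves; so the reverse implication is genuinely a strengthening of Green--Griffiths--Lang, as you note, and one must in addition rule out elliptic curves and complex tori in the lower-dimensional strata. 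Overall your assessment is accurate and consistent with the paper's presentation.
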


In the next section we shall see some partial result in this direction.
The latter conjecture should be put in perspective with this other celebrated one.

\begin{conj}[Green-Griffiths \cite{G-G80}, Lang]
Let $X$ be a smooth projective algebraic manifold of general type. Then there should exist a proper algebraic subvariety $Y\subsetneq X$ such that all entire curves $f\colon\mathbb C\to X$ have image $f(\mathbb C)$ contained in $Y$.
\end{conj}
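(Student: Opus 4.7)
The plan is to follow the general strategy laid out in Chapter 4, which the survey proposes as the guiding approach to precisely this conjecture. First I would try to produce non-zero global sections of the bundle of invariant jet differentials vanishing along an ample divisor: concretely, for some order $k\gg 0$, weight $m\gg 0$ and some ample line bundle $A$ on $X$, one wants
$$H^0(X,E_{k,m}T_X^*\otimes A^{-1})\neq 0.$$
Since $K_X$ is big (as $X$ is of general type), the natural tool is Demailly's holomorphic Morse inequalities, applied to the tautological line bundle $\mathcal{O}_{X_k}(1)$ on the Demailly-Semple tower $X_k$ of projectivized jet bundles. A direct Euler-characteristic computation combined with upper bounds for the higher cohomology groups coming from the weak Morse inequalities is the template discussed later for dimension three; in the general case one applies the strong Morse inequalities directly to the twisted tautological bundle.

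The second step is automatic from the Ahlfors-Schwarz lemma as formulated in Chapter 4: any global section $P\in H^0(X,E_{k,m}T_X^*\otimes A^{-1})$ yields an algebraic differential equation $P(f',f'',\dots,f^{(k)})\equiv 0$ which every entire curve $f\colon\mathbb C\to X$ must satisfy identically. Thus any such $f$ is forced into the zero locus of $P$ inside the appropriate jet space. If one could guarantee that the base locus (in $X_k$, pushed down to $X$) of the family of all such equations is a proper algebraic subvariety $Y\subsetneq X$, the conjecture would follow by taking $Y$ to be the Zariski closure of the image of the projection of this base locus.

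The third step — forcing the projected base locus to be proper in $X$ — is where Siu's approach via meromorphic vector fields on the jet spaces enters. One constructs vector fields on the jet space, tangent to the fibers of appropriate projections and with poles of controlled order, and differentiates the jet differentials already produced in step one against them to obtain many new jet differentials still vanishing on an ample divisor. The enlarged family of equations cuts out, heuristically, a proper subvariety of $X_k$, and then $Y$ is its image in $X$.

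The hard part is the third step in the stated generality. For an \emph{arbitrary} smooth projective manifold of general type there is no analogue of the universal family $\mathcal X\subset\mathbb P^{n+1}\times\mathbb P(H^0(\mathbb P^{n+1},\mathcal O(d)))$ used by Voisin and Siu, whose positivity properties yield enough vector fields with prescribed pole order. Consequently the program as sketched above is only known to succeed under strong supplementary hypotheses (curves; surfaces of general type with sufficient Segre/Chern class positivity as in McQuillan, Demailly--El Goul and P\u{a}un; generic hypersurfaces of sufficiently high degree as in \cite{D-M-R10}), and the intrinsic version of step three for a general $X$ of general type is precisely the bottleneck that keeps the Green-Griffiths-Lang conjecture open.
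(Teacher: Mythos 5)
You have correctly recognized something essential: the statement you were asked to prove is not a theorem of the paper but the Green-Griffiths-Lang conjecture itself, which the survey explicitly flags as ``largely open.'' The paper offers no proof of it, only partial results in special settings (Bogomolov and McQuillan for surfaces with $c_1^2>c_2$, Bloch's theorem for subvarieties of tori, and the main results on generic hypersurfaces of high degree). Your response is therefore not a proof and does not claim to be one, and that is the right call.

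Your account of the three-step strategy is faithful to Chapter~4 of the survey, and your diagnosis of the bottleneck is exactly right: step one (existence of a global invariant jet differential vanishing on an ample divisor) is plausible for varieties of general type via Morse-inequality and Riemann-Roch arguments, and step two (entire curves satisfy the resulting differential equation) is an immediate consequence of the Ahlfors-Schwarz lemma (Corollary~\ref{baselocus}). But step three --- producing enough further differential equations to cut the lifted curve down to a proper subvariety of $X$ --- relies in all known cases on differentiating against meromorphic vector fields of controlled pole order, and the only known source of such vector fields is the positivity of the tangent bundle of a universal family (the slanted vector fields of Voisin, Siu, P\u{a}un, Merker). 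For an abstract smooth projective manifold of general type, with no ambient moduli space to exploit, no such construction is available, and this is precisely why the conjecture remains open. One small refinement worth adding: even step one is not unconditionally settled for arbitrary $X$ of general type in the form needed (with the extra twist $A^{-1}$ and controlled vanishing order); the footnote in Chapter~5 notes that Demailly announced such a result by different, probabilistic techniques, but the survey does not rely on it. In short, you have not closed a gap --- you have correctly identified that there is one, and located it where the experts locate it.
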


This conjecture is largely open, too. Nevertheless, related to algebraic hyperbolicity we have the following.

\begin{thm}[Bogomolov \cite{Bog78}]
Let $X$ be  a smooth projective surface of general type with $c_1(X)^2>c_2(X)$. Then there are only finitely many rational or elliptic curves in $X$.
\end{thm}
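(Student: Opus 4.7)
The plan is to exploit the positivity condition $c_1(X)^2>c_2(X)$ to produce many symmetric differentials on $X$, to use each such differential to constrain the tangent directions of rational and elliptic curves, and finally to invoke the theory of foliations on surfaces of general type to extract a finiteness statement.

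First, I would produce a large supply of sections of $S^m\Omega_X^1$. Hirzebruch-Riemann-Roch on the surface $X$ gives
$$
\chi(X,S^m\Omega_X^1)=\frac{m^3}{6}\bigl(c_1(X)^2-c_2(X)\bigr)+O(m^2),
$$
and Bogomolov's vanishing theorem (applicable here because $K_X$ is big) controls $H^2(X,S^m\Omega_X^1)$, so under the hypothesis $c_1^2>c_2$ the dimension $h^0(X,S^m\Omega_X^1)$ grows like $m^3$. Fixing an ample line bundle $A$ on $X$ and noting that $h^0(X,A^{\otimes m})$ grows only quadratically in $m$, I can find, for $m$ sufficiently large, a non-zero section
$$
\omega\in H^0\bigl(X,S^m\Omega_X^1\otimes A^{-1}\bigr).
$$

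Second, I would interpret $\omega$ as a section of $\mathcal O_{\mathbb P(T_X)}(m)\otimes\pi^*A^{-1}$, where $\pi\colon\mathbb P(T_X)\to X$ is the projectivized tangent bundle, normalized so that $\pi_*\mathcal O(m)=S^m\Omega_X^1$. Given an irreducible curve $C\subset X$ of geometric genus $g\leq 1$ with normalization $\nu\colon\widehat C\to X$, the differential $d\nu$ yields a canonical tangent lift $\tilde\nu\colon\widehat C\to\mathbb P(T_X)$ whose pulled back tautological line bundle $\tilde\nu^*\mathcal O(1)$ has degree at most $2g-2$. Restricting $\omega$ along $\tilde\nu$ therefore produces a section of a line bundle on $\widehat C$ of degree at most $m(2g-2)-C\cdot A<0$, which must vanish identically. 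Hence $\tilde\nu(\widehat C)\subset D_\omega$, where $D_\omega$ denotes the zero divisor of $\omega$ in $\mathbb P(T_X)$.

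Third, since $\omega$ restricts on each fiber of $\pi$ to a non-zero homogeneous polynomial of degree $m$, the divisor $D_\omega$ defines an algebraic web of foliations on $X$: factoring $\omega$ into irreducible components, each factor determines a foliation whose algebraic leaves include precisely the rational and elliptic curves whose tangent lift is contained in that component. At this point I would invoke Bogomolov's theorem on algebraic integrability of foliations on surfaces of general type, which asserts that the algebraic leaves of geometric genus at most one of such a foliation form a finite set. The proof of that theorem rests on the positivity of the canonical bundle of the foliation combined with a Miyaoka-type inequality.

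The main obstacle is precisely this last step. Producing global symmetric differentials and constraining the tangent directions of low-genus curves is a fairly direct Riemann-Roch-plus-vanishing argument; the real content is the passage from an algebraic differential equation satisfied by every rational or elliptic curve to an actual bound on the number of such curves, which requires a deeper analysis of the induced foliation and of its canonical bundle in the presence of the general-type hypothesis.
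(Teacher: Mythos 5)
Your argument follows the paper's route quite closely for the first two thirds: Riemann--Roch together with Bogomolov vanishing (applied to the dualized pieces via Serre duality) to get $h^0(X,S^m\Omega^1_X)\sim m^3$, then the interpretation of a symmetric differential as a section of $\mathcal O_{\mathbb P(T_X)}(m)$ and the tangent-lift argument to show that a curve of geometric genus $\le 1$ has its lift contained in a fixed divisor. Your twist by $A^{-1}$ is a valid variant, and the degree estimate $\deg\tilde\nu^*\mathcal O(1)\le 2g-2$ is correct (it equals $2g-2-\deg R$, $R$ being the vanishing divisor of $d\nu$), so the conclusion that the restricted section vanishes identically is sound.

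The gap is in the last step, and it is not a small one. You assert a result you call ``Bogomolov's theorem on algebraic integrability of foliations on surfaces of general type,'' which ``asserts that the algebraic leaves of geometric genus at most one of such a foliation form a finite set.'' There is no such prior theorem to cite --- this statement \emph{is} the theorem you are trying to prove, so as written the argument is circular. The tool that actually closes the gap, and that the paper uses, is Jouanolou's theorem: a singular holomorphic foliation on a projective surface either has only finitely many compact (algebraic) leaves or admits a rational first integral, in which case all leaves are fibers of a fibration. In the latter case one gets a pencil of rational or elliptic curves, which would make $X$ ruled or elliptic, contradicting the general-type hypothesis. Jouanolou's theorem is an independent statement about foliations, not a special case of the finiteness we want; the sketch you give (``positivity of the canonical bundle of the foliation combined with a Miyaoka-type inequality'') is not the argument and does not yield the conclusion without substantial extra work.

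Two smaller points worth noting, both handled explicitly in the paper and glossed over in your proposal. First, among the components of $D_\omega$ (or $Z_m$), the ones that do not dominate $X$ must be treated separately: they project to finitely many curves in $X$, so only finitely many low-genus curves lift into them; it is only the dominating components that carry a genuine multifoliation structure to which Jouanolou applies. Second, an irreducible factor of $\omega$ of fiber-degree $k>1$ defines a $k$-web, not a single foliation, so one must either pass to a generically finite cover where the web splits or argue componentwise after further factorization; this is why the paper phrases the regular/irregular dichotomy in terms of the map $\varphi$ to projective space (regular curves are shown directly, via a section that is nonzero but vanishes somewhere on the lift, to have $\deg K_C>0$ hence genus $\ge 2$) rather than pushing everything through the web picture.
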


\begin{proof}
We will see later in some details that the hypothesis on the second Segre number  $c_1(X)^2>c_2(X)$ implies that $h^0(X,S^mT_X^*)\sim m^3$.
A non-trivial symmetric differential $\omega \in H^0(X,S^mT_X^*)$ defines a multifoliation on $X$. Recall that there is an isomorphism (we will come back later on this, too)
$$
H^0(X,S^mT_X^*) \cong H^0(P(T_X),\mathcal{O}_{P(T_X)}(m)).
$$
If $\sigma \in H^0(X,S^mT_X^*)$ and $x \in X$ then $\sigma(x)$ defines naturally a polynomial of degree $m$ on $P(T_{X,x})\simeq \mathbb P^1$. The zeroes of $\sigma(x)$ determines the directions of the multifoliation. Let $C$ be a smooth projective curve and $f\colon C \to X$. The curve $f(C)$ is a leaf of the multifoliation defined by $\sigma$ if $f^*\sigma \in H^0(C,T_C^{*\otimes m})$ is trivial. Equivalently if $t_f\colon C \to P(T_X)$ is the lifting of $f$ then $f(C)$ is a leaf if $t_f(C)$ lies in the zero locus of $\sigma\in H^0(P(T_X),\mathcal{O}_{P(T_X)}(m))$.

The sections of $\mathcal{O}_{P(T_X)} (m)$ for $m$ large enough provide a rational map $\varphi\colon P(T_X) \to \mathbb P^N$ generically $1-1$ onto its image. Let us denote $Z_m \subset P(T_X)$ the union of the positive dimensional fibers of $\varphi$ and of the base locus of  $\mathcal{O}_{P(T_X)}(m)$.

Let $f\colon C \to X$ be a curve. Then, $f(C)$ is said to be irregular if $t_f(C) \subset Z_m$, otherwise it is regular. The set of irregular curves can be broken into $2$ sets: the curves that are leaves of multifoliations and the curves whose lifts lie on the positive dimensional fibers of $\varphi$.

Let $C'$ be a regular curve with normalization $f\colon C \to C' \subset X$. There is a symmetric differential $\sigma \in H^0(X,S^mT_X^*)$ such that  $f^*\sigma \in H^0(C,(T_C^*)^{\otimes m})$ is non-trivial but vanishes somewhere. Hence $\deg_C T_C^{*\otimes m}=m\deg K_C >0$ and $C$ cannot be rational or elliptic.

Let $C'$ be an irregular curve and write $Z_m= Z_m^1 \cup Z_m^2$ where $Z_m^1$ is the union of components not dominating $X$, $Z_m^2$ is the union of components dominating $X$. The number of curves that lift in $Z_m^1$ is clearly finite. The components of $Z_m^2$ have a naturally defined foliation on them. Curves whose lifts lie in $Z_m^2$ are leaves of these foliations. By Jouanolou's theorem on compact leaves of foliations, either there are finitely many compact leaves or they are fibers of a fibration. Thus there are finitely many such elliptic or rational curves: $X$ being of general type, the second situation is not possible since a surface of general type cannot  be ruled or elliptic.
\end{proof}

In the transcendental case, the only result for a quite general case has been obtained Mc Quillan in \cite{McQ98}, for $\dim X=2$ and the second Segre number $c_1(X)^2-c_2(X)$ of $X$ positive. The heart of his proof is 
\begin{thm}
Consider a (possibly singular) holomorphic foliation on a surface of general type. Then any parabolic leaf of this foliation is algebraically degenerate.
\end{thm}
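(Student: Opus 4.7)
The plan is to derive a contradiction from the assumption that a parabolic leaf $f\colon\mathbb C\to X$ of the foliation $\mathcal F$ has Zariski-dense image in the surface $X$ of general type. Following McQuillan, the approach combines Nevanlinna theory for entire curves lifted to $P(T_X)$ with the algebraic positivity theory of foliated surfaces.

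First, I would apply Seidenberg's theorem to resolve the singularities of $\mathcal F$, reducing to the case where $\mathcal F$ has only reduced (elementary) singularities on a modification of $X$. This normal form is essential because McQuillan's inequality needs the local invariants at singular points to be controlled, and because the pseudo-effectivity results for the canonical bundle of the foliation are phrased in the reduced setting. I would then record the basic numerical identity $K_X=K_{\mathcal F}+N_{\mathcal F}^{*}$ relating the canonical bundle of $X$, the canonical bundle $K_{\mathcal F}$ of the foliation, and its conormal.

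The central analytic ingredient is McQuillan's \emph{tautological inequality}. Because $f$ is tangent to $\mathcal F$, the derivative $f'$ factors through $f^{*}T_{\mathcal F}$, and an Ahlfors--Schwarz argument applied to a Poincaré-type singular metric on $T_{\mathcal F}$ along the leaves (together with a careful accounting of the contribution of $\mathrm{Sing}\,\mathcal F$) yields, outside a set of finite Lebesgue measure,
\[
T_f(r,K_{\mathcal F})\leq O\bigl(\log^{+} T_f(r,\omega)+\log r\bigr),
\]
where $\omega$ is any Kähler form on $X$ and $T_f(r,\cdot)$ is the Nevanlinna characteristic. Intuitively this says that the growth of $f$ measured against the cotangent bundle of $\mathcal F$ is negligible compared with its overall growth, because the derivative of $f$ is constrained to lie in the tangent line of the foliation.

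Against this upper bound I would place a positivity statement. By Miyaoka's semi-positivity theorem, $K_{\mathcal F}$ is pseudo-effective on any non-uniruled surface, and $X$ being of general type is certainly not uniruled. Combined with the bigness of $K_X$ and the decomposition above, together with an index-theoretic analysis (Camacho--Sad and Baum--Bott) of the resolved singularities and the use of Vojta-type ``Diophantine approximation'' for jets, one obtains that if $f$ is Zariski dense then
\[
T_f(r,K_{\mathcal F})\geq \varepsilon\, T_f(r,\omega)-O(\log r)
\]
for some $\varepsilon>0$. Confronting this with the tautological inequality forces $T_f(r,\omega)=O(\log T_f(r,\omega)+\log r)$, which contradicts the non-algebraicity of $f$ and thus establishes algebraic degeneracy.

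The main obstacle is clearly the lower bound in the last step: the tautological inequality, though technically delicate, follows a now-standard Nevanlinna recipe, but producing the genuine positivity of $T_f(r,K_{\mathcal F})$ for a Zariski-dense leaf is the heart of McQuillan's contribution. It requires a global choice of birational model in which both the foliated and the ordinary canonical bundles behave well, and a careful use of Miyaoka's theorem together with the numerical invariants of reduced foliation singularities to convert the algebraic positivity of $K_X$ into transcendental information along the parametrized leaf $f$.
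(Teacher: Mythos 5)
The survey does not contain a proof of this theorem; it is stated as a black-box citation to McQuillan \cite{McQ98} with no argument supplied, so there is nothing in the paper to compare your proposal against. Your sketch is an attempt to reconstruct McQuillan's proof and correctly identifies several of the pillars: Seidenberg reduction to reduced singularities, the adjunction-type identity $K_X=K_{\mathcal F}\otimes N_{\mathcal F}^{*}$, the tautological inequality bounding the characteristic function of the lifted curve against $K_{\mathcal F}$, and Miyaoka's generic semi-positivity theorem.

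However, the concluding step --- which you yourself flag as the heart of the matter --- is formulated in a way that does not reflect how the argument works and, as stated, cannot be established. There is no lower bound of the form $T_f(r,K_{\mathcal F})\ge \varepsilon\,T_f(r,\omega)$: $K_{\mathcal F}$ is merely pseudo-effective, not big, so no such growth estimate can come from positivity alone, and indeed the tautological inequality gives an upper, not a lower, bound on $T_f(r,K_{\mathcal F})$. McQuillan's actual route is to extract from the leaf an Ahlfors--Nevanlinna closed positive $(1,1)$-current $T[f]$ on $X$ and turn the analytic inequalities into intersection numbers. The tautological inequality becomes $T[f]\cdot K_{\mathcal F}\le 0$, and the genuinely new ingredient is a residue and index-theoretic analysis at the reduced singularities of $\mathcal F$ proving $T[f]\cdot N_{\mathcal F}^{*}\ge 0$. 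Summing, via $K_X=K_{\mathcal F}+N_{\mathcal F}^{*}$, one gets $T[f]\cdot K_X\le 0$, which is incompatible with $K_X$ big and $T[f]$ a nef class of non-negative self-intersection. So the real difficulty is the sign of $T[f]\cdot N_{\mathcal F}^{*}$, not a growth lower bound on $T_f(r,K_{\mathcal F})$; your proposal has the hard step on the wrong side of the decomposition and in a form that is not true.
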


An immediate corollary of the two previous results is a confirmation of the Green-Griffiths conjecture in this situation.

\begin{cor}
Let $X$ be a smooth projective surface of general type with $c_1(X)^2 >c_2(X)$. Then there are finitely many curves $C \subset X$ such that any non-constant entire curve takes value in one of these curves.
\end{cor}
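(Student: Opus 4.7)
The plan is to combine McQuillan's theorem with Bogomolov's finiteness result. Starting from a non-constant entire curve $f\colon\mathbb C\to X$, the first goal is to show that $f(\mathbb C)$ is contained in some algebraic curve; the second is to argue that, of all curves that could host such an entire image, only finitely many exist.

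First I would exploit the hypothesis $c_1(X)^2>c_2(X)$ exactly as in Bogomolov's proof: it yields $h^0(X,S^mT_X^*)\sim m^3$, so one can pick a nonzero symmetric differential $\sigma\in H^0(X,S^mT_X^*)$ and use it to build a (possibly singular) multifoliation on $X$, with associated exceptional locus $Z_m\subset P(T_X)$. Two cases arise for the lifted map $t_f\colon\mathbb C\to P(T_X)$. If $t_f(\mathbb C)\subset Z_m^1$, the projection to $X$ already lies in a proper algebraic subvariety of $X$, and algebraic degeneracy is immediate. Otherwise, either $t_f(\mathbb C)\subset Z_m^2$, or $t_f$ defines, via the Zariski closure of its image, a saturated rank-one subsheaf of $T_X$; in both cases one extracts a genuine (singular) holomorphic foliation $\mathcal F$ on $X$ to which $f$ is tangent. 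McQuillan's theorem applied to $\mathcal F$ then asserts that the parabolic leaf $f(\mathbb C)$ is algebraically degenerate, so that, since $\dim X=2$ and $f$ is non-constant, the Zariski closure of $f(\mathbb C)$ is an algebraic curve $C\subset X$.

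Next I would lift $f$ through the normalization $\nu\colon\widehat C\to C$ to obtain a non-constant holomorphic map $\widetilde f\colon\mathbb C\to\widehat C$. By the uniformization discussion of Chapter~1, a Riemann surface admitting a non-constant map from $\mathbb C$ cannot be of hyperbolic type (Liouville on the disc), so $\widehat C$ is either $\mathbb P^1$ or an elliptic curve, and consequently $C$ is a rational or elliptic curve in $X$. Bogomolov's theorem, recalled just above in the chapter, then bounds the number of such $C$: they form a finite set. Hence every non-constant entire curve in $X$ factors through one of a fixed finite list of algebraic curves, which is the corollary.

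The main obstacle is the passage from the multifoliation defined by $\sigma$ to an honest (singular) holomorphic foliation tangent to $f$, so that McQuillan's theorem is genuinely applicable. This may require replacing $X$ by a finite cover on which the multifoliation splits into ordinary foliations, or, more intrinsically, taking the saturation of the rank-one subsheaf of $T_X$ generated by the tangent directions picked out by $t_f$. Once McQuillan's result is invoked as a black box, though, the rest of the argument is a short combination of uniformization applied to $\widehat C$ together with the counting already provided by Bogomolov.
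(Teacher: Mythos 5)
Your proof is correct and takes essentially the same route as the paper, which presents the corollary as an immediate combination of McQuillan's theorem on parabolic leaves (for algebraic degeneracy) with Bogomolov's finiteness of rational and elliptic curves (via the normalization and uniformization argument you spell out). The technical point you flag about passing from the multifoliation to an honest foliation is real but standard: one takes the Zariski closure of $t_f(\mathbb C)$ in $P(T_X)$, which carries a tautological foliation of which $t_f$ is a parabolic leaf, and this is the surface-with-foliation to which McQuillan's theorem is applied.
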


Unfortunately, these \lq\lq order one\rq\rq{} techniques are insufficient to work with surfaces of degree $d$ in projective $3$-space. In this case in fact
$$
c_1(X)^2=d(d-4)^2<d(d^2-4d+6)=c_2(X),\quad d\ge 3.
$$

In higher dimensions, there are few results. For the algebraic version, let us mention the following result of Lu and Miyaoka.

\begin{thm}[\cite{Lu-Mi}]
Let $X$ be a projective manifold of general type. Then $X$ has only a finite number of nonsingular codimension-one subvarieties having pseudo-effective anticanonical divisor. In particular, $X$ has only a finite number of non singular codimension one Fano, Abelian and Calabi-Yau subvarieties.
\end{thm}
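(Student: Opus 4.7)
The plan is to combine the adjunction formula with Kodaira's lemma to extract a strong negativity condition on the normal bundle of any admissible $Y$, and then to show that only finitely many smooth prime divisors on $X$ can carry such a conormal bundle.

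First, for a smooth codimension-one $Y\subset X$, adjunction gives $K_Y=(K_X+Y)|_Y$, so the hypothesis that $-K_Y$ is pseudo-effective on $Y$ is equivalent to $-(K_X+Y)|_Y$ being pseudo-effective on $Y$. Since $K_X$ is big, Kodaira's lemma provides, for some $m>0$, an ample divisor $A$ and an effective divisor $E$ on $X$ with $mK_X\sim A+E$; more invariantly, the augmented base locus $\mathbb{B}_+(K_X)$ is a proper Zariski closed subset of $X$ with only finitely many codimension-one components. Setting aside the finitely many $Y$ contained in this divisorial part, for every remaining $Y$ the restriction $K_X|_Y$ is big on $Y$. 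Adjunction then yields
$$
-Y|_Y \;=\; K_X|_Y + (-K_Y),
$$
which exhibits $-Y|_Y$ as the sum of a big and a pseudo-effective class, hence itself big. Equivalently, the conormal bundle $N_{Y/X}^{\vee}$ is big on $Y$.

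Next I would deduce finiteness. Rigidity of any single such $Y$ follows from the restriction sequence
$$
0\to\mathcal{O}_X\bigl((m-1)Y\bigr)\to\mathcal{O}_X(mY)\to\mathcal{O}_Y\bigl(mY|_Y\bigr)\to 0
$$
combined with the vanishing of $H^0(Y,mY|_Y)$ for $m\gg 0$ (since $mY|_Y$ is anti-big on $Y$): this forces $h^0(X,mY)$ to stabilize, so $Y$ is the only effective divisor in its $\mathbb{R}$-linear equivalence class. For finiteness of the possible numerical classes $[Y]$, I would intersect with $(H|_Y)^{n-2}$ for some ample $H$ on $X$: bigness of $-Y|_Y$ gives $Y^2\cdot H^{n-2}<0$, which places $[Y]$ in the "negative part" of the pseudo-effective cone of $X$. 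A divisorial Zariski (Boucksom-Nakayama) decomposition of a suitable big divisor, for instance $K_X-\varepsilon H$ with $\varepsilon>0$ small, has only finitely many prime components in its negative part, and every admissible $Y$ must appear among them.

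The hard part is this last step. The cohomological rigidity of an individual $Y$ is essentially formal, but controlling the set of numerical classes supporting a smooth prime divisor with the required strict negativity is subtler and rests on the good behaviour of the negative part of the pseudo-effective cone in a general-type ambient manifold. The original Lu-Miyaoka argument likely bypasses the abstract Zariski decomposition in favor of a more direct argument showing that a hypothetical infinite family of admissible $Y$'s would sweep out a positive-dimensional locus of $X$, producing movable classes in contradiction with the bigness of $K_X$ inherited from the restrictions $K_X|_Y$.
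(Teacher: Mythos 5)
The survey does not prove this theorem; it merely quotes it from \cite{Lu-Mi}, so there is no internal argument for me to compare against. On its own merits, the first two-thirds of your proposal are correct. After setting aside the finitely many smooth prime divisors contained in $\operatorname{Supp}(E)$ for a Kodaira decomposition $mK_X\sim A+E$, the bigness of $K_X|_Y$ combined with adjunction and the pseudo-effectivity of $-K_Y$ does give that $-Y|_Y$ is big; then $mY|_Y$ is \emph{not} pseudo-effective for \emph{every} $m>0$ (otherwise $0=mY|_Y+(-mY|_Y)$ would be big on $Y$), so your restriction sequence forces $h^0(X,mY)=1$ for all $m$. You can even strengthen the rigidity to a numerical statement: if $D\equiv Y$ were effective with $Y\not\subset\operatorname{Supp}(D)$, then $D|_Y$ would be an effective class numerically equivalent to the anti-big $Y|_Y$, a contradiction; hence $Y$ is the unique effective representative of its numerical class.

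The genuine gap is the one you flag yourself. Bigness of $-Y|_Y$ does give $Y^2\cdot H^{n-2}<0$, but that is a property of the conormal bundle of $Y$ and has no a priori bearing on the asymptotic multiplicity $\sigma_Y(K_X-\varepsilon H)$, which is what decides whether $Y$ lies in the negative part $N_\sigma(K_X-\varepsilon H)$ of the Boucksom--Nakayama decomposition. The two conditions are independent: already on a surface of general type an elliptic curve $Y$ with $Y^2=-1$ satisfies your negativity, yet adjunction forces $K_X\cdot Y=+1$, so $(K_X-\varepsilon H)\cdot Y>0$ for small $\varepsilon$ and nothing pushes $Y$ into the negative part of any such divisor. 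Hence the assertion that "every admissible $Y$ must appear among them" is unsupported, and the finiteness of the numerical classes --- the actual content of the theorem --- is left open. Closing it requires a different mechanism; the direction you speculate about at the end, namely extracting from an infinite family of such rigid divisors a normalized movable limit class in $\overline{\operatorname{Eff}}(X)$ and playing it against the bigness of $K_X$ and of the restrictions $K_X|_Y$, is the sort of argument that is actually needed.
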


For some partial result in all dimensions for the transcendental case, we refer to next chapters.

\section[Algebraic hyperbolicity of projective hypersurfaces]{Algebraic hyperbolicity of generic projective hypersurfaces of high degree}

Consider the Grassmannian $\mathbb G(1,n+1)$ of projective lines in $\mathbb P^{n+1}$ which is canonically identified with the Grassmannian $Gr(2,n+2)$ of $2$-planes in $\mathbb C^{n+2}$: its complex dimension is $2n$. We are interested in understanding when a generic projective hypersurfaces $X\subset\mathbb P^{n+1}$ contains a line. Fix an integer $d>0$. Then a projective hypersurface of degree $d$ is an element of the linear system $|\mathcal O_{\mathbb P^{n+1}}(d)|$ or, equivalently, can be identified with a point in the projectivization $\mathbb P(H^0(\mathbb P^{n+1},\mathcal O_{\mathbb P^{n+1}}(d)))$. One has $\dim\mathbb P(H^0(\mathbb P^{n+1},\mathcal O_{\mathbb P^{n+1}}(d)))=N_d-1$, where $N_d=\binom{n+d+1}{n+1}=h^0(\mathbb P^{n+1},\mathcal O_{\mathbb P^{n+1}}(d))$ is the dimension of homogeneous polynomials of degree $d$ in $n+2$ variables.

Now, consider the incidence variety 
$$
\mathcal L=\{(\ell,X)\in\mathbb G(1,n+1)\times\mathbb P^{N_d-1}\mid\text{the line $\ell$ is contained in $X$}\}.
$$
By construction, the image of $\mathcal L$ in $\mathbb P^{N_d-1}$ by the second projection is the set of projective hypersurfaces of degree $d$ which contain at least one line. Of course, if $\dim\mathcal L$ is less than $N_d-1$, then a generic projective hypersurface of degree $d$ does not contain lines, since the second projection cannot be dominant.
On the other hand, $\mathcal L$ is always mapped onto $\mathbb G(1,n+1)$ by the first projection, since every line is always contained in some degree $d$ hypersurface. Next, an easy parameter computation shows that generically a homogeneous polynomial of degree $d$ in $n+2$ variables must satisfy $d+1$ condition in order to contain a line.
Therefore, the fiber of the first projection has dimension $N_d-d-2$ and thus $\dim\mathcal L=N_d+2n-d-2$.

After all, the second projection maps a variety of dimension $N_d+2n-d-2$ to a variety of dimension $N_d-1$ and so we have proved the following.

\begin{prop}
If $d\ge 2n$, then a generic projective hypersurface of degree $d$ in $\mathbb P^{n+1}$ cannot contain any line.
\end{prop}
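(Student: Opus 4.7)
The plan is to formalize the two-projection dimension count already outlined in the excerpt. First I would verify that the incidence variety $\mathcal L\subset\mathbb G(1,n+1)\times\mathbb P^{N_d-1}$ is a closed algebraic subvariety: the condition ``$\ell\subset X$'' can be expressed as the vanishing of the restriction of the defining polynomial of $X$ to $\ell$ (equivalently, on any two points spanning $\ell$), which is bihomogeneous in the Pl\"ucker coordinates of $\ell$ and in the coefficients of the polynomial.

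Next I analyze the first projection $\pi_1\colon\mathcal L\to\mathbb G(1,n+1)$. The fiber above a fixed line $\ell\cong\mathbb P^1$ is the projectivization of the kernel of the restriction map
$$
H^0(\mathbb P^{n+1},\mathcal O_{\mathbb P^{n+1}}(d))\longrightarrow H^0(\ell,\mathcal O_\ell(d)),
$$
which is surjective (choose coordinates so that $\ell$ is a coordinate line and extend by zero in the remaining variables, or use $H^1(\mathbb P^{n+1},\mathcal I_\ell(d))=0$). Since the target has dimension $d+1$, each such fiber is a projective space of dimension $N_d-d-2$. Because $\pi_1$ is surjective (every line is contained in, for instance, a degree-$d$ hypersurface built as a union of $d$ hyperplanes through $\ell$) with equidimensional fibers over the smooth irreducible base $\mathbb G(1,n+1)$ of dimension $2n$, one obtains that $\mathcal L$ is of pure dimension
$$
\dim\mathcal L=2n+N_d-d-2.
$$

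Finally I consider the second projection $\pi_2\colon\mathcal L\to\mathbb P^{N_d-1}$, whose image is by construction the locus of hypersurfaces containing at least one line. Since $\mathcal L$ is projective (via the Pl\"ucker embedding of the Grassmannian), this image is closed, and of course $\dim\pi_2(\mathcal L)\le\dim\mathcal L=2n+N_d-d-2$. Under the hypothesis $d\ge 2n$ this gives
$$
\dim\pi_2(\mathcal L)\le N_d-2<N_d-1=\dim\mathbb P^{N_d-1},
$$
so $\pi_2(\mathcal L)$ is a \emph{proper} closed subvariety of $\mathbb P^{N_d-1}$ and a generic $X$ lies in its complement, i.e.\ contains no line.

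The only mildly delicate point is the surjectivity of the restriction map that controls the fiber dimension of $\pi_1$; once this is in hand, the rest is a routine dimension computation together with the standard fact that the image of a projective variety under a morphism is closed.
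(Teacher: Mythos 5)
Your argument is correct and follows exactly the paper's approach: the incidence variety, the dimension count via the first projection to the Grassmannian, and the conclusion from the second projection being non-dominant. The one place you add value is in making precise the paper's casual phrase that a polynomial ``must satisfy $d+1$ conditions'' to contain a line, by identifying this with the surjectivity of the restriction map $H^0(\mathbb P^{n+1},\mathcal O(d))\to H^0(\ell,\mathcal O_\ell(d))$, which shows every fiber of $\pi_1$ (not just the generic one) is a projective space of the claimed dimension.
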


This digression shows that if we are interested in hyperbolicity of generic projective hypersurfaces, we surely have to exclude low degree ones. On the other hand, by the Euler short exact sequence
$$
0\to\mathcal O_{\mathbb P^{n+1}}\to\bigoplus_{j=1}^{n+2}\mathcal O_{\mathbb P^{n+1}}(1)\to T_{\mathbb P^{n+1}}\to 0,
$$
combined with the classical adjunction formula 
$$
K_D\simeq (K_Y\otimes\mathcal O_Y(D))|_D
$$
for smooth divisors $D\subset Y$ in a smooth manifold $Y$, one finds straightforwardly, by taking determinants, that the canonical bundle of a smooth hypersurface $X$ of degree $d$ in projective $(n+1)$-space is given by
$$
K_X=\mathcal O_X(d-n-2).
$$
So, the higher the degree of the hypersurface $X$ is, the more positive its canonical bundle is. This is somehow consistent with the picture presented at the end of Chapter 1, where hyperbolicity was heuristically linked to the positivity properties of the canonical bundle.

More precisely, Kobayashi made the following.

\begin{conj}[\cite{Kob70}]
Le $X\subset\mathbb P^{n+1}$ be a generic projective hypersurfaces of degree $d$, $n\ge 2$. Then $X$ is Kobayashi hyperbolic if its degree is sufficiently high, say $d\ge 2n+1$.  
\end{conj}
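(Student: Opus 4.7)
The plan is to reduce the conjecture to Brody hyperbolicity via Theorem \ref{Brody}: since $X$ is compact, it suffices to show that a generic degree-$d$ hypersurface $X\subset\mathbb{P}^{n+1}$ admits no non-constant entire curve $f\colon\mathbb{C}\to X$. The strategy follows the two-step philosophy sketched in the introduction, combining jet differentials with Siu's method of meromorphic vector fields. Note that we already know from the proposition above that we must exclude low-degree hypersurfaces, so the restriction $d\geq 2n+1$ is at least consistent with the absence of lines; moreover the adjunction computation $K_X=\mathcal{O}_X(d-n-2)$ shows $K_X$ is ample exactly when $d\geq n+3$, placing us firmly in the general-type regime where jet-differential techniques have a chance of working.

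The first step would be to produce, for $m\gg k\gg 0$, a non-zero global invariant jet differential
$$
P\in H^0\bigl(X,\,E_{k,m}T_X^*\otimes\mathcal{O}_X(-1)\bigr),
$$
so that by the Ahlfors--Schwarz lemma every entire curve $f\colon\mathbb{C}\to X$ satisfies the algebraic differential equation $P(f',\dots,f^{(k)})\equiv 0$. I would attack this via Demailly's holomorphic Morse inequalities applied to suitable twisted tautological bundles on the projectivized jet tower of $(X,T_X)$, in the spirit of \cite{Div09}: one computes the Euler characteristic, bounds the higher cohomology using the weak/strong Morse inequalities, and deduces that for $d$ larger than some explicit function of $n$ a non-trivial section exists.

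The second, much harder step is to produce enough independent differential equations to force algebraic degeneracy, and then to exclude entire curves in the resulting proper subvariety. Here I would implement Siu's strategy on the universal family $\mathcal{X}\to\mathbb{P}^{N_d-1}$: construct meromorphic vector fields on the jet space of $\mathcal{X}$, tangent to the fibers, whose pole order along the hypersurface direction is controlled \emph{independently of $d$}. Applying these vector fields to a section $P$ as above (pulled back from a generic fiber) gives new jet differentials still vanishing on an ample divisor. Iterating, one aims to cut down the locus of jet-differentials-vanishing directions to something so thin that any entire curve must lie in a proper $Y\subsetneq X$; an induction on dimension (or Lang's conjecture for subvarieties of a general hypersurface) should then eliminate entire curves in $Y$.

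The principal obstacle is the \emph{effective degree bound}. Both steps are wasteful: the Morse-inequality existence theorem for $P$ already requires $d$ to grow at least polynomially in $n$, and the pole-order estimates needed to run Siu's vector-field argument compound this dramatically, yielding in \cite{D-M-R10} a bound of the shape $d\geq 2^{n^5}$---astronomically larger than the conjectural threshold $d\geq 2n+1$. Closing this gap would require either sharp vanishing theorems for the higher cohomology $H^{i}(X,E_{k,m}T_X^*)$ (replacing the crude Morse estimates), or genuinely new positivity properties of the universal family producing low-pole-order vector fields, or a completely different approach bypassing the jet-differential machinery altogether. In this survey I would therefore not expect to prove the full conjecture, but to explain the state of the art and why the optimal bound remains out of reach.
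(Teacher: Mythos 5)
You have correctly recognized that the statement is a \emph{conjecture}, not a theorem, and that the paper does not and cannot prove it with current technology: the best known effective bound (Theorem \ref{mainthm}, $d\ge 2^{n^5}$) is astronomically larger than the conjectured threshold $d\ge 2n+1$. Your sketch of the two-step strategy — (i) produce a global invariant jet differential vanishing on an ample divisor via Morse inequalities on twisted tautological bundles over the Demailly tower, and (ii) differentiate it using Merker's $\mathbb{G}_n$-invariant meromorphic vector fields on vertical jets of the universal hypersurface, with pole order controlled independently of $d$ — is exactly the route the paper follows in Chapter 6, and your account of why the resulting bound is far from optimal (cohomology control via crude Morse estimates, pole-order inflation) agrees with the discussion in the text.

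One point worth sharpening: even granting everything in your outline, the machinery as it stands does \emph{not} yield hyperbolicity in general dimension, but only algebraic degeneracy — every entire curve lands in a proper codimension-$\geq 2$ subvariety $Y\subsetneq X$ (under the very-generic hypothesis that forces $\operatorname{Pic}(X)\cong\mathbb{Z}$). Your last step, ``an induction on dimension... should then eliminate entire curves in $Y$,'' is precisely where the argument currently breaks down: $Y$ is not itself a generic hypersurface, so one cannot simply recurse, and the Green--Griffiths/Lang conjecture needed to conclude is open. For surfaces ($n=2$) the degeneracy is to a point, hence hyperbolicity; for $n=3$ one can invoke Clemens' theorem (generic hypersurfaces of degree $\geq 2n+1$ contain no rational or elliptic curves) to upgrade degeneracy to hyperbolicity (Corollary \ref{kob3}); but for $n\ge 4$ only algebraic degeneracy is known. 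So in the paper's own framing, the ``proof'' of this conjecture does not exist — the survey's Chapter 6 proves the weaker degeneracy statement, and you were right not to claim more than that.
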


This conjecture and the bound on the degree are closely related to the conjecture in the case of complements of hypersurfaces.

\begin{conj}[\cite{Kob70}]
Le $X\subset\mathbb P^{n}$ be a generic projective hypersurfaces of degree $d$. Then $\mathbb P^{n} \setminus X$ is Kobayashi hyperbolic if its degree is sufficiently high, say $d\ge 2n+1$.  
\end{conj}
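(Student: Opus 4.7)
Since the statement is Kobayashi's long-standing conjecture, the proposal is not a complete proof but a strategy parallel to the one outlined in the survey for the compact case, adapted to the logarithmic setting of a pair $(\mathbb P^n, X)$. I would pursue two complementary routes in parallel: a cyclic-covering reduction to the compact Kobayashi conjecture, and a direct approach via logarithmic jet differentials.

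The cyclic covering route proceeds as follows. Writing $X=\{F=0\}$ with $F$ homogeneous of degree $d$, consider the $d$-fold cyclic cover $\pi\colon Y\to\mathbb P^n$ branched along $X$, realized as the hypersurface $Y=\{t^d=F(z_0,\dots,z_n)\}\subset\mathbb P^{n+1}$ of degree $d$. For generic $X$, $Y$ is smooth, and $\pi$ restricts to an \'etale cover $Y\setminus\pi^{-1}(X)\to\mathbb P^n\setminus X$. Any entire curve $f\colon\mathbb C\to\mathbb P^n\setminus X$ lifts, by simple connectivity of $\mathbb C$, to an entire curve in $Y$ avoiding $\pi^{-1}(X)$. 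Hence the conjecture for generic $X$ of degree $d\ge 2n+1$ in $\mathbb P^n$ reduces (at least at the level of Brody hyperbolicity) to the companion Kobayashi conjecture for generic hypersurfaces of degree $d$ in $\mathbb P^{n+1}$, with matching numerical bound. The subtlety, and first obstacle, is that Brody's criterion does not close the argument on the non-compact target $\mathbb P^n\setminus X$: one must upgrade absence of entire curves to \emph{hyperbolic embeddedness} of $\mathbb P^n\setminus X$ in $\mathbb P^n$, i.e.\ a uniform infinitesimal lower bound for $\mathbf{k}_{\mathbb P^n\setminus X}$ against a fixed Fubini--Study metric on $\mathbb P^n$.

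For the direct route I would mimic the three-step programme sketched in Chapter~4. First I would look for a non-zero global section $\omega\in H^0(\mathbb P^n, E_{k,m}\Omega^1_{\mathbb P^n}(\log X)\otimes \mathcal O(-A))$ vanishing along an ample $A$, by computing the Euler characteristic of the Green--Griffiths logarithmic jet bundle using the logarithmic Chern classes $c_i(\Omega^1_{\mathbb P^n}(\log X))$ and applying Demailly's holomorphic Morse inequalities to a suitable tautological line bundle on the logarithmic Demailly tower of $(\mathbb P^n, X)$. Here the adjunction
\[
K_{\mathbb P^n}+X=\mathcal O_{\mathbb P^n}(d-n-1)
\]
starts to be big at $d\ge n+2$, and the extra positivity encoded by $X$ itself ought to make the Morse computation go through precisely around $d\simeq 2n+1$. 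Second, by the logarithmic Ahlfors--Schwarz lemma, every $f\colon\mathbb C\to\mathbb P^n\setminus X$ satisfies $P(j^kf)\equiv 0$ for every such $\omega$. Third, following Siu and the strategy recalled in the survey, I would construct a Zariski-dense family of meromorphic vector fields on the total jet space of the universal family $\mathcal X\subset\mathbb P^n\times\mathbb P^{N_d-1}$ of degree-$d$ hypersurfaces, logarithmic along $\mathcal X$, with pole order in the hypersurface parameter independent of $d$; differentiating $\omega$ by these fields produces enough log-jet differentials to cut down the Zariski closure of $f(\mathbb C)$ to a proper subvariety of $\mathbb P^n$.

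An inductive or Nevanlinna-theoretic argument on the algebraic degeneracy locus then has to exclude positive-dimensional entire curves, using that a generic $X$ of degree $d\ge 2n+1$ contains no lines (the proposition preceding the conjecture) and, inductively, no lower-degree ``special'' subvarieties along which entire curves can concentrate. Finally, upgrading Brody hyperbolicity of $\mathbb P^n\setminus X$ to hyperbolic embeddedness, hence to Kobayashi hyperbolicity, can be attempted via a Zalcman-type rescaling argument combined with the jet-differential equations extending across the boundary $X$ as \emph{logarithmic} differentials. I expect the hardest step by far to be the first one, the production of log jet differentials on $(\mathbb P^n, X)$ down to the sharp numerical threshold $d=2n+1$: the existence results currently available (generalising \cite{Div09} to the logarithmic case) give bounds on $d$ that are astronomically larger than $2n+1$, and closing this gap would already require a substantial new input in holomorphic Morse inequalities for logarithmic jet bundles.
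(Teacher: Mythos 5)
The statement you are asked about is labelled as a \emph{conjecture} in the survey, attributed to \cite{Kob70}, and it has no proof in the paper (nor anywhere else: it is a famous open problem). Your proposal correctly treats it as such and offers two strategies rather than a proof, which is the only honest thing to do. The paper itself discusses exactly your first route, the cyclic $d:1$ cover $Y=\{Z_0^d=P(Z_1,\dots,Z_n)\}\subset\mathbb P^{n+1}$ branched along $X$, as the heuristic link between this conjecture and the compact Kobayashi conjecture for hypersurfaces in $\mathbb P^{n+1}$; it supplements this with a second heuristic that you do not mention, namely the Dufresnoy--Green theorem that a holomorphic map $\mathbb C\to\mathbb P^n$ omitting $2n+1$ hyperplanes in general position is constant, so that $d=2n+1$ is the expected threshold already for the most degenerate (completely reducible) divisors. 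Your second route, logarithmic jet differentials on the Demailly tower of the pair $(\mathbb P^n,X)$ together with slanted vector fields on the universal family, is the natural logarithmic analogue of the programme carried out in Chapters 5 and 6 for the compact case, and you are right that the bounds currently attainable that way are astronomically far from $2n+1$.

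One small point on the cyclic-cover reduction is worth correcting: you describe upgrading Brody to Kobayashi hyperbolicity of the complement as an obstacle, but in fact the reduction is cleaner than you suggest. If the smooth cover $Y\subset\mathbb P^{n+1}$ is Kobayashi hyperbolic (equivalently Brody hyperbolic, since $Y$ is compact), then the open subset $Y\setminus\pi^{-1}(X)$ is automatically Kobayashi hyperbolic, and since $Y\setminus\pi^{-1}(X)\to\mathbb P^n\setminus X$ is an unramified finite covering, Kobayashi hyperbolicity descends to $\mathbb P^n\setminus X$ directly (a complex space is Kobayashi hyperbolic if and only if any covering is). So the compact conjecture for degree $d$ in $\mathbb P^{n+1}$ implies the full Kobayashi hyperbolicity of $\mathbb P^n\setminus X$, not merely Brody hyperbolicity; no separate argument for hyperbolic embeddedness is required for the statement as given. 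Hyperbolic embeddedness is a genuinely stronger property and does need extra work, but it is not what the conjecture asserts, and the survey itself records (Proposition of \cite{PaRou}) that hyperbolic embeddedness implies the algebraic version, not that it is needed for Kobayashi hyperbolicity of the complement.
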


One possible explanation for the bounds on the degrees comes, as far as we know, from the following facts. 
Consider in $\mathbb P^{n}$ with homogeneous coordinates $[Z_1:\cdots:Z_n]$ the divisor $D$ of degree $d$ defined by the homogeneous equation $P(Z)=0$. Then, one can construct a cyclic $d:1$ cover of $\mathbb P^n$ by taking in $\mathbb P^{n+1}$ with homogeneous coordinates $[Z_0:\cdots:Z_n]$ the divisor $X$ defined by $Z_0^d=P(Z_1,\dots,Z_n)$ together with its projection onto $\mathbb P^n$. This covering ramifies exactly along $D$ and thus all holomorphic map $f\colon\mathbb C\to\mathbb P^n\setminus D$ lift to $X$. It is then clear that the hyperbolicity of $\mathbb P^n\setminus D$ is intimately correlated with the hyperbolicity of $X$. On the other hand, if a holomorphic map $f\colon\mathbb C\to\mathbb P^{n}$ misses $2n+1$ or more hyperplanes in general position, then it is a constant map; this is the by now classical result of Dufresnoy \cite{Duf44} and Green \cite{Gre72}. Now, just remark that a configuration of $d$ hyperplanes in general position can be seen as a generic completely reducible divisor of degree $d$.

One has to notice anyway, that if one believes to the equivalence of Kobayashi and algebraic hyperbolicity in the projective algebraic setting then, as we shall see in the next section, this bound should probably be $d\ge 2n$, at least for $n\ge 6$ \cite{Pac04}. Anyway the state of the art on the subject is for the moment very far from this optimal bounds, no matter in which one we want to believe.  

The rest of this chapter will be devoted to prove several algebraic properties of generic projective hypersurfaces of high degree, such as their algebraic hyperbolicity and the property of their subvarieties of being of general type. 

\subsection{Global generation of the twisted tangent bundle of the universal family}
 
First, given a holomorphic vector bundle $E\to X$ over a compact complex manifold $X$, we say that $E$ is \emph{globally generated}, if the global sections evaluation maps
$$
H^0(X,E)\to E_x
$$
are surjective for all $x\in X$, where $E_x$ is the fiber of $E$ over the point $x$. If a vector bundle is globally generated, so are all its exterior powers, in particular its determinant, as it is easy to verify.

Now, consider the universal family of projective hypersurfaces in $\mathbb P^{n+1}$ of a given degree $d>0$. It is the subvariety $\mathcal X$ of the product $\mathbb P^{n+1}\times\mathbb P(H^0(\mathbb P^{n+1},\mathcal O_{\mathbb P^{n+1}}(d)))$  defined by the pairs $([x],X)$ such that $[x]\in X$. The starting point is the following global generation statement.

\begin{prop}[See \cite{Voi96},\cite{Siu04}]
The twisted tangent bundle 
$$
T_{\mathcal X}\otimes p^*\mathcal O_{\mathbb P^{n+1}}(1)
$$ 
is globally generated, where $p\colon\mathcal X\to\mathbb P^{n+1}$ is the first projection.
\end{prop}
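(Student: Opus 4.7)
\emph{Proof sketch.} Write $V:=H^0(\mathbb P^{n+1},\mathcal O(d))$, so that $\mathcal X\subset\mathbb P^{n+1}\times\mathbb P(V)$. Since the fiber of $p\colon\mathcal X\to\mathbb P^{n+1}$ over $[Z]$ is the hyperplane $\{[a]\in\mathbb P(V):a(Z)=0\}$, my first move is to identify $\mathcal X$ with the projective bundle $\mathbb P(K)\to\mathbb P^{n+1}$, where $K$ is the kernel of the evaluation map $V\otimes\mathcal O_{\mathbb P^{n+1}}\twoheadrightarrow\mathcal O_{\mathbb P^{n+1}}(d)$. Under this identification, the tautological $\mathcal O_{\mathcal X}(1)$ coincides with the restriction $q^*\mathcal O_{\mathbb P(V)}(1)$. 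Twisting the relative tangent sequence by $p^*\mathcal O(1)$ yields
$$0\to T_{\mathcal X/\mathbb P^{n+1}}\otimes p^*\mathcal O(1)\to T_{\mathcal X}\otimes p^*\mathcal O(1)\to p^*\bigl(T_{\mathbb P^{n+1}}\otimes\mathcal O(1)\bigr)\to 0,$$
and my plan is to check (i) that the sub- and quotient bundles are both globally generated, and (ii) that global sections of the middle term surject fiberwise onto the quotient.

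For (i), the quotient $p^*(T_{\mathbb P^{n+1}}\otimes\mathcal O(1))$ is globally generated because $T_{\mathbb P^{n+1}}\otimes\mathcal O(1)$ is itself a quotient of $\mathcal O(2)^{\oplus n+2}$ via the twisted Euler sequence. Twisting the relative Euler sequence $0\to\mathcal O_{\mathcal X}\to p^*K\otimes\mathcal O_{\mathcal X}(1)\to T_{\mathcal X/\mathbb P^{n+1}}\to 0$ by $p^*\mathcal O(1)$ expresses the subbundle as a quotient of $p^*(K\otimes\mathcal O(1))\otimes\mathcal O_{\mathcal X}(1)$. To verify global generation of $K\otimes\mathcal O(1)$ on $\mathbb P^{n+1}$ I use the identification $H^0(K\otimes\mathcal O(1))=\ker\bigl(V\otimes H^0(\mathcal O(1))\to H^0(\mathcal O(d+1))\bigr)$: given $a\in V$ vanishing at $[Z_0]=[1:0:\cdots:0]$, the decomposition $a=\sum_{j\geq 1}Z_j\widetilde a_j$ yields the explicit section $\sum_j \widetilde a_j Z_0\otimes Z_j-\sum_j\widetilde a_j Z_j\otimes Z_0$ lying in the kernel, whose value at $[Z_0]$ recovers $\pm a$. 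Since $\mathcal O_{\mathcal X}(1)=q^*\mathcal O(1)$ is tautologically globally generated, the subbundle is globally generated.

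For (ii), which is the conceptual heart, I exploit the diagonal $GL_{n+2}$-action on $\mathcal X$: the formula $g\cdot([Z],[a])=([gZ],[g\cdot a])$ with $(g\cdot a)(Z):=a(g^{-1}Z)$ preserves the incidence equation $a(Z)=0$, hence preserves $\mathcal X$. Differentiating, each $M\in\mathfrak{gl}_{n+2}$ produces a regular holomorphic vector field $X_M$ on $\mathcal X$ whose horizontal component at $([Z],[a])$ is the image of $MZ$ in $T_{\mathbb P^{n+1},[Z]}$; for any $\sigma\in H^0(\mathbb P^{n+1},\mathcal O(1))$ the product $\sigma\cdot X_M$ is then a global section of $T_{\mathcal X}\otimes p^*\mathcal O(1)$, and as $(M,\sigma)$ vary the horizontal evaluations at any fixed $([Z],[a])$ fill out the entire fiber of $p^*(T_{\mathbb P^{n+1}}\otimes\mathcal O(1))$. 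Hence any tangent vector in $T_{\mathcal X}\otimes p^*\mathcal O(1)$ at a given point is reached by subtracting an appropriate $\sigma\cdot X_M$ to reduce to the relative direction, which is handled by (i). The main obstacle is precisely this lifting step: the naive cohomological approach would demand a vanishing of the type $H^1(\mathcal X,T_{\mathcal X/\mathbb P^{n+1}}\otimes p^*\mathcal O(1))=0$, which is circumvented by the built-in $GL_{n+2}$-homogeneity of $\mathcal X$.
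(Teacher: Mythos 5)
Your proof is correct, and it takes a genuinely different, more structural route than the paper's. The paper works in explicit affine coordinates $(z_j,a_\alpha)$ on a chart of $\mathcal X$: it constructs ``slanted'' vertical fields $V_\alpha^j=\partial/\partial a_\alpha - z_j\,\partial/\partial a_{\alpha}^{j}$, then takes arbitrary degree-one polynomial fields $V_0=\sum v_j\,\partial/\partial z_j$ in the $z$-variables and corrects them by adding constants $v_\alpha\,\partial/\partial a_\alpha$ chosen so that the result is tangent to $\mathcal X$, checking by a change of affine chart that the pole order is at most one. You instead exhibit $\mathcal X$ as $\mathbb P(K)$, split via the relative tangent sequence, treat the vertical part through the relative Euler sequence and the explicit generation of $K\otimes\mathcal O(1)$, and — as the real substitute for the paper's ``slanted'' correction — obtain the horizontal sections from the $GL_{n+2}$-action on $\mathcal X$, whose infinitesimal generators land directly in $H^0(\mathcal X,T_{\mathcal X})$ and thereby bypass the lifting obstruction (the potential $H^1$ of the relative tangent bundle) that a naive cohomological argument would hit. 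Under the hood the two proofs produce the same fields: the paper's degree-one $V_0$ are exactly the affine slices of the $\mathfrak{gl}_{n+2}$-action on $\mathbb P^{n+1}$, and the correction terms $\sum v_\alpha\,\partial/\partial a_\alpha$ are precisely what is built into your $X_M$ because the induced action is already tangent to $\mathcal X$. What the paper's coordinate computation buys is that it is the template the authors later generalize to the jet spaces $J_k\mathcal V$ in Chapters~5 and~6, where one must track pole orders by hand and no such clean transitive group action is available; what your argument buys is invariance, a cleaner logical structure, and a conceptual explanation for why the extension problem is unobstructed.
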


\begin{proof}
We shall exhibit on an affine open set of $\mathcal X$ a set of generating holomorphic vector fields and then show that when extended to the whole space, the pole order of such vector fields in the $\mathbb P^{n+1}$-variables is one. 

Consider homogeneous coordinates $(Z_j)_{j=0,\dots,n+1}$ and $(A_{\alpha})_{|\alpha|=d}$ respectively on $\mathbb P^{n+1}$ and $\mathbb P^{N_d-1}$, where $\alpha=(\alpha_0,\dots,\alpha_{n+1})\in\mathbb N^{n+2}$ is a multi-index and $|\alpha|=\sum\alpha_j$. The equation of the universal hypersurface is then given by
$$
\sum_{|\alpha|=d}A_\alpha\,Z^\alpha=0,\quad Z^\alpha=Z_0^{\alpha_0}\cdots Z_{n+1}^{\alpha_{n+1}}.
$$
Next, we fix the affine open set $U=\{Z_0\ne 0\}\times\{A_{d0\cdots 0}\ne 0\}\simeq\mathbb C^{n+1}\times\mathbb C^{N_d-1}$ in $\mathbb P^{n+1}\times\mathbb P^{N_d-1}$ with the corresponding inhomogeneous coordinates $(z_j)_{j=1,\dots,n+1}$ and $(a_{\alpha})_{|\alpha|=d,\alpha_0<d}$. On this affine open set we have
$$
\mathcal X\cap U=\left\{\sum_{|\alpha|=d}a_{\alpha}\,z_1^{\alpha_1}\cdots z_{n+1}^{\alpha_{n+1}}=0\right\},\quad a_{d0\cdots 0}=1.
$$
Its tangent space in $\mathbb C^{n+1}\times\mathbb C^{N_d-1}\times\mathbb C^{n+1}\times\mathbb C^{N_d-1}$ with affine coordinates $(z_j,a_{\alpha},z_j',a_\alpha')$ is then given by the two equations
$$
\begin{cases}
\sum_{|\alpha|=d}a_{\alpha}\,z_1^{\alpha_1}\cdots z_{n+1}^{\alpha_{n+1}}=0,\quad a_{d0\cdots 0}=1 \\
\sum_{|\alpha|=d,\alpha_0<d}\sum_{j=1}^{n+1}\alpha_j\, a_\alpha\,z_1^{\alpha_1}\cdots z_j^{\alpha_j-1}\cdots z_{n+1}^{\alpha_{n+1}}\,z_j' \\ 
\quad +\sum_{|\alpha|=d,\alpha_0<d}z_1^{\alpha_1}\cdots z_{n+1}^{\alpha_{n+1}}\,a_{\alpha}'=0,
\end{cases}
$$
the second of which is obtained by formal derivation.
For any multi-index $\alpha$ with $\alpha_j\ge 1$, set
$$
V_\alpha^j=\frac{\partial}{\partial a_{\alpha}}-z_j\,\frac{\partial}{\partial a_\alpha^j},
$$
where $a_\alpha^j$ is obtained by the multi-index $a_{\alpha}$ lowering the $j$-th entry  by one.
It is immediate to verify that these vector fields are tangent to $\mathcal X_0$ and, by an affine change of coordinates, that once extended to the whole $\mathcal X$ it becomes rational with pole order equal to one in the $z$-variables.

Now consider a vector field on $\mathbb C^{n+1}$ of the form
$$
V_0=\sum_{j=1}^{n+1}v_j\,\frac{\partial}{\partial z_j},
$$
where $v_j=\sum_{k=1}^{n+1}v_{j,k}\,z_k+v_{j,0}$ is a polynomial of degree at most one in the $z$-variables. We can then modify it by adding some \lq\lq slanted\rq\rq{} direction in order to obtain a vector field tangent to $\mathcal X_0$ as follows. Let
$$
V=\sum_{|\alpha|=d,\alpha_{0}<d}v_\alpha\,\frac{\partial}{\partial a_{\alpha}}+V_0,
$$
where the $v_{\alpha}$'s have to be determined. The condition to be satisfied in order to be tangent to $\mathcal X_0$ clearly is
$$
\sum_{\alpha} v_\alpha\, z^\alpha+\sum_{\alpha,j}a_\alpha\,v_j\,\frac{\partial z^\alpha}{\partial z_j}\equiv 0
$$
and thus it suffices to select the $v_\alpha$ to be constants such that the coefficient in each monomial $z^\alpha$ is zero. Here, an affine change of variables shows that once the extension of $V$ to the whole $\mathcal X$ is taken, the pole order is at most one in the $z$-variables. 

It is then straightforward to verify that these packages of vector field generate the tangent bundle, and the poles are compensated by twisting by $\mathcal O_{\mathbb P^{n+1}}(1)$, since they appear at order at most one and only in the variables living in $\mathbb P^{n+1}$.
\end{proof}

\subsection{Consequences of the twisted global generation}

Two remarkable consequences of the twisted global generation of the tangent space of the universal family are the following. First, the very generic projective hypersurface of high degree (is of general type and) admits only subvarieties of general type, that is very generic projective hypersurfaces of high degree satisfy Lang's conjecture stated above, which is conjecturally equivalent to Kobayashi hyperbolicity. Second, very generic projective hypersurfaces are algebraically hyperbolic, which would be implied by their hyperbolicity (and should be in principle equivalent) as we have seen: this can be regarded as another evidence towards Kobayashi's conjecture.

\begin{thm}
Let $X\subset\mathbb P^{n+1}$ be a (very) generic projective hypersurface of degree $d\ge 2n+2$. If $Y\subset X$ is any subvariety, let $\nu\colon\widetilde Y\to Y$ be a desingularization. Then
$$
H^0(\widetilde Y,K_{\widetilde Y}\otimes\nu^*\mathcal O_{\mathbb P^{n+1}}(-1))\ne 0.
$$
\end{thm}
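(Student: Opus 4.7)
The approach, following Voisin's original strategy, is to study $Y$ not in isolation but as a fiber of a family inside the universal hypersurface $\mathcal{X}$, to compute the canonical class of the (resolved) total space by adjunction, and to read off the desired section by restriction to a generic fiber. The twisted global generation of $T_\mathcal{X} \otimes p^*\mathcal{O}_{\mathbb{P}^{n+1}}(1)$ established in the previous proposition enters precisely to guarantee that the relevant family is large enough to dominate $\mathcal{X}$.

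More concretely, let $\mathcal{U} \subset \mathbb{P}^{N_d-1}$ be the open locus of smooth hypersurfaces and consider the relative Hilbert scheme of closed subvarieties of the fibers of $\mathcal{X}\to\mathcal{U}$, which decomposes into countably many irreducible components $H_j$, each with universal subvariety $\mathcal{Y}_j\subset\mathcal{X}$ and evaluation $\varepsilon_j\colon\mathcal{Y}_j\to\mathcal{X}$. By a Baire-type argument, the very generic assumption ensures that any $Y\subset X_s$ corresponds to a class $[Y]\in H_j$ for some $j$ with $\varepsilon_j$ dominant; the twisted global generation is used here to upgrade dominance over $\mathcal{U}$ (which Baire alone would yield) to dominance onto $\mathcal{X}$, by flowing along the globally generating vector fields any family whose evaluation failed to fill $\mathcal{X}$. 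After shrinking $H_j$ to a suitable subvariety, we may further assume $\varepsilon_j$ is generically finite, so $\dim\mathcal{Y}_j = \dim\mathcal{X}= N_d - 1 + n$.

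Take a resolution $\tau\colon\widetilde{\mathcal{Y}}\to\mathcal{Y}_j$, put $f := \varepsilon_j\circ\tau$, and write $\pi\colon\widetilde{\mathcal{Y}}\to H_j$ for the structural projection. Generic finiteness of $f$ gives $f^*\Omega_{\mathcal{X}}\hookrightarrow\Omega_{\widetilde{\mathcal{Y}}}$, whence on top exterior powers $f^*K_{\mathcal{X}}\hookrightarrow K_{\widetilde{\mathcal{Y}}}$. Since $\mathcal{X}\subset\mathbb{P}^{n+1}\times\mathbb{P}^{N_d-1}$ is a smooth bidegree-$(d,1)$ divisor, adjunction yields
$$
K_{\mathcal{X}} \cong p^*\mathcal{O}_{\mathbb{P}^{n+1}}(d-n-2)\otimes q^*\mathcal{O}_{\mathbb{P}^{N_d-1}}(1-N_d).
$$
For a general $h\in H_j$, the fiber $\widetilde{Y}:=\pi^{-1}(h)$ is the desingularization $\nu\colon\widetilde{Y}\to Y_h\subset X_{\phi(h)}$. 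Smoothness of $\pi$ at $\widetilde{Y}$ makes the normal bundle trivial, so $K_{\widetilde{\mathcal{Y}}}|_{\widetilde{Y}}\cong K_{\widetilde{Y}}$; because $q\circ f$ is constant on $\widetilde{Y}$, the factor $q^*\mathcal{O}(1-N_d)$ restricts trivially. Hence, since $\widetilde{Y}$ is not contained in the ramification of $f$ (being a moving fiber), the inclusion above restricts to a nonzero inclusion of line bundles
$$
\nu^*\mathcal{O}_{\mathbb{P}^{n+1}}(d-n-2)\hookrightarrow K_{\widetilde{Y}}.
$$
Tensoring by $\nu^*\mathcal{O}_{\mathbb{P}^{n+1}}(-1)$ and using that the hypothesis $d\ge 2n+2$ gives in particular $d-n-3\ge 0$, one pulls back any homogeneous polynomial of degree $d-n-3$ not identically vanishing on $Y_h$ (such polynomials exist since $Y_h\subsetneq\mathbb{P}^{n+1}$) to produce the required nonzero section of $K_{\widetilde{Y}}\otimes\nu^*\mathcal{O}(-1)$.

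The main obstacle is the very generic reduction: the naive Baire argument only guarantees dominance of $H_j\to\mathcal{U}$, not of $\varepsilon_j\colon\mathcal{Y}_j\to\mathcal{X}$, and non-dominance on $\mathcal{X}$ is compatible with containing a subvariety of a very generic fiber (think of rigid subvarieties sitting in every $X_s$). Ruling out such pathological components, and doing so in a way compatible with the subsequent shrinking of $H_j$ that achieves generic finiteness of $\varepsilon_j$, is exactly where the twisted global generation of $T_\mathcal{X}(1)$ is essential, and constitutes the technical heart of the proof.
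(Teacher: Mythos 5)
Your argument hinges on arranging the evaluation map $\varepsilon_j\colon\mathcal Y_j\to\mathcal X$ to be generically finite so that you can pass to top exterior powers and obtain $f^*K_{\mathcal X}\hookrightarrow K_{\widetilde{\mathcal Y}}$. This is a genuine gap, not a removable technicality. If $\ell=\dim Y<n$ and $Y$ moves in a family of dimension less than $n-\ell$ inside each $X_s$ (the extreme case being a rigid subvariety), then $\dim\mathcal Y_j=\dim H_j+\ell<N_d-1+n=\dim\mathcal X$, and no shrinking of $H_j$ can make $\varepsilon_j$ dominant --- you can only lower dimensions by shrinking. Your proposed fix, ``flowing along the globally generating vector fields,'' does not produce a new relative Hilbert scheme component: the vector fields of $T_{\mathcal X}\otimes p^*\mathcal O(1)$ that generate the fiber directions are not tangent to the individual fibers $X_s$ (they carry a correction in the moduli directions), so flowing $\mathcal Y_j$ along them does not yield a larger family of subvarieties of the $X_s$'s. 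The difficulty you acknowledge at the end is real, but the twisted global generation does not ``rule out'' slow-moving components; it is used differently.

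The paper avoids the dichotomy entirely by working with the \emph{non-top} exterior power. Instead of $f^*\Omega_{\mathcal X}\hookrightarrow\Omega_{\widetilde{\mathcal Y}}$ (which needs generic finiteness), it uses the generically surjective restriction $T^*_{\mathcal X}|_{\widetilde{\mathcal Y}}\to T^*_{\widetilde{\mathcal Y}}$ and takes its $(\ell+N_d-1)$-th wedge power, which is a generically nonzero morphism onto the top power $K_{\widetilde{\mathcal Y}}$ even though the source is a proper exterior power of $T^*_{\mathcal X}$ of rank $n+N_d-1$. By the linear-algebra identity $\bigwedge^{\ell+N_d-1}T^*_{\mathcal X}\cong K_{\mathcal X}\otimes\bigwedge^{n-\ell}T_{\mathcal X}$ and adjunction $K_{\mathcal X}|_{X_s}\cong K_{X_s}$, one is reduced to producing sections of
$$
K_{X_s}\otimes\bigwedge^{n-\ell}T_{\mathcal X}\bigr|_{X_s}\otimes\mathcal O_{X_s}(-1)
=\bigwedge^{n-\ell}\bigl(T_{\mathcal X}\otimes\mathcal O_{\mathbb P^{n+1}}(1)\bigr)\bigr|_{X_s}\otimes\mathcal O_{X_s}(d-2n+\ell-3),
$$
and \emph{this} is exactly where the twisted global generation of $T_{\mathcal X}\otimes\mathcal O_{\mathbb P^{n+1}}(1)$ enters: it makes the $(n-\ell)$-th exterior power globally generated, and the remaining twist $\mathcal O_{X_s}(d-2n+\ell-3)$ is effective once $d\ge 2n+3-\ell$, so $d\ge 2n+2$ suffices for all $\ell\ge1$. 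Your adjunction computation of $K_{\mathcal X}$ and your handling of the $q^*\mathcal O(1-N_d)$ factor are both fine, and the Baire/countable-union reduction at the end is the same one the paper uses, but without the wedge-power mechanism the argument only covers the (uninteresting) case $Y=X$.
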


\begin{proof}
Let $\mathcal X\subset\mathbb P^{n+1}\times\mathbb P^{N_d-1}$ the universal hypersurface of degree $d$ and $\mathcal Y\subset\mathcal X$ be a subvariety such that the second projection $\mathcal Y\to\mathbb P^{N_d-1}$ is dominant of relative dimension $\ell$. For simplicity, we shall skip here a technical point which consists to allow an tale base change $U\to\mathbb P^{N_d-1}$ for the family. 

Let $\nu\colon\widetilde{\mathcal Y}\to\mathcal Y$ be a desingularization and consider an open dense subset $U\subset\mathbb P^{N_d-1}$ over which both $\widetilde{\mathcal Y}$ and $\mathcal X$ are smooth. What we have to show is that 
$$
H^0(\widetilde Y_s,K_{\widetilde Y_s}\otimes\nu^*\mathcal O_{\mathbb P^{n+1}}(-1))\ne 0,
$$
for $Y_s$ the fiber over a generic point $s\in U$. To this aim, observe that, since the normal bundle of a fiber in a family is trivial, 
$$
K_{\widetilde Y_s}\simeq K_{\widetilde{\mathcal Y}}|_{\widetilde Y_s}=\bigwedge^{k+N_d-1}T^*_{\widetilde{\mathcal Y}}\biggr |_{\widetilde Y_s},
$$
by adjunction and that 
$$
\bigwedge^{k+N_d-1} T^*_{\mathcal X}\biggr |_{X_s}\simeq K_{X_s}\otimes\bigwedge^{n-k}T_{\mathcal X}\biggr |_{X_s}
$$
by linear algebra and adjunction again.

Therefore, we have to show that $\bigwedge^{k+N_d-1}T^*_{\widetilde{\mathcal Y}}\otimes\nu^*\mathcal O_{\mathbb P^{n+1}}(-1)\bigr |_{\widetilde Y_s}$ is effective. Now, we have a map
$$
\bigwedge^{k+N_d-1} T^*_{\mathcal X}\otimes\mathcal O_{\mathbb P^{n+1}}(-1)\biggr |_{X_s}\to
\bigwedge^{k+N_d-1}T^*_{\widetilde{\mathcal Y}}\otimes\nu^*\mathcal O_{\mathbb P^{n+1}}(-1)\biggr |_{\widetilde Y_s}
$$
induced by the generically surjective restriction $T^*_{\mathcal X}\to T^*_{\widetilde{\mathcal Y}}$, which is non zero for a generic choice of $s\in U$.

It is then sufficient to prove that $K_{X_s}\otimes\bigwedge^{n-k}T_{\mathcal X}\bigr |_{X_s}\otimes\mathcal O_{X_s}(-1)$ is globally generated. Now, 
$$
K_{X_s}=\mathcal O_{X_s}(d-n-2)=\mathcal O_{X_s}((n-k)+(d-2n+k-2))
$$
and thus

\begin{multline*}
K_{X_s}\otimes\bigwedge^{n-k}T_{\mathcal X}\biggr |_{X_s}\otimes\mathcal O_{X_s}(-1)= \\
\bigwedge^{n-k}T_{\mathcal X}\otimes\mathcal O_{\mathbb P^{n+1}}(1)\biggr |_{X_s}\otimes\mathcal O_{X_s}(d-2n+k-3).
\end{multline*}

By the global generation of $T_{\mathcal X}\otimes\mathcal O_{\mathbb P^{n+1}}(1)$, the right hand term is globally generated as soon as $d\ge 2n+3-k$ so that $d\ge 2n+2$ will do the job.

We have thus proved that the theorem holds for the general fiber of the family $\mathcal Y$. To conclude, it suffices to let the family $\mathcal Y$ vary, that is to let vary the Hilbert polynomial. In this way we obtain the same statement for all subvarieties of $X_s$ outside a countable union of closed algebraic subvarieties of the parameter space $U$, that is for very generic $X$.
\end{proof}

\begin{cor}
Let $X\subset\mathbb P^{n+1}$ be a (very) generic projective hypersurface of degree $d\ge 2n+2$. Then any subvariety $Y\subset X$ (and of course $X$ itself) is of general type.
\end{cor}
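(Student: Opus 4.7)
The plan is to derive bigness of $K_{\widetilde Y}$ directly from the preceding theorem; since $\widetilde Y$ is smooth, bigness of $K_{\widetilde Y}$ is by definition the condition that $\widetilde Y$ (hence $Y$) is of general type, and this notion does not depend on the chosen desingularization. The case $Y=X$ is just the special case $\nu=\operatorname{id}_X$.

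Concretely, pick any positive-dimensional subvariety $Y\subset X$ and a desingularization $\nu\colon\widetilde Y\to Y$. By the theorem, there exists a non-zero section
$$
s\in H^0(\widetilde Y, K_{\widetilde Y}\otimes \nu^*\mathcal O_{\mathbb P^{n+1}}(-1)).
$$
Letting $D=\operatorname{div}(s)\ge 0$ be its zero divisor, we obtain the linear equivalence
$$
K_{\widetilde Y} \sim \nu^*\mathcal O_{\mathbb P^{n+1}}(1)\bigr|_{\widetilde Y}\; +\; D.
$$
Now $\mathcal O_{\mathbb P^{n+1}}(1)|_Y$ is ample on $Y$, since the restriction of an ample bundle to a closed positive-dimensional subvariety of projective space is ample; pulling back by the proper birational morphism $\nu$ produces a line bundle on $\widetilde Y$ which is nef (pullback of nef is nef) and big (pullback along a birational morphism of projective varieties preserves volume, by the projection formula).

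The conclusion then follows from the standard fact that the sum of a big line bundle and an effective divisor is big. Explicitly, multiplication by the section of $\mathcal O_{\widetilde Y}(mD)$ defining $mD$ gives injections
$$
H^0\bigl(\widetilde Y,\, m\,\nu^*\mathcal O_{\mathbb P^{n+1}}(1)\bigr)\hookrightarrow H^0\bigl(\widetilde Y,\, mK_{\widetilde Y}\bigr),
$$
and the left-hand side grows like $m^{\dim Y}$ by bigness of $\nu^*\mathcal O_{\mathbb P^{n+1}}(1)|_{\widetilde Y}$ (combine Riemann--Roch with Kodaira vanishing on a big and nef line bundle, or equivalently invoke the definition of the volume). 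Hence $h^0(\widetilde Y,mK_{\widetilde Y})$ grows like $m^{\dim Y}$, so $K_{\widetilde Y}$ is big and $Y$ is of general type.

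No real obstacle is expected: all the substance is already contained in the preceding theorem, and what remains is a formal bigness argument. The only conceptual points to keep in mind are that the restriction of $\mathcal O_{\mathbb P^{n+1}}(1)$ to any positive-dimensional subvariety of $\mathbb P^{n+1}$ is ample, that bigness is preserved under the pullback by a birational morphism, and that bigness of the canonical bundle of a smooth model is precisely the definition of general type used in the paper.
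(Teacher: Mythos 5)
Your proof is correct and follows exactly the paper's argument: from the non-vanishing of $H^0(\widetilde Y,K_{\widetilde Y}\otimes\nu^*\mathcal O_{\mathbb P^{n+1}}(-1))$ one writes $K_{\widetilde Y}$ as an effective divisor plus the pullback of the ample line bundle $\mathcal O_{\mathbb P^{n+1}}(1)$, which is big, and concludes by the standard fact that big plus effective is big. You merely spell out in more detail (volume growth, injection via multiplication by the section) what the paper compresses into one sentence.
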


\begin{proof}
This is an immediate consequence of the theorem above: such a subvariety has in fact a desingularization whose canonical bundle can be written as an effective divisor twisted by a big one (the pull-back by a modification of the ample divisor $\mathcal O_{\mathbb P^{n+1}}(1)$) and hence it is big. 
\end{proof}

This corollary can be sharpened as soon as $n\ge 6$, see \cite{Pac04}.

\begin{cor}
A very generic projective hypersurfaces in $\mathbb P^{n+1}$ of degree greater than or equal to $2n+2$ is algebraically hyperbolic.
\end{cor}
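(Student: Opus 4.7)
The plan is to deduce algebraic hyperbolicity almost immediately from the preceding theorem, by applying it in the special case where the subvariety $Y$ is a curve. All the real work has already been done in proving the effectivity statement $H^0(\widetilde Y,K_{\widetilde Y}\otimes\nu^*\mathcal O_{\mathbb P^{n+1}}(-1))\ne 0$; here one just needs to read off what this says when $\dim Y=1$.

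First, I would fix an arbitrary algebraic curve $C\subset X$ and let $\nu\colon\widehat C\to C\subset X$ be its normalization. Since $\widehat C$ is already smooth, it serves as its own desingularization, and the previous theorem (applied with $Y=C$) produces a non-zero global section of the line bundle $K_{\widehat C}\otimes\nu^*\mathcal O_{\mathbb P^{n+1}}(-1)$ on $\widehat C$. Here one must check that curves fall within the scope of the theorem: any irreducible curve $C\subset X$ appears as the general fiber of some component of a relative Hilbert scheme of curves in the universal family $\mathcal X\to U$, so we may organize all such curves into countably many families $\mathcal Y\to U$ of relative dimension $1$, and the countable union of bad loci remains a countable union of proper subvarieties in $U$, compatible with the \lq\lq very generic\rq\rq{} hypothesis.

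Next, I would use the elementary fact that a line bundle on a smooth connected projective curve admitting a non-zero global section has non-negative degree. Applying this to $K_{\widehat C}\otimes\nu^*\mathcal O_{\mathbb P^{n+1}}(-1)$ gives
$$
\deg K_{\widehat C}-\deg\nu^*\mathcal O_{\mathbb P^{n+1}}(1)\ge 0,
$$
which rewrites as
$$
2\,g(\widehat C)-2\ge\deg_\omega C,
$$
where $\omega=i\,\Theta(\mathcal O_{\mathbb P^{n+1}}(1))$ is the Fubini--Study form restricted to $X$. This is exactly the definition of algebraic hyperbolicity with the uniform constant $\varepsilon_0=1$ (independent of $C$), so the corollary follows.

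The only potentially delicate point is the \lq\lq very generic\rq\rq{} bookkeeping: one must ensure that a single very generic $X$ works simultaneously for all its curves $C$, not just for one family at a time. This is handled exactly as in the proof of the previous theorem, by stratifying the space of curves by Hilbert polynomial and taking the intersection of the countably many Zariski-open conditions on $U\subset\mathbb P^{N_d-1}$; since a countable intersection of dense Zariski-opens is still very generic, the constant $\varepsilon_0=1$ works uniformly over all $C\subset X$. Apart from this, the argument is a direct one-line consequence of the effectivity of $K_{\widehat C}\otimes\nu^*\mathcal O_{\mathbb P^{n+1}}(-1)$, and there is no genuine obstacle.
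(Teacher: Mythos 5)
Your proof is correct and follows essentially the same path as the paper: apply the preceding effectivity theorem to the normalization $\nu\colon\widehat C\to C$, observe that a line bundle on a smooth projective curve with a nonzero section has nonnegative degree, and read off $2g(\widehat C)-2\ge\deg_\omega C$ with $\varepsilon_0=1$. The extra paragraph on the ``very generic'' bookkeeping is sound but redundant here, since that stratification by Hilbert polynomial is already built into the statement and proof of the preceding theorem, which is what the corollary invokes.
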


\begin{proof}
Let $\omega=i\,\Theta(\mathcal O_X(1))$ be the reference hermitian metric on $X$ and $C\subset X$ a curve. Consider the finite-to-one normalization morphism $\nu\colon\widetilde C\to C$, which is in fact a desingularization, if necessary. Then, the preceding theorem states that $K_{\widetilde C}\otimes\nu^*\mathcal O_{\mathbb P^{n+1}}(-1)$ is effective and so of non negative degree on $\widetilde C$. By the Hurwitz formula $c_1(K_{\widetilde C})=2g(\widetilde C)-2$ and thus
$$
-\chi(\widetilde C)=2g(\widetilde C)-2\ge\nu^*\mathcal O_{\mathbb P^{n+1}}(1)\cdot \widetilde C=\int_C\omega.
$$
\end{proof}

Another consequence of the global generation statement is the following result on the non-deformability of entire curves in projective hypersurfaces of high degree.

\begin{thm}[\cite{DPP}]
Consider $\mathcal X\subset\mathbb P^{n+1}\times\mathbb P^{N_d-1}$ the universal hypersurface of degree $d$, $U \subset \mathbb P^{N_d-1}$ an open set and $\Phi\colon\mathbb C \times U \to \mathcal X$ a holomorphic map such that $\Phi( \mathbb C \times \{t\}) \subset X_t$ for all $t \in U$. If $d \geq 2n+2$, the rank of $\Phi$ cannot be maximal anywhere.
\end{thm}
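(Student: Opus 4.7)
The plan is to argue by contradiction, combining the immersion property forced by maximal rank with the algebraic hyperbolicity corollary just established. Suppose $\Phi$ has maximal rank at some $(z_0, t_0) \in \mathbb C \times U$. Since $\dim (\mathbb C \times U) = 1 + \dim U = N_d$ while $\dim \mathcal X = n + N_d - 1 \ge N_d$ (as $n \ge 1$), maximality forces $d\Phi_{(z_0, t_0)}$ to be injective, so $\Phi$ is a local immersion near $(z_0, t_0)$. Because $p_2 \circ \Phi$ is simply the projection $(z,t) \mapsto t$, this injectivity is equivalent to the non-vanishing of the vertical derivative $\partial_z \Phi(z_0, t_0)$; hence the slice $\Phi_{t_0} = \Phi(\cdot, t_0)\colon \mathbb C \to X_{t_0}$ is a non-constant entire curve, and by openness $\Phi_t$ remains non-constant for every $t$ in some neighborhood $V$ of $t_0$ in $U$.

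Next I would invoke the algebraic hyperbolicity corollary above: since $d \ge 2n + 2$, for every very generic $t \in U$ the hypersurface $X_t$ is algebraically hyperbolic, so every irreducible algebraic curve $C \subset X_t$ has normalization of genus at least two. The set of very generic $t$ is dense in $U$, hence can be found arbitrarily close to $t_0$.

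To connect these two ingredients I would take the Zariski closure $\mathcal Y \subset \mathcal X$ of the analytic image $\Phi(\mathbb C \times V)$. Then $\mathcal Y$ is an algebraic subvariety of $\mathcal X$ which dominates a neighborhood of $t_0$ via $p_2$, and for each $t \in V$ the entire curve $\Phi_t(\mathbb C)$ lies in the algebraic fiber $Y_t := \mathcal Y \cap X_t$. The main obstacle is to prove that for a very generic $t$ one can extract from $Y_t$ a one-dimensional irreducible algebraic subvariety containing $\Phi_t(\mathbb C)$: \emph{a priori} the Zariski closure of an entire curve's image can have algebraic dimension strictly larger than its analytic dimension, and this step requires a careful dimension count exploiting that $\Phi$ is an immersion at $(z_0, t_0)$ and that the fibers of $p_2 \circ \Phi$ are one-dimensional.

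Once such an algebraic curve $C_t \subset X_t$ with $\Phi_t(\mathbb C) \subset C_t$ is available for a very generic $t$ close to $t_0$, the conclusion is immediate. By algebraic hyperbolicity, the normalization $\widetilde{C_t}$ has genus at least two, so it is uniformized by the unit disc $\Delta$. Since $\mathbb C$ is simply connected, the entire curve $\Phi_t$ lifts through the normalization and then to a holomorphic map $\mathbb C \to \Delta$, which must be constant by Liouville's theorem. This contradicts the non-constancy of $\Phi_t$ and completes the argument.
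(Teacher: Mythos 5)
Your opening step is correct: maximal rank at $(z_0,t_0)$ makes $d\Phi_{(z_0,t_0)}$ injective (since $N_d = \dim(\mathbb C\times U) \le \dim\mathcal X$), and as $\text{pr}_2\circ\Phi$ is just $(z,t)\mapsto t$, injectivity forces $\partial_z\Phi(z_0,t_0)\ne 0$, so the slices $\Phi_t$ are non-constant entire curves tangent to $\mathcal V$ for $t$ near $t_0$. The argument breaks exactly at the obstacle you flagged yourself, and it cannot be repaired along these lines. There is no reason for $\Phi_t(\mathbb C)$ to lie in an algebraic \emph{curve} $C_t\subset X_t$: the fiber $\Phi_t(\mathbb C)$ is a one-dimensional \emph{analytic} set, but its Zariski closure $Y_t$ can have any dimension up to $n$ (indeed, the Zariski closure $\mathcal Y$ of the image $\Phi(\mathbb C\times V)$ could be all of $\mathcal X$, forcing $Y_t=X_t$). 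The immersion property and the fact that $\text{pr}_2\circ\Phi$ has one-dimensional fibers control the analytic dimension of the image, not the dimension of its Zariski closure, so the ``careful dimension count'' you appeal to does not exist. Establishing algebraic degeneracy of entire curves in generic high-degree hypersurfaces is precisely (a strong form of) the Green--Griffiths/Kobayashi circle of conjectures, which this theorem is meant to be \emph{evidence for}, not a corollary of; and algebraic hyperbolicity constrains only algebraic curves, hence gives no grip on transcendental entire curves until algebraic degeneracy has been proved by other means.

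The paper's proof is of a completely different, analytic nature, and avoids algebraic degeneracy altogether. Assume $\Phi$ has maximal rank and $U=B(\delta_0)^{N_d-1}$. One rescales the source, setting $\Phi_k(z,\xi)=\Phi(zk^{N_d-1},\xi/k)$ on $\mathbb B(\delta_0 k)^{N_d}$, and forms the Jacobian $N_d$-vectors $J_{\Phi_k}=\partial_z\Phi\wedge\partial_{\xi_1}\Phi\wedge\cdots\wedge\partial_{\xi_{N_d-1}}\Phi$, which are nowhere zero by the maximal-rank hypothesis. Using the global generation of $T_{\mathcal X}\otimes p^*\mathcal O_{\mathbb P^{n+1}}(1)$ established earlier in the chapter, one picks $n-1$ vector fields $V_1,\dots,V_{n-1}$ so that $\sigma_k=J_{\Phi_k}\wedge\Phi_k^*(V_1\wedge\cdots\wedge V_{n-1})$ is a non-vanishing holomorphic section of $\Phi_k^*(K_{\mathcal X}^{-1}\otimes\mathcal O_{\mathbb P^{n+1}}(n-1))$ with $\sigma_k(0)\ne 0$. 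Since $d\ge 2n+2$ makes $K_{\mathcal X}\otimes\mathcal O_{\mathbb P^{n+1}}(1-n)$ ample over $\text{pr}_2^{-1}(U)$, one endows it with a positively curved metric $h$ and sets $f_k=\|\sigma_k\|^{2/N_d}_{\Phi_k^*h^{-1}}$. The ampleness yields $\Delta\log f_k\ge Cf_k$, and the Ahlfors--Schwarz lemma on the rescaled polydiscs gives $f_k(0)\le Ck^{-2}\to 0$, contradicting $f_k(0)=b>0$. This curvature argument is the substance of the theorem and is not a repackaging of algebraic hyperbolicity.
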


In other words, the Kobayashi conjecture may possibly fail only if there is an entire curve on a general hypersurface $X$ which is not preserved by a deformation of $X$.

Now, let us sketch the proof of the previous result.

\begin{proof}
Suppose that $\Phi\colon\mathbb C \times U \to \mathcal X$ has maximal rank and $U$
is the polydisc $ B(\delta_0)^{N_d-1}$. We consider the sequence of maps
$$
\Phi_k\colon \mathbb B(\delta_0 k)^{N_d} \to  \mathcal X
$$
given by $\Phi_k(z,\xi_1,\dots\xi_{N_d-1})=\Phi(zk^{N_d-1},\frac{1}{k}\xi_1,\dots,\frac{1}{k}\xi_{N_d-1}).$ The sections
$$
J_{\Phi_k}(z,\xi)=\frac{\partial \Phi}{\partial z} \wedge \frac{\partial \Phi}{\partial \xi_1}\wedge \cdots\wedge  \frac{\partial \Phi}{\partial \xi_{N_d-1}} (z,\xi) \in \Lambda^{N_d}T_{\mathcal{X},\Phi(z,\xi)}
$$
are not identically zero and we can assume $J_{\Phi_k}(0)$ non-zero. 
Thanks to the global generation statement of $T_\mathcal{X}\otimes \mathcal{O}_{\mathbb P^{n+1}}(1)$, we can choose $n-1$ vector fields
$$V_1, \dots, V_{n-1} \in T_\mathcal{X}\otimes \mathcal{O}_{\mathbb P^{n+1}}(1)$$
such that
$$
J_{\Phi_k}(0) \wedge \Phi_k^*(V_1 \wedge \dots \wedge V_{n-1}) \neq 0
$$
in $K_\mathcal{X}^{-1}\otimes \mathcal{O}_{\mathbb P^{n+1}}(n-1)_{\Phi_k(0)}.$
We consider the sections
$$\sigma_k=J_{\Phi_k}\wedge \Phi_k^*(V_1\wedge \dots \wedge V_{n-1}),$$
of $\Phi_k^*(K_\mathcal{X}^{-1}\otimes \mathcal{O}_{\mathbb P^{n+1}}(n-1))$ over the polydisk.
If $d \geq 2n+2$ the restriction of $K_\mathcal{X} \otimes \mathcal{O}_{\mathbb P^{n+1}}(1-n)$ over $U$ is ample and we can endow this bundle with a metric $h$ of positive curvature.
We consider the sequence of functions $f_k: \mathbb B(\delta_0 k)^{N_d} \to \mathbb R^+$ defined by
$$f_k(w)=||\sigma_k(w)||^{2/N_d}_{\Phi_k^*h^{-1}}.$$
The ampleness implies that there exists a positive $C$ such that
$$\Delta \log f_k \geq Cf_k.$$ This gives
$$f_k(0) \leq Ck^{-2},$$
and therefore $f_k(0) \to 0$ which contradicts the fact that, by construction, there exists a positive constant $b$ such that for all $k$, $f_k(0)=b$.
\end{proof}

Let us briefly describe the generalization of the above results to the logarithmic case, that is the case of complements of hypersurfaces. If $X$ is a $n$-dimensional complex manifold and $D$ a normal crossing divisor, {\sl i.e.} in local coordinates $D=\{z_1\dots z_l = 0\}$, $l\le n$, we call the pair $(X,D)$ a {\it log-manifold}.

In the case of complements we have the following notion stronger than hyperbolicity.

\begin{defn}
Let $(X,D)$ be a log-manifold and $\omega$ a hermitian metric on $X$. The complement $X \setminus D$ is said to be hyperbolically embedded in $X$, if there exists $\varepsilon >0$ such that for every $x \in X \setminus D$ and $\xi \in T_{X,x}$, we have $$k_X(\xi) \geq \varepsilon ||\xi||_\omega.$$
\end{defn}

To generalize to this setting the notion of algebraic hyperbolicity, we need to introduce the following.

\begin{defn}
Let $(X,D)$ be a log-manifold, $C \subset X$ a curve not contained in $D$ and $\nu\colon\widehat{C} \to C$ the normalization. Then we define $i(C,D)$ to be the number of distinct points in $\nu^{-1}(D)$.
\end{defn}

Then, we have the next.

\begin{defn}
The pair $(X,D)$ is \emph{algebraically hyperbolic} if there exists $\varepsilon> 0$ such that
$$
2g(\widehat{C})-2+i(C,D) \geq \varepsilon \deg_\omega(C)
$$
for all curves $C \subset X$ not contained in $D$.
\end{defn}

As in the compact case, analytic and algebraic hyperbolicity are closely related.

\begin{prop}[\cite{PaRou}]
Let $(X,D)$ be a log-manifold such that $X \setminus D$ is hyperbolic and hyperbolically embedded in $X$. Then $(X,D)$ is algebraically hyperbolic.
\end{prop}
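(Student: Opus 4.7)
The plan is to repeat almost verbatim the argument used for the compact algebraic hyperbolicity proposition in this chapter, but with the compact normalization $\widehat{C}$ replaced by the punctured normalization $\widehat{C}^{\ast} := \widehat{C} \setminus \nu^{-1}(D)$. Given a curve $C \subset X$ not contained in $D$ together with its normalization $\nu\colon\widehat{C}\to C$, the open Riemann surface $\widehat{C}^{\ast}$ maps holomorphically into $X\setminus D$ via $\iota\circ\nu$, and its topological Euler characteristic is exactly $2 - 2g(\widehat{C}) - i(C,D)$, so the quantity we want to estimate from below appears naturally as $-\chi(\widehat{C}^{\ast})$.

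First I would dispose of the non-hyperbolic-type cases. If $\chi(\widehat{C}^{\ast}) \geq 0$, then $\widehat{C}^{\ast}$ would be biholomorphic to $\mathbb{P}^{1}$, $\mathbb{C}$, $\mathbb{C}^{\ast}$, or a complex torus, hence would admit a non-constant holomorphic map from $\mathbb{C}$; composing with $\iota\circ\nu$ would produce an entire curve in the hyperbolic space $X\setminus D$, contradicting the fact that $\iota\circ\nu$ is non-constant. Therefore $\chi(\widehat{C}^{\ast}) < 0$ and $\widehat{C}^{\ast}$ carries a complete Poincaré metric of constant Gaussian curvature $-4$, with cusps at the points of $\nu^{-1}(D)$ each of finite hyperbolic area. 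Gauss--Bonnet then yields
$$
\int_{\widehat{C}^{\ast}} \sigma_{\widehat{C}^{\ast}} \;=\; \frac{\pi}{2}\bigl(2g(\widehat{C}) - 2 + i(C,D)\bigr).
$$

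Next I would invoke the hyperbolic embedding. Applying the distance-decreasing property of the Kobayashi infinitesimal pseudometric to $\iota\circ\nu\colon\widehat{C}^{\ast}\to X\setminus D$ and the uniform lower bound $\mathbf{k}_{X\setminus D}(\xi) \geq \varepsilon\,\|\xi\|_{\omega}$ supplied by the hyperbolic embedding hypothesis, I obtain
$$
\mathbf{k}_{\widehat{C}^{\ast}}(\eta) \;\geq\; \mathbf{k}_{X\setminus D}\bigl((\iota\circ\nu)_{\ast}\eta\bigr) \;\geq\; \varepsilon\,\|(\iota\circ\nu)_{\ast}\eta\|_{\omega}
$$
for every $\eta \in T_{\widehat{C}^{\ast}}$. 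Since the Poincaré metric on $\widehat{C}^{\ast}$ coincides with its Kobayashi infinitesimal pseudometric (by uniformization via $\Delta\to\widehat{C}^{\ast}$), this translates to the pointwise inequality of area forms $\sigma_{\widehat{C}^{\ast}} \geq \varepsilon^{2}(\iota\circ\nu)^{\ast}\omega$ on $\widehat{C}^{\ast}$. Integrating and using the fact that $\nu^{-1}(D)$ has measure zero, so that $\int_{\widehat{C}^{\ast}}(\iota\circ\nu)^{\ast}\omega = \deg_{\omega} C$, I end up with
$$
\frac{\pi}{2}\bigl(2g(\widehat{C}) - 2 + i(C,D)\bigr) \;\geq\; \varepsilon^{2} \deg_{\omega} C,
$$
which is the desired algebraic hyperbolicity with constant $\varepsilon_{0} = 2\varepsilon^{2}/\pi$.

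The only genuinely new ingredient compared with the compact proof is that the constant $\varepsilon$ appearing in the lower bound for $\mathbf{k}_{X\setminus D}$ must be \emph{independent} of how close the curve $C$ comes to the boundary divisor $D$; this is precisely what the hyperbolically embedded hypothesis is designed to provide, and without it the analogous statement would fail because the Kobayashi metric of $X\setminus D$ can degenerate near $D$. A secondary technical point is the validity of Gauss--Bonnet on the non-compact surface $\widehat{C}^{\ast}$, justified by the classical fact that a complete hyperbolic metric on a Riemann surface with only cusps has finite total area equal to $\pi(-\chi)/2$ when the curvature is $-4$.
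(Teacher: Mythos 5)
Your proof is correct and follows exactly the template the survey uses for the compact case (the proposition asserting $2g(\widehat C)-2\ge\varepsilon_0\deg_\omega C$ for hyperbolic $X$), with the standard adaptations: replace $\widehat C$ by the punctured normalization $\widehat C^*$, rule out $\chi(\widehat C^*)\ge 0$ via uniformization and hyperbolicity of $X\setminus D$, and invoke Gauss--Bonnet for a complete finite-area cusped hyperbolic metric, with the hyperbolic embedding hypothesis supplying precisely the uniform lower bound $\mathbf{k}_{X\setminus D}\ge\varepsilon\|\cdot\|_\omega$ that keeps the constant independent of how close $C$ comes to $D$. The survey itself only cites \cite{PaRou} for this proposition, but the argument you give is the natural and correct one, and you have rightly read the inequality in the definition of hyperbolic embedding as involving $\mathbf{k}_{X\setminus D}$ rather than $\mathbf{k}_X$ as the text's typo suggests.
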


The algebraic version of the Kobayashi conjecture is also verified.
\begin{thm}[\cite{PaRou}]
Let $X_d \subset \mathbb P^n$ be a very generic hypersurface of degree $d\geq 2n+1$ in $\mathbb P^n$. Then $(\mathbb P^n, X_d)$ is algebraically hyperbolic.
\end{thm}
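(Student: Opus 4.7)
The plan is to adapt the proof of the earlier algebraic hyperbolicity theorem for hypersurfaces in $\mathbb P^{n+1}$ of degree $\geq 2n+2$ to the logarithmic setting of pairs $(\mathbb P^n, X_d)$, replacing tangent and canonical bundles by their logarithmic analogues along the universal hypersurface and tracking the numerics carefully to obtain the sharper bound $d\geq 2n+1$.

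Set $M=\mathbb P^n\times\mathbb P^{N_d-1}$, let $\mathcal H\subset M$ be the universal hypersurface of degree $d$, and let $p\colon M\to\mathbb P^n$ be the first projection. The first step is to establish the logarithmic analogue of the twisted global generation proposition, namely that $T_M(-\log\mathcal H)\otimes p^*\mathcal O_{\mathbb P^n}(1)$ is globally generated. One exhibits sections exactly as in the compact case: the vector fields $V_\alpha^j=\partial/\partial a_\alpha-z_j\,\partial/\partial a_{\alpha^j}$ (for $\alpha_j\geq 1$) and the modifications of $V_0=\sum v_j\,\partial/\partial z_j$ with $v_j$ affine in $z$ are constructed to annihilate $F=\sum a_\alpha z^\alpha$ and are therefore log vector fields along $\mathcal H$; the affine change of charts shows that they extend to rational sections with at most simple poles along $\{Z_0=0\}$. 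Complemented by an Euler-type log field (which satisfies $E(F)=F\in(F)$) to cover the residue direction at points of $\mathcal H$, these span $T_M(-\log\mathcal H)\otimes p^*\mathcal O(1)$ at every point.

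The second step produces an effective log canonical on a generic fibre. Fix a Hilbert polynomial $P$ and pass to an étale neighbourhood $U$ of an open subset of $\mathbb P^{N_d-1}\times\mathrm{Hilb}^P(\mathbb P^n)$, giving a relative family $\mathcal C\subset \mathbb P^n\times U$ of curves whose generic fibre $\mathcal C_s$ is not contained in $X_s$. Let $\nu\colon\widetilde{\mathcal C}\to\mathcal C$ be a desingularization chosen so that $\widetilde D:=\nu^{-1}(\mathcal H_U)_{\mathrm{red}}$ is SNC, and consider the generically surjective restriction of log $1$-forms $\Omega_{\mathbb P^n\times U}(\log\mathcal H_U)\bigl|_{\widetilde{\mathcal C}}\twoheadrightarrow\Omega_{\widetilde{\mathcal C}}(\log\widetilde D)$. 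Taking top exterior powers and using the standard identification
$$\textstyle\bigwedge^{\dim\widetilde{\mathcal C}}\Omega_{\mathbb P^n\times U}(\log\mathcal H_U)\simeq K_{\mathbb P^n\times U}(\mathcal H_U)\otimes \bigwedge^{n-1}T_{\mathbb P^n\times U}(-\log\mathcal H_U),$$
the production of a non-zero section of $K_{\widetilde{\mathcal C}}(\widetilde D)\otimes p^*\mathcal O(-1)$ on a generic fibre reduces to the global generation, in restriction to $\mathbb P^n\times\{s\}$, of $K_{\mathbb P^n\times U}(\mathcal H_U)\otimes\bigwedge^{n-1}T_{\mathbb P^n\times U}(-\log\mathcal H_U)\otimes p^*\mathcal O(-1)$. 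Since $K_M(\mathcal H_U)|_{\mathbb P^n\times\{s\}}=\mathcal O(d-n-1)$, this bundle rewrites as $\bigwedge^{n-1}[T_M(-\log\mathcal H)\otimes p^*\mathcal O(1)]\otimes\mathcal O(d-2n-1)$, which is globally generated exactly when $d\geq 2n+1$, by Step 1.

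Finally, restricting to the one-dimensional fibre $\widetilde{\mathcal C}_s$, the effectivity of $K_{\widetilde{\mathcal C}_s}(\widetilde D_s)\otimes\nu^*\mathcal O_{\mathbb P^n}(-1)$ gives
$$2g(\widetilde{\mathcal C}_s)-2+\deg\widetilde D_s\geq \deg_\omega\mathcal C_s,$$
and by construction $\deg\widetilde D_s=i(\mathcal C_s,X_s)$, which is the algebraic hyperbolicity inequality with $\varepsilon=1$. Running over the countably many Hilbert polynomials of curves in $\mathbb P^n$ and discarding the corresponding countable union of proper closed bad loci in $\mathbb P^{N_d-1}$ yields the statement for very generic $X_d$. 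The main obstacle is the careful bookkeeping of the logarithmic setup in a relative singular setting: ensuring that the desingularization can be chosen with $\widetilde D$ SNC compatibly in families, verifying that the residue direction of $T_M(-\log\mathcal H)\otimes p^*\mathcal O(1)$ is genuinely covered by the added log fields at points of $\mathcal H$, and confirming that the generically surjective restriction of log cotangents truly accounts for the $\log\widetilde D$ poles that encode $i(\mathcal C_s,X_d)$.
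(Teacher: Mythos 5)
The survey only cites \cite{PaRou} for this theorem and does not reproduce its proof, so there is no internal argument to compare against line by line; your proposal is the natural logarithmic adaptation of the compact-case variational argument that the survey \emph{does} present in detail (the $d\geq 2n+2$ theorem for hypersurfaces in $\mathbb P^{n+1}$), and it matches the structure of the argument in \cite{PaRou}. The shift to $d\geq 2n+1$ is exactly as you compute: $K_{\mathbb P^n}(X_d)$ restricts on fibers to $\mathcal O(d-n-1)$ rather than the $\mathcal O(d-n-2)$ of the compact case, and with the rank-$(n-1)$ exterior power one lands on $\bigwedge^{n-1}\bigl[T_M(-\log\mathcal H)\otimes p^*\mathcal O(1)\bigr]\otimes\mathcal O(d-2n-1)$.

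One concrete inaccuracy needs fixing, and it sits precisely at the step you were most worried about. Your ``Euler-type log field'' does not, as written, satisfy $E(F)\in(F)$. In the affine chart the universal equation $F=\sum_\alpha a_\alpha z^\alpha$ is \emph{not} homogeneous in $z$ (since $\alpha_0=d-|\alpha|'$ is absorbed into $Z_0=1$), and one computes
$$\sum_{j=1}^n z_j\,\frac{\partial F}{\partial z_j}=\sum_\alpha (d-\alpha_0)\,a_\alpha z^\alpha,$$
which is generically not a multiple of $F$; working in homogeneous coordinates does not help either, since there the Euler field descends to the zero section of $T_{\mathbb P^n}$. The fix is to correct in the moduli directions, e.g. take
$$E'=\sum_{j=1}^n z_j\,\frac{\partial}{\partial z_j}+\sum_\alpha \alpha_0\,a_\alpha\,\frac{\partial}{\partial a_\alpha},$$
which gives $E'(F)=dF$, is globally holomorphic with no twist at all, and supplies the missing ``$z_1\partial_{z_1}$''-component of $T_M(-\log\mathcal H)$ along $\mathcal H$ (and, off $\mathcal H$, the direction transverse to the level sets of $F$ that the $V_\alpha^j$ and $V_0$ alone do not span). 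With this repaired field, the global generation of $T_M(-\log\mathcal H)\otimes p^*\mathcal O(1)$ goes through as you intend, and the remaining obstacles you flag --- resolving $\nu^{-1}(\mathcal H_U)$ to SNC compatibly over $U$, the adjunction-type identification $K_{\widetilde{\mathcal C}}(\widetilde D)\big|_{\widetilde{\mathcal C}_s}\simeq K_{\widetilde{\mathcal C}_s}(\widetilde D_s)$ for generic $s$, and the surjectivity of the restriction of log cotangents --- are the standard technical points and are handled in \cite{PaRou} as you anticipate.
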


\section{A little history of the above results}

The chronicle of the above results about algebraic hyperbolicity is the following. 

First, in \cite{Cle86} it is shown that if $X$ is a generic hypersurface of degree $d\ge 2$ in $\mathbb{P}^{n+1}$, then $X$ does not admit an irreducible family $f\colon\mathcal C\to X$ of immersed curves of genus $g$ and fixed immersion degree $\deg f$ which cover a variety of codimension less than $D=((2-2g)/\deg f)+d-(n+2)$. As an immediate consequence, one gets, for example, that there are no rational curves on generic hypersurfaces $X$ of degree $d\ge 2n+1$ in $\mathbb P^{n+1}$. 

Two years later, \cite{Ein88} studies the Hilbert scheme of $X\subseteq G$, a generic complete intersection of type $(m_1,\dots,m_k)$ in the Grassmann variety $G=G(r,n+2)$. As a remarkable corollary one gets that any smooth projective subvariety of $X$ is of general type if $m_1+m_2+\cdots +m_k\ge\dim X+n+2$. It is also proved that the Hilbert scheme of $X$ is smooth at points corresponding to smooth rational curves of \lq\lq low\rq\rq{} degree.

The variational method presented here, is due to \cite{Voi96}. By variational method we mean the idea of putting the hypersurfaces in family and to use the positivity property of the tangent bundle of the family itself. The main result of this paper is the following theorem which improves Ein's result in the case of hypersurfaces: let $X\subset\mathbb P^{n+1}$ be a hypersurface of degree $d$. If $d \geq 2n-\ell+1$, $1 \leq \ell \leq n-2$, then any $\ell$-dimensional subvariety $Y$ of $X$ has a desingularization $\widetilde Y$ with an effective canonical bundle. Moreover, if the inequality is strict, then the sections of $K_{\widetilde Y}$ separate generic points of $\widetilde Y$. The bound is now optimum and, in particular, the theorem implies that generic hypersurfaces in $\mathbb P^{n+1}$ of degree $d \geq 2n$, $n \geq 3$, contain no rational curves. The method also gives an improvement of a result of \cite{Xu94} as well as a simplified proof of Ein's original result.

Lastly, let us cite \cite{Pac04}: this paper gives the sharp bound $d\geq 2n$ for a general projective hypersurface $X$ of degree $d$ in $\mathbb P^{n+1}$ containing only subvarieties of general type, for $n\ge 6$. This result improves the aforesaid results of Voisin and Ein. The author proves the bound by showing that, under some numerical conditions, the locus $W$ spanned by subvarieties not of general type (even more than this), is contained in the locus spanned by lines. This is obtained in two steps. First, with the variational technique inherited by Voisin the author proves that $W$ is contained in the locus spanned by lines with highly nonreduced intersection with $X$, the so called bicontact locus. Then the latter is proved to be contained in the locus of lines by using the global generation of certain bundles. Finally, let us mention that similar results have also been obtained independently and at the same time in \cite{C-R04}.

\chapter{Jets spaces}

{\small\textsc{Abstract}. This chapter is devoted to the theory of jet spaces and jet differentials. The idea of using differential equations in hyperbolicity problems can be traced back to work of Bloch \cite{Blo}. The modern language adopted here has been initiated by \cite{G-G80} and later refined by several authors, such as \cite{Dem97} and \cite{S-Y97}. We shall describe the construction of the vector bundle of jet differentials and explain how to build in a functorial way a tower of projective bundles together with the corresponding tautological line bundles on any given manifold which provide a relative compactification of the classical jets spaces. Then, a characterization of jet differentials in terms of direct images of these tautological line bundles will be given.}

\section{Projectivization of directed manifolds}

We introduce a functorial construction in the category of directed manifold in order to produce the so-called projectivized space of $1$-jets over $X$.

So, let $(X,V)$ be a complex directed manifold, $\rank V=r$, and set $\widetilde X = P(V)$. Here, $P(V)$ is the projectivized bundle of lines of $V$ and there is a natural projection $\pi\colon\widetilde X\to X$; moreover, if $\dim X=n$, then $\dim\widetilde X=n+r-1$. On $\widetilde X$, we consider the tautological line bundle $\mathcal O_{\widetilde X}(-1)\subset\pi^* V$ which is defined fiberwise as 
$$
\mathcal O_{\widetilde X}(-1)_{(x,[v])}\overset{\text{\rm def}}=\mathbb C\,v,
$$
for $(x,[v])\in\widetilde X$, with $x\in X$ and $v\in V_x\setminus\{0\}$. We also have the following short exact sequence which comes from the very definition of $\widetilde X$:
$$
0\to T_{\widetilde X/X}\to T_{\widetilde X} \to \pi^*T_X\to 0.
$$
Of course, the surjection here is given by the differential $\pi_*$ and $T_{\widetilde X/X}=\ker\pi_*$ is the relative tangent bundle.

Now, in the above exact sequence, we want to replace $\pi^*T_X$ by $\mathcal O_{\widetilde X}(-1)\subset\pi^* V\subset\pi^*T_X$, in order to build a subbundle of $T_{\widetilde X}$ which takes into account just one significant \lq\lq horizontal\rq\rq{} direction and the \lq\lq vertical\rq\rq{} ones; namely we define $\widetilde V$ to be the inverse image $\pi_*^{-1}\mathcal O_{\widetilde X}(-1)$ so that we have a short exact sequence
$$
0\to T_{\widetilde X/X}\to \widetilde V\to\mathcal O_{\widetilde X}(-1)\to 0
$$
and $\rank\widetilde V=\rank V=r$. There is another short exact sequence attached to this projectivization, which is the relative version of the usual Euler exact sequence of projective spaces:
$$
0\to\mathcal O_{\widetilde X}\to\pi^*V\otimes\mathcal O_{\widetilde X}(1)\to T_{\widetilde X/X}\to 0.
$$
By definition, $(\widetilde X,\widetilde V)$ is a new complex directed manifold, which is compact as soon as $X$ is compact and such that $\pi\colon (\widetilde X,\widetilde V)\to(X,V)$ is a morphism of complex directed manifolds.

\subsection{Lifting of curves}

Let $\Delta_R\subset\mathbb C$ be the open disc $\{|z|<R\}$ of radius $R>0$ and center $0\in\mathbb C$ and $f\colon\Delta_R\to X$ a holomorphic map. Suppose moreover that $f(0)=x$ for some $x\in X$ and that $f$ is a non-constant tangent trajectory of the directed manifold, that is $f'(t)\in V_{f(t)}$ for each $t\in\Delta_R$.

In this case, there is a well-defined and unique tangent line $[f'(t)]\subset V_{f(t)}$ for every $t\in\Delta_R$ even at the stationary points of $f$: if $f'(t_0)=0$ for some $t_0\in\Delta_R$, write $f'(t)=(t-t_0)^mu(t)$ with $m\in\mathbb N\setminus\{0\}$ and $u(t_0)\ne 0$ and define the tangent line at $t_0$ to be $[u(t_0)]$.

We define the lifting $\widetilde f$ of $f$ as the map
$$
\widetilde f\colon\Delta_R\to\widetilde X
$$
which sends $t\mapsto\widetilde f(t)=(f(t),[f'(t)])$. It is clearly holomorphic and the derivative $f'$ gives rise to a section
$$
f'\colon T_{\Delta_R}\to\tilde f^*\mathcal O_{\widetilde X}(-1).
$$
Observe moreover that, as $\pi\circ\widetilde f=f$, one has $\pi_*\widetilde f'(t)=f'(t)$, so that $\widetilde f'(t)$ belongs to $\widetilde V_{(f(t),[f'(t)])}=\widetilde V_{\widetilde f(t)}$. Thus, if $f$ is a tangent trajectory of $(X,V)$ then $\widetilde f$ is a tangent trajectory of $(\widetilde X,\widetilde V)$.

On the other hand, if $g\colon\Delta_R\to\widetilde X$ is a tangent trajectory of $(\widetilde X,\widetilde V)$, then $f\overset{\text{\rm def}}=\pi\circ g$ is a tangent trajectory of $(X,V)$ and $g$ coincides with $\widetilde f$ unless $g$ is contained in a vertical fiber $P(V_x)$: in this case $f$ is constant. 

\subsection{Jets of curves}

Let $X$ be a complex $n$-dimensional manifold. Here, we follow \cite{G-G80} to define the bundle $J_kT_X\to X$ of $k$-jets of germs of parametrized holomorphic curves in $X$.

It is the set of equivalence classes of holomorphic maps $f\colon(\mathbb C,0)\to (X,x)$, with the equivalence relation $f\sim g$ if and only if all derivatives $f^{(j)}(0)=g^{(j)}(0)$ coincide, for $0\le j\le k$, in some (and hence in all) holomorphic coordinates system of $X$ near $x$.
Here, the projection is simply $f\mapsto f(0)$.

These are not vector bundles, unless $k=1$: in this case $J_1T_X$ is simply the holomorphic tangent bundle $T_X$. However, in general, the $J_kT_X$'s are holomorphic fiber bundles, with typical fiber $(\mathbb C^n)^k$ (in fact the elements of the fiber $J_{k}T_{X,x}$ are uniquely determined by the Taylor expansion up to order $k$ of a germ of curve $f$, once a system of coordinate is fixed).

Now, we translate these concepts to the setting of complex directed manifolds. 

\begin{defn}
Let $(X,V)$ be a complex directed manifold. We define the bundle $J_kV\to X$ to be the set of $k$-jets of curves $f\colon (\mathbb C,0)\to X$ which are tangent to $V$, together with the projection map $f\mapsto f(0)$.
\end{defn}

To check that this is in fact a subbundle of $J_k$ we shall describe a special choice of local coordinates: for any point $x_0\in X$, there are local coordinates $(z_1,\dots,z_n)$ on a neighborhood $\Omega$ of $x_0$ such that the fibers $V_x$, for $x\in\Omega$, can be defined by linear equations
$$
V_x=\left\{ v=\sum_{j=1}^nv_j\frac{\partial}{\partial z_j},\,\text{s.t.}\,\, v_j=\sum_{k=1}^ra_{jk}(x)\,v_k,\quad j=r+1,\dots,n\right\},
$$
where $(a_{jk}(x))$ is a holomorphic $(n-r)\times r$ matrix. From this description of the fibers, it follows that to determine a vector $v\in V_x$ it is sufficient to know its first $r$ components $v_1,\dots,v_r$, and the affine chart $v_r\ne 0$ of $P(V_x)$ can be endowed with the coordinates system $(z_1,\dots,z_n,\xi_1,\dots,\xi_{r-1})$, where $\xi_j=v_j/v_r$, $j=1,\dots,r-1$ (and in an analogous way for the other affine charts).

Now, if $f\simeq (f_1,\dots,f_n)$ is a holomorphic tangent trajectory to $(X,V)$ contained in $\Omega$, then by a simple Cauchy problem argument, we see that $f$ is uniquely determined by its initial value $x_0$ and its first $r$ components: as $f'(t)\in V_{f(t)}$, we can recover the remaining components by integrating the differential system
$$
f_j'(t)=\sum_{k=1}^r a_{jk}(f(t))\,f_k'(t),
$$
where $j=r+1,\dots,n$, and initial data $f(0)=x_0$. This shows that the fibers $J_kV_x$ are locally parametrized by
$$
\bigl((f_1',\dots,f_r'),\dots,(f_1^{(k)},\dots,f_r^{(k)})\bigr),
$$
for all $x\in\Omega$, hence $J_k V$ is a locally trivial $(\mathbb C^r)^k$-subbundle of $J_kT_X$.

\section{Projectivized jet bundles}

In this section, we iterate the construction of the projectivization of a complex directed manifold, in order to obtain a projectivized version of the jet bundles. This construction is essentially due to Jean-Pierre Demailly.

We start with a complex directed manifold $(X,V)$, with $\dim X=n$ and $\rank V=r$. We also suppose that $r\ge 2$, otherwise the projectivization of $V$ is trivial. Now, we start the inductive process in the directed manifold category by setting
$$
(X_{0},V_{0})=(X,V),\quad (X_{k},V_{k})=(\tilde X_{k-1},\tilde V_{k-1}).
$$
In other words, $(X_k,V_k)$ is obtained from $(X,V)$ by iterating $k$ times the projectivization construction $(X,V)\mapsto (\tilde X,\tilde V)$ described above.

In this process, the rank of $V_k$ remains constantly equal to $r$ while the dimension of $X_k$ growths linearly with $k$: $\dim X_k=n+k(r-1)$. Let us call $\pi_k\colon X_k\to X_{k-1}$ the natural projection. Then we have, as before, a tautological line bundle $\mathcal O_{X_k}(-1)\subset\pi_k^* V_{k-1}$ over $X_k$ which fits into short exact sequences
\begin{equation}\label{ses1}
0\to T_{X_k/ X_{k-1}}\to V_k \overset{(\pi_k)_*}\to \mathcal O_{X_k}(-1)\to 0
\end{equation}
and
\begin{equation}\label{ses2}
0\to\mathcal O_{X_k}\to\pi_k^* V_{k-1}\otimes\mathcal O_{X_k}(1)\to T_{X_k/X_{k-1}}\to 0.
\end{equation}
Now we come back to the lifting of curves. Our precedent discussion has shown that given a non-constant tangent trajectory $f\colon\Delta_R\to X$ to $(X,V)$ we have a well-defined non-constant tangent trajectory $\tilde f\colon\Delta_R\to\tilde X=X_1$ to $(\tilde X,\tilde V)=(X_1,V_1)$. Now, set inductively 
$$
f_{[0]}=f,\quad f_{[k]}=\tilde f_{[k-1]}.
$$
Then, for each $k$, we get a tangent trajectory $f_{[k]}\colon\Delta_R\to X_k$ to $(X_k,V_k)$ and the derivative $f'_{[k-1]}$ gives rise to a section
$$
f'_{[k-1]}\colon T_{\Delta_R}\to f_{[k]}^*\mathcal O_{X_k}(-1).
$$

\subsection{Regular and singular loci}

By construction, there exists a canonical injection $\mathcal O_{X_k}(-1)\hookrightarrow \pi_k^* V_{k-1}$ and, a composition with the projection $(\pi_{k-1})_*$ gives for all $k\ge 2$ a line bundle morphism
$$
\xymatrix{
\mathcal O_{X_k}(-1) \ar@{^{(}->}[r] \ar@/_1pc/[rr] & \pi_k^* V_{k-1} \ar[r] & \pi_k^*\mathcal O_{X_{k-1}}(-1).}
$$
The zero divisor of this morphism is clearly the projectivization of the relative tangent bundle $T_{X_{k-1}/X_{k-2}}$, which is, of course, (fiber-wise, with respect to $\pi_k\colon X_k\to X_{k-1}$) a hyperplane subbundle of $X_k$. Thus, if we set
$$
D_k=P(T_{X_{k-1}/X_{k-2}})\subset P(V_{k-1})=X_k,\quad k\ge 2,
$$
we find

\begin{equation}\label{divisor}
\mathcal O_{X_k}(-1)\simeq\pi_k^*\mathcal O_{X_{k-1}}(-1)\otimes\mathcal O_{X_k}(-D_k).
\end{equation}

Now, take a regular germ of curve $f\colon (\mathbb C,0)\to (X,x)$ tangent to $V$, that is $f'(0)\ne 0$, and consider, for $j=2,\dots,k$, its $j$-th lifting $f_{[j]}$: we claim that then $f_{[j]}(0)\notin D_j$. In this case, in fact, all  the liftings of $f$ are regular and  $f_{[j]}(0)\in D_j$ if and only if $(\pi_{j-1})_*f_{[j-1]}'(0)=f_{[j-2]}'(0)=0$.

On the other hand, if $f$ is a non-constant germ of curve tangent to $V$ such that, for all $j=2,\dots,k$, $f_{[j]}(0)\notin D_j$ then $f'(0)\ne 0$.

Summarizing, if we define
$$
\pi_{j,k}\overset{\text{\rm def}}=\pi_{j+1}\circ\dots\circ\pi_k\colon X_k\to X_j,
$$
then a point $w\in X_k$ can be reached by a lifting of some regular germ of curve (if and) only if $\pi_{j,k}(w)\notin D_j$, for all $j=2,\dots,k$. It is then natural to define
$$
X_k^{\text{\rm reg}}\,\overset{\text{\rm def}}=\bigcap_{j=2}^k\pi_{j,k}^{-1}(X_j\setminus D_j)
$$
and
$$
X_k^{\text{\rm sing}}\,\overset{\text{\rm def}}=\bigcup_{j=2}^k\pi_{j,k}^{-1}(D_j)=X_k\setminus X_k^{\text{\rm reg}}.
$$
This singular locus comes out also if one studies the base locus of the linear system associated to the anti-tautological line bundle $\mathcal O_{X_k}(1)$. In fact, we have the following proposition:

\begin{prop}[\cite{Dem97}]
For every $m>0$, the base locus of the linear system associated to the restriction of $\mathcal O_{X_k}(m)$ to every fiber $\pi_{0,k}^{-1}(x)$, $x\in X$, is exactly $X_k^{\text{\rm sing}}\cap\pi_{0,k}^{-1}(x)$.
In other words, $X_k^{\text{\rm sing}}$ is the \lq\lq relative\rq\rq{} base locus of $|\mathcal O_{X_k}(m)|$.
Moreover, $\mathcal O_{X_k}(1)$ is relatively big. 
\end{prop}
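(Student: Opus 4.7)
I would argue by induction on $k\ge 1$. In the base case $k=1$, the fibre $\pi_{0,1}^{-1}(x)=P(V_x)\cong\mathbb P^{r-1}$ (with $r=\rank V$) carries $\mathcal O_{X_1}(1)$ as the tautological line bundle $\mathcal O_{\mathbb P^{r-1}}(1)$, which is very ample, hence big and base-point-free; and $X_1^{\text{\rm sing}}$ is empty by convention (the union defining it is over $j\ge 2$). Both assertions follow at once.

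For the inductive step, the inclusion ``base locus $\subseteq X_k^{\text{\rm sing}}\cap X_{k,x}$'' is handled constructively. Given $w\in X_{k,x}\setminus X_k^{\text{\rm sing}}$, by definition $w\notin D_k$ and $\pi_k(w)\notin X_{k-1}^{\text{\rm sing}}$, so the inductive hypothesis furnishes $\rho\in H^0(X_{k-1,x},\mathcal O_{X_{k-1}}(m))$ with $\rho(\pi_k(w))\ne 0$. Using the isomorphism~\eqref{divisor}, namely $\mathcal O_{X_k}(m)\simeq\pi_k^*\mathcal O_{X_{k-1}}(m)\otimes\mathcal O_{X_k}(mD_k)$, the section $\sigma:=\pi_k^*\rho\otimes s_{D_k}^m$, with $s_{D_k}$ the canonical section of $\mathcal O_{X_k}(D_k)$, satisfies $\sigma(w)\ne 0$ since $w\notin D_k$, so $w$ is not a base point.

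The reverse inclusion ``$X_k^{\text{\rm sing}}\cap X_{k,x}\subseteq$ base locus'' is the heart of the proof. By the standard projective-bundle identity $(\pi_k)_*\mathcal O_{X_k}(m)=S^m V_{k-1}^*$, sections of $\mathcal O_{X_k}(m)$ on $X_{k,x}$ are identified with sections of $S^m V_{k-1}^*$ on $X_{k-1,x}$. Dualizing~\eqref{ses1},
\[
0\to\mathcal O_{X_{k-1}}(1)\to V_{k-1}^*\to T^*_{X_{k-1}/X_{k-2}}\to 0,
\]
induces a decreasing filtration $F^0\supset F^1\supset\cdots\supset F^m=\mathcal O_{X_{k-1}}(m)$ on $S^m V_{k-1}^*$ with graded quotients $F^l/F^{l+1}\cong\mathcal O_{X_{k-1}}(l)\otimes S^{m-l}T^*_{X_{k-1}/X_{k-2}}$. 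The key Bott-type vanishing $H^0(X_{k-1,x},S^m T^*_{X_{k-1}/X_{k-2}})=0$, obtained from the classical $H^0(\mathbb P^{r-1},S^m\Omega^1_{\mathbb P^{r-1}})=0$ (via the Euler sequence) applied fibrewise along $\pi_{k-2}:X_{k-1,x}\to X_{k-2,x}$ together with the Leray spectral sequence, forces every global section of $S^m V_{k-1}^*$ on $X_{k-1,x}$ to lie in $F^1\cong\mathcal O_{X_{k-1}}(1)\otimes S^{m-1}V_{k-1}^*$. Evaluated at any $v\in X_{k-1,x}$, such a section becomes a polynomial on $V_{k-1,v}$ divisible by a generator of $\mathcal O_{X_{k-1}}(1)_v$, hence vanishing on $T_{X_{k-1}/X_{k-2},v}$; translated back to $X_k$, the corresponding section of $\mathcal O_{X_k}(m)$ vanishes along $D_k\cap X_{k,x}$. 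Iterating the filtration step as far as the successive vanishings $H^0(X_{k-1,x},\mathcal O(l)\otimes S^{m-l}T^*_{X_{k-1}/X_{k-2}})=0$ permit, and invoking the inductive hypothesis for $\mathcal O_{X_{k-1}}(m)$ at the bottom of the filtration, one extends the vanishing to the remaining component $\pi_k^{-1}(X_{k-1}^{\text{\rm sing}}\cap X_{k-1,x})$ of $X_k^{\text{\rm sing}}\cap X_{k,x}$.

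Relative bigness of $\mathcal O_{X_k}(1)$ follows from identifying $(\pi_{0,k})_*\mathcal O_{X_k}(m)=E_{k,m}V^*$ (the Demailly--Semple bundle of invariant jet differentials), whose rank is a polynomial in $m$ of degree $k(r-1)=\dim X_{k,x}$, yielding $h^0(X_{k,x},\mathcal O_{X_k}(m))\sim m^{\dim X_{k,x}}$. The main technical obstacle is the iterated filtration argument for the $\pi_k^{-1}(X_{k-1}^{\text{\rm sing}})$ part, since the higher Bott-type vanishings eventually fail as $l$ grows, and the inductive hypothesis must be inserted at precisely the right level of the filtration rather than applied in a naive pass-through to the top piece $\mathcal O_{X_{k-1}}(m)$.
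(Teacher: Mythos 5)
Your base case, your argument for the inclusion ``base locus $\subseteq X_k^{\text{\rm sing}}\cap\pi_{0,k}^{-1}(x)$'' via the twist $\pi_k^*\rho\otimes s_{D_k}^m$, and your proof that every section vanishes on $D_k$ (Bott vanishing on the fibres of $\pi_{k-1}$, which is the projection you mean rather than $\pi_{k-2}$, forcing $s$ into $F^1$) are all correct. The genuine gap is exactly where you point at it: the vanishing on the other component $\pi_k^{-1}\bigl(X_{k-1}^{\text{\rm sing}}\bigr)$ of $X_k^{\text{\rm sing}}$. To use the inductive hypothesis you would need the section $s\in H^0\bigl(X_{k-1,x},S^m V_{k-1}^*\bigr)$ to lie in $F^m\cong\mathcal O_{X_{k-1}}(m)$, but the iterated filtration argument cannot reach $F^m$: the relevant groups on the $\mathbb P^{r-1}$-fibres are
\[
H^0\bigl(\mathbb P^{r-1},\mathcal O(l)\otimes S^{m-l}\Omega^1_{\mathbb P^{r-1}}\bigr),
\]
which vanish precisely for $l\le m-l$ and are nonzero beyond that threshold (already for $r=2$ one has $\mathcal O(l)\otimes S^{m-l}\Omega^1_{\mathbb P^1}=\mathcal O(3l-2m)$, which acquires sections as soon as $l\ge 2m/3$). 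So the filtration genuinely stops near $l\approx m/2$, well short of $F^m$, and the inductive hypothesis --- which only concerns the rank-one quotient $\mathcal O_{X_{k-1}}(m)$ and not the full bundle $S^mV_{k-1}^*$ --- never comes into play. Saying that the induction ``must be inserted at precisely the right level'' is precisely the step that is missing, and no such level exists inside this filtration, because what you actually need is the vanishing of $s$ as a section of the rank $\binom{m+r-1}{r-1}$ bundle $S^mV_{k-1}^*$ at points of $X_{k-1}^{\text{\rm sing}}$, which is a strictly stronger statement than $(I_{k-1})$. The proof in \cite{Dem97} does not run through this filtration at all: it exploits the identification $H^0\bigl(\pi_{0,k}^{-1}(x),\mathcal O_{X_k}(m)\bigr)\simeq E_{k,m}V^*_x$ together with the explicit trivializations of $\mathcal O_{X_k}(1)$ in the charts of the projectivized jet tower, from which the forced vanishing on all of $X_k^{\text{\rm sing}}$ is read off directly. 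Similarly, your bigness argument, which uses the growth rate of $\operatorname{rank}E_{k,m}V^*$, is essentially equivalent to the statement being proved; it is cleaner to observe that $\mathcal O_{X_k}(m)$, restricted to the fibre, is $\pi_{1,k}^*\mathcal O_{X_1}(m)$ (which is the pull-back of a very ample bundle) twisted by the effective divisor $m\sum_{j\ge 2}\pi_{j,k}^*D_j$, whence bigness follows once the base-locus description is established.
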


This proposition also shows that $\mathcal O_{X_k}(1)$ cannot be relatively ample, unless $k=1$. Observe finally that the fibers $\pi_{0,k}^{-1}(x)$ are all isomorphic to a \lq\lq universal\rq\rq{} (and quite mysterious) nonsingular projective variety of dimension $k(r-1)$ which will be denoted by $\mathbb R_{r,k}$: it is not hard to see that $\mathbb R_{r,k}$ is rational. 

\section{Jet differentials}

Let $(X,V)$ be a complex directed manifold.
Let $\mathbb G_k$ be the group of germs of $k$-jets of biholomorphisms of $(\mathbb C,0)$, that is, the group of germs of biholomorphic maps
$$
t\mapsto\varphi(t)=a_1\, t+ a_2\, t^2+\cdots+a_k\,t^k,\quad a_1\in\mathbb C^*,a_j\in\mathbb C,j\ge 2,
$$
in which the composition law is taken modulo terms $t^j$ of degree $j>k$. Then $\mathbb G_k$ admits a natural fiberwise right action on $J_kV$ consisting of reparametrizing $k$-jets of curves by a biholomorphic change of parameter. Moreover the subgroup $\mathbb H\simeq\mathbb C^*$ of homotheties $\varphi(t)=\lambda\,t$ is a (non normal) subgroup of $\mathbb G_k$ and we have a semidirect decomposition $\mathbb G_k=\mathbb G'_k\ltimes\mathbb H$, where $\mathbb G'_k$ is the group of $k$-jets of biholomorphisms tangent to the identity. The corresponding action on $k$-jets is described in coordinates by
$$
\lambda\cdot(f',f'',\dots,f^{(k)})=(\lambda f',\lambda^2 f'',\dots,\lambda^k f^{(k)}).
$$
As in \cite{G-G80}, we introduce the vector bundle $\mathcal J_{k,m}V^*\to X$ whose fibres are complex valued polynomials $Q(f',f'',\dots,f^{(k)})$ on the fibres of $J_kV$, of weighted degree $m$ with respect to the $\mathbb C^*$ action defined by $\mathbb H$, that is, such that
$$
Q(\lambda f',\lambda^2 f'',\dots,\lambda^k f^{(k)})=\lambda^mQ(f',f'',\dots,f^{(k)}),
$$
for all $\lambda\in\mathbb C^*$ and $(f',f'',\dots,f^{(k)})\in J_kV$.

Next, we define the bundle of Demailly-Semple jet differentials (or invariant jet differentials) as a subbundle of the Green-Griffiths bundle.

\begin{defn}[\cite{Dem97}]
The \emph{bundle of invariant jet differentials of order $k$ and weighted degree $m$} is the subbundle $E_{k,m}V^*\subset\mathcal J_{k,m}V^*$ of polynomial differential operators $Q(f',f'',\dots,f^{(k)})$ which are equivariant under arbitrary changes of reparametrization, that is, for every $\varphi\in\mathbb G_k$
$$
Q((f\circ\varphi)',(f\circ\varphi)'',\dots,(f\circ\varphi)^{(k)})=\varphi'(0)^m\,Q(f',f'',\dots,f^{(k)}).
$$
Alternatively, $E_{k,m}V^*=(\mathcal J_{k,m}V^*)^{\mathbb G'_k}$ is the set of invariants of $\mathcal J_{k,m}V^*$ under the action of $\mathbb G'_k$.
\end{defn}

\begin{rem}
From the hyperbolicity point of view, it is of course more natural to consider the invariant jet differentials. In fact, we are only interested in the geometry of the entire curves in a given manifold. For this reason, it is redundant how the entire curves are parametrized: we just want to look at their conformal class.
\end{rem}

\begin{rem}
In the sequel, it will be useful to look at global invariant jet differentials not only as sections of a vector bundle but also as holomorphic maps $Q\colon J_kV\to\mathbb C$ which are invariant with respect to the fiberwise action of $\mathbb G_k$.
\end{rem}

We now define a filtration on $\mathcal J_{k,m}V^*$: a coordinate change $f\mapsto\Psi\circ f$ transforms every monomial $(f^{(\bullet)})^\ell=(f')^{\ell_1}(f'')^{\ell_2}\cdots(f^{(k)})^{\ell_k}$ of partial weighted degree $|\ell|_s:=\ell_1+2\ell_2+\cdots+s\ell_s$, $1\le s\le k$, into a polynomial $((\Psi\circ f)^{(\bullet)})^\ell$ in $(f',f'',\dots,f^{(k)})$, which has the same partial weighted degree of order $s$ if $\ell_{s+1}=\cdots=\ell_k=0$ and a larger or equal partial degree of order $s$ otherwise. Hence, for each $s=1,\dots,k$, we get a well defined decreasing filtration $F_s^\bullet$ on $\mathcal J_{k,m}V^*$ as follows:
$$
F_s^p(\mathcal J_{k,m}V^*)=
\left\{
\begin{matrix}
\text{$Q(f',f'',\dots,f^{(k)})\in\mathcal J_{k,m}V^*$ involving} \\
\text{only monomials $(f^{(\bullet)})^\ell$ with $|\ell|_s\ge p$}
\end{matrix}
\right\},
\quad\forall p\in\mathbb N.
$$
The graded terms $\operatorname{Gr}^p_{k-1}(\mathcal J_{k,m}V^*)$, associated with the $(k-1)$-filtration $F_{k-1}^p(\mathcal J_{k,m}V^*)$, are the homogeneous polynomials, say $Q(f',\dots,f^{(k)})$, whose all monomials $(f^{(\bullet)})^\ell$ have partial weighted degree $|\ell|_{k-1}=p$; hence, their degree $\ell_k$ in $f^{(k)}$ is such that $m-p=k\ell_k$ and $\operatorname{Gr}^p_{k-1}(\mathcal J_{k,m}V^*)=0$ unless $k|m-p$. Looking at the transition automorphisms of the graded bundle induced by the coordinate change $f\mapsto\Psi\circ f$, it turns out that $f^{(k)}$ behaves as an element of $V\subset T_X$ and, as a simple computation shows, we find
$$
\operatorname{Gr}^{m-k\ell_k}_{k-1}(\mathcal J_{k,m}V^*)=\mathcal J_{k-1,m-k\ell_k}V^*\otimes S^{\ell_k}V^*.
$$
Combining all filtrations $F^\bullet_s$ together, we find inductively a filtration $F^\bullet$ on $\mathcal J_{k,m}V^*$ such that the graded terms are
$$
\operatorname{Gr}^{\ell}(\mathcal J_{k,m}V^*)=S^{\ell_1}V^*\otimes S^{\ell_2}V^*\otimes\cdots\otimes S^{\ell_k}V^*,\quad\ell\in\mathbb N^k,|\ell|_k=m.
$$
Moreover there are natural induced filtrations 
$$
F^p_s(E_{k,m}V^*)=E_{k,m}V^*\cap F_{s}^p(\mathcal J_{k,m}V^*)
$$ 
in such a way that
$$
\operatorname{Gr}^\bullet(E_{k,m}V^*)=\left(\bigoplus_{|\ell|_k=m}S^{\ell_1}V^*\otimes S^{\ell_2}V^*\otimes\cdots\otimes S^{\ell_k}V^*\right)^{\mathbb G'_k}.
$$
Let us see more concretely which are the elements of the bundles we have introduced above in the following examples. For the sake of simplicity we shall consider here only the \lq\lq absolute\rq\rq{} case, that is $V=T_X$.
\begin{exmp}
Let us first look at the Green-Griffiths jet differentials. So, we fix a point $x\in X$ and look at the elements of the fiber $\mathcal J_{k,m}T^*_{X,x}$. 
\begin{itemize}
\item For $k=1$, we simply have $\mathcal J_{1,m}T^*_X=S^mT^*_X$. This is the usual bundle of symmetric differentials.
\item For another example, when $k=3$, we have that a typical element of the fiber is
$$
\begin{aligned}
& \sum a_i f_i',\quad\text{for $m=1,$} \\
& \sum a_{ij}f_i'f_j'+b_if_i'',\quad\text{for $m=2,$} \\
& \sum a_{ijk}f_i'f_j'f_k'+b_{ij}f_i'f_j''+c_if_i''',\quad\text{for $m=3,$}
\end{aligned}
$$
where the coefficients are holomorphic functions.
\end{itemize}
In conclusion, sections of $\mathcal J_{k,m}T^*_X$ are locally given by homogeneous polynomials with holomorphic coefficients in the variables $f',\dots,f^{(k)}$, of total weight $m$, where $f_i^{(l)}$ is assigned weight $l$. 
\end{exmp}

\begin{exmp}
For the invariant jet differentials, we still have $E_{1,m}T^*_X=S^mT^*_X$, but for $k\ge 2$ things become much more complicated. To explain the difficulty, let us translate the condition of being invariant under reparametrization in terms of classical invariant theory.

As above, let $\mathbb G_k'$ be the group of $k$-jets of biholomorphisms tangent to the identity 
$$\varphi(t)= t +a_2t^2+\dots+ a_kt^k.$$
Then, $\mathbb G_k'$ acts on $(f',f'',\dots,f^{(k)})$ linearly:
$$(f\circ \varphi)'= f',$$
$$(f\circ \varphi)''= f''+2a_2f',$$
$$(f\circ \varphi)'''= f'''+6a_2f''+6a_3f',\quad\dots$$
We see that $\mathbb G_k'$ acts by explicit matrix multiplication by the group of matrices

$$
\left(
\begin{array}{ccccc}
1 & 0 & 0 & 0 & 0 \\
2a_2 & 1 & 0 & 0 & 0 \\
6a_3 & 6a_2 & 1 & 0 & 0 \\
... & ... & ... & ... & 0 \\
k! a_k& ... & ... & ... & 1
\end{array}
\right).
$$
For instance, for $X$ a complex surface, local sections of $E_{2,m}T^*_X$ are given by polynomials invariant under the action of the unipotent group $\mathbb U(2)$
$$
\sum_{\alpha_1+\alpha_2+3\beta=m}a_{\alpha_1\alpha_2\beta}(f_1')^{\alpha_1}(f_2')^{\alpha_2}(f'_1f''_2-f''_1f'_2)^\beta.
$$

The algebraic characterization of $E_{2,m}T^*_X$ enables us to make explicit the filtration described above, namely
$$
\operatorname{Gr}^\bullet(E_{2,m}T_X^*)=\bigoplus_{0 \leq j \leq m/3} S^{m-3j}T_X^*\otimes K_X^j.
$$
For $k\geq 3$ this group of matrices is a proper subroup of the unipotent group, hence it is non-reductive and therefore we cannot apply the well-known invariant theory of reductive actions. It turns out that for proper subgroup of the unipotent group the theory is much less developed.

Thus, in general, it is still an unsolved (and probably very difficult) problem to determine the structure of the algebra
$$
\mathcal A_k=\bigoplus_{m\ge 0}E_{k,m}T^*_{X,x}.
$$

Among the few results we have, we know \cite{Rou06a} for instance, for $X$ a threefold,  that  $\mathcal A_3$ is generated by the following polynomials:
$$W= \left| 
\begin{array}{ccc}
f_{1}^{\prime } & f_{2}^{\prime } & f_{3}^{\prime } \\ 
f_{1}^{\prime \prime } & f_{2}^{\prime \prime } & f_{3}^{\prime \prime } \\ 
f_{1}^{\prime \prime \prime } & f_{2}^{\prime \prime \prime } & 
f_{3}^{\prime \prime \prime }
\end{array}
\right|,$$

$$w_{ij}=f_{i}^{\prime }f_{j}^{\prime \prime }-f_{i}^{\prime \prime
}f_{j}^{\prime },$$

$$w_{ij}^{k}=f_{k}^{\prime }(f_{i}^{\prime }f_{j}^{\prime \prime \prime
}-f_{i}^{\prime \prime \prime }f_{j}^{\prime })-3f_{k}^{\prime \prime
}(f_{i}^{\prime }f_{j}^{\prime \prime }-f_{i}^{\prime \prime }f_{j}^{\prime
}).$$

As above, one can then deduce the filtration
$$
\operatorname{Gr}^\bullet(E_{3,m}T_X^*)=\bigoplus_{a+3b+5c+6d=m}\Gamma^{
(a+b+2c+d,\,\,b+c+d,\,\,d)}T_X^{\ast },
$$
where $\Gamma$ denotes the Schur functor which provides the list of all irreducible representation of the general linear group.

In this direction, we want to cite here the other results we have: the structure of $\mathcal A_2$, $\mathcal A_3$ and $\mathcal A_4$ for $\dim X=2$ ($9$ generators) was found by Demailly and, recently, Merker \cite{Mer08} found $\mathcal A_5$ when $\dim X=2$ ($56$ generators) and $\mathcal A_4$ for $\dim X=4$ ($2835$ generators).

The general structure of $\mathcal A_k$ appears to be far from being understood even in the surface case.

We would like to mention here that recent progresses have been made in the invariant theory of non-reductive groups with applications to Demailly-Semple jets by Berczi and Kirwan. 
In particular, they can prove that $\mathcal A_k$ is finitely generated \cite{B-K10}.

\end{exmp}

\subsection{Invariant jet differentials and projectivized jet bundles}

Associated to the graded algebra bundle $\mathcal J_{k,\bullet}V^*=\bigoplus_{m\ge 0}\mathcal J_{k,m}V^*$, there is an analytic fiber bundle, namely $\operatorname{Proj}(\mathcal J_{k,\bullet}V^*)=J_kV^{\text{\rm nc}}/\mathbb C^*$, where $J_kV^{\text{\rm nc}}$ is the bundle of non-constant $k$-jets tangent to $V$, whose fibers are weighted projective spaces $\mathbb P(r,\dots,r;1,2,\dots,k)$ (for a definition of weighted projective spaces and much more, see \cite{Dol82}).  

However, we would be mostly interested in a more \lq\lq geometric\rq\rq{} quotient, for instance something like $J_kV^{\text{\rm nc}}/\mathbb G_k$.

In \cite{Dem97}, it has been constructed something similar, that is the quotient space of $J_kV^{\text{\rm reg}}/\mathbb G_k$ of regular (i.e. with non-vanishing first derivative) $k$-jets tangent to $V$ and we shall see how the projectivized jet bundles can be seen as a relative compactification of this quotient space.

This is exactly the content of the next theorem.

\begin{thm}[\cite{Dem97}]\label{di}
Suppose $\rank V\ge 2$ and let $\pi_{0,k}\colon X_k\to X$ be the projectivized $k$-th jet bundle of $(X,V)$. Then
\begin{itemize}
\item the quotient $J_kV^{\text{\rm reg}}/\mathbb G_k$ has the structure of a locally trivial bundle over $X$ and there is a holomorphic embedding $J_kV^{\text{\rm reg}}/\mathbb G_k\hookrightarrow X_k$ over $X$, which identifies $J_kV^{\text{\rm reg}}/\mathbb G_k$ with $X_k^{\text{\rm reg}}$.
\item The direct image sheaf
$$
(\pi_{0,k})_*\mathcal O_{X_k}(m)\simeq\mathcal O(E_{k,m}V^*)
$$
can be identified with the sheaf of holomorphic sections of the bundle $E_{k,m}V^*$.
\end{itemize}
\end{thm}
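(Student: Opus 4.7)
The plan is to prove the two assertions together, using the lifting procedure from the preceding subsection to build an explicit correspondence, and then to transfer statements about sections on $X_k$ to statements about invariant polynomials on $J_kV$.

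For the first assertion, I would define
\[
\Phi \colon J_kV^{\text{\rm reg}} \longrightarrow X_k, \qquad j^k f \longmapsto f_{[k]}(0),
\]
using the iterated lifting $f \mapsto f_{[1]} \mapsto \cdots \mapsto f_{[k]}$. The map is well-defined on $k$-jets because only the Taylor coefficients of $f$ up to order $k$ at $0$ enter the definition, and its image lies in $X_k^{\text{\rm reg}}$ by the characterization $X_k^{\text{\rm reg}} = \bigcap_{j\ge 2}\pi_{j,k}^{-1}(X_j\setminus D_j)$ established above (regularity of $f$ is exactly what prevents $f_{[j]}(0)$ from landing in $D_j$). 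Invariance under $\mathbb{G}_k$ is immediate from the discussion of liftings: for a reparametrization $\varphi$, the tangent line $[f'_{[j-1]}]$ at the origin is unchanged, so $(f\circ\varphi)_{[k]}(0)=f_{[k]}(0)$. Thus $\Phi$ factors through $J_kV^{\text{\rm reg}}/\mathbb{G}_k$. In the standard local coordinates on $V$ from the jet bundle section, one can normalize any $\mathbb{G}_k$-orbit to a unique representative, which then matches the iterated affine charts of $X_k = P(V_{k-1})$; this makes the induced map a local biholomorphism, and a dimension count ($\dim X_k - \dim X = k(r-1)$ matches the fiber dimension of $J_kV^{\text{\rm reg}}/\mathbb{G}_k$) shows it is an open embedding onto $X_k^{\text{\rm reg}}$.

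For the second assertion, given a local section $\sigma$ of $\mathcal{O}_{X_k}(m)$ over $\pi_{0,k}^{-1}(U)$, I associate an invariant jet differential as follows. For a regular jet $j^k f$, the derivative $f'_{[k-1]}(0)$ sits in $T^*_{\Delta,0}\otimes \mathcal{O}_{X_k}(-1)_{f_{[k]}(0)}$, so after trivializing $T^*_{\Delta,0}$ by $dt|_0$ it furnishes a generator of $\mathcal{O}_{X_k}(-1)_{f_{[k]}(0)}$. Pairing the $m$-th tensor power of this generator with $\sigma(f_{[k]}(0))$ produces a scalar $Q_\sigma(j^k f)$; a coordinate computation expresses it as a polynomial in $(f',f'',\dots,f^{(k)})$ of weighted degree $m$. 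For $\varphi\in\mathbb{G}_k$, the chain rule gives $(f\circ\varphi)'_{[k-1]}(0) = \varphi'(0)\,f'_{[k-1]}(0)$, hence $Q_\sigma$ transforms by $\varphi'(0)^m$, so $Q_\sigma\in E_{k,m}V^*$. Conversely, any $Q \in E_{k,m}V^*$ determines, by the inverse formula, a section $\sigma_Q$ on $X_k^{\text{\rm reg}}$, independent of representative by the invariance of $Q$.

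The delicate point is to show that $\sigma_Q$ extends holomorphically across $X_k^{\text{\rm sing}} = \bigcup \pi_{j,k}^{-1}(D_j)$, so as to give a genuine section of $\mathcal{O}_{X_k}(m)$ rather than just a meromorphic one. My plan is to argue inductively on $k$ using the short exact sequences \eqref{ses1}, \eqref{ses2} and the relation \eqref{divisor}: pushing $\mathcal{O}_{X_k}(m)$ down through $\pi_k$ produces a filtered bundle whose graded pieces can be computed from symmetric powers of $V_{k-1}^*$ twisted by $\mathcal{O}_{X_{k-1}}(-1)$, and these match, level by level, the graded pieces $S^{\ell_1}V^*\otimes\cdots\otimes S^{\ell_k}V^*$ of the filtration on $E_{k,m}V^*$ recorded earlier. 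Checking that the natural pairing described above respects these filtrations makes the assignment $\sigma\mapsto Q_\sigma$ a filtered isomorphism, which promotes $\sigma_Q$ to a global section and yields the required identification of direct images.

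The main obstacle, as anticipated, is precisely this extension across $X_k^{\text{\rm sing}}$. The invariance of $Q$ under the homothety subgroup $\mathbb{H}$ alone only captures the weighted-degree condition; it is the invariance under the unipotent part $\mathbb{G}_k'$ that forces the algebraic cancellations ensuring $\sigma_Q$ has no poles along the $D_j$'s. Making this cancellation explicit is essentially the content of the filtration comparison above, and once established it proves both parts of the theorem simultaneously.
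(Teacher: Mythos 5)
Your treatment of the first assertion is essentially the paper's: define $\Phi(j^kf)=f_{[k]}(0)$, observe $\mathbb{G}_k$-invariance because the tangent line $[f'_{[j-1]}(0)]$ is unchanged under reparametrization, normalize orbits in an affine chart (the paper fixes $f_r(t)=t$), and match dimensions. That part is fine.

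The second assertion has a real gap, and it is not where you think it is. You write that \lq\lq a coordinate computation expresses $Q_\sigma$ as a polynomial in $(f',\dots,f^{(k)})$.\rq\rq{} It does not: in the inhomogeneous chart where $f_r(t)=t$ is normalized, the affine coordinates of $f_{[k]}(0)$ are ratios with $f'_r(0)$ in the denominator, so $Q_\sigma(j^kf)=\sigma(f_{[k]}(0))\cdot(f'_{[k-1]}(0))^m$ is a priori only a \emph{holomorphic function on $J_kV^{\text{\rm reg}}_x$}, not visibly a polynomial. The paper's argument is precisely what bridges this: one observes that the complement $J_kV^{\text{\rm sing}}_x=\{f'(0)=0\}$ has codimension $r\ge 2$ inside $J_kV_x\simeq(\mathbb{C}^r)^k$, so $Q_\sigma$ extends holomorphically across it by Riemann's removable singularity theorem, and then the $\mathbb{H}$-equivariance (weighted homogeneity of degree $m$) forces the extension to be a polynomial. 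This is exactly where the hypothesis $\operatorname{rank}V\ge 2$ enters, and your proposal never invokes it. What you \emph{do} flag as the delicate point --- extending $\sigma_Q$ across $X_k^{\text{\rm sing}}=\bigcup\pi_{j,k}^{-1}(D_j)$ --- concerns the opposite direction of the bijection (surjectivity onto the direct image), and there your filtration idea is a legitimate route, close in spirit to Demailly's original dimension count via the graded pieces; but it is a more involved path than the one this survey sketches, and in any case cannot substitute for the missing extension argument in the direction $\sigma\mapsto Q_\sigma$.
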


Let us say a few words about this result. First of all, one needs to use $J_kV^{\text{\rm reg}}$ instead of $J_kV^{\text{\rm nc}}$ in order to lift a $k$-jet of curve $f$ by taking the derivative $(f,[f'])$ without any cancellation of zeroes in $f'$: in this way one gets a uniquely defined $(k-1)$-jet $\tilde f$ so that, inductively, $f_{[k]}(0)$ is independent of the choice of the representative $f$.

Moreover, as the reparametrization commutes with the lifting process, that is $\widetilde{(f\circ\varphi)} =\widetilde{f} \circ\varphi$, and more generally $(f\circ\varphi)_{[k]}=f_{[k]}\circ\varphi$, we get a well defined map
$$
J_kV^{\text{\rm reg}}/\mathbb G_k\to X_k^{\text{\rm reg}}.
$$

This map can be described explicitely in local coordinates. Take coordinates $(z_1,\dots,z_n)$ near $x_0 \in X$ such that we have that $V_{x_0}= \operatorname{Vect}\left(\frac{\partial}{\partial z_1},\dots,\frac{\partial}{\partial z_r}\right)$. Let $f=(f_1,\dots,f_n)$ be a regular $k$-jet tangent to $V$ such that $f_r(t)=t$. $X_k$ is a $k$-stage tower of $\mathbb P^{r-1}$-bundles. In the corresponding inhomogeneous coordinates, the point $f_{[k]}(0)$ is given by
$$
\left(f'_1(0),\dots,f'_{r-1}(0),f''_1(0),\dots,f''_{r-1}(0),\dots,f^{(k)}_1(0),\dots,f^{(k)}_{r-1}(0)\right).
$$
We see easily that the map $J_kV^{\text{\rm reg}}/\mathbb G_k\to X_k^{\text{\rm reg}}$ is a bijection onto $X_k^{\text{\rm reg}}$. This is the embedding of the first part of the theorem.

Next, part two of the theorem says that, for $x\in X$, we have an identification $H^0(\pi_{0,k}^{-1}(x),\mathcal O_{X_k}(m))\simeq E_{k,m}V^*_{x}$: we want to describe briefly what this identification is. To begin with, fix a section $\sigma\in H^0(\pi_{0,k}^{-1}(x),\mathcal O_{X_k}(m))$. Recall that given regular $k$-jet of curve at $x\in X$, the derivative $f_{[k-1]}'(0)$ defines an element of the fiber of $\mathcal O_{X_k}(-1)$ at $f_{[k]}(0)$. Then we get a well-defined complex valued operator
$$
Q(f',f'',\dots,f^{(k)})=\sigma(f_{[k]}(0))\cdot(f_{[k-1]}'(0))^m.
$$
Such a $Q$ is holomorphic and extends to singular jets by an easy Riemann's extension theorem argument (since $\operatorname{codim}_{J_kV_x} J_kV^{\text{\rm sing}}_x=r\ge 2$). The $\mathbb G_k$-invariance is satisfied since $f_{[k]}(0)$ does not depend on the reparametrization and $(f\circ \varphi)'_{[k-1]}(0)=f'_{[k-1]}(0)\cdot\varphi'(0).$ Moreover, the invariance implies in particular that $Q$ must be polynomial. Thus $Q \in E_{k,m}V^*_x$.
This correspondence is easily shown to be bijective and is in fact the one given in the theorem.

\subsection{Sufficient conditions for relative positivity}

The relative structure of the fibration $\pi_{0,k}\colon X_k\to X$ is completely universal and its fibers are smooth rational varieties which depend only on $k$ and on the rank of $V$.

Moreover, as $X_k$ arises as a sequence of successive compactifications of vector bundles, its Picard group has a quite simple structure, namely we have
$$
\operatorname{Pic}(X_k)\simeq\operatorname{Pic}(X)\oplus\mathbb Z u_1\cdots\oplus\mathbb Z u_k,
$$   
where $u_j$, $j=1,\dots,k$, is the class of $\mathcal O_{X_j}(1)$.

As we already observed, the line bundle $\mathcal O_{X_k}(1)$ is never relatively ample over $X$ for $k\ge 2$. Now, for each $\mathbf a=(a_1,\dots,a_k)\in\mathbb Z^k$, we define a line bundle $\mathcal O_{X_k}(\mathbf a)$ as
$$
\mathcal O_{X_k}(\mathbf a)\overset{\text{\rm def}}=\pi_{1,k}^*\mathcal O_{X_1}(a_1)\otimes\pi_{2,k}^*\mathcal O_{X_2}(a_2)\otimes\cdots\otimes\mathcal O_{X_k}(a_k).
$$
By formula (\ref{divisor}), we get inductively 
$$
\pi_{j,k}^*\mathcal O_{X_j}(1)=\mathcal O_{X_k}(1)\otimes\mathcal O_{X_k}(-\pi_{j+1,k}^*D_{j+1}-\cdots -D_k).
$$
Set, for $j=1,\dots,k-1$, $D_j^\star=\pi_{j+1,k}^*D_{j+1}$ and $D^\star_k=0$. Then, if we define the weight $\mathbf b=(b_1,\dots,b_k)\in\mathbb Z^k$ by $b_j=a_1+\cdots+a_j$, $j=1,\dots,k$, we find an identity
$$
\mathcal O_{X_k}(\mathbf a)\simeq\mathcal O_{X_k}(b_k)\otimes\mathcal O_{X_k}(-\mathbf b\cdot D^\star),
$$
where
$$
\mathbf b\cdot D^\star\overset{\text{\rm def}}=\sum_{j=1}^{k-1}b_j\,\pi_{j+1,k}^*D_{j+1}.
$$
In particular, as all the $D_j$'s are effective, if $\mathbf b\in\mathbb N^k$, that is $a_1+\cdots+a_j\ge 0$ for all $j=1,\dots,k$, we get a non-trivial bundle morphism
\begin{equation}\label{twist}
\mathcal O_{X_k}(\mathbf a)\simeq\mathcal O_{X_k}(b_k)\otimes\mathcal O_{X_k}(-\mathbf b\cdot D^\star)\to\mathcal O_{X_k}(b_k).
\end{equation}
Set theoretically, we have seen that the relative base locus of the complete linear system $|\mathcal O_{X_k}(m)|$ is exactly $X_k^{\text{\rm sing}}=\bigcup_{j=2}^k\pi_{j,k}^{-1}(D_j)$. 

Now, we would like to twist the line bundle $\mathcal O_{X_k}(m)$ by an ideal sheaf $\mathcal I$, possibly co-supported on $X_k^{\text{\rm sing}}$, in order to get rid of this base locus. If one wants to remain in the category of invertible sheaves, then this ideal sheaf should be something of the form $\mathcal O_{X_k}(-\mathbf b\cdot D^\star)$, for $\mathbf b\in\mathbb N^k$. 

Next proposition gives sufficient conditions to solve this problem.

\begin{prop}[\cite{Dem97}]\label{relnef}
Let $\mathbf a=(a_1,\dots,a_k)\in\mathbb N^k$ be a weight and $m=b_k=a_1+\cdots+a_k$. Then
\begin{itemize}
\item we have the direct image formula
$$
(\pi_{0,k})_*\mathcal O_{X_k}(\mathbf a)\simeq\mathcal O(\overline F^{\mathbf a}E_{k,m}V^*)\subset\mathcal O(E_{k,m}V^*)
$$
where $\overline F^{\mathbf a}E_{k,m}V^*$ is the subbundle of homogeneous polynomials $Q(f',\dots,f^{(k)})\in E_{k,m}V^*$ involving only monomials $(f^{(\bullet)})^\ell$ such that
$$
\ell_{s+1}+2\ell_{s+2}+\cdots+(k-s)\ell_{k}\le a_{s+1}+\cdots+a_k
$$
for all $s=0,\dots,k-1$.
\item if $a_1\ge 3 a_2,\dots,a_{k-2}\ge 3a_{k-1}$ and $a_{k-1}\ge 2a_k\ge 0$, then the line bundle $\mathcal O_{X_k}(\mathbf a)$ is relatively nef over $X$.
\item if $a_1\ge 3 a_2,\dots,a_{k-2}\ge 3a_{k-1}$ and $a_{k-1}> 2a_k> 0$, then the line bundle $\mathcal O_{X_k}(\mathbf a)$ is relatively ample over $X$.
\end{itemize}
\end{prop}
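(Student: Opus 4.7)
For part (1), I would start from identity (\ref{twist}), which rewrites $\mathcal O_{X_k}(\mathbf{a})\simeq\mathcal O_{X_k}(m)\otimes\mathcal O_{X_k}(-\mathbf{b}\cdot D^\star)$ with $m=b_k$. Pushing forward by $\pi_{0,k}$ and invoking Theorem \ref{di}, $(\pi_{0,k})_*\mathcal O_{X_k}(m)\simeq\mathcal O(E_{k,m}V^*)$; the direct image of the twist then cuts out the subsheaf of those local sections of $E_{k,m}V^*$ which, viewed fiberwise as sections of $\mathcal O_{X_k}(m)$, vanish along each $D_j^\star=\pi_{j+1,k}^*D_{j+1}$ to order at least $b_j=a_1+\cdots+a_j$. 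The combinatorial heart of this step is to compute the order of vanishing along $D_j=P(T_{X_{j-1}/X_{j-2}})$ of a monomial $(f')^{\ell_1}(f'')^{\ell_2}\cdots(f^{(k)})^{\ell_k}$ interpreted as an invariant jet differential: iterating (\ref{divisor}), one finds that the contribution of $(f^{(s)})^{\ell_s}$ to the $D_j$-order equals $(s-j+1)\ell_s$ for $s\geq j$ and zero otherwise. Imposing $\operatorname{ord}_{D_{s+1}^\star}\geq b_s$ for each $s$ translates, after a telescoping rearrangement, exactly into the condition $\ell_{s+1}+2\ell_{s+2}+\cdots+(k-s)\ell_k\leq a_{s+1}+\cdots+a_k$ defining $\overline F^{\mathbf a}E_{k,m}V^*$.

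For parts (2) and (3), I would argue by induction on $k$. The case $k=1$ is immediate since $\mathcal O_{X_1}(a_1)$ is the relative $\mathcal O(1)$ of the $\mathbb P^{r-1}$-bundle $X_1=P(V)\to X$. For the inductive step, write $\mathcal O_{X_k}(\mathbf a)=\pi_k^*\mathcal O_{X_{k-1}}(\mathbf{a}')\otimes\mathcal O_{X_k}(a_k)$ where $\mathbf a'=(a_1,\dots,a_{k-1})$. By the inductive hypothesis applied to the sub-weight $\mathbf{a}'$ (whose entries already satisfy the required ratio conditions), $\mathcal O_{X_{k-1}}(\mathbf{a}')$ is relatively nef (resp.\ ample) over $X$, hence so is its pull-back along $\pi_k$. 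The remaining task is to absorb the relative $\mathcal O_{X_k}(a_k)$ factor, which is ample on vertical fibers of $\pi_k$ but whose base locus along horizontal directions is exactly $D_k$. Using the identity $\mathcal O_{X_k}(1)\simeq\pi_k^*\mathcal O_{X_{k-1}}(1)\otimes\mathcal O_{X_k}(D_k)$ iteratively, one tests nefness (resp.\ ampleness) on each curve $C$ contained in a fiber of $\pi_{0,k}$ by decomposing $C$ against the natural generators $u_1,\dots,u_k$ of $\mathrm{Pic}(X_k)$ and the effective classes $[D_j^\star]$. The conditions $a_{j-1}\geq 3a_j$ (resp.\ $>$) together with $a_{k-1}\geq 2a_k$ (resp.\ $>$) are extracted as the sharp numerical inequalities ensuring that the positive pull-back contributions dominate the negative intersection of $\mathcal O_{X_k}(a_k)$ with curves sweeping the exceptional divisors $D_j$.

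The main obstacle is the analysis along the relative base locus $X_k^{\text{sing}}=\bigcup_{j=2}^k\pi_{j,k}^{-1}(D_j)$, where $\mathcal O_{X_k}(1)$ is not even generated by its sections and where horizontal curves sitting in a $D_j$ can give negative intersection numbers. To control these, one must identify the extremal rays of the cone of curves in the universal fiber $\mathbb R_{r,k}$ of $\pi_{0,k}$; since $\mathbb R_{r,k}$ is itself built from $\mathbb R_{r,k-1}$ by a $\mathbb P^{r-1}$-bundle with exceptional locus a $\mathbb P^{r-2}$-bundle, this cone is generated by lines in the newly-added fiber and by lifts of extremal curves from the previous stage, both of which can be intersected explicitly with $\pi_{j+1,k}^*D_{j+1}$ using the Euler-type sequences (\ref{ses1}) and (\ref{ses2}). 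The factors $3$ (respectively $2$ at the last step) appear precisely because the negative contribution of $D_j^\star$ along one such generating curve is at most twice the positive contribution of $\pi_{j-1,k}^*\mathcal O_{X_{j-1}}(1)$, plus a unit defect that one can absorb once $a_{j-1}>2a_j$, sharpening to $a_{j-1}\geq 3a_j$ to obtain uniform positivity in the nef case.
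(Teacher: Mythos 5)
Your overall plan tracks what the paper (and its source \cite{Dem97}) does: part~(1) via the morphism (\ref{twist}) and the direct image Theorem~\ref{di}, and parts~(2)--(3) by induction on $k$ using the factorization $\mathcal O_{X_k}(\mathbf a)=\pi_k^*\mathcal O_{X_{k-1}}(\mathbf a')\otimes\mathcal O_{X_k}(a_k)$. Note, however, that the paper only sketches the argument in two sentences after the statement, so the burden is entirely on your expansion being correct --- and there is a concrete error in it.

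Your vanishing-order formula for part~(1) is backwards. You claim the contribution of $(f^{(s)})^{\ell_s}$ to the order of the associated section $\sigma$ along $D_j$ is $(s-j+1)\ell_s$ for $s\ge j$, giving $\operatorname{ord}_{D_{s+1}}\sigma=\ell_{s+1}+2\ell_{s+2}+\cdots+(k-s)\ell_k$. Imposing $\operatorname{ord}_{D^\star_s}\sigma\ge b_s$ would then yield $\ell_{s+1}+2\ell_{s+2}+\cdots+(k-s)\ell_k\ge b_s$, which is the \emph{opposite} inequality from the one defining $\overline F^{\mathbf a}E_{k,m}V^*$. Test it on $k=2$, $V=T_X$, $\operatorname{rank}V=2$: a local section of $E_{2,m}T_X^*$ is $\sum a_{\alpha_1\alpha_2\beta}(f_1')^{\alpha_1}(f_2')^{\alpha_2}W^\beta$ with $W=f_1'f_2''-f_1''f_2'$, and $\ell_2=\beta$. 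In the affine coordinate $\eta=W/(f_2')^2$ on the $\mathbb P^1$-fiber of $X_2\to X_1$, with $D_2=\{\eta=\infty\}$, the monomial $W^\beta\cdots$ corresponds to $\eta^\beta$, whose vanishing order at $\eta=\infty$ is $m-\beta=m-\ell_2$, not $\ell_2$. Thus $\operatorname{ord}_{D_2}\sigma\ge b_1=a_1$ gives $m-\ell_2\ge a_1$, i.e.\ $\ell_2\le a_2$, matching the statement. In general the correct vanishing order along $D_{s+1}$ is $m-\bigl(\ell_{s+1}+2\ell_{s+2}+\cdots+(k-s)\ell_k\bigr)=\sum_t\min(t,s)\,\ell_t$: it is the \emph{complement} of your formula, roughly because $D_{s+1}$ is the point at infinity for the higher jet variable, so low weight in the high derivatives means high vanishing there, not low. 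With this fix your ``telescoping rearrangement'' does land on the stated condition.

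For parts~(2)--(3), the inductive scaffold is the right one, but the step where you ``extract the sharp numerical inequalities'' by intersecting against generators of the cone of curves of $\mathbb R_{r,k}$ is not carried out; the appeal to a ``unit defect'' absorbed by strengthening $a_{j-1}>2a_j$ to $a_{j-1}\ge 3a_j$ is an assertion, not a calculation, and the cone of curves of $\mathbb R_{r,k}$ is not explicitly described anywhere in the paper. Also be careful with the quantifiers: parts~(2) and~(3) both require $a_1\ge 3a_2,\dots,a_{k-2}\ge 3a_{k-1}$; only the last inequality $a_{k-1}\ge 2a_k\ge 0$ versus $a_{k-1}>2a_k>0$ distinguishes relative nefness from relative ampleness, whereas your ``(resp.\ $>$)'' reads as if all the $3a_j$ inequalities become strict in the ample case. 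The cleaner inductive route (the one used in Lemma~\ref{globnef} later in this paper, and essentially in \cite{Dem97}) is to exhibit, given $A_{k-1}$ with $V_{k-1}^*\otimes A_{k-1}$ nef, an $A_k$ making $V_k^*\otimes A_k$ nef by dualizing (\ref{ses1}) and using the surjection obtained from the second wedge power of (\ref{ses2}), rather than by a direct analysis of extremal rays.
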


Note that formula \ref{twist} gives a sheaf injection
$$(\pi_{0,k})_*\mathcal{O}_{X_k}(\mathbf a) \hookrightarrow (\pi_{0,k})_*\mathcal{O}_{X_k}(m)=\mathcal{O}(E_{k,m}V^*),$$
which is the inclusion of the first part of the proposition.
The two last positivity properties are obtained by induction on $k$.

\chapter{Hyperbolicity and negativity of the curvature}

{\small\textsc{Abstract}. In this chapter we shall explain how negativity properties of the curvature of complex manifolds is connected to hyperbolicity. We start with some basic notions of curvature and then prove the classical Ahlfors-Schwarz lemma. Then, we come back to higher order jets, and prove the basic result that every entire curve automatically satisfies every global jet differential with values in an antiample line bundle; as a consequence we deduce Bloch's theorem about entire curves on complex tori. To conclude the chapter we illustrate a general strategy to prove algebraic degeneracy of entire curves.}

\section{Curvature and positivity}

Let $X$ be a complex manifold of complex dimension $n$ and $\pi\colon E\to X$ a hermitian vector bundle of rank $r$ with hermitian metric $h$. Fix a point $x_0\in X$, some local holomorphic coordinates $(z_1,\dots,z_n)$ centered in $x_0$ and a local holomorphic frame $(e_1,\dots,e_r)$ of $E$, which we can suppose without loss of generality orthonormal in $x_0$ with respect to $h$.

We recall that, on $E$, there exists a unique linear connection $D_h$ which respects both the complex structures of $E$ and $X$ and the hermitian structure of $E$ given by $h$: it is called the \emph{Chern connection}. Its curvature
$$
i\,\Theta(E,h)=i\,D^2_h
$$
is the \emph{Chern curvature} of the pair $(E,h)$: it is a $(1,1)$-form with values in the hermitian endomorphisms of $E$. Locally, in terms of the natural hermitian matrix $H$ associated to $h$ with respect to the local frame $(e_\lambda)$, it is given by
$$
i\,\Theta(E,h)=i\,\overline\partial(\overline H^{-1}\partial\overline H).
$$
At the given point $x_0\in X$, write
$$
i\,\Theta_{x_0}(E,h)=i\sum_{j,k=1}^n\sum_{\lambda,\mu=1}^r c_{jk\lambda\mu}\,dz_j\wedge d\overline z_k\otimes e_\lambda^*\otimes e_\mu.
$$
To $i\,\Theta_{x_0}(E,h)$ corresponds a natural hermitian form $\theta_E$ on $T_{X,x_0}\otimes E_{x_0}$ defined by
$$
\theta_E=\sum_{j,k,\lambda,\mu}c_{jk\lambda\mu}\,(dz_j\otimes e_\lambda^*)\otimes(\overline{dz_k\otimes e_\mu^*}).
$$
Its evaluation on rank one tensors gives rise to the notion of Griffiths positivity for vector bundles.

\begin{defn}
The hermitian vector bundle $(E,h)$ is said to be \emph{Griffiths positive} (resp. \emph{Griffiths semi-negative}) at $x_0$ if for all $v\in T_{X,x_0}\setminus\{0\}$ and $s\in E_{x_0}\setminus\{0\}$ we have
$$
\theta_E(v\otimes s,v\otimes s)>0\quad\text{(resp. $\le 0$)}.
$$
The bundle $(E,h)$ is said to be \emph{Griffiths positive} (resp. \emph{Griffiths semi-negative}) if it is Griffiths positive (resp. semi-negative) at all point $x\in X$.
\end{defn}
We shall not list here all the remarkable properties of Griffiths positive bundles, but just mention that Griffiths positivity implies (and is conjecturally equivalent to) the ampleness for the bundle $E$, that is global section of high symmetric powers of $E$ generates $1$-jets of sections at any point.

Obviously these notions are still valid in the particular case when $E=T_X$ is the tangent bundle of $X$ and the metric $h$ is then a hermitian metric on $X$. In this case the notion of Griffiths curvature coincides with the classical notion of holomorphic bisectional curvature and if the hermitian form $\theta_{T_X}$ is just tested on tensors of the form $v\otimes v$, with $v\in T_X$, this gives back the holomorphic sectional curvature. In particular, if the tangent bundle of a manifold is Griffiths positive (resp. Griffiths semi-negtive) then it has positive (resp. semi-negative) holomorphic sectional curvature.

When hermitian metrics on the tangent bundle are given, we have and we shall often confuse the hermitian form $h$ and its naturally associated $(1,1)$-form $\omega=-\Im h$ (and vice-versa): if $h$ is given locally by
$$
h=\sum_{j,k=1}^n h_{jk}\,dz_j\otimes d\overline z_k,
$$
then $\omega$ is given simply by
$$
\omega=\frac i2\sum_{j,k=1}^n h_{jk}\,dz_j\wedge d\overline z_k,
$$
as one can immediately check.

\subsection{Special case of hermitian line bundles}

When $r=1$, that is when $E$ is a hermitian line bundle, the positive definite hermitian matrix $H$ is just a positive function which we write $H=e^{-\varphi}$; we call $\varphi$ a \emph{local weight} for $h$. The above formulae then give locally
$$
i\,\Theta(E,h)=i\,\partial\overline\partial\varphi.
$$
Especially, we see that in this case $i\,\Theta(E,h)$ is a closed real $(1,1)$-form. It is (semi-)positive (in the sense of Griffiths) if and only if the local weight $\varphi$ is strictly plurisubharmonic (resp. plurisubharmonic). By the celebrated Kodaira theorem, this happens if and only if $E$ is an ample line bundle. With a slight abuse of notation, in the case of line bundle we could indicate in the sequel with
$$
i\,\Theta(E,h)(v),\quad v\in T_X,
$$
the evaluation on $v$ of the hermitian form naturally associated to the $(1,1)$-form $i\,\Theta(E,h)$.

More generally, we can relax the smoothness requirements on $h$ and just ask the local weights to be locally integrable: this gives the so-called notion of \emph{singular hermitian metric}. In this framework, the curvature is still well defined provided we take derivatives in the sense of distributions. The positivity in the sense of distribution becomes now to ask the local weights $\varphi$ to satisfy the following property: for all $w=(w_1,\dots,w_n)\in\mathbb C^n\setminus\{0\}$ the distribution
$$
\sum_{j,k=1}^n\frac{\partial^2\varphi}{\partial z_j\partial \overline z_k}\,w_j\overline w_k
$$
is a positive measure. This is again equivalent to ask the local weights $\varphi$ to be plurisubharmonic. We say that a singular hermitian $h$ admits a closed subset $\Sigma_h\subset X$ as its degeneration set if $\varphi$ is locally bounded on $X\setminus\Sigma_h$ and is unbounded on a neighborhood of any point of $\Sigma_h$.

\begin{exmp}
Let $L\to X$ be a holomorphic line bundle and suppose we have non zero global sections $\sigma_1,\dots,\sigma_N\in H^0(X,L)$. Then, there is a natural way to construct a (posssibily) singular hermitian metric $h^{-1}$ on $L^{-1}$ (and therefore on $L$ by taking its dual): we let
$$
h^{-1}(\xi)=\biggl(\sum_{j=1}^N\bigl(\xi(\sigma_{j}(x))\bigr)^2\biggr)^{1/2},\quad\forall\xi\in L_x^{-1}.
$$
The local weights of the induced metric on $L$ are then given by 
$$
\varphi(x)=\frac 12\log\sum_{j=1}^N|s_j(x)|^2
$$
where $s_j$ is the holomorphic function which is the local expression in a given trivialization of the global section $\sigma_j$, $j=1,\dots,N$.  

Since the $s_j$'s are holomorphic, the weights of such a metric are always plurisubharmonic and hence the corresponding Chern curvature is semi-positive (in the sense of distribution). In particular every effective line bundle carries a singular hermitian metric with semi-positive curvature current constructed in this fashion and its degeneracy locus is exactly the base locus of the sections.

If the sections $\sigma_1,\dots,\sigma_N$ globally generate the space of sections of $L$ (that is, for each $x\in X$ there exists a $j_0=1,\dots,N$, such that $\sigma_{j_0}(x)\ne 0$) then the metric $h$ is smooth and semi-positively curved, in particular $L$ is nef.
If they generate $1$-jets of sections at any given point, then the metric $h$ has strictly plurisubharmonic weights and it is positively curved. Both of these properties can be easily shown by direct computation on the local weights.
\end{exmp}

\subsubsection{Riemann surfaces}

In the special case $\dim X=1$, the tangent bundle of $X$ is in fact a line bundle.

The curvature of $T_X$ becomes then a real number and coincide (modulo a positive factor) with the classical notion of Gaussian curvature for Riemann surfaces. For if the metric $h$ is given locally by the single positive smooth function $e^{-\varphi}$, the Chern curvature is given by 
$$
i\,\partial\overline\partial\varphi=i\,\frac{\partial^2\varphi}{\partial z\partial \overline z}\,dz\wedge d\overline z
$$
and 
$$
\frac{\partial^2\varphi}{\partial z\partial \overline z}=\frac 14\Delta\varphi=\frac 14 \Delta(-\log h),
$$
while the Gaussian curvature of $h$ is given by
$$
\kappa(X,h)=-\frac{1}{2h}\Delta\log h.
$$
Thus, we see that hyperbolic Riemann surfaces are exactly the ones which admits a negatively curved hermitian metric (the ones of genus greater than or equal to two which are covered by the disc and inherit its Poincar metric).  

\subsection*{Tautological line bundle on projectivized vector bundles}

We want to derive here the relation between the curvature of a hermitian vector bundle and the curvature of the associated tautological line bundle over its projectivization.

So let $E\to X$ be a hermitian vector bundle of rank $r$ with hermitian metric $h$, where $X$ is any $n$-dimensional complex manifold and consider the projectivization $\pi\colon P(E)\to X$ of lines of $E$. 

Then, $\mathcal O_{P(E)}(-1)\subset\pi^*E$ admits a natural hermitian metric which is just the restriction of the pull-back of $h$ by $\pi$. Fix an arbitrary point $x_0\in X$, a unit vector $v_0\in E_{x_0}$, local holomorphic coordinates $(z_j)$ centered at $x_0$ and choose a local holomorphic frame $(e_\lambda)$ for $E$ near $x_0$ such that
$$
h(e_\lambda(z),e_\mu(z))=\delta_{\lambda\mu}-c_{jk\lambda\mu}\,z_j\overline z_k+O(|z^3|),
$$
and $e_r(x_0)=v_0$, where the $c_{jk\lambda\mu}$'s are the coefficients of the Chern curvature
$$
i\,\Theta_{x_0}(E,h)=i\,\sum_{j,k}\sum_{\lambda,\mu}c_{jk\lambda\mu}\,dz_j\wedge d\overline z_k\otimes e_\lambda^*\otimes e_\mu.
$$
It is a standard fact in hermitian differential geometry that such a choice of a holomorphic local frame is always possible.

We shall compute the curvature $i\,\Theta(\mathcal O_{P(E)}(-1))$ at the (arbitrary) point $(x_0,[v_0])\in P(E)$. Local holomorphic coordinates centered at $(x_0,[v_0])$ are given by $(z_1,\dots,z_n,\xi_1,\dots,\xi_{r-1})$, where the $(r-1)$-tuple $(\xi_1,\dots,\xi_{r-1})$ corresponds to the direction $[\xi_1\,e_1(z)+\cdots+\xi_{r-1}\,e_{r-1}(z)+e_r(z)]$ in the fiber over the point of coordinates $(z_1,\dots,z_n)$. Next, a local holomorphic non vanishing section of $\mathcal O_{P(E)}(-1)$ around $(x_0,[v_0])\simeq (0,0)$ is given by
$$
\eta(z,\xi)=\xi_1\,e_1(z)+\cdots+\xi_{r-1}\,e_{r-1}(z)+e_r(z)
$$
and its squared length by
$$
||\eta||_h^2=1+|\xi|^2-\sum_{j,k=1}^n c_{jkrr}\,z_j\overline z_k + O((|z|+|\xi|)^3).
$$
Thus, we obtain
\begin{equation}\label{curvature}
\begin{aligned}
i\,\Theta_{(x_0,[v_0])}(\mathcal O_{P(E)}(-1)) & =\left.- i\,\partial\overline\partial\log||\eta||^2_h\right|_{(z,\xi)=(0,0)} \\
& = i\left(\sum_{j,k=1}^n c_{jkrr}\,dz_j\wedge d\overline z_k-\sum_{\lambda=1}^{r-1}d\xi_\lambda\wedge d\overline\xi_\lambda\right) \\
&=\theta_E(\bullet\otimes v_0,\bullet\otimes v_0)-||\bullet||_{FS}^2,
\end{aligned}
\end{equation}
where $||\bullet||_{FS}^2$ is the Fubini-Study metric induced by $h$ on the projective space $P(E_{x_0})$.

In particular, we see that if $(E,h)$ is Griffiths negative then the line bundle $\mathcal O_{P(E)}(-1)$ is negative (and $\mathcal O_{P(E)}(1)$ is positive).
\section{The Ahlfors-Schwarz lemma}

A basic idea is that Kobayashi hyperbolicity is somehow related with suitable properties of negativity of the curvature even in dimension greater than one. The first result in this direction is the following, which was already observed in \cite{Kob70}.

\begin{prop}\label{bisectional}
Let $X$ be a compact hermitian manifold. Assume that $T_X$ has negative Griffiths curvature, or more generally that $T^*_X$ is ample. Then $X$ is hyperbolic.
\end{prop}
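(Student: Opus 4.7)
The plan is to combine Brody's criterion (Theorem~\ref{Brody}) with the Ahlfors-Schwarz lemma. Assume for contradiction that $X$ is not hyperbolic. Since $X$ is compact, Brody yields a non-constant entire curve $f\colon\mathbb C\to X$; the idea is to equip $\mathbb C$ with a pseudo-metric obtained by pulling back a negatively curved hermitian structure on $X$ (or, rather, on $P(T_X)$), and then contradict the existence of $f$ by comparing this pseudo-metric with the Poincaré metric on $\Delta_R$ as $R\to+\infty$.

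The first step is to produce a smooth hermitian metric $h_{-1}$ of strictly negative Chern curvature on the tautological line bundle $\mathcal O_{P(T_X)}(-1)\subset\pi^*T_X$. In the Griffiths-negative case the metric induced by $\pi^*h$ does the job: formula~(\ref{curvature}) gives its Chern form at $(x,[v])$ as $\theta_{T_X}(\bullet\otimes v,\bullet\otimes v)-\|\bullet\|_{FS}^2$, which is negative definite. In the more general case where $T_X^*$ is ample, ampleness of $T_X^*$ translates into ampleness of $\mathcal O_{P(T_X)}(1)$, so Kodaira provides a smooth positively curved hermitian metric on $\mathcal O_{P(T_X)}(1)$ and we let $h_{-1}$ be its dual. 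In either case, by compactness of $P(T_X)$, one gets $i\,\Theta(\mathcal O_{P(T_X)}(-1),h_{-1})\leq -A\,\omega$ for some $A>0$ and some reference Kähler form $\omega$ on $P(T_X)$.

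Next, I lift $f$ to $\widetilde f\colon\mathbb C\to P(T_X)$, $t\mapsto(f(t),[f'(t)])$; this is well-defined at every $t$, the direction $[f'(t)]$ extending continuously across the isolated zeros of $f'$ as explained in the subsection on lifting of curves. The derivative $f'$ is then a holomorphic section of $L:=\widetilde f^*\mathcal O_{P(T_X)}(-1)$, and $\eta(t):=\|f'(t)\|_{h_{-1}}^2$ is a non-negative function on $\mathbb C$ defining a pseudo-metric $\eta\,\tfrac{i}{2}\,dt\wedge d\bar t$. Writing $-i\partial\bar\partial\log\eta=\widetilde f^*\bigl(i\,\Theta(\mathcal O_{P(T_X)}(-1),h_{-1})\bigr)$ off the zero set of $f'$ and exploiting the tautological identification $\mathcal O_{P(T_X)}(-1)_{[v]}=\mathbb C\cdot v$ to express $\widetilde f^*\omega$ in terms of $\eta$ itself, one obtains the inequality $i\partial\bar\partial\log\eta\geq A\,\eta\cdot\tfrac{i}{2}\,dt\wedge d\bar t$; equivalently, the Gaussian curvature of $\eta\,\tfrac{i}{2}\,dt\wedge d\bar t$ is bounded above by $-A$.

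The Ahlfors-Schwarz lemma then dominates this pseudo-metric on each disc $\Delta_R\subset\mathbb C$ by $\tfrac{1}{A}$ times the Poincaré metric of $\Delta_R$. Letting $R\to+\infty$ the Poincaré metric tends to $0$ locally uniformly on $\mathbb C$, which forces $\eta\equiv 0$ and hence $f$ constant --- the desired contradiction. The main technical obstacle is the curvature inequality in the third paragraph in the ample-$T_X^*$ case: since $h_{-1}$ does not come from a hermitian metric on $T_X$, passing from the upper bound $-A\,\omega$ on the Chern form of $\mathcal O_{P(T_X)}(-1)$ to a bound on the Gaussian curvature of $\eta\,\tfrac{i}{2}\,dt\wedge d\bar t$ expressed in terms of $\eta$ itself requires one to observe that the ``horizontal'' part of $i\,\Theta(\mathcal O_{P(T_X)}(-1),h_{-1})$, thanks to the tautological inclusion, is precisely what controls the $f'$-direction, while the ``vertical'' Fubini-Study-like piece appearing in formula~(\ref{curvature}) only makes the bound stronger.
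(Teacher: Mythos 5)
Your argument is correct, but it takes a genuinely different route from the paper's. What you are doing is, in effect, proving the $k=1$ case of Theorem~\ref{link} (and the corollary on negative holomorphic sectional curvature that follows it): build a smooth hermitian metric $h_{-1}$ of negative curvature on $\mathcal O_{P(T_X)}(-1)$, pull it back along the lift $\widetilde f$ of the Brody curve, and crush the resulting pseudo-metric with the Ahlfors--Schwarz lemma as $R\to\infty$. The paper's own proof of Proposition~\ref{bisectional} is shorter and more elementary: ampleness of $T_X^*$ gives sections $\sigma_1,\dots,\sigma_N$ of $S^mT_X^*$ generating $1$-jets, and the resulting function $\eta(v)=\bigl(\sum|\sigma_j\cdot v^{\otimes m}|^2\bigr)^{1/2m}$ is strictly plurisubharmonic on $T_X$ minus the zero section; Brody then supplies an entire curve $g$ with \emph{bounded} derivative, so $\eta\circ g'$ is a bounded subharmonic function on $\mathbb C$, hence constant, contradicting strict subharmonicity where $g'\ne 0$. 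Note that the paper's proof genuinely uses the boundedness of $g'$, whereas yours uses nothing beyond the existence of a nonconstant entire curve.

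One small imprecision in your last paragraph: in the ample-$T_X^*$ case the metric $h_{-1}$ comes from Kodaira, not from a hermitian metric on $T_X$, and formula~(\ref{curvature}) simply does not apply to it --- there is in general no clean splitting of $i\,\Theta(\mathcal O_{P(T_X)}(-1),h_{-1})$ into a ``horizontal piece controlling the $f'$-direction'' and a ``vertical Fubini--Study piece.'' The clean way to close the gap, which is exactly what the paper does in the proof of Theorem~\ref{link}, is not geometric but metric: $(\pi_1)_*$ maps $T_{P(T_X)}$ continuously onto $\mathcal O_{P(T_X)}(-1)$, so by compactness of $P(T_X)$ and smoothness of $h_{-1}$ there is a constant $C>0$ with $\|(\pi_1)_*\xi\|_{h_{-1}}^2\le C\,\|\xi\|_{\omega}^2$ for all $\xi\in T_{P(T_X)}$. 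Combined with $i\,\Theta(\mathcal O_{P(T_X)}(-1),h_{-1})\le -A\,\omega$ this gives $i\,\Theta(\mathcal O_{P(T_X)}(1),h_{-1}^{-1})(\xi)\ge (A/C)\,\|(\pi_1)_*\xi\|_{h_{-1}}^2$, and plugging in $\xi=\widetilde f'(t)$ (so that $(\pi_1)_*\xi=f'(t)$) yields precisely $i\partial\bar\partial\log\eta\ge (A/C)\,\eta\cdot\tfrac{i}{2}\,dt\wedge d\bar t$. With this fix your proof is complete.
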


\begin{proof}
Since $T^*_X$ is ample, the large symmetric powers $S^mT^*_X$ of $T^*_X$ have enough global section $\sigma_1,\dots,\sigma_N$ in order to generate $1$-jets of sections at any point. This means that the function $\eta\colon T_X\to\mathbb R$ defined by
$$
\eta(v)=\biggl(\sum_{j=1}^N|\sigma_j(x)\cdot v^{\otimes m}|^2\biggr)^{1/2m},\quad v\in T_{X,x},
$$
is strictly plurisubharmonic on $T_X$ minus the zero section.

Now assume that $X$ is not hyperbolic: then, by Brody's lemma, there exists a non constant entire curve $g\colon\mathbb C\to X$ with bounded first derivative, with respect to any hermitian metric on $X$. Then, the composition $\eta\circ g'$ is a bounded subharmonic function on $\mathbb C$ which is strictly plurisubharmonic on $\{g'\ne 0\}$. But then $g$ must be constant by the maximum principle and we get a contradiction.
\end{proof}

We now state and give a sketch of the proof of the Ahlfors-Schwarz lemma, in order to generalize the preceding proposition to higher order jets.

\begin{lem}[Ahlfors-Schwarz]
Let $\gamma(\zeta)=i\,\gamma_0(\zeta)\,d\zeta\wedge d\overline\zeta$ be a hermtian metric on $\Delta_R$, where $\log\gamma_0$ is a subharmonic function such that 
$$
i\,\partial\overline\partial\log\gamma_0(\zeta)\ge A\gamma(\zeta)
$$ 
in the sense of distribution, for some positive constant $A$. Then $\gamma$ can be compared with the Poincar metric of $\Delta_R$ as follows:
$$
\gamma(\zeta)\le \frac 2A\frac{R^{-2}|d\zeta|^2}{(1-|\zeta|^2/R^2)^2}.
$$
\end{lem}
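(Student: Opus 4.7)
The plan is to use a maximum principle argument, comparing $\gamma$ with the model Poincaré metric on slightly smaller discs $\Delta_{R'}$, $R' < R$, and then letting $R' \to R$.

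First I would set, for each $R' < R$, the model conformal factor
$$\tau^{R'}_0(\zeta) = \frac{2R'^{-2}}{A(1-|\zeta|^2/R'^{2})^2}, \qquad \tau^{R'} = i\, \tau^{R'}_0 \, d\zeta \wedge d\overline\zeta,$$
and verify by direct calculation that $\log\tau^{R'}_0$ is smooth and satisfies the equality
$$ i\, \partial\overline\partial \log \tau^{R'}_0 = A\, \tau^{R'}$$
on $\Delta_{R'}$. This is the key model identity: the Poincaré-type metric realizes curvature exactly $-A$, while $\gamma$ only satisfies an inequality.

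Next I would consider the ratio $u = \gamma_0/\tau^{R'}_0$ on $\Delta_{R'}$. Since $\gamma$ is a hermitian metric on the full disc $\Delta_R$, its coefficient $\gamma_0$ is bounded on any compact subset of $\Delta_R$, whereas $\tau^{R'}_0(\zeta) \to +\infty$ as $|\zeta| \to R'$. Hence $u$ extends continuously to the closure $\overline{\Delta_{R'}}$ with $u \equiv 0$ on $\partial \Delta_{R'}$, and therefore attains its supremum at some interior point $\zeta_0 \in \Delta_{R'}$. If the supremum is zero there is nothing to prove; otherwise $\log u$ is well-defined near $\zeta_0$ and has an interior maximum there, which forces
$$ i\,\partial\overline\partial \log u(\zeta_0) \le 0 $$
(a property which remains valid for distributional subharmonic $\log\gamma_0$ by standard regularization, so that we may pretend $\gamma_0$ is smooth for this computation).

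Now I would combine the hypothesis $i\,\partial\overline\partial\log\gamma_0 \ge A\gamma$ with the model identity to compute at $\zeta_0$:
$$ 0 \ge i\,\partial\overline\partial \log u = i\,\partial\overline\partial\log\gamma_0 - i\,\partial\overline\partial\log\tau^{R'}_0 \ge A\gamma - A\tau^{R'} = A(\gamma_0 - \tau^{R'}_0)\, i\, d\zeta \wedge d\overline\zeta.$$
Consequently $\gamma_0(\zeta_0) \le \tau^{R'}_0(\zeta_0)$, i.e.\ $u(\zeta_0) \le 1$. Since $\zeta_0$ was chosen to realize the supremum, we conclude $\gamma_0 \le \tau^{R'}_0$ on all of $\Delta_{R'}$. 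Letting $R' \to R^-$ then yields the desired bound $\gamma \le \frac{2}{A}\frac{R^{-2}|d\zeta|^2}{(1-|\zeta|^2/R^2)^2}$.

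The only subtle step is the maximum-principle inequality $i\,\partial\overline\partial\log u(\zeta_0) \le 0$ when $\log\gamma_0$ is merely subharmonic in the sense of distributions; this is the main technical obstacle. It can be handled either by approximating $\log\gamma_0$ by a decreasing sequence of smooth subharmonic functions $(\log\gamma_0)*\rho_\varepsilon$ obtained by convolution with a mollifier, applying the argument above to each regularization, and then passing to the limit, or by invoking the fact that an upper semi-continuous function attaining an interior maximum satisfies the distributional inequality $i\,\partial\overline\partial f \le 0$ at that point in an averaged sense. All remaining steps are elementary computations.
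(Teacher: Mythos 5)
Your proof is correct and takes essentially the same approach as the paper: both are maximum-principle arguments comparing $\gamma_0$ with a rescaled Poincaré density and invoking $i\,\partial\overline\partial\log(\gamma_0/\tau_0)\le 0$ at an interior maximum. The only cosmetic difference is that you compare with the Poincaré metric of a smaller disc $\Delta_{R'}$ (using the boundary blow-up of $\tau^{R'}_0$ to force the maximum inside), whereas the paper keeps $R$ fixed and instead dilates $\gamma_0$ via $\gamma_0^\varepsilon(\zeta)=\gamma_0((1-\varepsilon)\zeta)$ so that the quantity $(1-|\zeta|^2/R^2)^2\gamma_0^\varepsilon$ attains an interior maximum on $\overline{\Delta_R}$; both handle the non-smooth case by convolution with mollifiers.
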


\begin{proof}
Assume first that $\gamma_0$ is smooth and defined on $\overline\Delta_R$. Take a point $\zeta_0\in\Delta_R$ at which $(1-|\zeta|^2/R^2)^2\gamma_0$ is maximum. Then its logarithmic $i\,\partial\overline\partial$-derivative at $\zeta_0$ must be non positive, hence
$$
i\,\partial\overline\partial\log\gamma_0(\zeta)|_{\zeta=\zeta_0}-2i\,\partial\overline\partial\log(1-|\zeta|^2/R^2)^{-1}|_{\zeta=\zeta_0}\le 0.
$$
But now the hypothesis implies that
$$
A\gamma_0(\zeta_0)\le i\,\partial\overline\partial\log\gamma_0(\zeta)|_{\zeta=\zeta_0}\le 2i\,\partial\overline\partial\log(1-|\zeta|^2/R^2)^{-1}|_{\zeta=\zeta_0}
$$
and 
$$
2i\,\partial\overline\partial\log(1-|\zeta|^2/R^2)^{-1}|_{\zeta=\zeta_0}=2R^{-2}(1-|\zeta_0|^2/R^2)^{-2}.
$$
Hence, we conclude that
$$
(1-|\zeta|^2/R^2)^2\gamma_0(\zeta)\le (1-|\zeta_0|^2/R^2)^2\gamma_0(\zeta_0)\le\frac 2{AR^2}.
$$
If $\gamma_0$ is just defined on $\Delta_R$, we consider instead $\gamma_0^\varepsilon(\zeta)=\gamma_0((1-\varepsilon)\zeta)$ and then we let $\varepsilon$ tend to $0$.

When $\gamma_0$ is not smooth one uses a standard regularization argument by taking the convolution with a family of smoothing kernels $(\rho_\varepsilon)$, but we shall skip the details here. 
\end{proof}

The general philosophy of the theory of jet differentials is that their global sections with values in an antiample divisor provide algebraic differential equations which every entire curve must satisfy. We start illustrating this philosophy with the following application of the Ahlfors-Schwarz lemma. First of all, we give the following definition.

\begin{defn}
Let $h_k$ be a $k$-jet metric on the directed manifold $(X,V)$, that is a (possibly singular) hermitian metric on the $k$-th tautological line bundle $\mathcal O_{X_k}(-1)$. We say that $h_k$  has \emph{negative jet curvature} if the curvature $i\,\Theta(\mathcal O_{X_k}(-1),h_k)$ is negative definite along the subbundle $V_k\subset T_{X_k}$ (possibly in the sense of distributions).
\end{defn} 

\begin{rem}
In the special case of $1$-jets, formula (\ref{curvature}) shows that if $h_1$ is smooth and comes from a hermitian metric $h$ on $V$, then $h_1$ has negative jet curvature if and only if $V$ has negative holomorphic sectional curvature. 
\end{rem}

\begin{thm}[\cite{Dem97}]\label{link}
Let $(X,V)$ be a compact complex directed manifold. If $(X,V)$ has a $k$-jet metric $h_k$ with negative jet curvature, then every entire curve $f\colon\mathbb C\to X$ tangent to $V$ is such that $f_{[k]}(\mathbb C)$ is contained in the degeneration set $\Sigma_{h_k}$ of $h_k$. 
\end{thm}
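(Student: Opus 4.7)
The plan is to argue by contradiction via the Ahlfors-Schwarz lemma applied to the pseudometric on $\mathbb C$ obtained by pulling back $h_k$ along the canonical section $f'_{[k-1]}$. Assume $f_{[k]}(\mathbb C)\not\subset\Sigma_{h_k}$. As recalled in the preceding section, $f'_{[k-1]}$ is a holomorphic section of $T^*_{\mathbb C}\otimes f_{[k]}^*\mathcal O_{X_k}(-1)$, so
$$
\gamma_0(t):=\|f'_{[k-1]}(\partial/\partial t)\|^2_{h_k}
$$
defines a non-negative function on $\mathbb C$ which by assumption is not identically zero. I would then set $\gamma:=i\gamma_0\,dt\wedge d\bar t$ and aim to apply Ahlfors-Schwarz to $\gamma$.

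Next I would compute $i\,\partial\overline\partial\log\gamma_0$ on the locus where $h_k$ is smooth and $\gamma_0>0$. By the Poincar\'e-Lelong formula applied to the holomorphic section $f'_{[k-1]}$ of the line bundle $f_{[k]}^*\mathcal O_{X_k}(-1)$, one gets
$$
i\,\partial\overline\partial\log\gamma_0 \;=\; -f_{[k]}^*\,i\,\Theta(\mathcal O_{X_k}(-1),h_k)\;+\;2\pi[Z(f'_{[k-1]})],
$$
where the second term is a non-negative Dirac divisor at the zeros of $f'_{[k-1]}$. Evaluated on $(\partial_t,\bar\partial_t)$, the first term on the right involves $i\,\Theta(\mathcal O_{X_k}(-1),h_k)$ paired with the jet lift $f'_{[k]}(t)$, which lies in $V_{k,f_{[k]}(t)}$ because $f$ is tangent to $V$; this is exactly where the hypothesis provides negativity.

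The main obstacle is to upgrade the pointwise negativity along $V_k$ to a genuine Ahlfors-Schwarz differential inequality $i\,\partial\overline\partial\log\gamma_0\ge A\,\gamma$ for some $A>0$. The difficulty is that $-i\,\Theta|_{V_k}$ is positive definite, whereas $\gamma_0(t)$ is the squared norm of the image $(\pi_k)_*f'_{[k]}(t)=f'_{[k-1]}(\partial_t)$ under the surjection $(\pi_k)_*\colon V_k\to\mathcal O_{X_k}(-1)$. One must therefore dominate the semi-definite hermitian form $\xi\mapsto\|(\pi_k)_*\xi\|^2_{h_k}$ on $V_k$ by $-i\,\Theta|_{V_k}$; this follows by a standard compactness argument on $X_k$, yielding a uniform $A>0$ with $-i\,\Theta(\xi,\bar\xi)\ge A\|(\pi_k)_*\xi\|^2_{h_k}$ for every $\xi\in V_k$. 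Plugging in $\xi=f'_{[k]}(t)$ gives $i\,\partial\overline\partial\log\gamma_0\ge A\gamma$ in the sense of distributions on $\mathbb C$; the Ahlfors-Schwarz lemma on each $\Delta_R\subset\mathbb C$ then gives $\gamma_0(t)\le \frac{2}{A R^2}(1-|t|^2/R^2)^{-2}$, and letting $R\to\infty$ forces $\gamma_0\equiv 0$, a contradiction. When $h_k$ is only singular, one needs a final layer of care (approximating $h_k$ by a decreasing family of smooth metrics via convolution and passing to the limit) in order to justify the distributional inequality across $f_{[k]}^{-1}(\Sigma_{h_k})$; this is routine in plurisubharmonic-weight arguments, but it is the one point where extra care is genuinely required.
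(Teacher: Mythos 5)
Your proposal is correct and follows essentially the same route as the paper: pull back $h_k$ along the canonical section $f'_{[k-1]}$, establish the Ahlfors--Schwarz differential inequality from the negative jet-curvature hypothesis together with a uniform comparison between the degenerate form $\xi\mapsto\|(\pi_k)_*\xi\|^2_{h_k}$ and $-i\,\Theta|_{V_k}$ (the paper does this by interposing a smooth reference metric $\omega_k$ on $T_{X_k}$, which also handles the singular-weight case without needing a regularization), then apply Ahlfors--Schwarz on $\Delta_R$ and let $R\to\infty$.
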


As an immediate corollary we get the following generalization of Proposition \ref{bisectional}.

\begin{cor}
Let $X$ be a compact complex manifold with negative holomorphic sectional curvature. Then $X$ is Brody hence Kobayashi hyperbolic.

More generally, if $X$ is a complex manifold whose holomorphic sectional curvature is bounded above by a negative constant then $X$ is Brody hyperbolic. 
\end{cor}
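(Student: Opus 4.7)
The plan is to derive both statements from Theorem \ref{link} and the Ahlfors-Schwarz lemma, treating the compact case first (which also yields Kobayashi hyperbolicity via Brody's criterion) and then the non-compact case directly.

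For the compact case, I would specialize Theorem \ref{link} to $k=1$ and $V=T_X$. Equipping $T_X$ with the given hermitian metric $h$ of negative holomorphic sectional curvature, one obtains the induced smooth metric $h_1$ on $\mathcal O_{X_1}(-1)=\mathcal O_{P(T_X)}(-1)$ coming from pulling back $h$ along $\pi\colon P(T_X)\to X$, whose curvature was computed in formula (\ref{curvature}). By the Remark immediately preceding Theorem \ref{link}, the Griffiths-type curvature formula shows that $h_1$ has negative jet curvature along $V_1$ if and only if the holomorphic sectional curvature of $h$ is negative. Compactness of $X$ promotes the pointwise negativity to a uniform upper bound by a negative constant, which in turn makes $i\,\Theta(\mathcal O_{X_1}(-1),h_1)$ strictly negative definite on $V_1$. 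Since $h_1$ is smooth, its degeneration set $\Sigma_{h_1}$ is empty; Theorem \ref{link} then forces any entire curve $f\colon\mathbb C\to X$ to satisfy $f_{[1]}(\mathbb C)\subset\Sigma_{h_1}=\varnothing$, hence $f$ is constant. Thus $X$ is Brody hyperbolic, and Theorem \ref{Brody} upgrades this to Kobayashi hyperbolicity.

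For the more general statement, where $X$ need not be compact, Theorem \ref{link} does not apply directly, so I would invoke Ahlfors-Schwarz by hand. Let $f\colon\mathbb C\to X$ be holomorphic and fix $R>0$. Consider the pulled-back pseudo-metric $\gamma=f^*\omega$ on $\Delta_R$ where $\omega$ is the $(1,1)$-form of $h$; write $\gamma=i\,\gamma_0(\zeta)\,d\zeta\wedge d\overline\zeta$ with $\gamma_0=\lVert f'\rVert_h^2$. At any point where $f'\ne 0$, a standard computation identifies the Gaussian curvature of $\gamma$ with the holomorphic sectional curvature of $(X,h)$ evaluated on the line $\mathbb C\cdot f'(\zeta)$, which is bounded above by $-A<0$ by hypothesis; this yields the distributional inequality
\[
i\,\partial\overline\partial\log\gamma_0\;\ge\;A\,\gamma
\]
(the zeros of $f'$ contribute non-negative Dirac masses, so the inequality only improves there). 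The Ahlfors-Schwarz lemma then gives
\[
\gamma(\zeta)\le\frac{2}{A}\cdot\frac{R^{-2}\,|d\zeta|^2}{(1-|\zeta|^2/R^2)^2}.
\]
Evaluating at $\zeta=0$ yields $\lVert f'(0)\rVert_h^2\le 2/(AR^2)$; letting $R\to\infty$ forces $f'(0)=0$. Translation invariance of $\mathbb C$ makes the same argument work at every point, so $f'\equiv 0$ and $f$ is constant. This gives Brody hyperbolicity.

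The main technical subtlety is the handling of zeros of $f'$ in the Ahlfors-Schwarz step: $\gamma_0$ vanishes precisely on the (discrete) zero locus of $f'$, so $\log\gamma_0$ is only locally integrable, and the curvature inequality must be understood distributionally. I would address this either by the standard regularization of $\gamma_0$ by convolution with a smoothing kernel (as alluded to in the proof of the Ahlfors-Schwarz lemma), or equivalently by replacing $\gamma_0$ with $\gamma_0+\epsilon$ and letting $\epsilon\to 0^+$, since the extra log-subharmonicity coming from the zeros of $f'$ only strengthens the inequality. Everything else is a direct invocation of results already established in the text.
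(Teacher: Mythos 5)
Your proof is correct and follows the route the paper intends: the compact case is the Remark preceding Theorem \ref{link} applied with $k=1$ and the empty degeneration set $\Sigma_{h_1}$, and the non-compact statement is obtained by running the Ahlfors--Schwarz estimate on $f^*h$ directly, which is precisely the computation carried out in the proof of Theorem \ref{link}, with the uniform negative bound now supplied by the hypothesis rather than by compactness. One imprecision worth fixing: the Gaussian curvature of $\gamma=f^*h$ at a non-critical point of $f$ is not \emph{equal} to the holomorphic sectional curvature of $(X,h)$ along $\mathbb C\cdot f'(\zeta)$ but only bounded above by it, the deficit being the non-negative second-fundamental-form contribution from the Gauss equation; since you use only the upper bound $\le -A$, the argument is unaffected.
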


Here is a more algebraic consequence.

\begin{cor}[\cite{G-G80},\cite{S-Y97}\cite{Dem97}]\label{baselocus}
Assume that there exist integers $k,m>0$ and an ample line bundle $A\to X$ such that
$$
H^0(X_k,\mathcal O_{X_k}(m)\otimes\pi^*_{0,k}A^{-1})\simeq H^0(X,E_{k,m}V^*\otimes A^{-1})
$$
has non zero sections $\sigma_1,\dots,\sigma_N$. Let $Z\subset X_k$ be the base locus of these sections. Then every entire curve $f\colon\mathbb C\to X$ tangent to $V$ is such that $f_{[k]}(\mathbb C)\subset Z$.
\end{cor}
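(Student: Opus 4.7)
The plan is to reduce the statement to Theorem \ref{link} by manufacturing, from the global sections $\sigma_1,\dots,\sigma_N$, a singular hermitian $k$-jet metric $h_k$ on $\mathcal O_{X_k}(-1)$ with negative jet curvature and with degeneration set equal to $Z$; the theorem will then immediately yield $f_{[k]}(\mathbb C)\subset\Sigma_{h_k}=Z$.

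First, since $A$ is ample, I would fix a smooth hermitian metric $h_A$ on $A$ with strictly positive Chern curvature $\omega_A:=i\,\Theta(A,h_A)>0$. Applying the construction of singular metrics induced by sections (as in the example above) to $\sigma_1,\dots,\sigma_N$, now regarded as sections of the line bundle $L:=\mathcal O_{X_k}(m)\otimes\pi^*_{0,k}A^{-1}$, would give a singular hermitian metric $\mu$ on $L$ with local weight $\varphi_\mu=\frac{1}{2}\log\sum_j|s_j|^2$; its Chern curvature current $i\,\Theta(L,\mu)$ is semi-positive and $\mu$ degenerates exactly on the common vanishing locus $Z$.

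I would then tensor the dual metric $\mu^{-1}$ on $L^{-1}$ with $\pi^*_{0,k}h_A^{-1}$ on $\pi^*_{0,k}A^{-1}$ to produce a singular metric on $\mathcal O_{X_k}(-m)$, and take the $m$-th root to obtain $h_k$ on $\mathcal O_{X_k}(-1)$. A direct computation of local weights yields
\[
i\,\Theta(\mathcal O_{X_k}(-1),h_k)=-\frac{1}{m}\bigl(i\,\Theta(L,\mu)+\pi^*_{0,k}\omega_A\bigr)\le-\frac{1}{m}\pi^*_{0,k}\omega_A
\]
in the sense of currents; and since $\varphi_\mu\to-\infty$ precisely on $Z$, the local weight of $h_k$ diverges exactly there, so $\Sigma_{h_k}=Z$.

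The hardest point will be verifying that $h_k$ really has negative jet curvature in the strict sense of the definition: the lower bound $-\frac{1}{m}\pi^*_{0,k}\omega_A$ is semi-negative along $V_k$ but vanishes on the kernel of $\pi_{0,k*}\colon V_k\to V$, so $h_k$ is only rank-one negative along $V_k$. I would argue, as in Demailly's original proof, that this rank-one negativity is still enough for the Ahlfors--Schwarz step underlying Theorem \ref{link}: along any lifted entire curve $f_{[k]}\colon\mathbb C\to X_k$ tangent to $V_k$, the derivative $f'_{[k]}$ satisfies $\pi_{0,k*}f'_{[k]}=f'$, so the pull-back pseudo-metric $\gamma:=|f'_{[k-1]}|^2_{f_{[k]}^*h_k}\,i\,dt\wedge d\bar{t}$ on $\mathbb C$ satisfies $i\,\partial\bar\partial\log\gamma\ge\frac{1}{m}f^*\omega_A$, and the Ahlfors--Schwarz lemma together with compactness of $X$ forces $\gamma\equiv 0$, i.e.\ $f_{[k]}(\mathbb C)\subset Z$.
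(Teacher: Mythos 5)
Your strategy is exactly the one the paper follows: build a singular $k$-jet metric $h_k$ on $\mathcal O_{X_k}(-1)$ from the sections $\sigma_j$, note that its degeneration set is $Z$ and its curvature is bounded above by $-\frac{1}{m}\pi_{0,k}^*\omega_A$, and feed this into Theorem~\ref{link}. You have also correctly put your finger on the one subtle point, which the paper itself flags with the parenthetical ``this is not completely correct, since $\pi_{0,k}^*A^{-1}$ does not bound the vertical eigenvalues''. So the construction and the identification of the difficulty are both right.

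The problem is that your attempted fix of that difficulty does not actually close the gap. Ahlfors--Schwarz, as stated and used in the proof of Theorem~\ref{link}, needs a \emph{self-comparison} of the form $i\,\partial\bar\partial\log\gamma_0\ge A\,\gamma_0$ for some constant $A>0$; what you obtain from the curvature bound is only $i\,\partial\bar\partial\log\gamma_0\ge\frac{1}{m}f^*\omega_A$, which is a lower bound by a different $(1,1)$-form, not by $\gamma$ itself. To turn this into an Ahlfors--Schwarz hypothesis you would need an estimate $\gamma_0(\zeta)\le C\,\|f'(\zeta)\|^2_{\omega_A}$ for some uniform $C$; equivalently, that the line-bundle morphism $\mathcal O_{X_k}(-1)\to\pi_{0,k}^*V$ (which sends $f'_{[k-1]}$ to $f'$) is bounded below against $h_k$. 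But that morphism vanishes along $X_k^{\text{\rm sing}}\subset Z$, so such a bound has to be fought for: one must check that the vanishing of $\sum_j|s_j|^2$ along $X_k^{\text{\rm sing}}$ (which governs the degeneration of $h_k$) dominates the vanishing of that morphism. This is precisely the missing ``vertical eigenvalue'' control, and is not automatic. Demailly's actual argument fixes it by modifying $h_k$ (interpolating with a smooth metric on $\mathcal O_{X_k}(-1)$ whose curvature has the built-in Fubini--Study negativity in the $T_{X_k/X_{k-1}}$ directions from formula (\ref{curvature})) so that negative jet curvature in the strict sense of the definition really does hold, without changing the degeneration set. As written, your final sentence asserts the conclusion without supplying this ingredient.
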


Remark that an analogous statement holds for non necessarily invariant jet differentials, when $V=T_X$.

\begin{proof}
In fact, from the sections $(\sigma_j)$ we get a singular hermitian metric on $\mathcal O_{X_k}(-1)$ whose degeneration set is exactly the base locus $Z$ and whose curvature is bounded above by the one of $\pi^*_{0,k}A^{-1}$ (this is not completely correct, since $\pi^*_{0,k}A^{-1}$ does not bound the \lq\lq vertical eigenvalues\rq\rq{}, see \cite{Dem97} for all the details).
\end{proof}

\begin{proof}[Proof of Theorem \ref{link}]
Chose an arbitrary smooth hermitian metric $\omega_k$ on $T_{X_k}$. By hypothesis there exists $\varepsilon >0$ such that
$$
i\,\Theta(\mathcal O_{X_k}(1),h_k^{-1})(\xi)\ge\varepsilon ||\xi||^2_{\omega_k},\quad\forall\xi\in V_k.
$$
Next, by definition, $(\pi_k)_*$ maps continuously $V_k$ onto $\mathcal O_{X_k}(-1)$; write $h_k$ locally $e^\varphi$ and notice that the weight $\varphi$ is locally bounded from above. Therefore, we can find a constant $C>0$ such that
$$
||(\pi_k)_*\xi||_{h_k}^2\le C\,||\xi||^2_{\omega_k},\quad\forall\xi\in V_k.
$$
Putting together the two inequalities, we get
$$
i\,\Theta(\mathcal O_{X_k}(1),h_k^{-1})(\xi)\ge\frac\varepsilon C||(\pi_k)_*\xi||_{h_k}^2,\quad\forall\xi\in V_k.
$$
Now take any $f\colon\mathbb C\to X$ tangent to $V$, fix an arbitrary radius $R>0$ and consider the restriction $f\colon\Delta_R\to X$. Thus, we get a line bundle morphism
$$
F=f_{[k-1]}'\colon T_{\Delta_R}\to f^*_{[k]}\mathcal O_{X_k}(-1)
$$
by which we can pull-back the metric $h_k$ to obtain a 
$$
\gamma=\gamma_0\,d\zeta\otimes d\overline\zeta=F^*h_k.
$$
If $f_{[k]}(\Delta_R)\subset\Sigma_{h_k}$ then $\gamma\equiv 0$. If not, $\gamma_0$ vanishes precisely at points where $F$ vanishes (which are isolated) and at points of the degeneration set $f_{[k]}^{-1}(\Sigma_{h_k})$, which is polar in $\Delta_R$. At other points $\zeta$, the Gaussian curvature of $\gamma$ is
$$
i\,\partial\overline\partial\log\gamma_0(\zeta)=-i\,f^*_{[k]}\Theta(\mathcal O_{X_k}(-1),h_k)=i\,f^*_{[k]}\Theta(\mathcal O_{X_k}(1),h_k^{-1})
$$ 
and, when computed as hermitian form on $\partial/\partial\zeta$, one gets
$$
\begin{aligned}
i\,f^*_{[k]}\Theta(\mathcal O_{X_k}(1),h_k^{-1})\biggl(\frac\partial{\partial\zeta}\biggr) & =
i\,\Theta(\mathcal O_{X_k}(1),h_k^{-1})(f_{[k]}'(\zeta)) \\
& \ge\frac\varepsilon C||f'_{[k-1]}(\zeta)||_{h_k}^2 \\
& = \frac\varepsilon C\gamma_0(\zeta),
\end{aligned}
$$
since $f'_{[k-1]}(\zeta)=(\pi_k)_*f'_{[k]}(\zeta)$ and $\gamma=F^* h_k$. Thus, the Ahlfors-Schwarz lemma implies 
$$
\gamma(\zeta)\le\frac{2C}\varepsilon\frac{R^{-2}|d\zeta|^2}{(1-|\zeta|^2/R^2)^2},
$$
that is
$$
||f'_{[k-1]}(\zeta)||_{h_k}^2\le\frac{2C}\varepsilon\frac{R^{-2}}{(1-|\zeta|^2/R^2)^2}.
$$
Letting $R$ tend to infinity, we obtain that $f_{[k-1]}$ must be constant, and hence $f$, too.
\end{proof}

\subsection{The Bloch theorem}

Bloch's theorem is a characterization of the (Zariski) closure of entire curves on a complex torus. Following \cite{Dem97}, we shall derive it here as a consequence of Theorem \ref{link}.

\begin{thm}\label{bloch}
Let $Z$ be a complex torus and let $f\colon\mathbb C\to Z$ be an entire curve. Then the (analytic) Zariski closure $\overline{f(\mathbb C)}^{\text{\rm Zar}}$ is a translate of a subtorus.
\end{thm}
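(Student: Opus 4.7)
I would argue by contradiction using the jet-theoretic criterion of Corollary \ref{baselocus}. Let $X := \overline{f(\mathbb{C})}^{\text{Zar}}$ and write $Z_X^0$ for the identity component of the stabilizer $\{a\in Z : a+X=X\}$, which is a subtorus of $Z$. Projecting through the quotient morphism $Z\to Z/Z_X^0 =: Z'$ replaces the triple $(Z,X,f)$ by $(Z',X',f')$ in which the stabilizer of $X'$ in $Z'$ is finite; moreover, $X$ is a translate of a subtorus of $Z$ if and only if $X'$ is a point. Hence it suffices to prove the following claim: \emph{if $X'\subset Z'$ has finite stabilizer and is the Zariski closure of an entire curve, then $\dim X' = 0$.} Renaming $(Z',X',f')$ back to $(Z,X,f)$, I assume toward a contradiction that $\dim X \ge 1$.

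Pick a desingularization $\mu\colon \widetilde X\to X$ and form the projectivized jet tower $\widetilde X_k\to\widetilde X$ of Chapter 3; let $W_k\subset\widetilde X_k$ be the Zariski closure of the image of the lifted curve $f_{[k]}\colon\mathbb{C}\to\widetilde X_k$. The plan is to produce, for some $k$ and $m$ sufficiently large, a nonzero section
\[
\sigma\in H^0\bigl(\widetilde X_k,\,\mathcal{O}_{\widetilde X_k}(m)\otimes\pi_{0,k}^{*}(\mu^{*}A)^{-1}\bigr)\;\simeq\; H^0(\widetilde X,\,E_{k,m}T^{*}_{\widetilde X}\otimes\mu^{*}A^{-1}),
\]
where $A$ is an ample line bundle on a smooth projective compactification of $X$, such that $\sigma$ does not vanish identically on $W_k$. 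Once such a $\sigma$ is available, Corollary \ref{baselocus} forces $f_{[k]}(\mathbb{C})$, and therefore its Zariski closure $W_k$, to lie in $\{\sigma=0\}$, delivering the desired contradiction.

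The existence of $\sigma$ is the heart of the argument and exploits the triviality $T_Z\cong\mathcal{O}_Z^{\oplus n}$ in a decisive way: invariant jet differentials on $Z$ are simply $\mathbb{G}_k'$-invariant polynomials of weighted degree $m$ in the translation-invariant one-forms $dz_1,\dots,dz_n$ and their symbolic derivatives, and they span a large finite-dimensional vector space that pulls back via $\mu$ to $\widetilde X$. The real obstacle lies in showing that some $\mathbb{C}$-linear combination of these pulled-back operators vanishes along a divisor of class $\mu^{*}A$ on $\widetilde X$, i.e. acquires enough zeros along the exceptional locus of $\mu$ and along generic ample cuts of $X$. I would attempt this via a Wronskian-type reduction in the spirit of Kawamata, Ochiai and Green--Griffiths: if no nontrivial linear combination of pulled-back invariant jet differentials acquired such vanishing, then the evaluation map from translation-invariant jet tensors on $Z$ to global sections on $\widetilde X$ would be so non-degenerate that $X$ would be preserved by a positive-dimensional subtorus of $Z$, contradicting finiteness of the stabilizer achieved in the initial reduction. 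Granted this algebraic/group-theoretic step, the clash with Corollary \ref{baselocus} concludes the proof; everything else is a formal consequence of the machinery developed in Chapters 3 and 4.
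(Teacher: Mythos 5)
Your overall scheme is plausible in spirit---trivialize $T_Z$, work with jet differentials, quotient by the connected stabilizer, and conclude by a dimension-reduction---but the proposal has a genuine gap at precisely the place you yourself flag as ``the heart of the argument.'' The assertion that a failure to produce a jet differential with enough vanishing would force $X$ to be preserved by a positive-dimensional subtorus is exactly the statement one needs to \emph{prove}, and the sketched ``Wronskian-type reduction'' does not prove it: it simply restates the desired conclusion. Without a concrete mechanism converting non-degeneracy of the evaluation of invariant jet tensors into a translation-invariance of $X$, the proof does not close. Put differently, Corollary~\ref{baselocus} never gets triggered, because you never exhibit the section $\sigma$.

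The paper's proof supplies this missing mechanism by a different, direct device, and it is worth contrasting. Because $T_Z$ is trivial, one has a global product structure $Z_k = Z\times\mathbb R_{n,k}$, and the compactified jet tower $X_k$ of $(X_{\mathrm{reg}},T_{X_{\mathrm{reg}}})$ inside $Z_k$ comes equipped with a projection $\Phi_k\colon X_k\to\mathbb R_{n,k}$ (no desingularization of $X$ is needed; one simply takes closures in $Z_k$). Choosing a weight $\mathbf a$ making $\mathcal O_{Z_k}(\mathbf a)$ relatively very ample, one produces a singular hermitian metric on $\mathcal O_{X_k}(\mathbf a)$ with strictly positive curvature current whose degeneration set is exactly the locus $B_k\subset X_k$ of points whose $\Phi_k$-fiber is positive dimensional. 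Theorem~\ref{link} then forces $f_{[k]}(\mathbb C)\subset B_k$ for every $k$. The geometric meaning of lying in $B_k$ is that through each $f_{[k]}(t_0)$ there is a germ of positive-dimensional variety in the $\Phi_k$-fiber, and such germs are precisely $k$-jets of translates of $X$ sharing the $k$-jet of $X$ along the curve; the descending chain $A_1(f)\supset A_2(f)\supset\cdots$ of translations stabilizes by Noetherianity, producing a genuine positive-dimensional curve of translations fixing the infinite jet, hence (since $X$ is the Zariski closure of $f(\mathbb C)$ and is irreducible) fixing $X$ itself. This gives a positive-dimensional subgroup $W\subset Z$ with $W+X=X$, and the theorem follows by induction on dimension after passing to $Z/W$. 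Your reduction to finite stabilizer at the start is fine and is essentially the contrapositive of the paper's ending step; but the substance of the argument---how to convert the jet-differential machinery into a translation symmetry of $X$---is exactly the step your proposal leaves unproved, and it is the step the paper carries out via $\Phi_k$ and the locus $B_k$.
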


The converse is clearly true, too: for any translate of a subtorus $a+Z'\subset Z$, one can choose a dense line $L\subset Z'$ and the corresponding map $f\colon\mathbb C\simeq a+L\hookrightarrow Z$ has Zariski closure $\overline{f(\mathbb C)}^{\text{\rm Zar}}=a+Z'$. 

Before giving the proof, we list here some immediate consequences (the first of which has already been proved by elementary methods in Chapter 1).

\begin{cor}
Let $X$ be a complex analytic subvariety in a complex torus $Z$. Then $X$ is hyperbolic if and only if $X$ does not contain any translate of a subtorus.
\end{cor}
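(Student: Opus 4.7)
The plan is to deduce the corollary as a straightforward application of Bloch's theorem (Theorem \ref{bloch}) together with Brody's criterion. The statement is an "if and only if", so I will handle the two implications separately, and the work is really all in the forward direction.

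For the easy direction, suppose $X$ contains a translate $a + Z'$ of a positive-dimensional subtorus $Z' \subset Z$. The universal cover of $a + Z'$ is $\mathbb{C}^{\dim Z'}$, so composing the covering map with any non-constant linear embedding $\mathbb{C} \hookrightarrow \mathbb{C}^{\dim Z'}$ gives a non-constant entire curve $\mathbb{C} \to a + Z' \subset X$. The existence of such a curve shows $X$ is not Brody hyperbolic, and hence (assuming compactness, which is the relevant case here since $Z$ is a compact torus and $X$ is a closed analytic subvariety) not Kobayashi hyperbolic by Brody's theorem (Theorem \ref{Brody}).

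For the nontrivial direction, assume $X$ is not hyperbolic. Then, again by Brody's criterion, there is a non-constant entire curve $f \colon \mathbb{C} \to X$. Composing with the inclusion $X \hookrightarrow Z$, we view $f$ as an entire curve in the complex torus $Z$. Bloch's theorem then asserts that the analytic Zariski closure $\overline{f(\mathbb{C})}^{\text{Zar}}$ in $Z$ is a translate $a + Z'$ of a subtorus. Since $f$ is non-constant, $a + Z'$ has positive dimension; and since $X$ is itself a closed analytic subvariety of $Z$ containing $f(\mathbb{C})$, it must contain $\overline{f(\mathbb{C})}^{\text{Zar}} = a + Z'$. Thus $X$ contains a translate of a (positive-dimensional) subtorus, which is the desired contradiction to the hypothesis.

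The only subtlety, and the one place where I would need to be careful, is the application of Brody's criterion: Theorem \ref{Brody} is stated for compact complex manifolds, while $X$ may be singular. However, one can either work on a desingularization and push the resulting entire curve down to $X$, or invoke the analogue of Brody's criterion for compact complex spaces (which goes through with the same argument since the Ascoli–Arzel\`a step only uses compactness and a hermitian metric on an ambient manifold, here $Z$). Apart from this, there are no technical obstacles: the corollary is essentially a repackaging of Theorem \ref{bloch}.
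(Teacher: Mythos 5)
Your proof is correct and follows exactly the route the paper intends: the corollary is listed there as an immediate consequence of Bloch's theorem (Theorem \ref{bloch}) combined with Brody's criterion, and you have supplied the (short) reasoning, including the observation that the Zariski closure of the Brody curve must lie inside the closed analytic set $X$. One small simplification: in the easy direction you do not need Brody's theorem at all, since ``exists a nonconstant entire curve $\Rightarrow$ not Kobayashi hyperbolic'' is just the trivial distance-decreasing property with $d_{\mathbb C}\equiv 0$; Brody's theorem is only needed for the converse (hard) direction, which you apply correctly.
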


\begin{cor}
Let $X$ be a complex analytic subvariety of a complex torus $Z$. If $X$ is not a translate of a subtorus then every entire curve drawn in $X$ is analytically degenerate.
\end{cor}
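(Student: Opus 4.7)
The plan is to deduce this corollary directly from Theorem \ref{bloch}, by contradiction. Suppose $X\subset Z$ is a complex analytic subvariety which is \emph{not} a translate of a subtorus, and suppose by contradiction that there exists an entire curve $f\colon\mathbb C\to X$ which is \emph{not} analytically degenerate, i.e.\ whose image is Zariski dense in $X$.

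First I would compose $f$ with the inclusion $\iota\colon X\hookrightarrow Z$ to view $f$ as an entire curve in the complex torus $Z$. Theorem \ref{bloch} then applies and tells us that $\overline{f(\mathbb C)}^{\text{\rm Zar}}$, the (analytic) Zariski closure of the image of $f$ inside $Z$, is a translate of a subtorus of $Z$. The crucial observation is that, since $X$ is itself an analytic subvariety of $Z$ containing $f(\mathbb C)$, the Zariski closure computed inside $Z$ automatically satisfies $\overline{f(\mathbb C)}^{\text{\rm Zar}}\subset X$.

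Next, I would use the non-degeneracy assumption: if $f(\mathbb C)$ is Zariski dense in $X$, then $\overline{f(\mathbb C)}^{\text{\rm Zar}}=X$ (as closed analytic subsets of $Z$). Combining this with the previous step, we conclude that $X$ is a translate of a subtorus of $Z$, which directly contradicts the hypothesis. Therefore every entire curve $f\colon\mathbb C\to X$ must be analytically degenerate.

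The argument is essentially a one-line deduction once Bloch's theorem is in hand; there is no real obstacle. The only mild subtlety to keep in mind is being careful about where the Zariski closure is taken (in $X$ versus in $Z$), but since $X$ is closed analytic in $Z$, the two notions agree for subsets of $X$, and the deduction goes through cleanly.
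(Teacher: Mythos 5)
Your proof is correct and is precisely the intended deduction: the paper states this corollary as an immediate consequence of Theorem \ref{bloch} without further argument, and your one-line reasoning (Zariski closure in $Z$ lands inside the closed analytic subset $X$, non-degeneracy forces it to equal $X$, Bloch's theorem makes it a translated subtorus) fills in that gap exactly as the authors intend. Your remark about where the Zariski closure is taken is the right point to be careful about, and you handle it correctly.
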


In particular, if $X$ is a complex analytic subvariety of general type of a complex torus $Z$ then it cannot be a translate of a subtorus and the corollary applies. Anyway, observe that {\sl a priori} this corollary just states that there are no Zariski dense entire curves in such a subvariety and not the stronger property of the existence of a closed proper subvariety of $X$ which contains the images of every entire curve in $X$. The corollary is thus a weak confirmation of the Green-Griffiths conjecture for subvarieties of general type of complex tori.

Nevertheless, once we have the analytic degeneracy of entire curves, the stronger version of the Green-Griffiths conjecture can be deduced from the following result of Kawamata.

\begin{thm}[\cite{Kaw80}]
Let $X$ be a subvariety of general type of a complex torus $Z$. Then there is a proper subvariety
$Y\subset X$ which contains all the translates of subtori contained in $X$.
\end{thm}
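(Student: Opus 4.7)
The strategy is to combine Ueno's structure theorem for subvarieties of complex tori with a Noetherian finiteness argument applied to the family of translated subtori contained in $X$. The plan starts by invoking Ueno's theorem: for any subvariety $W \subseteq Z$, the identity component of the stabilizer $\{a \in Z : a + W = W\}$ is a subtorus $B_W \subseteq Z$, and the projection $q \colon Z \to Z/B_W$ realizes $W$ as a bundle of translates of $B_W$ over $q(W)$; in particular $\kappa(W) = \kappa(q(W)) \le \dim q(W) = \dim W - \dim B_W$. Since $X$ is of general type, $\kappa(X) = \dim X$, which forces $B_X = 0$: no positive-dimensional subtorus preserves $X$ by translation.

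Next, for each positive-dimensional subtorus $B \subseteq Z$, I would consider the incidence locus $T_B := \{a \in Z : a + B \subseteq X\}$, which is a Zariski closed subvariety of $Z$ contained in $X$ and is tautologically $B$-invariant. If $T_B = X$, then $X$ itself would be $B$-stable, contradicting $B_X = 0$; hence $T_B \subsetneq X$ for every positive-dimensional $B$. Set $Y_0 = \bigcup_B T_B$, the union being taken over all positive-dimensional subtori $B$. The goal then becomes to show $Y_0$ is contained in a proper Zariski closed subvariety of $X$.

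The finiteness ingredient is that the set of subtori of $Z$ is countable (parametrized, up to translation, by sublattices of the defining lattice of $Z$ whose real span is a complex subspace), and for each such subtorus $B$ the relative family of translates contained in $X$ is of finite type. Applying Ueno's theorem iteratively to the Zariski closure of $Y_0$, and using the general-type hypothesis on $X$ together with Noetherianity, one bounds the union inside a proper closed subvariety $Y \subsetneq X$. The main obstacle is precisely this last step: a priori, a countable union of proper Zariski closed subvarieties of an irreducible variety can be Zariski dense without any member equaling it (as illustrated by $\mathbb{Q} \subseteq \mathbb{C}$). Overcoming this requires exploiting the bounded geometry of subtori of $Z$---particularly a uniform bound on the dimensions of the relevant $T_B$---together with Ueno's structure theorem applied to the Zariski closure of the relative family of translates. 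This is the technical heart of Kawamata's original argument, and is where the general-type hypothesis enters in a nontrivial way.
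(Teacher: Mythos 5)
The paper itself gives no proof of this theorem; it is stated purely as a citation to Kawamata's work, so there is no argument in the text to compare against. Evaluating your proposal on its own terms, the structure is right at the outset: Ueno's theorem correctly gives $B_X=0$ for $X$ of general type, and the incidence loci $T_B=\{a\in Z: a+B\subseteq X\}$ are the natural objects to consider, with $T_B\subsetneq X$ for every positive-dimensional $B$. But you stop precisely where the actual theorem begins. You acknowledge that a countable union of proper Zariski-closed subsets can be dense, and then you gesture at ``bounded geometry of subtori'' and ``Noetherianity'' without supplying the finiteness statement that would make these words do any work. That is not a technical detail to be filled in; it \emph{is} Kawamata's theorem. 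What is missing is a proof that the map $x\mapsto B_x$ (where $B_x$ is the largest subtorus with $x+B_x\subseteq X$) takes only finitely many values, or equivalently that only finitely many subtori $B$ have $T_B\neq\emptyset$ after discarding those dominated by larger ones, so that $\bigcup_B T_B$ collapses to a \emph{finite} union of proper closed subsets. Kawamata proves this by an inductive stratification argument built on the Ueno fibration: one shows that the locus where $\dim B_x\ge k$ is Zariski closed for each $k$, and that on a top-dimensional stratum the subtorus $B_x$ is locally constant, then applies Ueno's theorem to the Zariski closure of each stratum and induces on dimension. None of that is present in your sketch.

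There is also a smaller but real issue with your Noetherian framing. Noetherianity of the Zariski topology gives you the descending chain condition, which controls \emph{decreasing} chains of closed sets; it gives you nothing for an \emph{increasing} union of countably many closed sets, which is the situation you are in. Invoking it here is a category error unless you first establish the finiteness of the relevant family of subtori, after which Noetherianity becomes unnecessary anyway (a finite union of closed sets is closed with no further argument). So as written, the proposal is an accurate description of the shape of the problem together with a correct reduction via Ueno's theorem, but it does not constitute a proof: the crucial finiteness lemma, which is where the general-type hypothesis must be exploited beyond the single application $B_X=0$, is announced rather than proved.
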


Here is the Bloch theorem itself \cite{Blo26, Kaw80}.

\begin{cor}[Bloch's theorem]
Let $X$ be a compact complex K\"ahler manifold such that the irregularity $q=h^0(X,T^*_X)$ is larger than the dimension $n=\dim X$. Then every entire curve drawn in $X$ is analytically degenerate.
\end{cor}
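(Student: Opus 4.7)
The plan is to reduce the statement to the preceding corollary about entire curves in a non-translated subvariety of a complex torus, the reduction being performed via the Albanese morphism. Because $X$ is compact Kähler, we have at our disposal the Albanese variety $\mathrm{Alb}(X)=H^{0}(X,T_X^*)^*/H_1(X,\mathbb Z)$, a complex torus of dimension $q$, together with the universal holomorphic map $\alpha\colon X\to\mathrm{Alb}(X)$ (well defined up to translation). The image $Y=\alpha(X)$ is a compact analytic subvariety of $\mathrm{Alb}(X)$ of dimension at most $n$.

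First I would observe that $Y$ cannot be a translate of a subtorus of $\mathrm{Alb}(X)$. Indeed, by the universal property of Albanese, the image $\alpha(X)$ is not contained in any proper translated subtorus: otherwise the composition of $\alpha$ with the quotient by that subtorus would give a non-universal factorization. Hence if $Y$ itself were a translate of a subtorus $T'\subset\mathrm{Alb}(X)$, it would force $T'=\mathrm{Alb}(X)$ and therefore $\dim Y=q$. But the hypothesis $q>n\ge\dim Y$ rules this out, so $Y$ is a proper analytic subvariety of $\mathrm{Alb}(X)$ that is not a translate of any subtorus.

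Next, given any entire curve $f\colon\mathbb C\to X$, the composition
\[
g=\alpha\circ f\colon\mathbb C\longrightarrow Y\subset\mathrm{Alb}(X)
\]
is an entire curve drawn in $Y$. By the corollary derived from Theorem \ref{bloch} (entire curves in a subvariety of a torus that is not a translate of a subtorus are analytically degenerate), its Zariski closure
\[
W\;\overset{\text{def}}=\;\overline{g(\mathbb C)}^{\text{Zar}}
\]
is a proper analytic subvariety of $Y$.

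Finally, I would pull $W$ back to $X$. Since $\alpha\colon X\to Y$ is surjective and $W\subsetneq Y$, the preimage $\alpha^{-1}(W)$ is a proper analytic subvariety of $X$, and by construction $f(\mathbb C)\subset \alpha^{-1}(W)$. This exhibits $f$ as analytically degenerate, which is the desired conclusion. The only delicate point in this plan is the first step, namely the fact that the Albanese image generates $\mathrm{Alb}(X)$ as a subtorus (so cannot itself be a translate of a proper subtorus); everything else is a direct application of the already established corollary together with the Kähler hypothesis, which is what guarantees the existence and universal property of the Albanese.
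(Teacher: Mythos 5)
Your proof is correct and follows essentially the same route as the paper: factor through the Albanese map $\alpha\colon X\to\operatorname{Alb}(X)$, note that the dimension hypothesis forces $\alpha(X)$ to be a proper subvariety which (by the universal/generation property of Albanese) cannot be a translate of a subtorus, apply the preceding corollary to $\alpha\circ f$, and pull the degeneracy back to $X$. You merely spell out two steps the paper leaves implicit — why $\alpha(X)$ is not a translated subtorus (generation of $\operatorname{Alb}(X)$ plus the dimension count $q>n$), and why degeneracy of $\alpha\circ f$ yields degeneracy of $f$ via $\alpha^{-1}(W)$ — so this is the paper's argument with the routine details made explicit rather than a different approach.
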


Observe that if the irregularity is larger than the dimension there exists a map from $X$ to a complex torus $Z$ such that the image of $X$ is of general type.

\begin{proof}
The Albanese map $\alpha\colon X\to\operatorname{Alb}(X)$ sends $X$ onto a proper subvariety $Y\subset\operatorname{Alb}(X)$, since $\dim\operatorname{Alb}(X)=q>n$. Moreover, by the universal property of the Albanese map, $\alpha(X)$ is not a translate of a subtorus. Hence, given an entire curve $f\colon\mathbb C\to X$, the composition $\alpha\circ f\colon\mathbb C\to Y$ is analytically degenerate; but then, $f$ itself is analytically degenerate.
\end{proof}

Now, we give a

\begin{proof}[Proof of Theorem \ref{bloch}]
Let $f\colon\mathbb C\to Z$ be an entire curve and let $X$ be the Zariski closure of its image. Call $Z_k$ the projectivized $k$-jet bundle of $(Z,T_Z)$ and $X_k$ the closure of the projectivized $k$-jet bundle of $(X_{\text{\rm reg}},T_{X_{\text{\rm reg}}})$ in $Z_k$. As $T_Z\simeq Z\times\mathbb C^n$, $\dim Z=n$, we have that $Z_k=Z\times\mathbb R_{n,k}$, where $\mathbb R_{n,k}$ is the rational variety introduced in the previous chapter. By Proposition \ref{relnef}, there exists a weight $\mathbf a\in\mathbb N^k$ such that $\mathcal O_{Z_k}(\mathbf a)$ is relatively very ample. This means that there exists a very ample line bundle $\mathcal O_{\mathbb R_{n,k}}(\mathbf a)$ over $\mathbb R_{n,k}$ such that $\mathcal O_{Z_k}(\mathbf a)$ is its pull-back by the second projection. Now, consider the restriction to $X_k$ of the second projection map and call it $\Phi_k\colon X_k\to\mathbb R_{n,k}$; by functoriality, one has that $\mathcal O_{X_k}(\mathbf a)=\Phi_k^*\mathcal O_{\mathbb R_{n,k}}(\mathbf a)$.

Define $B_k\subset X_k$ to be the set of points $x\in X_k$ such that the fiber of $\Phi_k$ passing through $x$ is positive dimensional and assume that $B_k\ne X_k$.

\begin{lem}
There exists a singular hermitian metric on $\mathcal O_{X_k}(\mathbf a)$ with strictly positive curvature current and such that its degeneration set is exactly $B_k$.
\end{lem}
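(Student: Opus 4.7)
The plan is to exploit the bigness of $L := \mathcal O_{X_k}(\mathbf a)$ and combine a finite family of Kodaira-type singular metrics via a pointwise maximum. Since $B_k \neq X_k$, the generic fiber of $\Phi_k$ is zero-dimensional, so $\Phi_k$ is generically finite onto its image, and $L = \Phi_k^{*}\mathcal O_{\mathbb R_{n,k}}(\mathbf a)$ is big as the pullback of a very ample line bundle by a generically finite morphism.

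Fix an ample line bundle $A$ on $X_k$ together with a smooth hermitian metric $h_A = e^{-\varphi_A}$ of strictly positive curvature $\omega_A$. By Kodaira's lemma, for every sufficiently large integer $m$ the space $H^0(X_k, mL \otimes A^{-1})$ is non-trivial, and any non-zero section $s$ produces an effective divisor $E = (s)$ together with a singular hermitian metric $h_{L,s} = e^{-\varphi_{L,s}}$ on $L$ of local weight
$$
\varphi_{L,s} = \frac{1}{m}\bigl(\varphi_A + \log|s|^2\bigr).
$$
Its curvature current $\frac{1}{m}(\omega_A + [E]) \geq \frac{1}{m}\omega_A > 0$ is strictly positive, and its degeneration set is exactly $\mathrm{supp}(E)$. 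The crucial inclusion $B_k \subseteq \mathrm{supp}(E)$ holds for every such $E$: restricting $mL \sim A + E$ to any irreducible component $F$ of a positive-dimensional fiber of $\Phi_k$ would give $\mathcal O_F \sim A|_F + E|_F$, which is impossible unless $F \subseteq \mathrm{supp}(E)$, since $A|_F$ is ample on $F$ while $E|_F$ would have to be effective.

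Conversely, I claim $\bigcap_{m,s} \mathrm{supp}(E_{m,s}) = B_k$, i.e. the augmented base locus $\mathbb{B}_+(L)$ equals $B_k$. By Nakamaye's theorem, $\mathbb{B}_+(L)$ coincides with the null locus of $L$, the union of positive-dimensional subvarieties $V \subseteq X_k$ on which $L|_V$ fails to be big. For such a $V$, the restriction $\Phi_k|_V$ cannot be generically finite (otherwise $L|_V$ would be big, again as a pullback of very ample by a generically finite morphism), so the generic fiber of $\Phi_k|_V$ is positive-dimensional; this forces a Zariski-dense subset of $V$ to lie in $B_k$, and since $B_k$ is closed by upper semicontinuity of fiber dimension, one concludes $V \subseteq B_k$.

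Combining these two inclusions with the Noetherian property of the Zariski topology on $X_k$, one extracts a finite collection of sections $s_1, \ldots, s_N$ (of respective orders $m_1, \ldots, m_N$) with $\bigcap_{j=1}^N \mathrm{supp}(E_j) = B_k$. Setting
$$
\varphi_L := \max_{1 \leq j \leq N} \varphi_{L, s_j},
$$
I obtain a plurisubharmonic local weight for a singular hermitian metric on $L$; since a finite maximum of psh functions each satisfying $i\partial\overline{\partial}\varphi_{L,s_j} \geq \frac{1}{m_j}\omega_A$ still satisfies the analogous lower bound with constant $\min_j (1/m_j)$, the curvature current of $h_L := e^{-\varphi_L}$ is strictly positive. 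Its $-\infty$-locus is precisely $\bigcap_j \mathrm{supp}(E_j) = B_k$, as desired. The main obstacle is the Nakamaye-type equality $\mathbb{B}_+(L) = B_k$, which is where the hypothesis $B_k \neq X_k$ and the semi-ample structure coming from $\Phi_k$ enter in an essential way; some care is also needed in handling potential singularities of $X_k$, but this is a technical issue that does not affect the overall strategy.
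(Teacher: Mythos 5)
Your argument is correct in structure, but it follows a route that cannot be the one the paper refers to: Nakamaye's theorem $\mathbb{B}_+(L)=\mathrm{Null}(L)$ for big and nef $L$ is from 2000, whereas the paper cites Demailly's 1997 notes. Two remarks. First, $X_k$ is the closure in $Z_k$ of the jet tower over $X_{\mathrm{reg}}$, where $X$ is merely the Zariski closure of $f(\mathbb C)$, so $X_k$ need not be smooth or even normal; Nakamaye's original statement does not directly apply, and this is more than a formality --- you must either normalize $X_k$ first (a positively curved singular metric on the pullback of $L$ to the normalization descends with the same degeneration set) or cite one of the later extensions of the theorem to singular projective varieties. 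Second, and this is what makes an elementary 1997-era proof possible, $L=\Phi_k^*\mathcal O_{\mathbb R_{n,k}}(\mathbf a)$ is not merely big and nef but \emph{semi-ample}, being pulled back from a very ample bundle on $W=\Phi_k(X_k)$; this lets one bypass Nakamaye. After normalizing, Stein-factorize $\Phi_k$ as $Y\xrightarrow{\,f\,}Y'\xrightarrow{\,g\,}W$ with $f$ birational onto a normal projective variety, $g$ finite and $L'=g^*\mathcal O_W(1)$ ample on $Y'$, so that $L=f^*L'$ and $\mathrm{Exc}(f)$ lies over $B_k$. Then for $A$ ample on $Y$, the projection formula gives $H^0(Y,mf^*L'-A)\simeq H^0(Y',mL'\otimes f_*\mathcal O_Y(-A))$; since $f_*\mathcal O_Y(-A)$ is invertible on $Y'\setminus f(\mathrm{Exc}(f))$ and $L'$ is ample, Serre's theorem produces for $m\gg 0$ and any $y\notin\mathrm{Exc}(f)$ a section of $mf^*L'-A$ not vanishing at $y$, which is precisely $\mathbb B_+(L)\subseteq B_k$. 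With $\mathbb B_+(L)=B_k$ in hand, the rest of your proof --- the easy inclusion $B_k\subseteq\mathrm{supp}(E)$ for every $E\in|mL-A|$, the Noetherian extraction of finitely many sections with common zero set $B_k$, and the pointwise maximum of the weights $\tfrac{1}{m_j}(\varphi_A+\log|s_j|^2)$ with curvature bound $\ge(\min_j 1/m_j)\,\omega_A$ --- is clean and correct.
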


\begin{proof}
This a quite standard fact. A proof can be found for example in \cite{Dem97}. 
\end{proof}

Thus, Theorem \ref{link} shows that $f_{[k]}(\mathbb C)\subset B_k$ (and this is trivially also true if $B_k=X_k$).This means that through every point $f_{[k]}(t_0)$ there is a germ of positive dimensional variety in the fiber $\Phi_k^{-1}(\Phi_k(f_{[k]}(t_0)))$, say a germ of curve $\zeta\to u(\zeta)=(z(\zeta),j_k)\in X_k\subset Z\times\mathbb R_{n,k}$ with $u(0)=f_{[k]}(t_0)=(z_0,j_k)$ and $z_0=f(t_0)$. Then $(z(\zeta),j_k)$ is the image of $f_{[k]}(t_0)$ by the $k$-th lifting of the translation $\tau_s\colon z\mapsto z+s$ defined by $s=z(\zeta)-z_0$.

Next, we have $f(\mathbb C)\not\subset X_{\text{\rm sing}}$ since $X$ is the Zariski closure of $f(\mathbb C)$ and we may then choose $t_0$ so that $f(t_0)\in X_{\text{\rm reg}}$ and $f'(t_0)\ne 0$. Define
$$
A_k(f)=\{s\in Z\mid f_{[k]}(t_0)\in X_k\cap \tau_{-s}(X)_k\}.
$$
Clearly $A_k(f)$ is an analytic subset of $Z$ containing the curve $\zeta\to s(\zeta)=z(\zeta)-z_0$ through $0$. By the Noetherian property, since 
$$
A_1(f)\supset A_(f)\supset\cdots\supset A_k(f)\supset\cdots,
$$
this sequence stabilizes at some $A_k(f)$.
Therefore, there is a curve $\Delta_r\to Z$, $\zeta\mapsto s(\zeta)$ such that the infinite jet $j_\infty$ defined by $f$ at $t_0$ is $s(\zeta)$-translation invariant for all $t\in\mathbb C$ and $\zeta\in\Delta_r$. As $X$ is the Zariski closure of $f(\mathbb C)$, we must have $s(\zeta)+X\subset X$ for all $\zeta\in\Delta_r$; moreover, $X$ is irreducible and then we have in fact $s(\zeta)+X=X$.

Now, define
$$
W=\{s\in Z\mid s+X=X\}.
$$
Then, $W$ is a closed positive dimensional subgroup of $Z$. Let $p\colon Z/W$ be the natural projection. As $Z/W$ is a complex torus with of strictly lower dimension that $Z$, we conclude by inducion on dimension that the curve $\widehat f=p\circ f\colon\mathbb C\to Z/W$ has its Zariski closure $\widehat X=\overline{\widehat f(\mathbb C)}^{\text{\rm Zar}}=p(X)$ equal to a translate $\widehat s+\widehat T$ of some subtorus $\widehat T\subset Z/W$. Since $X$ is $W$-invariant, we get $X=s+p^{-1}(\widehat T)$, where $p^{-1}(\widehat T)$ is a closed subgroup of $Z$. This implies that $X$ is a translate of a subtorus.
\end{proof}

\section{A general strategy for algebraic degeneracy}

We state here a quite general theorem which gives sufficient conditions in order to have algebraic degeneracy of entire curves in a given compact complex manifold. This statement somehow combine and sums up several ideas and strategies by different authors in the last three decades.

\begin{thm}[\cite{Siu04}, \cite{Pau08}, \cite{Rou07}, \cite{D-M-R10}, \cite{D-T10}]\label{general}
Let $X$ be a compact complex manifold. Suppose there exist two ample line bundle $A,B\to X$ and integers $k,m>0$ such that
\begin{itemize}
\item[(i)] there is a non zero section $P\in H^0(X,E_{k,m}T^*_X\otimes A^{-1})$,
\item[(ii)] the twisted tangent space $T_{J_k T_X}\otimes p_k^*B$ of the space of $k$-jets $p_k\colon J_k T_X\to X$ is globally generated over its regular part $J_kT_X^{\text{\rm reg}}$ by its global sections, and suppose moreover that one can choose such generating vector fileds to be equivariant with respect to the action of $\mathbb G_k$ on $J_k T_X$,
\item[(iii)] the line bundle $A\otimes B^{-\otimes m}$ is ample.
\end{itemize}
Call $Y\subsetneq X$ the zero locus of the section $P$. Then every holomorphic entire curve $f\colon\mathbb C\to X$ has image contained in $Y$.

If moreover the effective cone of $X$ is contained in the ample cone (which is the case for instance if the Picard group of $X$ is $\mathbb Z$), then the ample line bundle $A$ can be chosen in such a way to force $Y$ to have codimension at least two in $X$.
\end{thm}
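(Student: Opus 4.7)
The plan is to implement the Siu--Demailly strategy for algebraic degeneracy: start from the single jet differential $P$ (which already constrains every entire curve via Corollary \ref{baselocus}) and manufacture many more jet differentials by Lie differentiation along the equivariant vector fields of hypothesis (ii). Concretely, by Theorem \ref{di}, $P$ corresponds to a section of $\mathcal O_{X_k}(m)\otimes\pi_{0,k}^*A^{-1}$, and since $A$ is ample Corollary \ref{baselocus} forces the $k$-jet lift $f_{[k]}\colon\mathbb C\to X_k$ of every entire curve $f\colon\mathbb C\to X$ to lie in the zero locus of $P$. Equivalently, the pull-back $\tilde P$ on $J_kT_X$, a $\mathbb G_k$-invariant weighted-homogeneous polynomial of weight $m$ with values in $p_k^*A^{-1}$, vanishes identically on the image of $t\mapsto j_kf(t)$.

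The core step is to produce inductively, for every $s=1,\dots,m$, a section $P_s\in H^0(X,E_{k,m}T^*_X\otimes A^{-1}\otimes B^{\otimes s})$, with $P_0=P$, such that $P_s$ still vanishes on the $k$-jet of every entire curve. Given $P_{s-1}$ and any $V\in H^0(J_kT_X,T_{J_kT_X}\otimes p_k^*B)$ as in (ii), the naive Lie derivative of $\tilde P_{s-1}$ along $V$ requires a choice of holomorphic connection on $A^{-1}\otimes B^{\otimes(s-1)}$; different choices differ by a multiple of $\tilde P_{s-1}$ and so coincide on its vanishing locus. Fixing a connection and using the $\mathbb G_k$-equivariance of $V$ (which forces weight zero under the $\mathbb C^*$-homothety action, so that the weight of $\tilde P$ is preserved), one obtains a $\mathbb G_k$-invariant, weighted-homogeneous section of $p_k^*(A^{-1}\otimes B^{\otimes s})$ on $J_kT_X^{\mathrm{reg}}$; by Theorem \ref{di} it descends, and extends by Hartogs across the codimension-$\ge 2$ singular locus, to the desired $P_s$. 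Hypothesis (iii) together with the ampleness of $B$ forces $A\otimes B^{-\otimes s}$ to be ample for every $0\le s\le m$, so Corollary \ref{baselocus} applies to each $P_s$ and yields $f_{[k]}(\mathbb C)\subset\{P_s=0\}$.

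With this family of vanishings, suppose for contradiction that some $t_0\in\mathbb C$ has $x_0=f(t_0)\notin Y$, and assume (after a standard reduction) that $j_kf(t_0)\in J_kT_X^{\mathrm{reg}}$. Then the restriction of $P$ to the fiber $J_kT_{X,x_0}$ is a non-zero weighted polynomial of degree $m$, so at $j_kf(t_0)$ some mixed partial derivative of $\tilde P$ of total order $\le m$ is non-zero. By the global generation in (ii), finitely many vector fields $V_1,\dots,V_N$ span $T_{J_kT_X}$ at $j_kf(t_0)$; iterated Lie derivatives of $\tilde P$ of order $\le m$ along these therefore span the space of all such partial derivatives at that point, and the vanishing of every $P_s$ at $j_kf(t_0)$, for all choices of $s\le m$ vector fields $V_{i_1},\dots,V_{i_s}$, forces each such partial to vanish. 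Hence $\tilde P$ vanishes to order strictly greater than $m$ at $j_kf(t_0)$, and so identically on the fiber, contradicting $x_0\notin Y$. Therefore $f(\mathbb C)\subset Y$.

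For the codimension-two refinement, suppose the effective cone of $X$ is contained in the ample cone and that $Y$ has a divisorial component $D$. Then $\mathcal O(D)$ is effective, hence ample, and $P$ factors as $P=s_D\otimes P'$ with $s_D\in H^0(X,\mathcal O(D))$ and $P'\in H^0(X,E_{k,m}T^*_X\otimes(A\otimes\mathcal O(D))^{-1})$. Replacing $A$ by $A'=A\otimes\mathcal O(D)$ preserves all three hypotheses: (i) and (ii) are immediate, and (iii) follows since $A'\otimes B^{-\otimes m}=(A\otimes B^{-\otimes m})\otimes\mathcal O(D)$ is a sum of two ample classes. Iterating removes all divisorial components of $Y$. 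The main obstacle throughout is the rigorous construction of $P_s$ in the second paragraph: the Lie derivative of a line-bundle-valued section is not intrinsic, and reconciling the connection-dependent ambiguity with the need for a genuinely global, $\mathbb G_k$-invariant holomorphic section --- together with the precise control of the pole orders in $B$ at each iteration --- is the technical heart of the works \cite{Siu04}, \cite{Pau08}, \cite{Rou07}, \cite{D-M-R10}, \cite{D-T10} cited in the statement.
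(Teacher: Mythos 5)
Your proof follows essentially the same route as the paper's: differentiate the starting jet differential $P$ along the global equivariant vector fields supplied by hypothesis (ii), use hypothesis (iii) to keep the target line bundle antiample through at most $m$ differentiations, and invoke Corollary~\ref{baselocus} at each stage; the codimension-two refinement by twisting off the divisorial part of $Y$ is also identical. The only cosmetic difference is that you argue in the forward direction (all iterated Lie derivatives of order $\le m$ vanish at $j_kf(t_0)$, forcing $P\vert_{J_kT_{X,x_0}}\equiv 0$), while the paper exhibits one particular chain of derivatives that fails to vanish and reads off the contradiction directly --- these are logically equivalent. One small caveat on your flagged subtlety: an ample (or antiample) line bundle carries no \emph{holomorphic} connection (the Atiyah class obstructs), so one cannot literally fix one; in practice one differentiates in a local holomorphic trivialization and checks that the ambiguity is a multiple of $P$ plus terms with controlled poles, exactly the observation you make. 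Your instinct to single this out as the technical heart is correct, and the paper itself glides over it in the same way.
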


\begin{proof}
Start with the given non zero section $P\in H^0(X,E_{k,m}T^*_X\otimes A^{-1})$ and call 
$$
Y=\{P=0\}\subsetneq X
$$ 
its zero locus. Look at $P$ as an invariant (under the action of the group $\mathbb G_k$) map 
$$
J_kT_X\to p^*_k A^{-1}
$$
where $p_k\colon J_k T_X\to X$ is the space of $k$-jets of germs of holomorphic curves $f\colon (\mathbb C,0)\to X$. Then $P$ is a weighted homogeneous polynomial in the jet variables of degree $m$ with coefficients holomorphic functions of the coordinates of $X$ and values in $p^*_k A^{-1}$.

By hypothesis (ii), we have enough global holomorphic invariant vector fields on $J_k T_X$ with values in the pull-back from X of the ample divisor $B$ in order to generate $T_{J_k T_X}\otimes p^*_kB$, at least over the dense open set $J_kT_X^{\text{\rm reg}}$ of regular $k$-jets, {\sl i.e.} of $k$-jets with nonvanishing first derivative. Considering jet differentials as functions on $J_k T_X$, the idea is to produce lots of them starting from the first one simply by derivation.

If $f\colon\mathbb C\to X$ is an entire curve, consider its lifting $j_k(f)\colon\mathbb C\to J_k T_X$ and suppose that $j_k(f)(\mathbb C)\not\subset J_kT_X^{\text{\rm sing}}\overset{\text{\rm def}}=J_k T_X\setminus J_kT_X^{\text{\rm reg}}$ (otherwise $f$ is constant). Arguing by contradiction, let $f(\mathbb C)\not\subset Y$ and $x_0=f(\zeta_0)\in X\setminus Y$.

Then, by Corollary \ref{baselocus}, the point $j_k(f)(\zeta_0)$ must lie in the zero locus of the restriction $P_{x_0}$ of $P$ to the affine fiber $p_k^{-1}(x_0)$ (see \figurename~\ref{fig:algdeg}): this restriction is not identically zero by construction, since the point $x_0$ is outside the zero locus of $P$ regarded as a section of $E_{k,m}T^*_X\otimes A^{-1}$.

\begin{figure}
\centering
\includegraphics[scale=0.8]{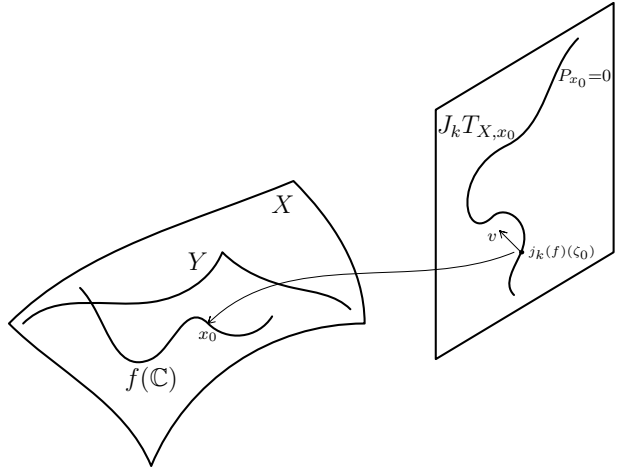}
\caption{The lifting of the curve $f$ and the zero locus of $P_{x_0}$\label{fig:algdeg}}
\end{figure}

If we take a global section $V\in H^0(J_kT_X,T_{J_k T_X}\otimes p_k^*B)$, by combining with the pairing 
$$
H^0(J_kT_X,p_k^*A^{-1})\times H^0(J_kT_X,p_k^*B)\to H^0(J_kT_X,p_k^*(A^{-1}\otimes B)),
$$ 
we can form the Lie derivative $L_VP$ of $P$ with respect to $V$ and obtain in this way a new global invariant (since $V$ is an invariant vector field) jet differential of weighted degree $m$ and order $k$ with values in $A^{-1}\otimes B$.

Since $P_{x_0}$ is a polynomial of weighted degree $m$, one can find $v_1,\dots,v_p\in T_{J_kT_X,j_k(f)(\zeta_0)}$, $p\le m$, such that given $V_1,\dots V_p\in H^0(J_kT_X,T_{J_k T_X}\otimes p_k^*B)$ with the property that $V_j(j_k(f)(\zeta_0))=v_j$, $j=1,\dots,p$, one has
$$
L_{V_p}\cdots L_{V_1} P(j_k(f)(\zeta_0))\ne 0.
$$
For instance, if the point $j_k(f)(\zeta_0)$ is a regular point of the zero locus of $P_{x_0}$ in $J_kT_{X,x_0}$, it suffices to take $p=1$ and any $v=v_1\in T_{J_kT_X,j_k(f)(\zeta_0)}$ transverse to the zero locus will do the job (see again \figurename~\ref{fig:algdeg}). In general, this zero locus has at most a singularity of order $m$ at $j_k(f)(\zeta_0)$ and thus one needs to take at most $m$ derivatives in order to guarantee the non vanishing of the derived polynomial at the given point.
The existence of the global sections with prescribed valued at $j_k(f)(\zeta_0)$ is assured by the global generation hypothesis (ii).

Summing up, one can produce, by differentiating at most $m$ times, a new invariant $k$-jet differential $Q=L_{V_p}\cdots L_{V_1}P$ of weighted degree $m$ with values in $A^{-1}\otimes B^{\otimes p}$ such that $Q(j_k(f)(\zeta_0))\ne 0$, thus contradicting Corollary \ref{baselocus}, provided $A^{-1}\otimes B^{\otimes p}$ is antiample, {\sl i.e.} provided $Q$ is still with value in an antiample divisor. But this is assured by hypothesis (iii), since $p\le m$.

For the last assertion, the starting point is the following general, straightforward remark. Let $E\to X$ be a holomorphic vector bundle over a compact complex manifold $X$ and let $\sigma\in H^0(X,E)\ne 0$; then, up to twisting by the dual of an effective divisor, one can suppose that the zero locus of $\sigma$ has no divisorial components. This is easily seen, for let $D$ be the divisorial (and effective) part of the zero locus of $\sigma$ and twist $E$ by $\mathcal O_X(-D)$. Then, $\sigma$ is also a holomorphic section of $H^0(X,E\otimes\mathcal O_X(-D))$ and seen as a section of this new bundle, it vanishes on no codimension $1$ subvariety of $X$.

Now, we use this simple remark in our case, the vector bundle $E$ being here $E_{k,m}T^*_X\otimes A^{-1}$. Since, by hypothesis, every effective line bundle on $X$ is ample the corresponding $\mathcal O_X(-D)$ is antiample. After all, we see that in the above proof of one can suppose that the \lq\lq first\rq\rq{} invariant jet differential vanishes at most on a codimension $2$ subvariety of $X$, provided one looks at it as a section of $H^0(E_{k,m}T^*_X\otimes A^{-1}\otimes\mathcal O_X(-D))$ and $A^{-1}\otimes\mathcal O_X(-D)$ is again antiample. In fact it is even \lq\lq more antiample\rq\rq{} than $A^{-1}$: in particular, condition (iii) in the hypotheses is still fulfilled:
$$
A\otimes\mathcal O_X(D)\otimes B^{-\otimes m}>A\otimes B^{-\otimes m}>0
$$
and everything works in the same way, but now with a $Y$ of codimension at least two. 
\end{proof}

One then gets immediately.

\begin{cor}
A compact complex surface which satisfies the hypotheses of Theorem \ref{general} is Kobayashi hyperbolic.
\end{cor}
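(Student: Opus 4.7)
The plan is to combine Theorem \ref{general} with Brody's criterion (Theorem \ref{Brody}), exploiting the low-dimensional setting in a decisive way: on a complex surface, \emph{codimension at least two} means zero-dimensional, i.e.\ a finite set of points.

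First I would apply Theorem \ref{general} to the compact surface $X$. Under hypotheses (i)--(iii), the theorem produces a proper subvariety $Y \subsetneq X$ such that every holomorphic entire curve $f \colon \mathbb{C} \to X$ has image contained in $Y$. To sharpen this conclusion I would invoke the \emph{moreover} part of the theorem: by replacing the initial invariant jet differential $P \in H^0(X, E_{k,m}T_X^* \otimes A^{-1})$ by $P$ viewed as a section of $E_{k,m}T_X^* \otimes A^{-1} \otimes \mathcal{O}_X(-D)$, where $D$ is the divisorial part of $\{P = 0\}$, one may arrange that $Y$ has codimension $\geq 2$ in $X$. Since $\dim X = 2$, this forces $Y$ to be a finite (possibly empty) set of points.

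Once $Y$ is discrete, the image of any entire curve $f \colon \mathbb{C} \to X$ lies in a finite set; since $\mathbb{C}$ is connected, $f$ must be constant. Thus $X$ admits no non-constant entire curves, and Brody's criterion (Theorem \ref{Brody}) immediately yields that $X$ is Kobayashi hyperbolic.

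The one delicate point, and really the only obstacle in this reduction, is guaranteeing the applicability of the \emph{moreover} statement of Theorem \ref{general}: the passage from $Y$ of possibly divisorial nature to $Y$ of codimension $\geq 2$ requires the effective cone of $X$ to be contained in its ample cone, together with the fact that the twist $A \otimes \mathcal{O}_X(D) \otimes B^{-\otimes m}$ remains ample (which is automatic, as noted in the proof of Theorem \ref{general}, because $\mathcal{O}_X(D)$ is effective hence ample under the cone hypothesis). This cone condition is verified, for instance, whenever $\operatorname{Pic}(X) = \mathbb{Z}$, which is the situation in all the applications to projective hypersurfaces treated in the following chapters; in more general settings it must be checked case by case.
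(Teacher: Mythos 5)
Your argument is correct and is exactly what the paper intends: invoke the codimension-two refinement of Theorem \ref{general}, note that on a surface this forces $Y$ to be finite, conclude that entire curves are constant, and finish with Brody's criterion (this is precisely how the paper later proves hyperbolicity of very generic surfaces in $\mathbb P^3$, via Noether--Lefschetz to get $\operatorname{Pic}(X)=\mathbb Z$). You are also right to flag the one subtlety: the codimension-two conclusion of Theorem \ref{general} is conditional on the effective cone of $X$ lying inside the ample cone, which is strictly an additional assumption beyond hypotheses (i)--(iii), and the corollary as stated tacitly absorbs it. Without that extra hypothesis, one only gets $Y$ of codimension one, which on a surface could be a rational or elliptic curve carrying non-constant entire maps, and the conclusion would not follow.
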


\begin{cor}
Let $X$ be a compact complex threefold which satisfies the hypotheses of Theorem \ref{general}. Suppose moreover that $X$ does not contain any rational or elliptic curve. Then $X$ is Kobayashi hyperbolic. 
\end{cor}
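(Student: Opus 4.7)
The strategy is to combine Theorem \ref{general} with uniformization of compact Riemann surfaces. By Brody's criterion, it suffices to rule out the existence of non-constant entire curves $f\colon\mathbb C\to X$, so suppose for contradiction that such an $f$ exists. I read the \lq\lq hypotheses of Theorem \ref{general}\rq\rq{} as including the effective-cone-in-ample-cone condition that yields the refined conclusion in which the exceptional locus $Y$ has codimension at least two; without this refinement, even the preceding surface corollary could not follow from the argument just given.

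By the codimension-two form of Theorem \ref{general}, the image $f(\mathbb C)$ is contained in a proper subvariety $Y\subsetneq X$ with $\operatorname{codim}_X Y\ge 2$. Since $\dim X=3$, this forces $\dim Y\le 1$, so $Y$ is a finite union of compact algebraic curves. Connectedness of $\mathbb C$ forces $f(\mathbb C)$ to lie in a single irreducible component $C$ of $Y$, and since $f$ is non-constant, $C$ cannot reduce to a point. Let $\nu\colon\widehat C\to C$ be its normalization, a compact Riemann surface. Because $\mathbb C$ is simply connected and $\nu$ is an isomorphism outside the finite singular locus of $C$, the map $f$ lifts uniquely to a holomorphic $\widetilde f\colon\mathbb C\to\widehat C$, applying Riemann's removable singularity theorem at the discrete preimage of $\operatorname{Sing}(C)$.

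Now apply uniformization to $\widehat C$: its universal cover is one of $\mathbb P^1$, $\mathbb C$, or the disc $\Delta$. If it is $\Delta$, then $\widetilde f$ lifts further to a bounded holomorphic map $\mathbb C\to\Delta$, which is constant by Liouville, hence $f$ itself is constant, a contradiction. Otherwise $\widehat C$ is $\mathbb P^1$ or an elliptic curve, so $C$ is a rational or elliptic curve contained in $X$, directly contradicting the standing hypothesis that no such curves exist. Therefore no non-constant entire curve $f\colon\mathbb C\to X$ can exist, and Brody's criterion delivers Kobayashi hyperbolicity.

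The main obstacle worth flagging is the codimension conclusion. If one could only assert $\operatorname{codim} Y\ge 1$, then an entire curve might well sit inside an irreducible surface component of $Y$, and one would need independent information about hyperbolicity of that surface — information the hypotheses do not provide. It is precisely the codimension-two improvement of Theorem \ref{general} that makes $Y$ purely one-dimensional in the threefold case and thereby reduces the problem to the Riemann-surface trichotomy handled above.
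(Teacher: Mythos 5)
Your proof is correct and follows essentially the same route as the paper's: both reduce to the observation that, once $Y$ has codimension at least two, the Zariski closure of $f(\mathbb C)$ is a compact algebraic curve admitting a non-constant holomorphic image of $\mathbb C$, hence by uniformization must be rational or elliptic. Your explicit remark that one must read the hypotheses as including the effective-cone-in-ample-cone condition (so that the codimension-two refinement of Theorem \ref{general} is available) is a fair and accurate point — the same reading is needed even for the surface corollary that precedes this one.
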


\begin{proof}
Let $f\colon\mathbb C\to X$ an entire curve in $X$. Then $\overline{f(\mathbb C)}^{\text {\rm Zar}}$ is an algebraic curve of $X$ which admits a non constant holomorphic image of $\mathbb C$. By uniformization, it must be rational or elliptic, contradiction.
\end{proof}

\chapter{Hyperbolicity of generic surfaces in projective $3$-space}

{\small\textsc{Abstract}. The main topic of these notes is hyperbolicity of generic projective hypersurfaces of high degree. In this chapter we shall describe how to prove it in the simpler case of surfaces in projective $3$-space. While Kobayashi's conjecture predicts in the case of surfaces a lower bound for the degree equal to $5$, nowadays the hyperbolicity is only known for degree greater than or equal to $18$ \cite{Pau08}, after 36 \cite{McQ99} and 21 \cite{D-EG00}.}

\section{General strategy}

The idea we present here to attack the Kobayashi conjecture is to apply Theorem \ref{general} in the context of projective hypersurfaces. Unluckily, to verify the validity of hypothesis (ii) even in this framework is very difficult (and maybe even not true): we shall instead use a kind of the variational method introduced in \cite{Voi96} as explained in \cite{Siu04}. 

The strategy is the following. First of all, it is known \cite{Sak79} that on a smooth projective hypersurface of dimension greater than or equal to two there is no symmetric differentials at all. This means that we cannot expect to work with order one jet differentials for this conjecture. In fact, in general, one has the following 

\begin{thm}[\cite{Div08}]\label{vanishing}
On a smooth projective complete intersection $X$ one has the following vanishing:
$$
H^0(X,E_{k,m}T^*_X)=0,\quad\forall m>0\quad\text{and}\quad 0<k<\dim X/\operatorname{codim} X. 
$$
In particular, whenever $X$ is a smooth hypersurface one has to look at least for invariant jet differentials of order equal to the dimension of $X$. 
\end{thm}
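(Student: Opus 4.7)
The plan is to reduce the claimed vanishing to a cohomological vanishing statement for tensor products of symmetric powers of the cotangent bundle of $X$, and then to treat the latter via the conormal and Euler sequences of the complete intersection $X\subset\mathbb P^N$.

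First I would use the filtration on $\mathcal J_{k,m}T_X^*$ recalled earlier in this chapter, whose associated graded is
$$
\operatorname{Gr}^\bullet(\mathcal J_{k,m}T_X^*) = \bigoplus_{|\ell|_k = m} S^{\ell_1}T_X^* \otimes S^{\ell_2}T_X^* \otimes \cdots \otimes S^{\ell_k}T_X^*.
$$
Chasing the induced long exact sequences in cohomology, it suffices to prove that each graded piece has no global sections. Since $E_{k,m}T_X^*\subset\mathcal J_{k,m}T_X^*$ inherits a filtration whose graded pieces are $\mathbb G'_k$-invariant subsheaves of the above, it is enough to show
$$
H^0\bigl(X,\,S^{\ell_1}T_X^* \otimes \cdots \otimes S^{\ell_k}T_X^*\bigr) = 0
$$
for every $(\ell_1,\ldots,\ell_k)\in\mathbb N^k$ with $\sum j\ell_j=m>0$. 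Note that at most $k$ of the tensor factors are non-trivial; this is where the hypothesis on $k$ will eventually enter.

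Next I would invoke (or reprove) a Schneider--Br\"uckmann--Rackwitz type vanishing: for a smooth complete intersection $X\subset\mathbb P^N$ of dimension $n$ and codimension $c$,
$$
H^0\bigl(X,\,S^{a_1}\Omega^1_X\otimes\cdots\otimes S^{a_s}\Omega^1_X\bigr)=0
\qquad\text{whenever}\qquad s<n/c,
$$
with at least one $a_i>0$. The strategy here is to combine the conormal sequence
$$
0\to N^*_{X/\mathbb P^N}\to \Omega^1_{\mathbb P^N}|_X\to \Omega^1_X\to 0
$$
with the restricted Euler sequence $0\to\Omega^1_{\mathbb P^N}|_X\to\mathcal O_X(-1)^{\oplus(N+1)}\to\mathcal O_X\to 0$. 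Taking symmetric and tensor powers produces natural surjections expressing $S^{a_1}\Omega^1_X\otimes\cdots\otimes S^{a_s}\Omega^1_X$ as a quotient of bundles built out of $\mathcal O_X(-a')$'s (with $a'=\sum a_i$), so all twists that show up are strictly negative. To lift sections from $X$ to the ambient $\mathbb P^N$, one uses the Koszul resolution of $\mathcal O_X$ associated with the defining equations of the complete intersection, which is acyclic precisely because $X$ is a smooth complete intersection.

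The main obstacle, and the place where the bound $k<n/c$ is forced, is the spectral sequence / Koszul bookkeeping in the last step: when restricting a tensor product of negatively twisted bundles from $\mathbb P^N$ to $X$ through the Koszul complex built out of $N^*_{X/\mathbb P^N}=\bigoplus_i\mathcal O_X(-d_i)$, the obstructions to vanishing live in cohomology groups whose degree scales like $c\cdot s$, while Bott's formula on $\mathbb P^N$ forces them to vanish as long as $c\cdot s<n$. Matching $s\le k$ gives the precise threshold $k<n/c$. Once this ambient vanishing is established, the filtration argument from the first paragraph delivers the theorem.
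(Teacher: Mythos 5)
Your proposal is correct and follows essentially the same route as the cited reference \cite{Div08}: reduce via the Green--Griffiths filtration to the graded pieces $S^{\ell_1}T^*_X\otimes\cdots\otimes S^{\ell_k}T^*_X$, which have at most $k$ nontrivial factors, and then apply the Br\"uckmann--Rackwitz vanishing theorem for tensor bundles on smooth complete intersections, whose threshold reproduces exactly the bound $k<\dim X/\operatorname{codim} X$. One small imprecision to watch: the Euler sequence realizes $S^a\Omega^1_{\mathbb P^N}$ as a \emph{subsheaf} of $\mathcal O(-a)^{\oplus M}$ while the conormal sequence gives a quotient, so $S^{a_1}\Omega^1_X\otimes\cdots\otimes S^{a_s}\Omega^1_X$ is a quotient of a subsheaf of negatively twisted trivial bundles rather than a quotient of such a bundle directly, which is why the vanishing genuinely requires the Koszul and conormal long-exact-sequence bookkeeping you rightly flag as the delicate step.
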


Following \cite{Dem97}, for $X$ a smooth surface in projective $3$-space we shall see that to find the first non zero jet differential (see hypothesis (i) of Theorem \ref{general}) it suffices to apply a Riemann-Roch argument together with a vanishing theorem of Bogomolov \cite{Bog78}. One gets in this way an order two jet differential on every smooth hypersurface in $\mathbb P^3$ of degree greater than or equal to $15$; a more involved Riemann-Roch computation made in \cite{G-G80} shows in fact that on every smooth projective surface of general type there is a global jet differential of some (possibly very high) order\footnote{Recently, in july 2010, Demailly has announced to be able to construct a non zero jet differential on every variety of general type by differential geometric and probabilistic techniques: this is a substantial step toward the Green-Griffiths conjecture in arbitrary dimension.}. Thus one has a global jet differential on every smooth projective hypersurface in $\mathbb P^3$ starting from degree $5$ (this is the first degree for which the canonical bundle is ample). Anyway, for our purposes order two techniques will suffice.

Then, one has to produce meromorphic vector fields with controlled pole order like in the case of the proof of algebraic hyperbolicity. For this, one consider the universal hypersurface $\mathcal X\subset\mathbb P^3\times\mathbb P^{N_d-1}$ of degree $d$ in $\mathbb P^3$ and the vertical tangent bundle $\mathcal V\subset T_{\mathcal X}$, kernel of the differential of the second projection. Next, one forms the corresponding directed manifold $(\mathcal X,\mathcal V)$: entire curves in $\mathcal X$ tangent to $\mathcal V$ are in fact contained in some fiber and then map to some hypersurface. The aim is to globally generate the (twisted) tangent space $T_{J_2\mathcal V}\otimes\mathcal O_{\mathbb P^3}(\bullet)\otimes\mathcal O_{\mathbb P^{N_d-1}}(\bullet)$ to vertical  $2$-jets: this is done in \cite{Pau08} using \lq\lq slanted\rq\rq{} vector fields which permit to gain some positivity from the moduli space.

But now, the problem is that in order to be able to take derivatives one needs global jet differentials not only defined over the fibers (that is over every single smooth hypersurface) but over an open set in $\mathcal X$.

The vector bundle $E_{k,m}\mathcal V^*\to\mathcal X$ has the tautological property that its restriction to any smooth fiber $X_s$ of the second projection (that is to the hypersurface corresponding to a given modulus $s$ in $\mathbb P^{N_d-1}$) is just the vector bundle $E_{k,m}T^*_{X_s}\to X_s$. Since we know that on any smooth projective hypersurface of degree $\ge 15$ there is an order two global jet differential we can use a standard semicontinuity argument in order to extend it to a section of $E_{2,m}\mathcal V^*$ over (the inverse image by the second projection of) a Zariski open set of the moduli space: this is the desired extension to use the meromorphic vector fields. 

We shall see later how the more technical in nature hypothesis (iii) of Theorem \ref{general} is fulfilled in this situation. 

\section{Existence of jet differentials}

We start this section with a simple Riemann-Roch computation for order one jet differential on a smooth surface in order to give the flavor of the kind of methods employed.

Let $X$ be a smooth compact surface. Consider the projectivization $\pi\colon P(T_X)\to X$ of lines of $T_X$ and the corresponding (anti)tautological line bundle $\mathcal O_{P(T_X)}(1)$; call $u$ the first Chern class of $\mathcal O_{P(T_X)}(1)$.

On the one hand, we have the following standard relation:
$$
u^2+\pi^* c_1(X)\cdot u+\pi^*c_2(X)=0.
$$
On the other hand, we know that the following higher direct image formula is valid:
$$
R^q\pi_*\mathcal O_{P(T_X)}(m)=
\begin{cases}
\mathcal O_X(S^m T^*_X) & \text{if $q=0$ and  $m\ge 0$,} \\
0 & \text{otherwise.}
\end{cases}
$$
Therefore, we have the following isomorphism in cohomology:
$$
H^q(X,S^mT^*_X)\simeq H^q(P(T_X),\mathcal O_{P(T_X)}(m)),\quad\forall m,q\ge 0.
$$
In particular, we have equality for the Euler characteristics 
$$
\chi(S^mT^*_X)=\chi(\mathcal O_{P(T_X)}(m)).
$$
Now, the HirzebruchÐRiemannÐRoch theorem gives us
$$
\begin{aligned}
\chi(\mathcal O_{P(T_X)}(m)) & =\int_{P(T_X)}\operatorname{ch}(\mathcal O_{P(T_X)}(m))\cdot \operatorname{Td}(P(T_X)) \\
& = \frac{m^3}{3!}u^3+O(m^2) \\
& = \frac{m^3}{6}(c_1(X)^2-c_2(X))+O(m^2),
\end{aligned}
$$
where the last equality is obtained using the above relation. We then have that if the second Segre number $c_1(X)^2-c_2(X)$ of $X$ is positive, then the asymptotic Euler characteristic of the symmetric powers of the cotangent bundle of $X$ has maximal growth. Moreover, since we deal with the asymptotic Euler characteristic, the same result holds true if we twist the symmetric powers by any fixed line bundle. 

Concerning the existence of sections, suppose now that $X$ is a smooth surface of general type (that is $K_X$ is big). Then, a vanishing theorem of Bogomolov contained in \cite{Bog78} implies that
$$
H^0(X,S^pT_X\otimes K_X^{\otimes q})=0,\quad\forall p-2q>0.
$$
In particular, $H^0(X,S^mT_X\otimes K_X)=0$ whenever $m\ge 3$. But then, 
$$
\begin{aligned}
h^0(X,S^mT^*_X) & = h^0(P(T_X),\mathcal O_{P(T_X)}(m)) \\
& \ge \chi (\mathcal O_{P(T_X)}(m))-h^2(P(T_X),\mathcal O_{P(T_X)}(m)) \\
& = \chi (\mathcal O_{P(T_X)}(m))-h^2(X,S^mT^*_X) \\
& = \chi (\mathcal O_{P(T_X)}(m))-h^0(X,S^mT_X\otimes K_X) \\
& =  \chi (\mathcal O_{P(T_X)}(m)),\quad m\ge 3.
\end{aligned}
$$
Thus, the zeroth cohomology group is asymptotically minorated by the Euler characteristic, which is asymptotically positive if the second Segre number is.

Unfortunately, as we have said, this first order result is not sufficient to deal with surfaces in $\mathbb P^3$. For order two jets, we have seen that the full composition series of $E_{2,m}T^*_X$ is given by
$$
\operatorname{Gr}^\bullet(E_{2,m}T^*_X)=\bigoplus_{j=0}^{m/3}S^{m-3j}T^*_X\otimes K_X^j.
$$
Thus, a slightly more involved Euler characteristic computation gives the following:
$$
\chi(E_{2,m}T^*_X)=\chi(\operatorname{Gr}^\bullet(E_{2,m}T^*_X))=\frac{m^4}{648}(13\,c_1(X)^2-9\,c_2(X))+O(m^3).
$$
Again by Bogomolov's vanishing we obtain

\begin{prop}[\cite{Dem97}]
If $X$ is an algebraic surface of general type and $A$ an ample line bundle over $X$, then
$$
h^0(X,E_{2,m}T^*_X\otimes A^{-1})\ge \frac{m^4}{648}(13\,c_1(X)^2-9\,c_2(X))+O(m^3).
$$
In particular, every smooth surface $X\subset\mathbb P^3$ of degree $d\ge 15$ admits non trivial sections of $E_{2,m}T^*_X\otimes A^{-1}$ for $m$ large.
\end{prop}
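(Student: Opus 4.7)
The argument is a direct upgrade of the order-one Riemann-Roch computation sketched just above. The two ingredients are the composition series
$$
\operatorname{Gr}^\bullet(E_{2,m}T^*_X) = \bigoplus_{j=0}^{\lfloor m/3\rfloor} S^{m-3j}T^*_X\otimes K_X^{\otimes j}
$$
and the already-computed formula
$$
\chi(X, E_{2,m}T^*_X) = \frac{m^4}{648}\bigl(13\,c_1(X)^2 - 9\,c_2(X)\bigr) + O(m^3).
$$
First, I would note that twisting the graded pieces by the fixed line bundle $A^{-1}$ only perturbs $\chi$ by $O(m^3)$: surface Riemann-Roch applied to each summand produces a correction of order $O\bigl((m-3j)^2\bigr)$ from the $A^{-1}$ factor, and the $O(m)$ summands together give $O(m^3)$. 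Hence
$$
\chi(X, E_{2,m}T^*_X\otimes A^{-1}) = \frac{m^4}{648}\bigl(13\,c_1^2 - 9\,c_2\bigr) + O(m^3).
$$
Since $X$ is a surface, $h^0\ge \chi - h^2$, and the whole point is now to control $h^2$.

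To bound $h^2$, I would chain the long exact cohomology sequences coming from the filtration to get the subadditivity
$$
h^2\bigl(X, E_{2,m}T^*_X\otimes A^{-1}\bigr) \le \sum_{j=0}^{\lfloor m/3\rfloor} h^2\bigl(X, S^{m-3j}T^*_X\otimes K_X^{\otimes j}\otimes A^{-1}\bigr),
$$
and then invoke Serre duality to rewrite each summand as $h^0(X, S^{m-3j}T_X\otimes K_X^{\otimes(1-j)}\otimes A)$. Because $X$ is of general type, $K_X$ is big, and Kodaira's lemma furnishes an integer $N>0$ with $K_X^{\otimes N}\otimes A^{-1}$ effective; the resulting injection $A\hookrightarrow K_X^{\otimes N}$ yields
$$
h^0\bigl(X, S^{m-3j}T_X\otimes K_X^{\otimes(1-j)}\otimes A\bigr) \le h^0\bigl(X, S^{m-3j}T_X\otimes K_X^{\otimes(N+1-j)}\bigr).
$$
Bogomolov's vanishing (in the form $H^0(S^pT_X\otimes K_X^{\otimes q})=0$ for $p>2q$, valid also for $q\le 0$) kills the right-hand side whenever $(m-3j)>2(N+1-j)$, i.e.\ $j<m-2N-2$; and for $m>3N+3$ this inequality holds throughout the range $0\le j\le m/3$. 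Consequently $h^2$ is zero for all $m$ sufficiently large, so $h^0\ge\chi$ gives the claimed lower bound.

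For the \emph{in particular} statement, a smooth surface $X\subset\mathbb P^3$ of degree $d$ has $K_X=\mathcal O_X(d-4)$, which is ample as soon as $d\ge 5$, so $X$ is of general type; the Euler sequence together with adjunction give the standard identities $c_1(X)^2=d(d-4)^2$ and $c_2(X)=d(d^2-4d+6)$, from which
$$
13\,c_1(X)^2 - 9\,c_2(X) = 2d\bigl(2d^2 - 34d + 77\bigr).
$$
The quadratic $2d^2-34d+77$ has its larger real root at $(17+3\sqrt{15})/2\approx 14.31$, so the expression is strictly positive for every $d\ge 15$, the leading $m^4$-coefficient in the lower bound on $h^0$ is positive, and nonzero sections therefore exist as soon as $m$ is large. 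The main obstacle in this scheme is clearly the $h^2$ estimate: a priori the $A^{-1}$ twist could inflate $h^2$ to the same order $m^4$ as $\chi$ and wreck the bound, and Kodaira's lemma is precisely what reduces the estimate to a uniform application of Bogomolov's vanishing on every graded piece.
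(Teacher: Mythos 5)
Your proof is correct and follows essentially the same route as the paper (and as the paper's proof of the closely related $K_X^{-\delta m}$-twisted version just below): decompose via the composition series $\operatorname{Gr}^\bullet(E_{2,m}T^*_X)=\bigoplus_j S^{m-3j}T^*_X\otimes K_X^j$, bound $h^0\ge\chi-h^2$, and kill $h^2$ graded piece by graded piece via Serre duality and Bogomolov's vanishing. Your explicit use of Kodaira's lemma to absorb the fixed twist $A^{-1}$ into a power of $K_X$, and the remark that Bogomolov extends to exponents $q\le 0$ when $K_X$ is big, simply spell out details the paper leaves to the reader.
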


\begin{proof}
Only the last assertion remains to be proved. But it follows from a standard computation of Chern classes of smooth projective hypersurface. In this case, in fact, we have
$$
c_1(X)=(4-d)\,h,\quad c_2(X)=(d^2-4d+6)\,h^2,
$$
where $h$ is the hyperplane class on $X$, $h^2=d$, and then $13\,c_1(X)^2-9\,c_2(X)>0$ for $d\ge 15$.
\end{proof}

This means that smooth projective surfaces in $\mathbb P^3$ of degree $d\ge 15$ satisfy hypothesis (i) of Theorem \ref{general} for $k=2$ and $m$ large enough.

In the sequel, we shall need a slightly more general and precise knowledge of the vanishing order of these sections. Thus, we shall need the following result.

\begin{prop}[see also \cite{Pau08}]\label{existencesurfaces}
Let $X$ be a projective surface of general type. Then
\begin{multline*}
h^0(X,E_{2,m}T^*_X\otimes K_X^{-\delta m})\ge \\ \frac{m^4}{648}\bigl((54\delta^2-48\delta+13)\,c_1(X)^2-9\,c_2(X)\bigr)+O(m^3),
\end{multline*}
provided $0\le\delta<1/3$.
\end{prop}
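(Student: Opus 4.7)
The plan is to mimic the $\delta=0$ argument sketched just above the statement: use the natural filtration on $E_{2,m}T^*_X$ to reduce to symmetric powers, compute an asymptotic Euler characteristic by Hirzebruch--Riemann--Roch, and kill the obstructing cohomology by Bogomolov's vanishing theorem. The graded bundle associated with the filtration of Section~3.3 has the form
$$
\operatorname{Gr}^\bullet(E_{2,m}T^*_X\otimes K_X^{-\delta m})=\bigoplus_{0\le j\le m/3} S^{m-3j}T^*_X\otimes K_X^{j-\delta m}.
$$
Since the Euler characteristic is additive on short exact sequences, I can compute
$\chi(X,E_{2,m}T^*_X\otimes K_X^{-\delta m})$ as the sum of the Euler characteristics of these graded summands.

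For each summand, I would apply Hirzebruch--Riemann--Roch on the projectivization $\pi\colon P(T_X)\to X$ (which identifies $S^p T^*_X$ with $\pi_* \mathcal O_{P(T_X)}(p)$), using the standard relation $u^2=-\pi^*c_1(X)\,u-\pi^*c_2(X)$ with $u=c_1(\mathcal O_{P(T_X)}(1))$. This gives, for the summand with $p=m-3j$ and twist $a=j-\delta m$,
$$
\chi\bigl(S^{m-3j}T^*_X\otimes K_X^{j-\delta m}\bigr)=\tfrac{1}{6}\bigl(p^3(c_1^2-c_2)+3p^2 a\, c_1^2+3p\, a^2\, c_1^2\bigr)+O(m^2),
$$
where I have used $c_1(K_X)=-c_1(X)$. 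Setting $t=j/m$, the sum over $j=0,\dots,m/3$ converts to $m$ times a Riemann sum, so the leading $m^4$--coefficient is obtained by integrating the corresponding cubic polynomial in $t$ over $[0,1/3]$. Carrying out the three elementary integrals $\int_0^{1/3}(1-3t)^3\,dt$, $\int_0^{1/3}(1-3t)^2(t-\delta)\,dt$, $\int_0^{1/3}(1-3t)(t-\delta)^2\,dt$ yields, after collecting, the announced expression $\tfrac{m^4}{648}\bigl((54\delta^2-48\delta+13)c_1^2-9\,c_2\bigr)+O(m^3)$.

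To pass from $\chi$ to $h^0$ I need to control $h^2$. By Serre duality,
$$
h^2\bigl(X,S^{m-3j}T^*_X\otimes K_X^{j-\delta m}\bigr)=h^0\bigl(X,S^{m-3j}T_X\otimes K_X^{\delta m-j+1}\bigr),
$$
which vanishes by Bogomolov's theorem as soon as $(m-3j)-2(\delta m-j+1)>0$, i.e.\ $j<m(1-2\delta)-2$. For $0\le j\le m/3$ this is automatic for $m$ large if and only if $1-2\delta>1/3$, which is precisely the hypothesis $\delta<1/3$. Using sub\-additivity of $h^2$ along the filtration, one then gets $h^2(X,E_{2,m}T^*_X\otimes K_X^{-\delta m})=0$ for $m\gg 0$, hence $h^0\ge\chi$, and the claim follows.

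The main obstacle is the bookkeeping of the asymptotic Euler characteristic: one has to be careful that when twisting $S^{m-3j}T^*_X$ by a line bundle whose first Chern class itself grows linearly with $m$, all three terms of the HRR expansion contribute at order $m^4$ and must be tracked simultaneously before summing in $j$. The other delicate point -- explaining exactly where the bound $\delta<1/3$ comes from -- is transparent: it is just the compatibility condition between the range $0\le j\le m/3$ of the filtration and the Bogomolov inequality $p>2q$, and this is what forces the hypothesis in the proposition.
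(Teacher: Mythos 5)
Your proof is correct and follows essentially the same route as the paper: compute the asymptotic Euler characteristic from the graded decomposition $\bigoplus_{j}S^{m-3j}T^*_X\otimes K_X^{j-\delta m}$ (you make explicit the Riemann--Roch/Riemann-sum computation on $P(T_X)$ that the paper leaves to the reader, and the integrals indeed yield $\tfrac{m^4}{648}\bigl((54\delta^2-48\delta+13)c_1^2-9c_2\bigr)$), then kill $h^2$ by Serre duality and Bogomolov's vanishing, with the constraint $(m-3j)-2(\delta m-j+1)>0$ producing exactly the threshold $\delta<1/3$ as $m\to\infty$. This is the paper's argument, just with the bookkeeping written out.
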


Please note that the original statement contained in \cite{Pau08} has a minor computational error for the Euler characteristic which has been fixed here.

\begin{proof}
First of all, one shows using 
$$
\operatorname{Gr}^\bullet(E_{2,m}T^*_X\otimes K_X^{-\delta m})=\bigoplus_{j=0}^{m/3}S^{m-3j}T^*_X\otimes K_X^{j-\delta m}
$$
that the right hand side is the Euler characteristic of $E_{2,m}T^*_X\otimes K_X^{-\delta m}$. Then, as usual, one applies Serre's duality for the second cohomology group and the Bogomolov vanishing to each piece of the dualized graded bundle: they are of the form
$$
S^{m-3j}T_X\otimes K_X^{\delta m -j+1}
$$
and they vanish if $m-3j-2(\delta m -j+1)>0$, which is the case if $\delta<1/3-1/m$ since $0\le j\le m/3$. The result follows, since the inequality in the statement is asymptotic with $m\to +\infty$.
\end{proof}

\section[Global generation on the universal family]{Global generation of the twisted tangent space of the universal family}

In this section, we shall reproduce the proof of \cite{Pau08} of the twisted global generation of the tangent space of the space of vertical two jets. First of all, we fix again the notation.

Consider the universal hypersurface $\mathcal X\subset\mathbb P^3\times\mathbb P^{N_d-1}$ of degree $d$ given by the equation
$$
\sum_{|\alpha|=d}A_{\alpha}\,Z^\alpha,
$$
where $[A]\in\mathbb P^{N_d-1}$, $[Z]\in\mathbb P^3$ and $\alpha=(\alpha_0,\dots,\alpha_3)\in\mathbb N^4$. 

Next, we fix the affine open set $U=\{Z_0\ne 0\}\times\{A_{0d00}\ne 0\}\simeq\mathbb C^{3}\times\mathbb C^{N_d-1}$ in $\mathbb P^{3}\times\mathbb P^{N_d-1}$ with the corresponding inhomogeneous coordinates $(z_j=Z_j/Z_0)_{j=1,2,3}$ and $(a_{\alpha}=A_\alpha/A_{0d00})_{|\alpha|=d,\alpha_1<d}$. Since $\alpha_0$ is determined by $\alpha_0=d-(\alpha_1+\alpha_2+\alpha_3)$, with a slight abuse of notation in the sequel $\alpha$ will be seen as a multiindex $(\alpha_1,\alpha_2,\alpha_3)$ in $\mathbb N^3$, with moreover the convention that $a_{d00}=1$.

On this affine open set we have
$$
\mathcal X_0:=\mathcal X\cap U=\left\{z_1^d+\sum_{|\alpha|\le d,\alpha_1<d}a_{\alpha}\,z^\alpha=0\right\}.
$$
We now write down equations for the open variety $J_2\mathcal V_0$, where we indicated with $\mathcal V_0$ the restriction of $\mathcal V\subset T_{\mathcal X}$, the kernel of the differential of the second projection, to $\mathcal X_0$: elements in $J_2\mathcal V_0$ are therefore $2$-jets of germs of  \lq\lq vertical\rq\rq{} holomorphic curves in $\mathcal X_0$, that is curves tangent to vertical fibers. Here are the equations, which naturally live in $\mathbb C^3_{z_j}\times\mathbb C^{N_d-1}_{a_\alpha}\times\mathbb C^3_{z_j'}\times\mathbb C^3_{z_j''}$:
$$
\sum_{|\alpha|\le d}a_{\alpha}\,z^{\alpha}=0,
$$
$$
\sum_{j=1}^3\sum_{|\alpha|\le d}a_{\alpha}\,\frac{\partial z^\alpha}{\partial z_j}\,z_j'=0,
$$
$$
\sum_{j=1}^3\sum_{|\alpha|\le d}a_{\alpha}\,\frac{\partial z^\alpha}{\partial z_j}\,z_j''
+\sum_{j,k=1}^3\sum_{|\alpha|\le d}a_{\alpha}\,\frac{\partial^2 z^\alpha}{\partial z_j\partial z_k}\,z_j'z_k'=0.
$$
Define $\Sigma_0$ to be the closed algebraic subvariety of $J_2\mathcal V_0$ defined by
$$
\Sigma_0=\{(z,a,z',z'')\in J_2\mathcal V_0\mid z'\wedge z''=0\}
$$
and let $\Sigma$ be the Zariski closure of $\Sigma_0$ in $J_2\mathcal V$: we call this set the \emph{wronskian locus} of $J_2\mathcal V$.

To begin with, observe that an affine change of coordinates $z\mapsto 1/z$ induces on jet variables the following transformation rules
$$
z'\mapsto -\frac{z'}{z^2}\quad\text{and}\quad z''\mapsto \frac{2(z')^2-zz''}{z^3}.
$$
Now, consider a general vector field in the vector space $\mathbb C^3_{z_j}\times\mathbb C^{N_d-1}_{a_\alpha}\times\mathbb C^3_{z_j'}\times\mathbb C^3_{z_j''}$; it is of the form
$$
V=\sum_{|\alpha|\le d,\alpha_1<d}v_\alpha\,\frac{\partial}{\partial a_{\alpha}}
+\sum_{j=1}^3v_j\,\frac{\partial}{\partial z_j}+\sum_{j=1}^3\xi_j^{(1)}\,\frac{\partial}{\partial z_j'}
+\sum_{j=1}^3\xi_j^{(2)}\,\frac{\partial}{\partial z_j''}.
$$
Thus, the conditions to be satisfied by the coefficients of $V$ in order to belong to $J_2\mathcal V_0$ are:

$$
\sum_{|\alpha|\le d, \alpha_1<d}v_\alpha\,z^{\alpha}+\sum_{j=1}^3\sum_{|\alpha|\le d}a_\alpha\,\frac{\partial z^\alpha}{\partial z_j}\,v_j=0,
$$

\begin{multline*}
\sum_{j=1}^3\sum_{|\alpha|\le d,  \alpha_1<d}v_\alpha\,\frac{\partial z^\alpha}{\partial z_j}\,z_j'
 \\ + \sum_{j,k=1}^3\sum_{|\alpha|\le d}a_\alpha\,\frac{\partial^2 z^\alpha}{\partial z_k\partial z_j}\,v_kz_j'
+\sum_{j=1}^3\sum_{|\alpha|\le d}a_\alpha\,\frac{\partial z^\alpha}{\partial z_j}\,\xi_j^{(1)},
\end{multline*}

\begin{multline*}
\sum_{|\alpha|\le d,  \alpha_1<d}\left(\sum_{j=1}^3\frac{\partial z^\alpha}{\partial z_j}\,z_j''+
\sum_{j,k=1}^3\frac{\partial^2 z^\alpha}{\partial z_k\partial z_j}\,z_j'z_k'\right)v_\alpha \\
+\sum_{j=1}^3\sum_{|\alpha|\le d}a_\alpha\left(\sum_{k=1}^3\frac{\partial^2 z^\alpha}{\partial z_k\partial z_j}\,z_k''+\sum_{i,k=1}^3\frac{\partial^3 z^\alpha}{\partial z_i\partial z_k\partial z_j}z_k'z_i'\right)v_j \\
+\sum_{|\alpha|\le d}\sum_{j,k=1}^3 a_\alpha\,\frac{\partial^2 z^\alpha}{\partial z_k\partial z_j}\bigl(\xi_j^{(1)}z_k'+\xi_k^{(1)}z_j'\bigr)+\sum_{j=1}^3 a_\alpha\,\frac{\partial z^\alpha}{\partial z_j}\,\xi_j^{(2)}.
\end{multline*}

\subsection*{First family of tangent vector fields}

For all multiindex $\alpha$ such that $\alpha_1\ge 3$, consider the vector field
$$
V_{\alpha}^{300}=\frac{\partial}{\partial a_{\alpha}}-3z_1\,\frac{\partial}{\partial a_{\alpha-\delta_1}}+3z_1^2\,\frac{\partial}{\partial a_{\alpha-2\delta_1}}-z_1^3\,\frac{\partial}{\partial a_{\alpha-3\delta_1}},
$$
where $\delta_j\in\mathbb N^4$ is the multiindex whose $j$-th component is equal to $1$ and the others are zero. For the multiindexes $\alpha$ which verify $\alpha_1\ge 2$ and $\alpha_2\ge 1$, define
$$
\begin{aligned}
V_{\alpha}^{210}&=\frac{\partial}{\partial a_{\alpha}}-2z_1\,\frac{\partial}{\partial a_{\alpha-\delta_1}}-z_2\,\frac{\partial}{\partial a_{\alpha-\delta_2}}+z_1^2\,\frac{\partial}{\partial a_{\alpha-2\delta_1}} \\ &\qquad+2z_1z_2\,\frac{\partial}{\partial a_{\alpha-\delta_1-\delta_2}}-z_1^2z_2\,\frac{\partial}{\partial a_{\alpha-2\delta_1-\delta_2}}.
\end{aligned}
$$
Finally, for those $\alpha$ for which $\alpha_1,\alpha_2,\alpha_3\ge 1$, set
$$
\begin{aligned}
V_{\alpha}^{111}&=\frac{\partial}{\partial a_{\alpha}}-z_1\,\frac{\partial}{\partial a_{\alpha-\delta_1}}-z_2\,\frac{\partial}{\partial a_{\alpha-\delta_2}}-z_3\,\frac{\partial}{\partial a_{\alpha-\delta_3}} \\
&\qquad+z_1z_2\,\frac{\partial}{\partial a_{\alpha-\delta_1-\delta_2}}+z_1z_3\,\frac{\partial}{\partial a_{\alpha-\delta_1-\delta_3}}+z_2z_3\,\frac{\partial}{\partial a_{\alpha-\delta_2-\delta_3}} \\
&\qquad\quad -z_1z_2z_3\frac{\partial}{\partial a_{\alpha-\delta_1-\delta_2-\delta_3}}.
\end{aligned}
$$
The pole order of these vector fields is equal to $3$, as a change of variables easily shows. Moreover, they are all tangent to $J_2\mathcal V_0$ and invariant under the action of $\mathbb G_2$ (because they do not contain any jet variable, on which the group acts).

Of course, there are similarly defined vector fields constructed by permuting the $z$-variables, and changing the multiindex $\alpha$ as indicated by permutations: it is straightforward to see that all these vector fields together span a codimension $7$ vector space in $\ker (T_{J_2\mathcal V}\to T_{J_2T_{\mathbb P^3}})$. The vector fields which generate the remaining seven directions will be constructed at the end of this section.

\subsection*{Second family of tangent vector fields}

We construct here the holomorphic vector fields in order to span the $\partial/\partial z_j$-directions. For $j=1,2,3$, consider the vector field
$$
\frac{\partial}{\partial z_j}-\sum_{|\alpha+\delta_j|\le d}(\alpha_j+1)a_{\alpha+\delta_j}\,\frac{\partial}{\partial a_{\alpha}}.
$$

It is immediate to check that these vector fields, once applied to the first defining equation of $J_2\mathcal V_0$, make it identically vanish. Since the other equations of $J_2\mathcal V_0$ are obtained by deriving the first just with respect to the $z_j$ and $z_j'$ variables, they make identically vanish the other two defining equations, too. Therefore they are tangent to $J_2\mathcal V_0$. Their pole order is one in the $a_{\alpha}$'s variables and they are $\mathbb G_2$-invariant since they do not contain jet variables.

\subsection*{Third family of tangent vector fields}

In order to span the jet directions, consider a vector field of the following form:

$$
V_B=\sum_{|\alpha|\le d,\alpha_1<d}p_{\alpha}(z,a,b)\,\frac{\partial}{\partial a_{\alpha}}+\sum_{j=1}^3\sum_{k=1}^2\xi_j^{(k)}\,\frac{\partial}{\partial z_j^{(k)}},
$$
where $\xi^{(k)}=B\cdot z^{(k)}$, $k=1,2$, and $B=(b_{jk})$ varies among $3\times 3$ invertible matrices with complex entries. The additional condition on the Wronskian $z'\wedge z''\ne 0$ implies that the family $(V_B)$ spans all the $\partial/\partial z^{(k)}_j$-directions on $\Sigma_0$, as it is straightforward to see. We claim that one can choose the coefficients $p_{\alpha}(z,a,b)$ to be polynomials of degree at most $2$ in $z$ and at most one in $a$ in such a way that $V_B$ is tangent to $J_2\mathcal V_0$. To see the invariance with respect to $\mathbb G_2$, observe that the action is the following: if $\varphi\colon (\mathbb C,0)\to(\mathbb C,0)$ is a $2$-jet of biholomorphism of the origin then the action is
$$
\varphi\cdot(z,a,z',z'')\mapsto (z,a,\varphi'\cdot z',(\varphi')^2\cdot z''+\varphi''\cdot z')
$$
and the corresponding induced action on vector fields is
$$
\frac{\partial}{\partial z}\mapsto\frac{\partial}{\partial z},\quad
\frac{\partial}{\partial a}\mapsto\frac{\partial}{\partial a},\quad
\frac{\partial}{\partial z'}\mapsto\varphi'\frac{\partial}{\partial z'}+\varphi''\frac{\partial}{\partial z''},\quad
\frac{\partial}{\partial z''}\mapsto(\varphi')^2\frac{\partial}{\partial z''}.
$$
For $V_B$, only the second addendum needs to be verified to be invariant: it is of the form
$$
z'\,\frac{\partial}{\partial z'}+z''\,\frac{\partial}{\partial z''}.
$$
On the one hand, letting $\varphi$ act on coordinates, one has
$$
z'\,\frac{\partial}{\partial z'}+z''\,\frac{\partial}{\partial z''}\mapsto 
\varphi'\cdot z'\,\frac{\partial}{\partial z'}+\bigl((\varphi')^2\cdot z''+\varphi''\cdot z'\bigr)\,\frac{\partial}{\partial z''};
$$
on the other hand, letting $\varphi$ act on vector fields by its differential, one has
$$
z'\,\frac{\partial}{\partial z'}+z''\,\frac{\partial}{\partial z''}\mapsto z'\,\left(\varphi'\frac{\partial}{\partial z'}+\varphi''\frac{\partial}{\partial z''}\right)+z''\,\left((\varphi')^2\frac{\partial}{\partial z''}\right),
$$
and the invariance follows.

As the proof of the claim is not so difficult and rests just on some linear algebra, we skip it here and refer the reader to \cite{Pau08}.

\bigskip

Finally, as announced, we have to span the remaining directions in the vector space $\ker (T_{J_2\mathcal V}\to T_{J_2T_{\mathbb P^3}})$. So, consider a vector field with the following shape:
$$
\sum_{|\alpha|\le 2}v_\alpha\,\frac{\partial}{\partial a_{\alpha}}.
$$
To be tangent to $J_2\mathcal V_0$, its coefficients have to satisfy
$$
\sum_{|\alpha|\le 2}v_{\alpha}\,z^\alpha=0,
$$
$$
\sum_{|\alpha|\le 2}\sum_{j=1}^3v_\alpha\,\frac{\partial z^\alpha}{\partial z_j}\,z_j'=0
$$
and
$$
\sum_{\alpha\le 2}\left(\sum_{j=1}^3\frac{\partial z^\alpha}{\partial z_j}\,z_j''+\sum_{j,k=1}^3\frac{\partial^2 z^\alpha}{\partial z_j\partial z_k}\,z_j'z_k'\right)v_\alpha.
$$
We place ourself outside $\Sigma_0$ and we suppose for simplicity that $z_1'z_2''-z_2'z_1''\ne 0$, the other cases being analogous. Then, we can solve this system with $v_{000}$, $v_{100}$ and $v_{010}$ as unknowns:
$$
\begin{cases}
v_{000}+z_1\,v_{100}+z_2\,v_{010}=\cdots \\
\quad\qquad z'_1\,v_{100}+z'_2\,v_{010}=\cdots \\
\quad\qquad z''_1\,v_{100}+z''_2\,v_{010}=\cdots \\
\end{cases}
$$
By the Cramer rule, we see that each of these quantities are linear combinations of the $v_\alpha$'s, where $|\alpha|\le 2$, $\alpha\ne (000), (100), (010)$, with coefficients rational functions in $z,z',z''$. The denomitaor of each such coefficient is just the Wronskian $z_1'z_2''-z_2'z_1''$ and the numerator is a polynomial whose monomials have either degree at most $2$ in $z$ and at most $1$ in $z'$ and $z''$, or degree $1$ in $z$ and three in $z'$; thus, the pole order here is at most $7$. Next, the system itself is $\mathbb G_2$-invariant: letting $\varphi\in\mathbb G_2$ act on it, we find
$$
\sum_{|\alpha|\le 2}v_{\alpha}\,z^\alpha=0,
$$
$$
\varphi'\sum_{|\alpha|\le 2}\sum_{j=1}^3v_\alpha\,\frac{\partial z^\alpha}{\partial z_j}\,z_j'=0
$$
and
\begin{multline*}
(\varphi')^2\sum_{\alpha\le 2}\left(\sum_{j=1}^3\frac{\partial z^\alpha}{\partial z_j}\,z_j''+\sum_{j,k=1}^3\frac{\partial^2 z^\alpha}{\partial z_j\partial z_k}\,z_j'z_k'\right)v_\alpha \\ +\varphi''\underbrace{\sum_{\alpha\le 2}\sum_{j=1}^3v_\alpha\,\frac{\partial z^\alpha}{\partial z_j}\,z_j'}_{=0}=0.
\end{multline*}
Therefore its solutions are invariant, too. Summing up, we have proved the following

\begin{thm}[\cite{Pau08}]
The twisted tangent space
$$
T_{J_2\mathcal V}\otimes\mathcal O_{\mathbb P^3}(7)\otimes\mathcal O_{\mathbb P^{N_d-1}}(1)
$$
is generated over $J_2\mathcal V\setminus\Sigma$ by its global sections. Moreover, one can choose the generating global sections in order to be invariant with respect to the action of $\mathbb G_2$ on $J_2\mathcal V$.
\end{thm}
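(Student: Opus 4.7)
The plan is to assemble the four families of vector fields constructed in the preceding subsections and verify that together they globally generate $T_{J_2\mathcal V}\otimes\mathcal O_{\mathbb P^3}(7)\otimes\mathcal O_{\mathbb P^{N_d-1}}(1)$ away from $\Sigma$. Concretely, I would decompose $T_{J_2\mathcal V}$ pointwise into three blocks: directions along $\mathcal X$ (the $\partial/\partial z_j$), jet directions ($\partial/\partial z'_j$ and $\partial/\partial z''_j$), and moduli directions (the $\partial/\partial a_\alpha$); each of the four families will be shown to address one block.

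\textbf{Tangency and invariance.} For each family I would verify tangency to $J_2\mathcal V_0$ by direct substitution into the three defining equations. For the first family this reduces to a straightforward binomial cancellation using the Taylor-like structure of $V_\alpha^{300}$, $V_\alpha^{210}$, $V_\alpha^{111}$; for the second family it follows from differentiating the defining equation with respect to $z_j$ (which reproduces the second and third equations up to the correction built into the vector field); for the fourth family the coefficients $v_{000},v_{100},v_{010}$ are designed from the outset by solving the three tangency conditions. Invariance under $\mathbb G_2$ is automatic for families one, two and four since none of their coefficients involves a jet variable; for the third family it is the content of the direct check already sketched in the excerpt, namely that $z'\,\partial/\partial z' + z''\,\partial/\partial z''$ is $\mathbb G_2$-equivariant under the prescribed action on jet coordinates.

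\textbf{Pole orders and the global twist.} Next I would pass from the affine chart $U$ to the whole of $\mathcal X$ and track pole orders under the affine change of coordinates $z\mapsto 1/z$, using the induced transformation rules $z'\mapsto -z'/z^2$ and $z''\mapsto (2(z')^2-z z'')/z^3$ recorded in the excerpt. Family two contributes pole order $1$ in the $\mathbb P^{N_d-1}$ direction (from the $a_{\alpha+\delta_j}$ terms) and no pole in $\mathbb P^3$; family one has pole order exactly $3$ in $\mathbb P^3$; family three has pole order at most $2$ in $\mathbb P^3$ and $1$ in $\mathbb P^{N_d-1}$, since $p_\alpha(z,a,b)$ is polynomial of degree $\le 2$ in $z$ and $\le 1$ in $a$; family four pushes the $\mathbb P^3$-pole order up to $7$, because the Cramer numerator in the solution of the tangency system is polynomial of weighted degree at most $7$ in $(z,z',z'')$. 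Hence the global twist $\mathcal O_{\mathbb P^3}(7)\otimes\mathcal O_{\mathbb P^{N_d-1}}(1)$ precisely absorbs all poles.

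\textbf{Spanning property and the main obstacle.} The generation claim must be checked fiberwise at each point $w\in J_2\mathcal V\setminus\Sigma$: family two yields the $\partial/\partial z_j$ directions; family three yields the two jet blocks, for which the assumption $z'\wedge z''\ne 0$ is exactly the condition ensuring that the map $B\mapsto (Bz',Bz'')$ is surjective onto $\partial/\partial z'_j\oplus\partial/\partial z''_j$; family one, together with the permutations of the $z$-variables, spans a codimension-$7$ subspace of the $\partial/\partial a_\alpha$ directions; and family four supplies the missing $7$ directions thanks to the non-vanishing outside $\Sigma$ of at least one $2\times 2$ Wronskian minor (say $z'_1 z''_2-z'_2 z''_1$), which makes the $3\times 3$ linear system in $v_{000},v_{100},v_{010}$ uniquely solvable. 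I expect the main obstacle to be the coefficient construction for the third family: one must prove that the polynomials $p_\alpha(z,a,b)$ can be chosen of degree at most $2$ in $z$ and at most $1$ in $a$ in such a way that $V_B$ is tangent to $J_2\mathcal V_0$ for every invertible $B$. This is a piece of non-reductive linear algebra where one writes tangency as a linear system in the $p_\alpha$ whose right-hand side is bilinear in $(a,B)$, and one must produce an explicit monomial ansatz solving it within the stated degree bounds. Once this ansatz is in hand, the four families combined give the theorem.
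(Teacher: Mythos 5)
Your overall outline matches the paper's proof closely: the same four families of vector fields, the same decomposition of $T_{J_2\mathcal V}$ into base, jet, and moduli directions, the same pole-order accounting that lands on the twist $\mathcal O_{\mathbb P^3}(7)\otimes\mathcal O_{\mathbb P^{N_d-1}}(1)$, and the same spanning argument combining the first family (codimension-$7$ subspace of the moduli directions) with the Cramer-rule family (the remaining $7$). You have also correctly isolated the one genuinely delicate point, namely the construction of the $p_\alpha(z,a,b)$ in the third family.

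However, there is a concrete error in your invariance argument for the fourth family. You assert that $\mathbb G_2$-invariance is ``automatic for families one, two and four since none of their coefficients involves a jet variable.'' This is false for family four. By your own description, $v_{000},v_{100},v_{010}$ are obtained by Cramer's rule from a $3\times 3$ linear system whose matrix has rows $(1,z_1,z_2)$, $(0,z_1',z_2')$, $(0,z_1'',z_2'')$; the solutions are therefore rational functions of $(z,z',z'')$ with Wronskian denominator $z_1'z_2''-z_2'z_1''$, and these do depend on the jet coordinates. So the vector field $\sum v_\alpha\,\partial/\partial a_\alpha$ has jet-dependent coefficients even though it only points in moduli directions, and ``no jet variables in the coefficients'' simply does not hold. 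The correct argument, which is the one P\u{a}un uses, is that the \emph{linear system itself} is $\mathbb G_2$-equivariant: under $\varphi\in\mathbb G_2$ the first equation is unchanged, the second acquires the factor $\varphi'$, and the third acquires $(\varphi')^2$ plus a $\varphi''$-multiple of the left-hand side of the second equation, which vanishes on the solution set. Hence the solution space of the system is preserved by the $\mathbb G_2$-action, and the resulting vector fields are invariant. Without this equivariance of the system, the invariance of family four would not follow; you need to replace your ``automatic'' claim by this argument.
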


\section{Proof of the hyperbolicity}

In this last section we want to show the following.

\begin{thm}[\cite{McQ99}, \cite{D-EG00}, \cite{Pau08}]
Let $X\subset\mathbb P^3$ be a (very) generic smooth surface of degree $d\ge 18$. Then $X$ is Kobayashi hyperbolic.
\end{thm}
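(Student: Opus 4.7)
The plan is to execute the general strategy of Theorem \ref{general} directly on the universal family $(\mathcal X,\mathcal V)$ and then specialize the conclusion to a very generic fiber. The two hypotheses of that theorem are precisely what the previous two sections have just assembled: Proposition \ref{existencesurfaces} provides, fibrewise, a non-zero global invariant jet differential of order $2$ vanishing along a multiple of $K_X$, while the global generation theorem at the end of the preceding section supplies enough $\mathbb G_2$-equivariant vector fields on $J_2\mathcal V$ to generate $T_{J_2\mathcal V}\otimes B$ outside the Wronskian locus $\Sigma$, where $B:=p^*\mathcal O_{\mathbb P^3}(7)\otimes\mathrm{pr}_2^*\mathcal O_{\mathbb P^{N_d-1}}(1)$.

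First I would fix a rational $\delta\in(0,1/3)$ and apply Proposition \ref{existencesurfaces} with $\omega$ the restriction of the Fubini--Study metric; using $c_1(X_s)^2=d(d-4)^2$ and $c_2(X_s)=d(d^2-4d+6)$, the leading Euler characteristic coefficient is proportional to $(54\delta^2-48\delta+13)(d-4)^2-9(d^2-4d+6)$. Combining the fibrewise existence of a non-zero $\sigma_s\in H^0(X_s,E_{2,m}T^*_{X_s}\otimes K_{X_s}^{-\delta m})$ for suitably large divisible $m$ with a semicontinuity argument for the smooth morphism $\mathcal X_U\to U$ over a Zariski open subset $U\subset\mathbb P^{N_d-1}$, I would produce a relative invariant jet differential $P\in H^0(\mathcal X_U,E_{2,m}\mathcal V^*\otimes A^{-1})$, with $A$ the pullback of $\mathcal O_{\mathbb P^3}(\delta m(d-4))$, possibly modified by an auxiliary twist from moduli which vanishes upon restriction to each fiber.

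Next I would run the derivation: given very generic $s\in U$ and an entire curve $f\colon\mathbb C\to X_s$, lift it to $j_2 f\colon\mathbb C\to J_2\mathcal V$. If $j_2 f(\mathbb C)\subset\Sigma$, the vanishing $f'\wedge f''\equiv 0$ forces the image of $f$ to parametrize a projective line contained in $X_s$, excluded for very generic $X_s$ of degree $d\ge 18$ by the dimension count of Chapter~2. Otherwise, at some $\zeta_0$ with $j_2 f(\zeta_0)\notin\Sigma$, I choose equivariant vector fields $V_1,\dots,V_p\in H^0(J_2\mathcal V,T_{J_2\mathcal V}\otimes B)$, $p\le m$, so that $L_{V_p}\cdots L_{V_1}P$ does not vanish at $j_2 f(\zeta_0)$; the resulting $\mathbb G_2$-invariant jet differential takes values in $A^{-1}\otimes B^{\otimes p}$, and Corollary \ref{baselocus} then forces $j_2 f(\mathbb C)$ into the zero locus $Y_s:=\{P|_{X_s}=0\}\subsetneq X_s$, provided the fibrewise antiampleness of the restriction of $A^{-1}\otimes B^{\otimes p}$ to $X_s$ holds.

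The hard part will be the tight numerical balancing of two opposing constraints: existence of $P$ pushes $\delta$ down towards $0$, whereas the fibrewise antiampleness condition $\delta(d-4)>7p/m$ pushes $\delta(d-4)$ up, and the naive worst-case $p=m$ is not compatible with $\delta<1/3$. The key refinement — essentially the improvement of \cite{Pau08} over the earlier \cite{McQ99} and \cite{D-EG00} — is a careful pole-order bookkeeping: the first two families of slanted vector fields from the previous section carry pole order at most $3$ (resp.\ $0$) on $\mathbb P^3$, so the full pole order $7$ of the third family is absorbed only as many times as is strictly necessary to span the jet directions, yielding the sharp bound $d\ge 18$ for this method and accounting for the gap with Kobayashi's conjectural $d\ge 5$. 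Once the strategy runs, $f(\mathbb C)$ is contained in the algebraic curve $Y_s$; for $d\ge 18\ge 2n+2=6$, Voisin's theorem of Chapter~2 implies that every algebraic curve on the very generic $X_s$ has geometric genus at least two, so $Y_s$ admits no non-constant entire curve, $f$ must be constant, and $X_s$ is Brody and hence Kobayashi hyperbolic.
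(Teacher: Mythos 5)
Your scheme is the right one and mostly tracks the paper's argument, but there is a genuine gap in the numerical heart of the proposal: the claim that ``careful pole-order bookkeeping'' of the three families of slanted vector fields produces the bound $d\ge 18$ is not correct. The argument you describe --- derive the jet differential at most $m$ times with vector fields of pole order $7$ over $\mathbb P^3$, which forces $\delta>7/(d-4)$, and combine with the existence condition $54\delta^2-48\delta+4>0$ from Proposition \ref{existencesurfaces} --- gives, after substituting $\delta=7/(d-4)$ into $(54\delta^2-48\delta+4)d^3+\cdots$, the polynomial $4d^3-404d^2+4144d$, whose largest root is just under $90$. That is precisely the bound the paper obtains via this strategy. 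There is no refinement hidden in the fact that the first two families have lower pole order: the derivation step must be able to use the third family (it is the only one that spans the jet directions $\partial/\partial z',\partial/\partial z''$), and one has no control over how many of the $p\le m$ derivatives must be taken in those directions, so the pole order one must budget for is $7$ for each of the (up to) $m$ derivatives. The improvement to $d\ge 18$ in \cite{Pau08} does \emph{not} come from sharper bookkeeping within this strategy; it comes from combining the variational method with McQuillan's theorem \cite{McQ98} on parabolic leaves of foliations on surfaces of general type, applied to the algebraic set $Y$ when it happens to have codimension one. The paper explicitly defers this and only proves the weaker $d\ge 90$ bound by the method you are following.

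Two smaller points. First, your handling of the Wronskian locus differs slightly from the paper: where you say the degeneracy $f'\wedge f''\equiv 0$ forces $f$ to parametrize a line in $X_s$, the paper's lemma gives that $f(\mathbb C)$ lies in a codimension-two affine linear slice of $\mathbb P^3$, i.e.\ in $X_s$ intersected with a line, which for generic $X_s$ is a finite set of points --- so $f$ is constant directly, without invoking the Clemens-type exclusion of lines. Second, your conclusion via Voisin's theorem (every algebraic curve on very generic $X_s$ of degree $\ge 6$ has genus $\ge 2$, hence admits no entire curve) is a perfectly valid alternative to the paper's route, which instead invokes Noether--Lefschetz to get $\operatorname{Pic}(X_s)\simeq\mathbb Z$ and then the codimension-two refinement of Theorem \ref{general} to force $Y_s$ to be zero-dimensional. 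Either finishing move is fine once the curve is trapped in a proper algebraic subset; the real obstruction in your write-up is the unsubstantiated $d\ge18$.
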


We shall in fact prove a slightly weaker form of this theorem, as far as the lower bound on the degree is concerned: the strategy of proof adopted here, which will be the one that we will use in all dimensions, will provide the worst bound $d\ge 90$ for the degree of $X$.

Let us fix once again the notations. We consider $X\subset\mathbb P^3$ a generic (or very generic) smooth surface of degree $d$. Its canonical bundle is then expressed in term of the hyperplane bundle as $K_X=\mathcal O_X(d-4)$; thus, $K_X^{\delta m}$ is the (ample) $\mathbb Q$-line bundle $\mathcal O_X(\delta m(d-4))$. The Chern classes of $X$ are given by
$$
c_1(X)=(4-d)\,h,\quad c_2(X)=(d^2-4d+6)\,h^2,
$$
so that the quantity $(54\delta^2-48\delta+13)\,c_1(X)^2-9\,c_2(X)$ considered above is equal to
\begin{equation}\label{delta}
(54\delta^2 - 48\delta + 4)\,d^3 +(-432\delta^2 + 384\delta - 68)\,d^2  + (864\delta^2 - 768\delta + 154)\, d.
\end{equation}
Notice that if $0\le\delta<1/3$ and $54\delta^2 - 48\delta + 4>0$ then by Proposition \ref{existencesurfaces}, for $m\gg d\gg 1$ we have a non zero global section of $E_{2,m}T^*_X\otimes K_X^{-\delta m}$ (compare with hypothesis (i) in Theorem \ref{general}).

Now, consider the universal hypersurface $\mathcal X\subset \mathbb P^3\times\mathbb P^{N_d-1}$ of degree $d$ in $\mathbb P^3$ and the holomorphic subbundle $\mathcal V\subset T_{\mathcal X}$ given by the differential of the kernel of the second projection. By the results of the previous section, we know that
$$
T_{J_2\mathcal V}\otimes\mathcal O_{\mathbb P^3}(7)\otimes\mathcal O_{\mathbb P^{N_d-1}}(1)
$$
is globally generated by its global holomorphic sections over $J_2\mathcal V\setminus\Sigma$ and moreover the generating sections can be chosen to be invariant by the action of $\mathbb G_2$ on $J_2\mathcal V$ (compare with hypothesis (ii) of Theorem \ref{general}, the bundle $\mathcal O_{\mathbb P^3}(7)$ here plays the role of the bundle $B$ there).

Concerning hypothesis (iii) of Theorem \ref{general}, we want $K_X^{\delta m}\otimes\mathcal O_{\mathbb P^3}(7)^{-\otimes m}$ to be ample: this is the case if 
$$
\delta>\frac 7{d-4},
$$ 
so $\delta$ will be chosen a little bit larger than $7/(d-4)$.

Start with a non zero section $P\in H^0(X,E_{2,m}T^*_X\otimes K_X^{-\delta m})$.
Call 
$$
Y=\{P=0\}\subsetneq X
$$ 
the base locus of such a non zero section. 

If $s\in\mathbb P(H^0(\mathbb P^{3},\mathcal O_{\mathbb P^3}(d)))$ parametrizes any smooth hypersurface $X_s$, then one has
$$
H^0(X_s,E_{2,m}T^*_{X_s}\otimes K_{X_s}^{-\delta m})\simeq H^0(X_s,E_{2,m}\mathcal V^*\otimes\mathcal O_{\mathbb P^3}(-\delta m(d-4))|_{X_s}).
$$
Next, suppose $X=X_0$ corresponds to some point $0$ in the parameter space $\mathbb P(H^0(\mathbb P^{3},\mathcal O_{\mathbb P^3}(d)))$. Since we have chosen $X$ to be generic, standard semicontinuity arguments show that there exists an open neighborhood $U\ni 0$ such that the restriction morphism
$$
H^0(\text{pr}_2^{-1}(U),E_{2,m}\mathcal V^*\otimes\mathcal O_{\mathbb P^3}(-\delta m(d-4)))\to H^0(X_0,E_{2,m}T^*_{X_0}\otimes K_{X_0}^{-\delta m})
$$
is surjective. Therefore the \lq\lq first\rq\rq{} jet differential $P$ may be extended to a neighborhood of the starting hypersurface.

Now, suppose we have a holomorphic entire curve $f\colon\mathbb C\to X$ and consider its lifting $j_2(f)\colon\mathbb C\to J_2T_X\subset J_2\mathcal V$. If $f(\mathbb C)\subset\Sigma$ then we have the following.

\begin{lem}
Let $f\colon\mathbb C\to\mathbb C^{N}$ be a holomorphic map. If $f'\wedge f''\wedge\cdots\wedge f^{(k)}\equiv 0$, then $f(\mathbb C)$ lies inside a codimension $N-k+1$ affine linear subspace. 
\end{lem}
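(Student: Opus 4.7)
My plan is to proceed by induction on $k$, the statement being trivial for $k=1$ since $f'\equiv 0$ forces $f$ to be constant, hence contained in a point (a codimension $N$ affine subspace). For the inductive step I distinguish two cases depending on whether the lower wedge $\omega:=f'\wedge f''\wedge\cdots\wedge f^{(k-1)}$ vanishes identically. If it does, the inductive hypothesis already places $f(\mathbb C)$ inside an affine subspace of codimension $N-(k-1)+1=N-k+2$, which is a fortiori of codimension at least $N-k+1$. The substantial case is therefore $\omega\not\equiv 0$.

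The key computation is a direct differentiation: since the derivative of the $j$-th factor $f^{(j)}$ is $f^{(j+1)}$, every term in
$$
\omega'=\sum_{j=1}^{k-1}f'\wedge\cdots\wedge f^{(j-1)}\wedge f^{(j+1)}\wedge f^{(j+1)}\wedge\cdots\wedge f^{(k-1)}
$$
has a repeated factor except the last one, so that
$$
\omega'=f'\wedge f''\wedge\cdots\wedge f^{(k-2)}\wedge f^{(k)}.
$$
Now the hypothesis $f'\wedge\cdots\wedge f^{(k)}\equiv 0$ says that on the open set $U\subset\mathbb C$ where $\omega\ne 0$, the vector $f^{(k)}(t)$ lies in the span $W(t):=\mathrm{span}(f'(t),\dots,f^{(k-1)}(t))$. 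Writing $f^{(k)}=\sum_{i=1}^{k-1}c_i(t)\,f^{(i)}$ and plugging into the expression for $\omega'$, all terms with $i\le k-2$ vanish because of a repeated factor, whence $\omega'(t)=c_{k-1}(t)\,\omega(t)$ on $U$.

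This scalar ODE for the $\Lambda^{k-1}\mathbb C^N$-valued holomorphic function $\omega$ shows that $\omega(t)$ is a holomorphic non-vanishing multiple of $\omega(t_0)$ on every simply connected subset of $U$; equivalently, the projective class $[\omega(t)]\in\mathbb P(\Lambda^{k-1}\mathbb C^N)$ is locally constant on $U$, and by continuity (and analyticity of $\omega$ on all of $\mathbb C$) it extends to a constant class on $\mathbb C$. By the standard Plücker interpretation, a constant nonzero class in $\mathbb P(\Lambda^{k-1}\mathbb C^N)$ corresponds to a fixed $(k-1)$-dimensional linear subspace $W_0\subset\mathbb C^N$, and on $U$ we have $W(t)=W_0$. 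In particular $f'(t)\in W_0$ on $U$, and by analyticity $f'(t)\in W_0$ for every $t\in\mathbb C$. Integrating, $f(t)-f(0)\in W_0$ for all $t$, so $f(\mathbb C)\subset f(0)+W_0$, an affine subspace of dimension $k-1$, i.e. codimension $N-k+1$, as required.

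The only delicate point is the passage from ``$[\omega]$ constant on $U$'' to a genuine fixed subspace $W_0\subset\mathbb C^N$ valid everywhere: one has to ensure that the isolated zeros of $\omega$ do not spoil the argument. This is handled by the fact that $\omega$ is holomorphic on $\mathbb C$, that the projective line $[\omega]$ extends continuously (indeed holomorphically) across the isolated zeros, and that $f'$ is itself holomorphic, so the inclusion $f'(t)\in W_0$ valid on the dense open set $U$ persists on all of $\mathbb C$ by continuity. Everything else is formal.
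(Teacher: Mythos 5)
Your proof is correct, and it takes a genuinely different route from the paper's. The paper's argument is local at a point: after reducing without loss of generality to the case where $\omega:=f'\wedge\cdots\wedge f^{(k-1)}$ does not vanish at $0$, it writes $f^{(k)}$ near $0$ as a holomorphic linear combination of $f',\dots,f^{(k-1)}$, shows inductively (by differentiating this relation and resubstituting) that every higher derivative $f^{(\ell)}$, $\ell\ge k$, again lies in $\operatorname{span}(f'(0),\dots,f^{(k-1)}(0))$, and then concludes from the Taylor expansion of $f$ at $0$. You instead package the same information as a first-order scalar ODE for the Pl\"ucker coordinate, $\omega'=c_{k-1}\omega$ on the open set $U=\{\omega\ne 0\}$, deduce that the projective class $[\omega]$ --- and hence the osculating $(k-1)$-plane $W(t)$ --- is constant on the connected set $U$, and finish by noting that $f'(t)\in W_0$ on $U$ propagates to all of $\mathbb C$ by analyticity and integrates to $f(t)-f(0)\in W_0$. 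Both proofs turn on the same key observation, that on $U$ the vector $f^{(k)}$ lies in $\operatorname{span}(f',\dots,f^{(k-1)})$, but they exploit it differently: your route makes the constancy of the osculating plane explicit and avoids any power-series manipulation, at the small cost of the Pl\"ucker/ODE formalism and an induction on $k$ to dispose of the degenerate case $\omega\equiv 0$ (which the paper handles simply by replacing $k$ with the largest index for which the wedge is nonzero); the paper's route is shorter and more elementary, using only linear algebra and Taylor's theorem. One cosmetic slip in your write-up: in the displayed formula for $\omega'$ the repeated factor should be $f^{(j+1)}\wedge f^{(j+1)}$ only for the $j$-th and $(j+1)$-th slots, not two consecutive copies inserted as written; the surrounding reasoning makes clear what is meant.
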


\begin{proof}
Without loss of generality, we can suppose $k>1$, $f'\wedge f''\wedge\cdots\wedge f^{(k-1)}\not\equiv 0$, $f'(0)\ne 0$ and $(f'\wedge f''\wedge\cdots\wedge f^{(k-1)})(0)\ne 0$. Then there exists an open neighborhood $\Omega\subset\mathbb C$ of $0$ such that for each $t\in\Omega$ we have a linear combination
$$
f^{(k)}(t)=\sum_{j=1}^{k-1}\lambda_{j}(t)\,f^{(j)}(t)
$$
and the $\lambda_j$'s depend holomorphically on $t$. By taking derivatives, one sees inductively that, in $\Omega$, every $f^{(\ell)}$, $\ell\ge k$, is a linear combination of the $f^{(j)}$'s, $1\le j\le k-1$. Thus, all the derivatives in $0$ of $f$ lie in the linear space generated by $f'(0),\dots,f^{(k-1)}(0)$. The conclusion follows by expanding $f$ in power series at $0$.
\end{proof}

In fact, this lemma shows that the image of the entire curve lies in a codimension two subvariety of $X$ (the intersection of $X$ with a codimension two linear subspace of $\mathbb P^3$), provided $X$ is generic. Therefore $f$ is constant in this case.

Thus, we can suppose $j_2(f)\not\subset \Sigma$. Then, if $f(\mathbb C)\not\subset Y$ the proof proceed exactly as the one of Theorem \ref{general}: we have extended on an open set the global jet differential $P$ and we can now take derivatives with meromorphic vector fields in $T_{J_2\mathcal V}$. After having taken at most $m$ derivatives, we restrict again everything to $X_0$ and we are done.

Observe now that if we take $X$ very generic, by the Noether-Lefschetz theorem, the Picard group of $X$ is infinite cyclic. Thus, we can apply in full strength the statement of Theorem \ref{general} and obtain that $Y$ has codimension two in $X$. But then $Y$ has dimension zero and $f$ is constant: this proves hyperbolicity.

We now come to the effective part concerning the degree of $X$. If we plainly substitute in (\ref{delta}) $\delta=7/(d-4)$, we obtain 
$$
4\,d^3 - 404\,d^2 + 4144\,d,
$$
whose larger root is a little smaller than $90$. Then, by continuity, if $\delta$ is chosen to be a little larger than $7/(d-4)$ as announced, everything fit for $d\ge 90$: we have in fact that $7/(d-4)<\delta < 1/3$ and (\ref{delta}) is positive.

Thus, very generic smooth projective surfaces in $\mathbb P^3$ of degree greater than or equal to $90$ are hyperbolic.

Finally, we remark here that in order to go down till degree $18$, a more complicated combination of this variational method together with deep result by Mc Quillan \cite{McQ98} on parabolic leaves of algebraic (multi)foliations on surfaces of general type is required, but we shall skip here this part (see \cite{Pau08} for more details).

In the next chapter, we shall treat the general case in arbitrary dimension: the strategy will be exactly the same as here (anyway, we would not be able to invoke anymore the work of Mc Quillan which unfortunately for the moment is available in dimension two only). A major difficulty will be the step one, that is to find the first jet differential; this will be overcome by means of the algebraic version of Demailly's holomorphic Morse inequalities. Then, a generalization of the global generation statement obtained in \cite{Mer09} (compare also with \cite{Siu04}) will permit us to let the strategy work in full generality.

\chapter[Algebraic degeneracy for projective hypersurfaces]{Algebraic degeneracy for generic projective hypersurfaces}

{\small\textsc{Abstract}. This chapter will treat the general case of projective hypersurfaces in every dimension. We will prove an algebraic degeneracy result for entire curve in generic projective hypersurfaces of high degree, taken from \cite{D-M-R10}. The first part will be concerned in finding jet differentials, as in \cite{Div09}, then we shall cite the general result on meromorphic vector fields contained in \cite{Mer09}. Finally, we shall discuss some effective aspects of the proof.}

\section{Statement of the result and scheme of proof}

The aim of this chapter is to give the proof of (the tools needed to prove) the following.

\begin{thm}[\cite{D-M-R10}, \cite{D-T10}]\label{mainthm}
Let $X\subset\mathbb P^{n+1}$ be a generic smooth projective hypersurface of arbitrary dimension $n \ge 2$. 
If the degree of $X$ satisfies the \emph{effective} lower bound:
$$
\deg (X)\ge 2^{n^5},
$$
then there exists a proper, of codimension at least two, closed subvariety $Y\subsetneq X$ such that every entire non constant holomorphic curve $f \colon \mathbb C \to X$ has its image contained in $Y$.
\end{thm}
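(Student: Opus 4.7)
The plan is to apply the general strategy of Theorem \ref{general} in the setting of the universal family of projective hypersurfaces, following the variational method used in Chapter 5 but now with jets of order $k=n=\dim X$. By Theorem \ref{vanishing}, no invariant jet differential of order $k<n$ can exist on a smooth hypersurface, so we are forced to work on the $n$-th projectivized jet tower $X_n$. The three ingredients of Theorem \ref{general} must be supplied: (i) a non-zero global invariant jet differential $P\in H^0(X,E_{n,m}T^*_X\otimes A^{-1})$ with $A$ ample; (ii) global generation, over the complement of a proper subvariety, of $T_{J_n\mathcal V}$ twisted by a bundle with controlled pole order along $\mathbb{P}^{n+1}$ and $\mathbb{P}^{N_d-1}$; and (iii) compatibility between the twist $A^{-1}$ and the pole order, namely $A\otimes B^{-\otimes m}$ ample, where $B$ absorbs the poles of the vector fields.

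The main obstacle, and the novelty of the higher-dimensional case, is step (i): the Riemann--Roch plus Bogomolov vanishing argument of Chapter 5 breaks down because the higher even cohomology of $E_{n,m}T^*_X$ is no longer under control. I would instead apply Demailly's \emph{strong} holomorphic Morse inequalities directly to a suitable twist of the tautological line bundle $\mathcal O_{X_n}(\mathbf a)$ on the projectivized jet tower, with a weight $\mathbf a=(a_1,\dots,a_n)$ satisfying the relative nefness conditions of Proposition \ref{relnef} ($a_j\ge 3a_{j+1}$, $a_{n-1}\ge 2a_n>0$). Pushing forward via $(\pi_{0,n})_*$ converts sections of $\mathcal O_{X_n}(\mathbf a)\otimes\pi_{0,n}^*\mathcal O_X(-\delta m(d-n-2))$ into invariant jet differentials twisted by a negative power of $K_X$. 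Morse inequalities on $X_n$ require computing the top self-intersection of this twisted bundle minus a correction term; the delicate point is that $X_n$ is a tower of $\mathbb{P}^{n-1}$-bundles of relative dimension $n(n-1)$, so the intersection calculus becomes a polynomial identity in $n$ variables. The outcome one wants is that for $d\gtrsim $ polynomial in $n$ (in fact a fixed polynomial of degree $n$ suffices at this stage) the leading term is strictly positive, yielding existence of $P$.

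For step (ii), I would invoke Merker's extension of the global generation theorem of Chapter 5 to arbitrary order jets on the universal family $\mathcal X\subset\mathbb{P}^{n+1}\times\mathbb{P}^{N_d-1}$: the twisted bundle $T_{J_n\mathcal V}\otimes\mathcal O_{\mathbb{P}^{n+1}}(c_n)\otimes\mathcal O_{\mathbb{P}^{N_d-1}}(c'_n)$ is generated by its $\mathbb G_n$-invariant global sections outside the wronskian-type locus $\Sigma\subset J_n\mathcal V$, where $c_n,c'_n$ are explicit polynomial expressions in $n$ (roughly $c_n=n^2+2n$, coming from successive affine changes of coordinates on jets). Using this, and after extending $P$ from a single very generic hypersurface $X_0$ to a neighborhood of $0\in U\subset\mathbb{P}^{N_d-1}$ by semicontinuity of $h^0$, I would argue exactly as in the proof of Theorem \ref{general}: given a non-constant entire curve $f\colon\mathbb C\to X$, its lift $j_n(f)$ is either contained in $\Sigma$ (in which case the lemma at the end of Chapter 5, iterated, forces $f$ to lie in a linear subspace of codimension at least $2$, hence in a proper algebraic subvariety of $X$), or one can differentiate $P$ at most $m$ times by the invariant vector fields to produce a jet differential not vanishing at $j_n(f)(\zeta_0)$, contradicting Corollary \ref{baselocus}, provided the ampleness condition $A\otimes\mathcal O_{\mathbb{P}^{n+1}}(c_n)^{-\otimes m}$ holds on $X$.

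Finally, the effective bound $2^{n^5}$ emerges by chasing constants through the three steps and solving the resulting system of inequalities in $d$: the Morse inequality demands $d$ larger than a polynomial in $n$; the ampleness condition (iii) requires $\delta>c_n/(d-n-2)$ while the Morse computation requires $\delta<$ an explicit constant $<1/(n+1)$; both being compatible forces $d$ to be at least exponential in $c_n$, which after a careful bookkeeping of the pole orders produced by Merker's construction (and of the auxiliary twist needed to push jet differentials below the diagonal) yields the super-exponential bound $d\ge 2^{n^5}$. To obtain the codimension-two statement on $Y$, I would invoke Noether--Lefschetz so that for a very general $X$ the Picard group is $\mathbb{Z}$, hence the effective cone lies in the ample cone, and apply the last part of Theorem \ref{general} to absorb any divisorial component of $\{P=0\}$ into $A^{-1}$ without spoiling the ampleness in (iii).
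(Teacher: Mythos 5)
Your overall strategy matches the paper's: apply strong Morse inequalities to a twist of $\mathcal O_{X_n}(\mathbf a)$ to produce the first jet differential with controlled vanishing along $K_X^{-\delta m}$, extend it by semicontinuity over an open set of the parameter space, and differentiate it with Merker's slanted vector fields, with the final effective bound coming from the intersection-theoretic computation on the tower $X_n$ and the ampleness constraint on $\delta$.

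There is, however, one genuine gap in how you treat the case $j_n(f)(\mathbb C)\subset\Sigma$. You invoke the lemma at the end of Chapter~5 (vanishing Wronskian forces $f(\mathbb C)$ into a codimension-two linear subspace $L\subset\mathbb P^{n+1}$), and conclude that $f(\mathbb C)$ lies in a proper subvariety of $X$. But $L$ depends on the individual curve $f$: as $f$ varies, the sections $X\cap L$ could sweep out a dense subset of $X$, and for $n\ge 3$ they are positive-dimensional, so no single fixed $Y$ of codimension two absorbs all of them. This is precisely why the paper's proof of Theorem~\ref{mainthm} does \emph{not} use the Wronskian-locus version of the global generation statement (which has the smaller pole order $(n^2+5n)/2$); it uses Theorem~\ref{globgen}, whose generation holds over the whole $J_n\mathcal V^{\text{\rm reg}}$ at the price of the larger twist $\mathcal O_{\mathbb P^{n+1}}(n^2+2n)$. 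Then the only exceptional case is $j_n(f)(\mathbb C)\subset J_n\mathcal V^{\text{\rm sing}}$ (vanishing first derivative identically), which forces $f$ constant. Your statement already cites $c_n=n^2+2n$, but pairs it with the Wronskian locus: you should drop $\Sigma$ entirely and use the regular-jet version. The Wronskian/$(n^2+5n)/2$ variant is the one used only for the threefold refinement (Corollary~\ref{kob3}), where after degeneracy into a curve one can invoke Clemens to conclude.

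A secondary, minor inaccuracy: the $2^{n^5}$ bound is not primarily driven by the pole order $n^2+2n$ (which is only quadratic in $n$), but by the combinatorial blow-up in eliminating the Chern classes of $V_1,\dots,V_n$ from the top self-intersection of $\mathcal O_{X_n}(\mathbf a)\otimes\pi_{0,n}^*\mathcal O_X(2|\mathbf a|)$ on the $n(n-1)$-fibered tower: multinomial expansions of $n^2$-th powers, the choice of an integral weight $\mathbf a$ in the cone (\ref{c}) of size roughly $3^n n^2$, and Jacobi--Trudi-type determinants, followed by an estimate for the largest real root of the resulting degree-$(n+1)$ polynomial in $d$ (Lemma~\ref{d}). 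The ampleness constraint $\delta>(n^2+2n)/(d-n-2)$ is only one of two conditions; the dominant one comes from the coefficient bounds ${\sf E}_j,{\sf E}_j',{\sf G}_{n+1}$ for that polynomial.
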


In small dimensions, better bounds can be obtained.

\begin{thm}[\cite{D-M-R10}, \cite{D-T10}]\label{mainthmlow}
Let $X\subset\mathbb P^{n+1}$ be a (very) generic smooth projective hypersurface. 
If the degree of $X$ satisfies the \emph{effective} bounds:
\begin{itemize}
\item for $n=3$, $\deg X \geqslant 593${\rm ;} 
\item for $n=4$, $\deg X \geqslant 3203${\rm ;}
\item for $n=5$, $\deg X \geqslant 35355${\rm ;}
\item for $n=6$, $\deg X \geqslant 172925$,
\end{itemize}
then there exists a proper, codimension two, closed subvariety $Y\subsetneq X$ such that every entire non constant holomorphic curve $f \colon \mathbb C \to X$ has its image contained in $Y$.
\end{thm}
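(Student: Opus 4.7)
The plan is to apply the general strategy of Theorem \ref{general} to the universal family $\mathcal X\subset\mathbb P^{n+1}\times\mathbb P^{N_d-1}$ of degree $d$ hypersurfaces together with the vertical subbundle $\mathcal V\subset T_{\mathcal X}$, mimicking the surface case treated in Chapter 5, but now working with jets of order $k=n=\dim X$. The three ingredients to secure are: (i) a non-zero global invariant jet differential vanishing on an ample divisor on every smooth fibre; (ii) a global generation statement for the twisted tangent bundle of the vertical jet space with controlled pole order; (iii) compatibility of the twisting line bundles so that Lie derivatives of the jet differential still take values in an antiample bundle.

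For step (i), since by Theorem \ref{vanishing} every invariant jet differential of order strictly less than $n$ vanishes identically on a smooth hypersurface of dimension $n$, the minimal usable order is exactly $k=n$. To produce a non-zero section of $E_{n,m}T^*_X\otimes K_X^{-\delta m}$ for a small $\delta>0$, I would run the algebraic holomorphic Morse inequalities of Demailly on the Demailly tower $X_n\to X$ of $(X,T_X)$, applied to a weighted line bundle $\mathcal O_{X_n}(\mathbf a)$ with $\mathbf a=(a_1,\dots,a_n)\in\mathbb N^n$ satisfying the relative nefness inequalities of Proposition \ref{relnef}. Via the direct image identification $(\pi_{0,n})_*\mathcal O_{X_n}(\mathbf a)\subset\mathcal O(E_{n,m}T^*_X)$, this reduces the problem to showing positivity of a top self-intersection on $X_n$ twisted by a pull-back from $X$. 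The intersection is computed recursively from the exact sequences \eqref{ses1}, \eqref{ses2} and the formulas $c_1(X)=(n+2-d)\,h$, $c_j(X)=\binom{n+2}{j}$-combinations of $d$ and $h$; this is exactly the approach of \cite{Div09}.

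For step (ii), I would invoke the extension to arbitrary $n$ of the global generation theorem of Chapter 5 obtained by Merker in \cite{Mer09}: there is an explicit integer $\ell_n$ (polynomial in $n$) such that $T_{J_n\mathcal V}\otimes p^*\mathcal O_{\mathbb P^{n+1}}(\ell_n)\otimes q^*\mathcal O_{\mathbb P^{N_d-1}}(1)$ is generated by its $\mathbb G_n$-equivariant global sections over the complement of a wronskian-type locus $\Sigma\subset J_n\mathcal V$. A semicontinuity argument using the very genericity of $X$ then extends the jet differential of step (i) from a single fibre to a section on the inverse image of a Zariski neighbourhood in moduli, which is what allows one to take Lie derivatives along the relative vector fields of \cite{Mer09}. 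For step (iii), one only needs $K_X^{\delta m}\otimes\mathcal O_{\mathbb P^{n+1}}(\ell_n)^{-\otimes m}$ to remain ample on each fibre, i.e.\ $\delta>\ell_n/(d-n-2)$, while Bogomolov-type vanishing on each graded piece of $E_{n,m}T_X^*$ imposes an upper ceiling $\delta<\delta_0(n)$. The degree bound in the statement is the worst root of the explicit polynomial in $d$ produced by asking the three conditions $\{\text{Morse leading term}>0,\ \delta>\ell_n/(d-n-2),\ \delta<\delta_0(n)\}$ to admit a simultaneous solution; optimizing separately in each low dimension $n=3,4,5,6$ yields the four numerical bounds $593,3203,35355,172925$, while the crude estimate for general $n$ gives $2^{n^5}$. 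Once the jet differential $P$ has been produced and extended, the argument concludes exactly as in Chapter 5: the lifting $j_n(f)$ of any entire curve $f\colon\mathbb C\to X$ either lands in $\Sigma$ (and then $f$ is constant by the Wronskian lemma applied to a linear embedding of $X$) or it avoids $\Sigma$, in which case iterated Lie derivatives of $P$ along at most $m$ equivariant vector fields of \cite{Mer09} produce enough jet differentials with antiample coefficients to force $f$ into the zero locus $Y=\{P=0\}$. The Noether--Lefschetz theorem, giving $\operatorname{Pic}(X)=\mathbb Z$ for very generic $X$, allows the final codimension-two refinement of Theorem \ref{general} to apply, so that $Y$ has codimension at least two.

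The main obstacle is step (i): the Demailly--Morse computation at order $n$ on the full tower $X_n$ is a genuinely multi-variable optimization, where one must simultaneously choose the weight $\mathbf a$, keep track of polynomial remainders at each level of the tower, and still extract an effective threshold in $d$; it is this computation that produces the enormous bound $2^{n^5}$, and any improvement in the effective degree must come either from a sharper Morse estimate or from an invariant-theoretic refinement of the structure of $E_{n,m}T_X^*$ in the spirit of \cite{Rou06a}, \cite{Mer08}, \cite{B-K10}.
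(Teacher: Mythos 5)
Your proposal follows the same overall strategy as the paper: use order-$n$ jets (Theorem \ref{vanishing}), run Morse inequalities on the Demailly tower to produce a section of $E_{n,m}T^*_X\otimes K_X^{-\delta m}$, extend it to a Zariski neighbourhood in moduli by semicontinuity, invoke Merker's global generation of $T_{J_n\mathcal V}$ with controlled poles, differentiate, and finish with the codimension-two refinement via Noether--Lefschetz. Since the paper itself only outlines Theorem \ref{mainthmlow} and defers the computations to \cite{D-T10}, your outline is of comparable granularity. Two points are worth correcting, though.

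First, you assert that an upper bound $\delta<\delta_0(n)$ comes from \lq\lq Bogomolov-type vanishing on each graded piece of $E_{n,m}T_X^*$.\rq\rq{} That is only available in dimension $2$, where $H^2(X,S^pT_X\otimes K_X^{\otimes q})$ can be killed by Serre duality and Bogomolov's theorem. For $n\ge 3$ the higher cohomology does \emph{not} vanish --- this is precisely why the paper replaces the Euler-characteristic-plus-Bogomolov argument by Morse inequalities. The actual ceiling on $\delta$ comes out of the Morse computation itself: in Theorem \ref{existenceKX} the intersection product produces a polynomial ${\sf P}_{\mathbf a,\delta}(d)={\sf P}_{\mathbf a}(d)+\delta\,{\sf P}'_{\mathbf a}(d)$ of degree $n+1$ in $d$, whose leading coefficient decreases linearly in $\delta$; positivity of this leading coefficient is what caps $\delta$, not a vanishing theorem.

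Second, and more importantly for the specific numbers $593,3203,35355,172925$: the paper indicates that these sharp low-dimensional bounds are obtained by substituting, where available, a \emph{different} step (i). For $n=3$ (and $n=4$) one has the explicit Schur-functor composition series of $E_{n,m}T^*_X$ (from \cite{Rou06a}, \cite{Mer08}), which lets one compute $\chi(X,E_{n,m}T^*_X)$ directly and bound $h^{2i}$ by the \emph{weak} Morse inequalities on the associated flag bundle, yielding a far better threshold (e.g. $d\ge 97$ for the bare existence of jet differentials when $n=3$). This is combined with the \emph{weaker-pole} version of Merker's generation theorem, which gives pole order $(n^2+5n)/2$ outside the wronskian locus rather than $n^2+2n$ over all of $J_n\mathcal V^{\text{\rm reg}}$. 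You conflate the two versions of Merker's result, and you describe the numerical bounds as arising from \lq\lq optimizing separately\rq\rq{} the same tautological-bundle Morse estimate; in fact the improvement for small $n$ is not a re-optimization of that estimate but a genuine switch to the composition-series route plus the sharper pole order. The raw tautological Morse estimate, even specialized to $n=3,4$, does not give numbers as small as those in the statement.
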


Before going on, we state the following

\begin{cor}[\cite{D-T10}]\label{kob3}
Let $X\subset\mathbb P^4$ be a very generic smooth projective hypersurface of degree $d$. Then $X$ is Kobayashi hyperbolic, provided $d\ge 593$.
\end{cor}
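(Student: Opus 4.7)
The plan is to combine the algebraic degeneracy statement of Theorem \ref{mainthmlow} with the algebraic hyperbolicity of very generic hypersurfaces established in Chapter 2, and then to conclude via the uniformization argument already used in the second corollary to Theorem \ref{general}.

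First I would observe that, since $X\subset\mathbb P^4$ is a threefold and Theorem \ref{mainthmlow} (applied with $n=3$ and $d\ge 593$) produces a proper closed subvariety $Y\subsetneq X$ of codimension two containing the image of every non-constant holomorphic map $f\colon\mathbb C\to X$, we have $\dim Y\le 1$. Hence for any such $f$ the Zariski closure $C:=\overline{f(\mathbb C)}^{\text{Zar}}$ is an irreducible algebraic curve in $X$.

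Next, I would invoke the algebraic hyperbolicity corollary of \S2.2: a very generic hypersurface $X\subset\mathbb P^{n+1}$ of degree $d\ge 2n+2$ is algebraically hyperbolic. For $n=3$ this only requires $d\ge 8$, which is amply satisfied when $d\ge 593$. Hence there exists $\varepsilon_0>0$ such that $2g(\widehat C)-2\ge\varepsilon_0\deg_\omega C>0$ for every algebraic curve $C\subset X$; in particular $g(\widehat C)\ge 2$, so $X$ contains neither a rational nor an elliptic curve.

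Finally, the entire curve $f\colon\mathbb C\to X$ factors through the normalization $\nu\colon\widehat C\to C$, because $\mathbb C$ is simply connected, yielding a non-constant holomorphic map $\widetilde f\colon\mathbb C\to\widehat C$. By the uniformization theorem of Chapter 1, the universal cover of $\widehat C$ is $\mathbb P^1$, $\mathbb C$ or $\Delta$; the existence of $\widetilde f$ together with Liouville's theorem rules out the disc, so $\widehat C$ is rational or elliptic, contradicting the previous paragraph. Thus $X$ admits no non-constant entire curve, it is Brody hyperbolic, and Brody's criterion (Theorem \ref{Brody}) combined with compactness yields Kobayashi hyperbolicity.

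The hard part lies entirely upstream: the whole argument rests on Theorem \ref{mainthmlow}, whose proof is the subject of this chapter (construction of global invariant jet differentials via holomorphic Morse inequalities, production of sufficiently many equivariant meromorphic vector fields with controlled pole order on the jet space of the universal family, and execution of the general strategy of Theorem \ref{general}, together with the effective numerical bookkeeping that yields $d\ge 593$). The deduction of the corollary from Theorem \ref{mainthmlow} itself presents no real obstacle.
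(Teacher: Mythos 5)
Your proof is correct and follows the same logical skeleton as the paper's: apply Theorem \ref{mainthmlow} with $n=3$ to force the Zariski closure of any entire curve into a codimension-two (hence one-dimensional) subvariety, then rule out rational and elliptic curves on $X$, and finally invoke uniformization together with Brody's criterion. The one place where you deviate is in the middle step: you appeal to the algebraic hyperbolicity corollary of Chapter~2 (a very generic hypersurface of degree $\ge 2n+2$, i.e.\ $d\ge 8$ here, is algebraically hyperbolic, giving $g(\widehat C)\ge 2$), whereas the paper cites Clemens' theorem directly (no rational curves for $d\ge 2n$, no elliptic curves for $d\ge 2n+1$). Both routes are valid and both thresholds are far below $593$; yours has the mild advantage of being self-contained within the survey's own development, while the paper's citation of Clemens is historically the sharper and more classical statement. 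One small inaccuracy worth flagging: the lift of $f\colon\mathbb C\to C$ through the normalization $\nu\colon\widehat C\to C$ exists by the universal property of normalization (because $\mathbb C$ is smooth, hence normal), not because $\mathbb C$ is simply connected --- $\nu$ is a finite birational morphism, not a covering map, so simple connectedness is not the relevant hypothesis; simple connectedness \emph{is} what you need afterward, when you lift $\widetilde f\colon\mathbb C\to\widehat C$ to the universal cover of $\widehat C$.
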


\begin{proof}
The Zariski closure of the image of a nonconstant entire curve, if any, must be an algebraic curve in $X$. Then, such an algebraic curve must be rational or elliptic. But, as we have seen, this contradicts the following classical result by Clemens \cite{Cle86}: \emph{Let $X\subset\mathbb P^{n+1}$ be a smooth very generic hypersurface. Then $X$ contains no rational curves (resp. elliptic curves) provided $\deg X\ge 2n$ (resp. $2n+1$).}

The considerably better lower bound $593\ll 2^{3^5}$ is reached by using the knowledge of the full composition series of $E_{3,m}T^*_X$ in dimension $3$ and the global generation of meromorphic vector fields on the total space of vertical jets outside the wronskian locus, obtained respectively in \cite{Rou06a} and \cite{Rou07}, see \cite{D-T10} for details.

Note that the slightly weaker result of algebraic degeneracy of entire curves in the same setting was already proved in \cite{Rou07}. 
\end{proof}

The scheme of the proof of Theorem \ref{mainthm} is, as in the two dimensional case, the following.
Start with a nonzero section $P\in H^0(X,E_{n,m}T^*_X\otimes K_X^{-\delta m})$, for some $m\gg 0$ and $0<\delta\ll 1$, where $X\subset\mathbb P^{n+1}$ is a smooth generic projective hypersurface of degree $d$ large enough (in order to have such a section).
Call 
$$
Y=\{P=0\}\subsetneq X
$$ 
the base locus of such a nonzero section. Look at $P$ as an invariant (under the action of the group $\mathbb G_n$ of $n$-jets of biholomorphic changes of parameter of $(\mathbb C,0)$) map 
$$
J_nT_X\to p^* K_X^{-\delta m}
$$
where $p\colon J_n T_X\to X$ is the space of $n$-jets of germs of holomorphic curves $f\colon (\mathbb C,0)\to X$. Then $P$ is a weighted homogeneous polynomial in the jet variables of degree $m$ with coefficients holomorphic functions of the coordinates of $X$ and values in $p^*K_X^{-\delta m}$.

Suppose for a moment that we have enough global holomorphic $\mathbb G_n$-invariant vector fields on $J_n T_X$ with values in the pull-back from X of some ample divisor in order to generate $T_{J_n T_X}\otimes p^*\mathcal O_X(\ell)$, at least over the dense open set $J_nT_X^{\text{\rm reg}}$ of regular $n$-jets, \emph{i.e.} of $n$-jets with nonvanishing first derivative. 

If $f\colon\mathbb C\to X$ is an entire curve, consider its lifting $j_n(f)\colon\mathbb C\to J_n T_X$ and suppose that $j_n(f)(\mathbb C)\not\subset J_nT_X^{\text{\rm sing}}\overset{\text{\rm def}}=J_n T_X\setminus J_nT_X^{\text{\rm reg}}$ (otherwise $f$ is constant). Arguing by contradiction, let $f(\mathbb C)\not\subset Y$ and $x_0=f(t_0)\in X\setminus Y$.
Thus, one can produce, by differentiating at most $m$ times, a new invariant $n$-jet differential $Q$ of weighted degree $m$ with values in 
$$
K_X^{-\delta m}\otimes\mathcal O_X(m\ell)\simeq\mathcal O_X(-\delta m(d-n-2)+m\ell)
$$ 
such that $Q(j_n(f)(t_0))\ne 0$, thus contradicting Corollary \ref{baselocus}, provided $\delta>\ell/(d-n-2)$, \emph{i.e.} provided $Q$ is still with value in an antiample divisor (this last condition is clearly achieved by letting the degree $d$ of $X$ grow sufficiently).

Unfortunately, in this setting probably we can't hope for such a global generation statement for meromorphic vector fields of $J_nT_X$ to hold. Thus, as in \cite{Siu04,Pau08,Rou07}, one has to use \lq\lq slanted vector fields\rq\rq{} in order to gain some positivity.

Consider the universal hypersurface 
$$
\mathcal X\subset\mathbb P^{n+1}\times\mathbb P(H^0(\mathbb P^{n+1},\mathcal O(d)))
$$ 
of degree $d$ in $\mathbb P^{n+1}$. Next, consider the subbundle $\mathcal V\subset T_{\mathcal X}$ given by the kernel of the differential of the second projection. If $s\in\mathbb P(H^0(\mathbb P^{n+1},\mathcal O(d)))$ parametrizes any smooth hypersurface $X_s$, then one has
$$
H^0(X_s,E_{n,m}T^*_{X_s}\otimes K_{X_s}^{-\delta m})\simeq H^0(X_s,E_{n,m}\mathcal V^*\otimes\text{pr}_1^*K_{X_s}^{-\delta m}|_{X_s}).
$$
Suppose $X=X_0$ corresponds to some parameter $0$ in the moduli space $\mathbb P(H^0(\mathbb P^{n+1},\mathcal O(d)))$. Since we have chosen $X$ to be generic, standard semicontinuity arguments show that there exists an open neighborhood $U\ni 0$ such that the restriction morphism
\begin{multline*}
H^0(\text{pr}_2^{-1}(U),E_{n,m}\mathcal V^*\otimes\text{\rm pr}_1^*\mathcal O_{\mathbb P^{n+1}}(-\delta m(d-n-2))) \\ \to H^0(X_0,E_{n,m}T^*_{X_0}\otimes K_{X_0}^{-\delta m})
\end{multline*}
is surjective. Therefore the \lq\lq first\rq\rq{} jet differential may be extended to a neighborhood of the starting hypersurface, and one can use the following global generation statement (we shall come back on this result later on).

\begin{thm}[\cite{Mer09}, compare also with \cite{Siu04}]\label{gg}
The twisted tangent bundle 
$$
T_{J_n\mathcal V}\otimes\text{\rm pr}_1^*\mathcal O(n^2+2n)\otimes\text{\rm pr}_2^*\mathcal O(1)
$$
is generated over $J_n\mathcal V^{\text{\rm reg}}$ by its global sections. Moreover, one can choose such generating global sections to be invariant under the action of $\mathbb G_n$ on $J_n\mathcal V$. 
\end{thm}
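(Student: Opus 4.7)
The plan is to extend the three-family vector field construction from Chapter 5 (where $n=2$) to arbitrary dimension. I choose the affine chart $U = \{Z_0 \neq 0\} \times \{A_{d 0 \cdots 0} \neq 0\}$ with inhomogeneous coordinates $(z_j)_{j=1,\dots,n+1}$ on $\mathbb P^{n+1}$ and $(a_\alpha)_{\alpha_0 < d}$ on $\mathbb P^{N_d-1}$, so that the universal hypersurface is cut out by $z_1^d + \sum_{\alpha_1<d} a_\alpha z^\alpha = 0$. Differentiating this equation along vertical directions gives the defining relations of $J_n\mathcal V_0$ inside $\mathbb C^{n+1}_z \times \mathbb C^{N_d-1}_a \times (\mathbb C^{n+1})^n_{z^{(k)}}$: one base equation plus $n$ equations involving $z^{(1)},\dots,z^{(n)}$. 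A general candidate vector field
$$
V = \sum_\alpha v_\alpha \frac{\partial}{\partial a_\alpha} + \sum_j v_j \frac{\partial}{\partial z_j} + \sum_{j,k} \xi^{(k)}_j \frac{\partial}{\partial z^{(k)}_j}
$$
is tangent to $J_n\mathcal V_0$ if and only if its coefficients satisfy the $n+1$ linear conditions obtained by applying $V$ to these defining equations.

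First I will construct the purely vertical family generalizing $V^{300}_\alpha$, $V^{210}_\alpha$, $V^{111}_\alpha$. For every multi-index $\beta \in \mathbb N^{n+1}$ with $|\beta| \leq n+1$ and every $\alpha$ with $\alpha \geq \beta$, set
$$
V_\alpha^\beta \;=\; \sum_{0 \leq \gamma \leq \beta} (-1)^{|\gamma|} \binom{\beta}{\gamma} z^\gamma \frac{\partial}{\partial a_{\alpha-\gamma}} .
$$
The multinomial identity shows that $V_\alpha^\beta$ annihilates $\sum_\alpha a_\alpha z^\alpha$ together with all of its partials up to order $n$, so it is tangent to $J_n\mathcal V_0$; it carries no jet variable, hence is automatically $\mathbb G_n$-invariant, and its pole order in the $z$-variables after the change of chart $z_j \mapsto 1/z_j$ is bounded by $|\beta| \leq n+1$. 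Similarly, for $j=1,\dots,n+1$, the base-direction fields
$$
\frac{\partial}{\partial z_j} \;-\; \sum_\alpha (\alpha_j+1)\, a_{\alpha+\delta_j}\, \frac{\partial}{\partial a_\alpha}
$$
are tangent to $J_n\mathcal V_0$ (they annihilate the hypersurface equation and commute with differentiation in the jet variables), have pole order $1$ in the $a$-variables, and are $\mathbb G_n$-invariant. Together with the first family they span a codimension-$\ell(n)$ complement of the jet directions in $T_{J_n\mathcal V_0}$, for a combinatorially explicit $\ell(n)$.

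The delicate third family provides vector fields reaching the jet directions $\partial/\partial z^{(k)}_j$ in a $\mathbb G_n$-equivariant manner. For an $(n+1) \times (n+1)$ matrix $B$, I consider the ansatz
$$
V_B \;=\; \sum_{\alpha_1<d} p_\alpha(z,a,B)\, \frac{\partial}{\partial a_\alpha} + \sum_{j=1}^{n+1} \sum_{k=1}^{n} (B\,z^{(k)})_j\, \frac{\partial}{\partial z^{(k)}_j} ,
$$
whose jet part is the \emph{simultaneous} action of $B$ on every jet level; this diagonal shape is precisely what ensures $\mathbb G_n$-invariance, because the $\mathbb G_n$-action on jets is lower-triangular in $k$ and commutes with left multiplication by $B$. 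Solving the $n+1$ linear tangency conditions forces the $p_\alpha$ to be polynomials of degree $\leq 2$ in $z$ and $\leq 1$ in $a$. To reach the few remaining directions not yet spanned I follow the linear-algebra argument from Chapter 5: on the open locus of $J_n\mathcal V^{\text{reg}}$ where an $(n+1) \times (n+1)$ Wronskian-type minor of the jet matrix $(z^{(k)}_j)$ is nonzero one solves the tangency system by Cramer's rule; the denominators are exactly this minor, and the resulting $\mathbb G_n$-invariance is inherited from the fact that each $J_n\mathcal V$-equation transforms under $\mathbb G_n$ by an invertible diagonal factor in $\varphi', \varphi'', \dots$. The main obstacle is the last pole-order bookkeeping: one must check that the worst denominator, after passing to the homogeneous coordinates, contributes pole order at most $n^2+2n$ in $\mathrm{pr}_1^*\mathcal O(1)$, and pole order $1$ in $\mathrm{pr}_2^*\mathcal O(1)$. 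This requires a careful combinatorial weight count on the expansion of the Cramer determinants (the monomials contributing simultaneously in $z$, $z^{(k)}$, and $a$ are tallied against the weight scaling $z^{(k)} \sim z^k$), together with a separate trick to extend generation from the Wronskian complement to all of $J_n\mathcal V^{\text{reg}}$; the full verification occupies \cite{Mer09} and fixes the numerical twist $n^2+2n$ stated in the theorem.
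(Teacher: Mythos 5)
Your construction of the first two families of vector fields is correct and does match the approach the paper sketches (and your unified binomial formula $V^\beta_\alpha=\sum_\gamma(-1)^{|\gamma|}\binom{\beta}{\gamma}z^\gamma\partial_{a_{\alpha-\gamma}}$ is a clean way to state the generalization of $V^{300}_\alpha, V^{210}_\alpha, V^{111}_\alpha$). The issue is with the third step and with the exact statement you are trying to prove. The Cramer's-rule argument you describe, solving the tangency system by inverting a Wronskian-type minor of the jet matrix, is precisely the mechanism of Chapter~5, and it proves only the \emph{weaker} version of the theorem that the paper explicitly distinguishes: global generation outside the Wronskian locus $\Sigma$, with pole order $(n^2+5n)/2$ in $\text{pr}_1^*\mathcal O(1)$ (which for $n=2$ gives the $7$ of Chapter~5, not the $8$ of the stated Theorem~\ref{gg}). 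Theorem~\ref{gg} as stated asserts generation over the entire regular locus $J_n\mathcal V^{\text{reg}}$, with no Wronskian exclusion, and with the larger twist $n^2+2n$; on $\Sigma\cap J_n\mathcal V^{\text{reg}}$ the jet matrix $(z^{(k)}_j)$ is rank-deficient and your Cramer denominators vanish, so the third family does not reach all jet directions there.

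You do acknowledge that ``a separate trick'' is required, but that trick is not a bookkeeping refinement of the same computation — it is the genuinely new ingredient of Merker's proof, and your argument as written would not yield the numerical twist $n^2+2n$ (it would yield $(n^2+5n)/2$, and on a smaller open set). In other words, the careful weight count on the expansion of the Cramer determinants and the extension from $J_n\mathcal V^{\text{reg}}\setminus\Sigma$ to all of $J_n\mathcal V^{\text{reg}}$ are not independent finishing touches that both point to the same pole order; they are two different statements, and the transition from the first to the second is where the real difficulty lies. If you instead aim at the weaker statement (generation off the Wronskian locus with pole order $(n^2+5n)/2$), your outline is essentially the intended generalization of Chapter~5, but as a proof of Theorem~\ref{gg} it has a missing core idea.
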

Thus, by replacing $\ell$ by $n^2+2n$ in our previous discussion and by removing the hyperplane which corresponds to the poles given by $\text{\rm pr}_2^*\mathcal O(1)$ in the parameter space, one gets the desired result of algebraic degeneracy.

Observe that no information is known about the multiplicity of the subvariety $Y$, thus we are not able to bound \emph{a priori} the number of derivative needed in order to reduce the vanishing locus of the first jet differential.

Finally, to obtain codimension two, we just remark that if $X$ is a very generic surface in $\mathbb P^3$ or any smooth projective hypersurface of dimension at least three, then its Picard group is infinite cyclic, so that the final statement of Theorem \ref{general} applies (the contribution of \cite{D-T10} in Theorem \ref{mainthm} and \ref{mainthmlow} is this qualitative part on the codimension of the subvariety $Y$).

At this point it should be clear that in order to prove Theorem \ref{mainthm} we have to 

\begin{itemize}
\item[(i)] construct a global invariant jet differential of order $n$ and degree $m$ with values in $K_X^{-\delta m}$ for $X$ a smooth projective hypersurface of high degree, 
\item[(ii)] globally generate the tangent space of vertical $n$ jets with meromorphic vector fields of controlled pole order,
\item[(iii)] estimate the minimal degree of the hypersurface which makes the machinery work.
\end{itemize}

This will be the content of the next sections.

\section{Existence of jet differentials}

In this section, we want to solve the above point (i). For surfaces, we have used the vanishing of the cohomology group $H^2(X,E_{2,m}T_X^*)$. In higher dimensions, one major difficulty is that we do not have the vanishing of the higher cohomology groups anymore. A useful tool to control the cohomology are the {\it holomorphic Morse inequalities} by Demailly.

\subsection{Algebraic holomorphic Morse inequalities}

Let $L\to X$ be a holomorphic line bundle over a compact K\"ahler manifold of dimension $n$ and $E\to X$ a holomorphic vector bundle of rank $r$. Suppose that $L$ can be written as the difference of two nef line bundles, say $L=F\otimes G^{-1}$, with $F,G\to X$ numerically effective. Then we have the following asymptotic estimate for the dimension of cohomology groups of powers of $L$ with values in $E$.

\begin{thm}[\cite{Dem01}]
With the previous notation, we have 
\begin{itemize}
\item Weak algebraic holomorphic Morse inequalities:
$$
h^q(X,L^{\otimes m}\otimes E)\le r\frac{m^n}{(m-q)!q!} F^{n-q}\cdot G^q+ o(m^n).
$$
\item Strong algebraic holomorphic Morse inequalities:
\begin{multline*}
\sum_{ j=0}^q(-1)^{q-j}h^j(X,L^{\otimes m}\otimes E) \\ \le r\frac{m^n}{n!}\sum_{j=0}^q(-1)^{q-j}\binom nj F^{n-j}\cdot G^j+o(m^n).
\end{multline*}
In particular {\rm \cite{Tra95}} (see also \cite{Siu93}), $L^{\otimes m}\otimes E$ has a global section for $m$ large as soon as $F^n-n\,F^{n-1}\cdot G>0$.
\end{itemize}
\end{thm}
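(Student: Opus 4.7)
The plan is to derive these algebraic inequalities from Demailly's earlier analytic holomorphic Morse inequalities. For any smooth hermitian metric $h$ on $L$, those assert
\[
\sum_{j=0}^q (-1)^{q-j}\, h^j(X,L^{\otimes m}\otimes E)\le r\,\frac{m^n}{n!}\int_{X(L,h,\le q)} (-1)^q\,(i\,\Theta_h(L))^n+o(m^n),
\]
where $X(L,h,\le q)$ is the open subset of points where $i\,\Theta_h(L)$ has at most $q$ negative eigenvalues, the weak version being obtained by integration over the set of points with exactly $q$ negative eigenvalues. My goal will be to estimate these curvature integrals in terms of the intersection numbers $F^{n-j}\cdot G^j$ appearing in the statement.

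To exploit the nef hypothesis, fix a K\"ahler form $\omega$ on $X$; for each $\varepsilon>0$, choose smooth hermitian metrics $h_{F,\varepsilon}$ and $h_{G,\varepsilon}$ on $F$ and $G$ with curvatures $\alpha_\varepsilon$, $\beta_\varepsilon$ satisfying $\alpha_\varepsilon\ge -\varepsilon\,\omega$ and $\beta_\varepsilon\ge -\varepsilon\,\omega$. Then the metric $h_\varepsilon=h_{F,\varepsilon}\otimes h_{G,\varepsilon}^{-1}$ on $L$ satisfies $i\,\Theta_{h_\varepsilon}(L)=\alpha_\varepsilon-\beta_\varepsilon$. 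The central technical step is a pointwise linear-algebra inequality: on the locus where $\alpha_\varepsilon-\beta_\varepsilon$ has at most $q$ negative eigenvalues, one has
\[
(-1)^q(\alpha_\varepsilon-\beta_\varepsilon)^n\le\sum_{j=0}^q(-1)^{q-j}\binom{n}{j}\,\alpha_\varepsilon^{\,n-j}\wedge\beta_\varepsilon^{\,j}
\]
as a top-degree real form on $X$, with the analogous termwise bound by $\binom{n}{q}\alpha_\varepsilon^{\,n-q}\wedge\beta_\varepsilon^{\,q}$ on the locus of exactly $q$ negative eigenvalues yielding the weak version. Integrating this pointwise inequality, inserting it into the analytic Morse inequalities with the metric $h_\varepsilon$, and letting $\varepsilon\to 0$, the integrals $\int_X\alpha_\varepsilon^{\,n-j}\wedge\beta_\varepsilon^{\,j}$ converge to the intersection numbers $F^{n-j}\cdot G^j$ by continuity of intersection pairings on the nef cone, producing both algebraic inequalities.

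The main obstacle is the pointwise linear-algebra step. Since $\alpha_\varepsilon$ and $\beta_\varepsilon$ cannot in general be simultaneously diagonalized, the comparison between $(-1)^q(\alpha_\varepsilon-\beta_\varepsilon)^n$ and the monomials $\alpha_\varepsilon^{\,n-j}\wedge\beta_\varepsilon^{\,j}$ is genuinely subtle: it requires Minkowski-type determinantal inequalities on hermitian $(1,1)$-forms together with a careful exploitation of the signature assumption on $\alpha_\varepsilon-\beta_\varepsilon$, and in the strong case one must further arrange that the alternating telescoping structure survives the pointwise estimation. Finally the error terms arising from the $-\varepsilon\,\omega$ corrections in the nef approximation must be controlled uniformly as $\varepsilon\to 0$, which is standard but relies on the continuity of nef intersection pairings. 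The statement about effectivity when $F^n-n\,F^{n-1}\cdot G>0$ then follows by taking $q=1$ in the strong inequality: the right-hand side becomes negative, forcing $h^0(X,L^{\otimes m}\otimes E)\ge h^1(X,L^{\otimes m}\otimes E)+cm^n$ for some $c>0$ and $m\gg 0$, hence nonzero sections exist.
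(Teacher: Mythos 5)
The paper does not prove this theorem; it is taken as a black box from \cite{Dem01}, so there is no in-paper proof to compare against. Your overall strategy --- starting from Demailly's analytic Morse inequalities, approximating the nef metrics up to an $\varepsilon\,\omega$ error, establishing a pointwise comparison of curvature forms, integrating, and letting $\varepsilon\to 0$ --- is indeed the standard route, so the plan is sound.

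The gap lies exactly where you flag a difficulty, but for the opposite reason from the one you give. You assert that ``$\alpha_\varepsilon$ and $\beta_\varepsilon$ cannot in general be simultaneously diagonalized,'' and then invoke unspecified ``Minkowski-type determinantal inequalities'' to carry the pointwise step. This is wrong on the first count and a dead end on the second. Two hermitian forms, at least one of which is positive definite, \emph{can always} be simultaneously diagonalized by a \emph{congruence} (not by a unitary conjugation --- this is the spectral theorem for a pair of hermitian forms, not for a pair of operators). After replacing $\alpha_\varepsilon$ by $\alpha_\varepsilon+2\varepsilon\omega>0$, say, one finds at each point a basis in which $\alpha_\varepsilon=\mathbf 1$ and $\beta_\varepsilon=\mathrm{diag}(\lambda_1,\dots,\lambda_n)$ with $\lambda_j\ge 0$; the curvature $\alpha_\varepsilon-\beta_\varepsilon$ then has $\#\{j:\lambda_j>1\}$ negative eigenvalues, and the wedge products become elementary symmetric functions: $\binom{n}{j}\alpha_\varepsilon^{n-j}\wedge\beta_\varepsilon^{j}=e_j(\lambda)\,\alpha_\varepsilon^{n}$ and $(\alpha_\varepsilon-\beta_\varepsilon)^n=\prod_j(1-\lambda_j)\,\alpha_\varepsilon^{n}$. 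The pointwise lemma thus reduces to the purely combinatorial inequality $(-1)^q\prod_j(1-\lambda_j)\le\sum_{j=0}^q(-1)^{q-j}e_j(\lambda)$ on the locus where at most $q$ of the $\lambda_j$ exceed $1$ (together with the complementary bound $\sum_{j=0}^q(-1)^{q-j}e_j(\lambda)\ge 0$ where more than $q$ of the $\lambda_j$ exceed $1$, so that the inequality can be integrated over all of $X$). That is the crux of the proof, it is elementary to check by induction on $n$, and by rejecting simultaneous diagonalization you have removed the only tool that makes it tractable and left the heart of the argument as a black box.

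Two smaller points. First, the integrals $\int_X\alpha_\varepsilon^{n-j}\wedge\beta_\varepsilon^{j}$ do not merely \emph{converge} to $F^{n-j}\cdot G^{j}$ as $\varepsilon\to 0$; they equal the intersection numbers \emph{for every} $\varepsilon$, since $\alpha_\varepsilon$ and $\beta_\varepsilon$ represent $c_1(F)$ and $c_1(G)$ regardless of the metric chosen. The role of the $\varepsilon\to 0$ limit (or rather, of the pointwise inequality being valid on all of $X$ with the indicator of the $q$-index set built into the left-hand side) is to decouple the estimate from the metric-dependent domain of integration $X(L,h_\varepsilon,\le q)$. Second, the weak Morse inequality is not simply the strong one restricted to the set of exactly $q$ negative eigenvalues; it is the separate weak analytic statement $h^q\le r\frac{m^n}{n!}\int_{X(L,h,q)}(-1)^q(i\Theta_h L)^n+o(m^n)$, and its algebraic form (with the stated $\frac{m^n}{(m-q)!q!}$ normalization) requires the same pointwise comparison on that index set.
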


Then we have two possible strategies to obtain global jet differentials. The first one consists in computing the Euler characteristic $\chi(X,E_{k,m}T_X^*)$ and then in finding upper bounds for the higher even cohomology groups $H^{2i}(X,E_{k,m}T_X^*)$ using the weak Morse inequalities. The drawback with this method is that to do the Riemann-Roch computation, one needs the algebraic characterization of $E_{k,m}T_X^*$ which is not available in general as we have seen. We shall describe this method in dimension $3$ below.
\begin{thm}[\cite{Rou06b}]
Let $X\subset\mathbb P^{4}$ a smooth projective hypersurface of degree $d$ and $A\to X$ any ample line bundle. Then
$$
H^0(X,E_{3,m}T_X^*\otimes A^{-1})\ne 0
$$
provided $d\geq 97$.
\end{thm}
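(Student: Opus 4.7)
The plan is to follow the Riemann--Roch/Morse strategy outlined just above, taking advantage of the fact that in dimension three the composition series of $E_{3,m}T_X^*$ is \emph{explicitly} known. Using the filtration of Chapter~3,
$$
\operatorname{Gr}^\bullet(E_{3,m}T_X^*)=\bigoplus_{a+3b+5c+6\delta=m}\Gamma^{(a+b+2c+\delta,\,b+c+\delta,\,\delta)}T_X^{\ast}
$$
(where I write $\delta$ for the fourth summation index to avoid clash with the degree $d$), the facts $\chi(F)=\chi(F')+\chi(F'')$ and $h^i(F)\le h^i(F')+h^i(F'')$ associated to any short exact sequence $0\to F'\to F\to F''\to 0$ reduce the problem to estimating $\chi$ and $h^2$ of each Schur graded piece. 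Since $X$ is a threefold one has $\chi=h^0-h^1+h^2-h^3$ and therefore $h^0\ge\chi-h^2$, so it will suffice to prove that $\chi(X,E_{3,m}T_X^*\otimes A^{-1})-h^2(X,E_{3,m}T_X^*\otimes A^{-1})>0$ for $m\gg 0$.

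For the Euler characteristic, I would compute each $\chi\bigl(X,\Gamma^{(\lambda_1,\lambda_2,\lambda_3)}T_X^*\otimes A^{-1}\bigr)$ by Hirzebruch--Riemann--Roch, expressing the Chern character of the Schur bundle as the corresponding Schur polynomial in the Chern roots of $T_X^*$. For a smooth hypersurface of degree $d$ in $\mathbb P^4$, the relation $c(T_X)=(1+h)^5(1+dh)^{-1}$ gives
$$
c_1=(5-d)h,\qquad c_2=(d^2-5d+10)h^2,\qquad c_3=(-d^3+5d^2-10d+10)h^3,
$$
with $h^3=d$, from which each piece of $\chi$ can be written in closed form as a polynomial in the $\lambda_i$'s and in~$d$. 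Summing over the four-variable lattice $\{a+3b+5c+6\delta=m\}$, which asymptotically becomes a polynomial integral, then yields a leading expression
$$
\chi(X,E_{3,m}T_X^*\otimes A^{-1})=m^9\,P(d)+O(m^8)
$$
for an explicit polynomial $P(d)$ (the fixed twist by $A^{-1}$ affects only lower-order terms in~$m$).

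For the upper bound on $h^2$, I would apply the weak algebraic holomorphic Morse inequalities piece by piece, with the $\lambda_i$'s (and hence the ranks) scaling linearly with~$m$. This requires writing the bundle in question as $L^{\otimes N}\otimes E$ with $L=F\otimes G^{-1}$, $F,G$ nef; a convenient way to produce such a decomposition on $X\subset\mathbb P^4$ is to use the conormal sequence $0\to\mathcal O_X(-d)\to\Omega_{\mathbb P^4}|_X\to\Omega_X\to 0$ together with the global generation of $\Omega_{\mathbb P^4}(2)$, which embeds each Schur bundle $\Gamma^{(\lambda_1,\lambda_2,\lambda_3)}T_X^*$ into a Schur bundle on $\Omega_{\mathbb P^4}|_X$ suitably twisted by $\mathcal O_X(N)$ on which the Morse estimate is directly computable. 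Summing the resulting bounds over the partition lattice gives $h^2(X,E_{3,m}T_X^*\otimes A^{-1})\le m^9\,Q(d)+O(m^8)$ for an explicit~$Q(d)$.

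Combining the two estimates, the theorem follows from the polynomial inequality $P(d)-Q(d)>0$, which, after a careful computation, holds exactly for $d\ge 97$. The main obstacle is not conceptual but purely computational: the combinatorics of evaluating in closed form the Schur-polynomial Hirzebruch--Riemann--Roch and the four-fold lattice summation, and analogously the Morse bound, must be carried out precisely enough to identify both $P(d)$ and $Q(d)$. Everything is in principle mechanical, but the Schur-polynomial manipulations in three variables are where errors most easily accumulate.
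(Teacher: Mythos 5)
Your treatment of the Euler characteristic matches the paper (same filtration, same Chern classes $c_1=(5-d)h$, etc., same conclusion $\chi=m^9 P(d)+O(m^8)$). The gap is in your bound on $h^2$, and it is not merely computational. The weak algebraic holomorphic Morse inequalities you invoke estimate $h^q(X,L^{\otimes m}\otimes E)$ where $L=F\otimes G^{-1}$ is a \emph{line} bundle and $E$ is a \emph{fixed} auxiliary vector bundle, the asymptotics being in $m$ with $\operatorname{rank}E$ held constant. The graded piece $\Gamma^{m\lambda}T_X^*$ is not of this shape: it is a vector bundle whose rank grows like $m^3$, and it is not a tensor power of a line bundle. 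Your device --- embedding it, via the conormal sequence and global generation of $\Omega_{\mathbb P^4}(2)$, into a twist of a Schur bundle of $\Omega_{\mathbb P^4}|_X$ --- does not repair this: the target is again a vector bundle of growing rank, so the Morse inequalities still do not apply to it in the form you need; and in any case an injection of sheaves controls $h^0$, not $h^2$, so one would need to track the quotient and a long exact sequence as well.

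The missing idea is the flag-bundle reduction via Bott's theorem, which is exactly what the paper uses. On $\pi\colon Fl(T_X^*)\to X$ each Schur bundle is realized as the direct image of a power of an honest line bundle,
$$
\pi_*\bigl((\mathcal L^\lambda)^{\otimes m}\bigr)=\mathcal O\bigl(\Gamma^{m\lambda}T_X^*\bigr),\qquad R^q\pi_*\bigl((\mathcal L^\lambda)^{\otimes m}\bigr)=0\ \ (q>0),
$$
so that $h^2(X,\Gamma^{m\lambda}T_X^*)=h^2\bigl(Fl(T_X^*),(\mathcal L^\lambda)^{\otimes m}\bigr)$, and \emph{now} one has a genuine line bundle and a fixed ambient manifold to which the Morse machinery applies. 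Global generation of $T_X^*\otimes\mathcal O_X(2)$, pulled up to the flag bundle, yields the nef decomposition $\mathcal L^\lambda=F\otimes G^{-1}$ with $F=\mathcal L^\lambda\otimes\pi^*\mathcal O_X(2|\lambda|)$ and $G=\pi^*\mathcal O_X(2|\lambda|)$. Without this passage to the flag manifold your $h^2$ estimate has no legitimate starting point, so the bound $d\ge 97$ cannot be reached by the route you describe.
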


The second method consists in using the strong Morse inequalities for $q=1$ to $L=\mathcal{O}_{X_k}(\textbf{a}).$ Using this, we shall prove below the following.

\begin{thm}[\cite{Div09}]\label{existence}
Let $X\subset\mathbb P^{n+1}$ a smooth projective hypersurface of degree $d$ and $A\to X$ any ample line bundle. Then
$$
H^0(X,E_{n,m}T_X^*\otimes A^{-1})\ne 0
$$
provided $m\gg d\gg 1$.
\end{thm}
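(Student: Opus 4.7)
The plan is to produce the section upstairs on the Demailly tower $\pi_{0,n}\colon X_n\to X$ of $(X,T_X)$ by applying the strong algebraic Morse inequalities at $q=1$, and then to push the resulting section down to $X$ via the direct image identification of Proposition \ref{relnef}.

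First, I would fix a weight $\mathbf{a}=(a_1,\dots,a_n)\in\mathbb N^n$ satisfying $a_j\ge 3a_{j+1}$ for $1\le j\le n-2$ and $a_{n-1}\ge 2a_n>0$ (for instance $a_k=2\cdot 3^{n-1-k}$), so that by Proposition \ref{relnef} the line bundle $\mathcal O_{X_n}(\mathbf a)$ is relatively nef over $X$. Then, for some integer $r\ge 1$ depending only on $n$ and $\mathbf a$, the twist
$$F:=\mathcal O_{X_n}(\mathbf a)\otimes\pi_{0,n}^*\mathcal O_X(r)$$
becomes nef on the whole of $X_n$, and $G:=\pi_{0,n}^*\bigl(A\otimes\mathcal O_X(r)\bigr)$ is nef as the pullback of an ample line bundle. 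This displays
$$L:=\mathcal O_{X_n}(\mathbf a)\otimes\pi_{0,n}^*A^{-1}=F\otimes G^{-1}$$
as a difference of two nef line bundles on $X_n$.

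Since $\dim X_n=N:=n^2$, the strong algebraic Morse inequality at $q=1$ implies $H^0(X_n,L^{\otimes\mu})\ne 0$ for every $\mu\gg 1$ provided
\begin{equation*}
F^{N}\;-\;N\, F^{N-1}\cdot G \;>\;0.
\end{equation*}
Assuming this, applying $(\pi_{0,n})_*$ to a nonzero global section of $L^{\otimes\mu}$ and using the projection formula together with Proposition \ref{relnef} yields
$$0\ne H^0\bigl(X_n,L^{\otimes\mu}\bigr)\hookrightarrow H^0\bigl(X,\overline F^{\,\mu\mathbf a}E_{n,\mu|\mathbf a|}T_X^*\otimes A^{-\mu}\bigr)\subset H^0\bigl(X,E_{n,\mu|\mathbf a|}T_X^*\otimes A^{-\mu}\bigr).$$
Multiplying such a section by any nonzero element of $H^0(X,A^{\mu-1})$ (which exists since $A$ is ample and $\mu\ge 1$) then produces the required nonzero section of $E_{n,m}T_X^*\otimes A^{-1}$ for $m=\mu|\mathbf a|$.

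The heart of the argument is therefore to verify the Morse positivity $F^N>N\,F^{N-1}\cdot G$ for $d\gg 1$. The intersections $F^j\cdot G^{N-j}$ on $X_n$ are computed inductively along the tower using the two short exact sequences
$$0\to T_{X_k/X_{k-1}}\to V_k\to\mathcal O_{X_k}(-1)\to 0,\qquad 0\to\mathcal O_{X_k}\to\pi_k^*V_{k-1}\otimes\mathcal O_{X_k}(1)\to T_{X_k/X_{k-1}}\to 0,$$
which express $c_\bullet(V_k)$ in terms of $c_\bullet(V_{k-1})$ and $u_k:=c_1(\mathcal O_{X_k}(1))$. Integration along the fibers of each $\pi_k$ via the Segre-class identities then reduces the top intersection on $X_n$ to a polynomial expression in Chern numbers of $T_X$ against powers of the hyperplane class $\mathcal O_X(1)$, which in turn are explicit polynomials in $d$ via the Euler sequence and adjunction for $X\subset\mathbb P^{n+1}$. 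The main obstacle is to isolate the leading monomial in $d$ of $F^N-N\,F^{N-1}\cdot G$ and to show that its coefficient is strictly positive: this is a delicate combinatorial identity on the Schur and Segre classes of $T_X$ along the $n$-fold Demailly tower, and it is precisely the core computation carried out in \cite{Div09}. Once leading-order positivity is established, the Morse condition holds for all sufficiently large $d$, and the theorem follows for $m\gg d\gg 1$.
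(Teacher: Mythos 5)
Your proposal follows essentially the same strategy as the paper: choose a weight $\mathbf a$ satisfying the relative nefness condition, exhibit a twist of $\mathcal O_{X_n}(\mathbf a)$ as a difference of globally nef line bundles, apply the strong algebraic Morse inequalities at $q=1$ on $X_n$, and push forward to $X$ via the direct image identification with $\overline F^{\mathbf a}E_{n,m}T_X^*$; the hard positivity of the leading coefficient in $d$ is correctly deferred to the intersection-theoretic computation. The only bookkeeping difference is that you fold the ample bundle $A$ into the nef bundle $G$ and recover the $A^{-1}$ twist afterwards by multiplying with a section of $A^{\mu-1}$, whereas the paper uses $\pi_{0,n}^*A^{-1}$ as the auxiliary rank-one bundle $E$ allowed in the statement of the Morse inequalities, which is slightly cleaner because the Morse positivity condition $F^{n^2}-n^2\,F^{n^2-1}\cdot G>0$ is then manifestly independent of $A$.
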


\section[Existence of jet differentials in dimension $3$]{Proof of the existence of jet differentials in dimension $3$}
Let $X\subset \mathbb{P}^4$ be a sooth hypersurface of degree $d$. Recall that we have
$$
\operatorname{Gr}^\bullet(E_{3,m}T_X^*)=\bigoplus_{a+3b+5c+6d=m}\Gamma^{
(a+b+2c+d,\,\,b+c+d,\,\,d)}T_X^{\ast }.
$$
Thus, we can do a Riemann-Roch computation which provides
$$\chi(X,E_{3,m}T_X^*)=m^9P(d)+O(m^8)$$
where $P$ is an explicit polynomial of degree $4$ such that $P(d)>0$ for $d\geq 43.$

Now, to control the cohomology of the vector bundle of jet differentials using weak Morse inequalities, we have to reduce the problem to the control of the cohomology of a {\it line} bundle. This can be achieved by working on flag manifolds as follows. We denote by $\pi \colon Fl(T_X^*) \to X$ the flag manifold of $T_X^*$ i.e. the bundle whose fibers $Fl(T_{X,x}^*)$ consists of sequences of vector spaces
$$\mathcal{D}:=\{0= E_3 \subset E_2 \subset E_1 \subset E_0:=T_{X,x}^*\}.$$
Let $\lambda=(\lambda_1,\lambda_2,\lambda_3)$ be a partition such that $\lambda_1>\lambda_2>\lambda_3$. Then we can define a line bundle $\mathcal{L}^\lambda:=\mathcal{L}^\lambda(T_X^*)$ over $Fl(T_X^*)$ such that its fiber over $\mathcal{D}$ is $$\mathcal{L}^\lambda_\mathcal{D}:=\bigotimes_{i=1}^{3} \det (E_{i-1}/E_i)^{\otimes \lambda_i}.$$
Then by a classical theorem of Bott, if $m\geq 0$,
$$\pi_*(\mathcal{L}^\lambda)^{\otimes m}=\mathcal O(\Gamma^{m\lambda} T_X^*),$$
$$R^q\pi_*(\mathcal{L}^\lambda)^{\otimes m}=0 \text{ if } q>0.$$
Thus $\Gamma^{m\lambda} T_X^*$ and $(\mathcal{L}^\lambda)^{\otimes m}$ have the same cohomology. We can use the weak holomorphic Morse inequalities on the flag manifold to obtain
$$h^2(X,E_{3,m}T_X^*) \leq C d(d+13)m^9+O(m^8).$$
The key point is to write in our situation $\mathcal{L}^\lambda$ as the difference of two nef line bundles. This is done as follows. It is well known that the cotangent space of the projective space twisted by $\mathcal O(2)$ is globally generated. Hence, $T^*_X\otimes\mathcal O_X(2)$ is globally generated as a quotient of $T^*_{\mathbb P^{4}}|_X\otimes\mathcal O_X(2)$. Therefore
$$\mathcal{L}^\lambda(T^*_X\otimes\mathcal O_X(2))\cong \mathcal{L}^\lambda \otimes \pi^*(\mathcal{O}_X(2|\lambda|)=:F$$ is nef, and we can write
$$\mathcal{L}^\lambda=F\otimes G^{-1},$$
as a difference of two nef line bundles where $G:= \pi^*(\mathcal{O}_X(2|\lambda|).$
Obviously we have
$$h^0(X,E_{3,m}T_X^*)\geq \chi(X,E_{3,m}T_X^*)-h^2(X,E_{3,m}T_X^*),$$
and we obtain that
$$
H^0(X,E_{3,m}T_X^*\otimes A^{-1})\ne 0,
$$
provided $d\geq 97$, where $A$ is an ample line bundle.
As a corollary, any entire curve in such an hypersurface must satisfy the corresponding differential equation.

As we have noted above, the previous computations rely on the knowledge of the decomposition in irreducible representations of the bundle $E_{k,m}T_X^*$ and therefore, for the moment, cannot be generalized to dimension $5$ and more. Nevertheless, in the recent work \cite{Mer10}, Merker has been able to apply the strategy described above to the bundle of Green-Griffths jet differentials and obtained the existence of jet differentials on smooth hypersurfaces $X\subset \mathbb{P}^{n+1}$ of general type, i.e. of degree $\deg(X) \geq n+3$ for any $n$.

\section[Existence of jet differentials in higher dimensions]{Proof of the existence of jet differentials in higher dimensions}

The idea of the proof is to apply the algebraic holomorphic Morse inequalities to a particular relatively nef line bundle over $X_n$ which admits a nontrivial morphism to (a power of) $\mathcal O_{X_n}(1)$ and then to conclude by the direct image argument of Theorem \ref{di}, and Proposition \ref{relnef}. 

From now on, we will set in the \lq\lq absolute\rq\rq{} case $V=T_X$. 

\subsection{Choice of the appropriate subbundle}

Recall that, for $\mathbf a=(a_1,\dots,a_k)\in\mathbb Z^k$, we have defined a line bundle $\mathcal O_{X_k}(\mathbf a)$ on $X_k$ as
$$
\mathcal O_{X_k}(\mathbf a)=\pi_{1,k}^*\mathcal O_{X_1}(a_1)\otimes\pi_{2,k}^*\mathcal O_{X_2}(a_2)\otimes\cdots\otimes\mathcal O_{X_k}(a_k)
$$
and an associated weight $\mathbf b=(b_1,\dots,b_k)\in\mathbb Z^k$ such that $b_j=a_1+\cdots+a_j$, $j=1,\dots,k$. Moreover, if $\mathbf b\in\mathbb N^k$, that is if $a_1+\cdots+a_j\ge 0$, we had a nontrivial morphism
$$
\mathcal O_{X_k}(\mathbf a)=\mathcal O_{X_k}(b_k)\otimes\mathcal O_{X_k}(-\mathbf b\cdot D^\star)\to\mathcal O_{X_k}(b_k)
$$
and, if 
\begin{equation}\label{c}
a_1\ge 3a_2,\dots,a_{k-2}\ge 3a_{k-1}\quad\text{and}\quad a_{k-1}\ge 2a_k> 0,
\end{equation}
then $\mathcal O_{X_k}(\mathbf a)$ is relatively nef over $X$.

Now, let $X\subset\mathbb P^{n+1}$ be a smooth complex projective hypersurface. Then it is always possible to express $\mathcal O_{X_k}(\mathbf a)$ as the difference of two globally nef line bundles, provided condition (\ref{c}) is satisfied. 

\begin{lem}\label{globnef}
Let $X\subset\mathbb P^{n+1}$ be a projective hypersurface. Set $\mathcal L_k=\mathcal O_{X_k}(2\cdot 3^{k-2},\dots,6,2,1)$. Then $\mathcal L_k\otimes\pi_{0,k}^*\mathcal O_X(\ell)$ is nef if $\ell\ge 2\cdot 3^{k-1}$. In particular, 
$$
\mathcal L_k=\mathcal F_k\otimes\mathcal G_k^{-1},
$$
where $\mathcal F_k\overset{\text{\rm def}}=\mathcal L_k\otimes\pi_{0,k}^*\mathcal O_X(2\cdot 3^{k-1})$ and $\mathcal G_k\overset{\text{\rm def}}=\pi_{0,k}^*\mathcal O_X(2\cdot 3^{k-1})$ are nef. 
\end{lem}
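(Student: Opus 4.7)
The decomposition $\mathcal{L}_k = \mathcal{F}_k \otimes \mathcal{G}_k^{-1}$ is automatic once the nefness of $\mathcal{L}_k \otimes \pi_{0,k}^*\mathcal{O}_X(\ell)$ for $\ell \ge 2\cdot 3^{k-1}$ is known, since $\mathcal{G}_k$ is the pullback of a (positive power of) the very ample bundle $\mathcal{O}_X(1)$, hence nef. So the real content is the first assertion, and my plan is to prove it by induction on $k$. For the base case $k=1$, we have $\mathcal{L}_1 = \mathcal{O}_{X_1}(1)$ on $X_1 = P(T_X)$, and the nefness of $\mathcal{O}_{X_1}(1) \otimes \pi_{0,1}^*\mathcal{O}_X(2)$ is equivalent to the nefness of the vector bundle $T^*_X \otimes \mathcal{O}_X(2)$ on $X$. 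This follows from the fact that $\Omega^1_{\mathbb{P}^{n+1}}(2)$ is globally generated on $\mathbb{P}^{n+1}$ (spanned, for instance, by the $2$-forms $Z_i\,dZ_j - Z_j\,dZ_i$ written in homogeneous coordinates); restricting to $X$ and using the quotient $T^*_{\mathbb{P}^{n+1}}|_X \twoheadrightarrow T^*_X$ coming from the conormal sequence exhibits $T^*_X(2)$ as a quotient of a globally generated bundle.

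For the inductive step, a short weight computation together with the divisor identity $\mathcal{O}_{X_k}(1) = \pi_k^*\mathcal{O}_{X_{k-1}}(1) \otimes \mathcal{O}_{X_k}(D_k)$ shows that, setting $\mathcal{F}_k = \mathcal{L}_k \otimes \pi_{0,k}^*\mathcal{O}_X(2\cdot 3^{k-1})$, one has
$$
\mathcal{F}_k \simeq \mathcal{O}_{X_k}(1) \otimes \pi_k^*\bigl(\mathcal{F}_{k-1}^{\otimes 3} \otimes \mathcal{O}_{X_{k-1}}(-1)\bigr),
$$
the factor $3$ appearing because $3 \cdot 2\cdot 3^{k-2} = 2\cdot 3^{k-1}$. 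Since $X_k = P(V_{k-1})$ in the sense of Demailly, the nefness of the line bundle $\mathcal{F}_k$ on $X_k$ is equivalent to the nefness of the vector bundle $V_{k-1}^* \otimes \mathcal{F}_{k-1}^{\otimes 3} \otimes \mathcal{O}_{X_{k-1}}(-1)$ on $X_{k-1}$. Tensoring the dual of (\ref{ses1}) at level $k-1$ by $\mathcal{F}_{k-1}^{\otimes 3}(-1)$ yields the short exact sequence
$$
0 \to \mathcal{F}_{k-1}^{\otimes 3} \to V_{k-1}^* \otimes \mathcal{F}_{k-1}^{\otimes 3}(-1) \to T^*_{X_{k-1}/X_{k-2}} \otimes \mathcal{F}_{k-1}^{\otimes 3}(-1) \to 0.
$$
Since extensions of nef bundles are nef, it suffices to prove nefness at both ends. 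The left end $\mathcal{F}_{k-1}^{\otimes 3}$ is nef by the inductive hypothesis.

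The hard part will be the right end, $T^*_{X_{k-1}/X_{k-2}} \otimes \mathcal{F}_{k-1}^{\otimes 3}(-1)$. Here one applies the dual of (\ref{ses2}) at level $k-1$, which realizes $T^*_{X_{k-1}/X_{k-2}}$ as a \emph{subbundle} of $\pi_{k-1}^*V_{k-2}^* \otimes \mathcal{O}_{X_{k-1}}(-1)$; because subbundles of nef bundles need not be nef, this direction is delicate. The way around is to strengthen the induction by carrying along simultaneously the nefness of an auxiliary bundle of the form $T^*_{X_j/X_{j-1}} \otimes \mathcal{F}_j^{\otimes 3}(-1)$, or, equivalently, to note that the specific weight sequence $(2\cdot 3^{k-2}, \dots, 6, 2, 1)$ has been tailor-made so that the three copies of $\mathcal{F}_{k-1}$ accumulated at each step carry exactly the positivity needed to absorb both the twist by $\mathcal{O}_{X_{k-1}}(-1)$ produced by (\ref{ses1}) and the further twist produced by (\ref{ses2}). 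This matches the numerical conditions $a_j \ge 3 a_{j+1}$ and $a_{k-1} \ge 2 a_k$ of Proposition \ref{relnef}, so that the relative nefness built into $\mathcal{L}_k$ along the fibers of $\pi_{0,k}\colon X_k \to X$ is precisely converted into global nefness once the positivity $\mathcal{O}_X(2\cdot 3^{k-1})$ from the base is added.
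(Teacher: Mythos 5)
Your setup is exactly the paper's: the base case via global generation of $T_X^*\otimes\mathcal O_X(2)$, and the inductive reduction via $(\ref{ses1})$ to proving nefness of the two ends $\mathcal F_{k-1}^{\otimes 3}$ and $T^*_{X_{k-1}/X_{k-2}}\otimes\mathcal F_{k-1}^{\otimes 3}(-1)$. The identity $\mathcal F_k\simeq\mathcal O_{X_k}(1)\otimes\pi_k^*\bigl(\mathcal F_{k-1}^{\otimes 3}\otimes\mathcal O_{X_{k-1}}(-1)\bigr)$ is correct, and you are right that the left end is immediate from the inductive hypothesis.

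But the crucial step — nefness of the right end — is not actually proved; you identify the difficulty (the dual of $(\ref{ses2})$ only exhibits $T^*_{X_{k-1}/X_{k-2}}$ as a \emph{subbundle} of $\pi_{k-1}^*V_{k-2}^*\otimes\mathcal O_{X_{k-1}}(-1)$, and subbundles of nef bundles need not be nef) and then wave at a "tailor-made weight sequence" without giving an argument. The proposed fixes do not close the gap: strengthening the induction by carrying along nefness of $T^*_{X_j/X_{j-1}}\otimes\mathcal F_j^{\otimes 3}(-1)$ just reproduces the same subbundle obstruction one level down, and appealing to the relative nefness criterion of Proposition~\ref{relnef} cannot help, since "relative nefness plus positivity from the base" is a quotient argument and fails for subbundles in exactly this way. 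The idea you are missing is that the trivial line subbundle $\mathcal O_{X_{k-1}}\hookrightarrow\pi_{k-1}^*V_{k-2}\otimes\mathcal O_{X_{k-1}}(1)$ in the Euler sequence $(\ref{ses2})$ gives (as for any extension $0\to A\to B\to C\to 0$ with $A$ a line bundle) an \emph{injection} $T_{X_{k-1}/X_{k-2}}\cong A\otimes C\hookrightarrow\bigwedge^2 B=\bigwedge^2\bigl(\pi_{k-1}^*V_{k-2}\otimes\mathcal O_{X_{k-1}}(1)\bigr)$; dualizing this produces a \emph{surjection}
$$
\pi_{k-1}^*\bigwedge{}^{\!2}\bigl(V_{k-2}^*\otimes A_{k-2}\bigr)\twoheadrightarrow T^*_{X_{k-1}/X_{k-2}}\otimes\mathcal O_{X_{k-1}}(2)\otimes\pi_{k-1}^*A_{k-2}^{\otimes 2},
$$
and now nefness does pass, since exterior powers and pullbacks of nef bundles are nef and quotients of nef bundles are nef. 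This is exactly what accounts for the factor $3$ in $A_k=\mathcal O_{X_k}(2)\otimes\pi_k^*A_{k-1}^{\otimes 3}$: one copy of $A_{k-1}$ twists $\mathcal O_{X_k}(1)$, the other two feed into the $\bigwedge^2$. Without this wedge-power device, your induction does not close.
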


\begin{proof}
Of course, as a pull-back of an ample line bundle, 
$$
\mathcal G_k=\pi_{0,k}^*\mathcal O_X(2\cdot 3^{k-1})
$$ 
is nef. As we have seen, $T^*_X\otimes\mathcal O_X(2)$ is globally generated as a quotient of $T^*_{\mathbb P^{n+1}}|_X\otimes\mathcal O_X(2)$, so that $\mathcal O_{X_1}(1)\otimes\pi_{0,1}^*\mathcal O_X(2)=\mathcal O_{\mathbb P(T^*_X\otimes\mathcal O_X(2))}(1)$ is nef. 

Next, we construct by induction on $k$, a nef line bundle $A_{k}\to X_{k}$ such that $\mathcal O_{X_{k+1}}(1)\otimes\pi_k^*A_{k}$ is nef. By definition, this is equivalent to say that the vector bundle $V_{k}^*\otimes A_{k}$ is nef. By what we have just seen, we can take $A_0=\mathcal O_X(2)$ on $X_0=X$. Suppose $A_0,\dots,A_{k-1}$ as been constructed. As an extension of nef vector bundles is nef, dualizing the short exact sequence (\ref{ses1}) we find
$$
0\longrightarrow\mathcal O_{X_k}(1)\longrightarrow V_k^*\longrightarrow T_{X_k/X_{k-1}}^*\longrightarrow 0,
$$ 
and so we see, twisting by $A_k$, that it suffices to select $A_k$ in such a way that both $\mathcal O_{X_k}(1)\otimes A_k$ and $T_{X_k/X_{k-1}}^*\otimes A_k$ are nef. To this aim, considering the second wedge power of the central term in (\ref{ses2}), we get an injection
$$
0\longrightarrow T_{X_k/X_{k-1}}\longrightarrow\bigwedge{}\!\!^2(\pi_k^* V_{k-1}\otimes\mathcal O_{X_k}(1))
$$
and so dualizing and twisting by $\mathcal O_{X_{k}}(2)\otimes\pi_k^* A_{k-1}^{\otimes 2}$, we find a surjection
$$
\pi_k^*\bigwedge{}\!\!^2(V_{k-1}^*\otimes A_{k-1})\longrightarrow T^*_{X_k/X_{k-1}}\otimes\mathcal O_{X_k}(2)\otimes\pi_k^* A_{k-1}^{\otimes 2}\longrightarrow 0.
$$
By induction hypothesis, $V_{k-1}^*\otimes A_{k-1}$ is nef so the quotient $T^*_{X_k/X_{k-1}}\otimes\mathcal O_{X_k}(2)\otimes\pi_k^* A_{k-1}^{\otimes 2}$ is nef, too.
In order to have the nefness of both $\mathcal O_{X_k}(1)\otimes A_k$ and $T_{X_k/X_{k-1}}^*\otimes A_k$, it is enough to select $A_k$ in such a way that $A_k\otimes\pi_k^* A_{k-1}^*$ and $A_k\otimes\mathcal O_{X_k}(-2)\otimes\pi_k^*{A_{k-1}^*}^{\otimes 2}$ are both nef: therefore we set 
$$
A_k=\mathcal O_{X_k}(2)\otimes\pi_k^* A_{k-1}^{\otimes 3}=\bigl (\mathcal O_{X_k}(1)\otimes\pi_k^* A_{k-1}\bigr)^{\otimes 2}\otimes\pi_k^* A_{k-1},
$$
which, as a product of nef line bundles, is nef and satisfies the two conditions above.
This gives $A_k$ inductively, and the resulting formula for $\mathcal O_{X_k}(1)\otimes\pi_k^*A_{k-1}$ is
$$
\begin{aligned}
\mathcal O_{X_k}(1)\otimes\pi_k^*A_{k-1} &= \mathcal L_{k}\otimes\pi_{0,k}^*\mathcal O_X(2\cdot(1+2+\cdots+2\cdot 3^{k-2})) \\
& =   \mathcal L_{k}\otimes\pi_{0,k}^*\mathcal O_X(2\cdot 3^{k-1}).
\end{aligned}
$$
The lemma is proved.
\end{proof}

We now use the above lemma to deal with general weights satisfying condition (\ref{c}).

\begin{prop}
Let $X\subset\mathbb P^{n+1}$ be a smooth projective hypersurface and $\mathcal O_X(1)$ be the hyperplane divisor on $X$. If condition (\ref{c}) holds, then $\mathcal O_{X_k}(\mathbf a)\otimes\pi_{0,k}^*\mathcal O_X(\ell)$ is nef provided that $\ell\ge 2|\mathbf a|$, where $|\mathbf a|=a_1+\cdots +a_k$.

In particular $\mathcal O_{X_k}(\mathbf a)=\bigl(\mathcal O_{X_k}(\mathbf a)\otimes\pi_{0,k}^*\mathcal O_X(2|\mathbf a|)\bigr)\otimes\pi_{0,k}^*\mathcal O_X(-2|\mathbf a|)$ and both $\mathcal O_{X_k}(\mathbf a)\otimes\pi_{0,k}^*\mathcal O_X(2|\mathbf a|)$ and $\pi_{0,k}^*\mathcal O_X(2|\mathbf a|)$ are nef.
\end{prop}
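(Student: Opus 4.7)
The plan is to reduce to Lemma~\ref{globnef} by writing any weight $\mathbf a$ satisfying~(\ref{c}) as a nonnegative $\mathbb Z$-linear combination of the ``canonical'' weights furnished by that lemma at each intermediate level $j$. For $1\le j\le k$ let $\mathbf a^{(j)}\in\mathbb N^k$ be the weight whose first $j$ entries are $(2\cdot 3^{j-2},\dots,6,2,1)$ (the weight of $\mathcal L_j$) and whose remaining $k-j$ entries are zero; a quick geometric-sum computation gives $|\mathbf a^{(j)}|=3^{j-1}$, and
\[
\mathcal O_{X_k}(\mathbf a^{(j)})\otimes\pi_{0,k}^*\mathcal O_X(2\cdot 3^{j-1})\simeq\pi_{j,k}^*\bigl(\mathcal L_j\otimes\pi_{0,j}^*\mathcal O_X(2\cdot 3^{j-1})\bigr)
\]
is nef on $X_k$ by Lemma~\ref{globnef} applied at level $j$ together with pull-back by $\pi_{j,k}$.

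First, I would solve the triangular linear system $\mathbf a=\sum_{j=1}^k\lambda_j\,\mathbf a^{(j)}$ from the last coordinate upwards. This yields $\lambda_k=a_k$ and, for $1\le i\le k-1$,
\[
\lambda_i=a_i-2\sum_{j>i}a_j.
\]
The crucial technical point is to verify $\lambda_i\ge 0$ under condition~(\ref{c}). Setting $S_i=\sum_{j>i}a_j$, I would prove $a_i\ge 2S_i$ by descending induction on $i$: the base case $S_{k-1}=a_k\le a_{k-1}/2$ is just $a_{k-1}\ge 2a_k$, and the inductive step uses $S_i=a_{i+1}+S_{i+1}\le\tfrac32 a_{i+1}$ together with $a_i\ge 3a_{i+1}$ to conclude $a_i\ge 3a_{i+1}\ge 2S_i$.

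Once the $\lambda_j\ge 0$ are in hand, tensoring the nef line bundles $[\mathcal O_{X_k}(\mathbf a^{(j)})\otimes\pi_{0,k}^*\mathcal O_X(2\cdot 3^{j-1})]^{\otimes\lambda_j}$ over $j=1,\dots,k$ yields a nef line bundle equal to $\mathcal O_{X_k}(\mathbf a)\otimes\pi_{0,k}^*\mathcal O_X\bigl(2\sum_{j}3^{j-1}\lambda_j\bigr)$. The total twist collapses to $\mathcal O_X(2|\mathbf a|)$ since $|\mathbf a|=\sum_j\lambda_j|\mathbf a^{(j)}|=\sum_j 3^{j-1}\lambda_j$, so $\mathcal O_{X_k}(\mathbf a)\otimes\pi_{0,k}^*\mathcal O_X(2|\mathbf a|)$ is nef; the general case $\ell\ge 2|\mathbf a|$ follows by absorbing the extra nef factor $\pi_{0,k}^*\mathcal O_X(\ell-2|\mathbf a|)$, and the ``in particular'' statement is then the tautological factorization $\mathcal O_{X_k}(\mathbf a)=[\mathcal O_{X_k}(\mathbf a)\otimes\pi_{0,k}^*\mathcal O_X(2|\mathbf a|)]\otimes\pi_{0,k}^*\mathcal O_X(-2|\mathbf a|)$.

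The step I expect to be the main obstacle is precisely the nonnegativity of the $\lambda_i$, which is where condition~(\ref{c}) is consumed. A more naive induction on $k$ via $\pi_k\colon X_k\to X_{k-1}$ applied to the truncated weight $(a_1,\dots,a_{k-1})$ does not close cleanly, because after invoking the inductive hypothesis one is still left with an unabsorbed factor $\mathcal O_{X_k}(a_k)$ that is only \emph{relatively} nef over $X_{k-1}$; the decomposition above has the advantage of absorbing all the tautological contributions $\mathcal O_{X_j}(1)$ simultaneously, at the cost of a purely combinatorial inequality that is then handled by the elementary descending induction described.
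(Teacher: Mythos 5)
Your proof is correct, and it is in essence the same decomposition as the paper's, but presented in closed form rather than as a recursion. The paper argues by induction on $k$: it factors out $a_k$ copies of the canonical nef bundle $\mathcal L_k\otimes\pi_{0,k}^*\mathcal O_X(2\cdot 3^{k-1})$ at the top level, reducing to a weight of length $k-1$ on $X_{k-1}$, and then must re-verify condition~(\ref{c}) for the truncated weight at each step. You instead solve the full triangular linear system $\mathbf a=\sum_j\lambda_j\mathbf a^{(j)}$ all at once, arriving at the tidy closed formula $\lambda_i=a_i-2\sum_{j>i}a_j$ (and $\lambda_k=a_k$); the identity $\sum_{j>i}3^{\,j-1-i}\lambda_j=\sum_{j>i}a_j$, which makes this formula work, is a pleasant telescoping that is worth writing out. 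Your presentation buys two things: it replaces the level-by-level re-verification of~(\ref{c}) with a single transparent inequality $a_i\ge 2\sum_{j>i}a_j$, and it quietly sidesteps the boundary case $a_{k-1}=2a_k$, where the paper's recursion produces a truncated weight with a zero last entry that strictly speaking falls outside the strict inequality in~(\ref{c}). One small remark on your closing paragraph: the ``naive'' truncation you caution against is a straw man --- the paper does not truncate $(a_1,\dots,a_{k-1})$ directly, but first peels off $a_k$ copies of the canonical bundle precisely so as to absorb the $\mathcal O_{X_k}(a_k)$ factor; so the failure mode you describe is not present in the paper's argument.
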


\begin{proof}
By Lemma \ref{globnef}, we know that the line bundle 
$$
\mathcal O_{X_k}(2\cdot 3^{k-2},2\cdot 3^{k-3},\dots,6,2,1)\otimes\pi_{0,k}^*\mathcal O_X(\ell)
$$ 
is nef as soon as $\ell\ge 2\cdot(1+2+6+\cdots+2\cdot 3^{k-2})=2\cdot 3^{k-1}$. Now we take $\mathbf a=(a_1,\dots,a_k)\in\mathbb N^{k}$ such that $a_1\ge 3a_2,\dots,a_{k-2}\ge 3a_{k-1},a_{k-1}\ge 2a_k>0$ and we proceed by induction, the case $k=1$ being obvious. Write
$$
\begin{aligned}
\mathcal O_{X_k}&(a_1,a_2,\dots,a_k)\otimes\pi_{0,k}^*\mathcal O_X(2\cdot(a_1+\cdots+a_k)) \\
&=\bigr(\mathcal O_{X_k}(2\cdot 3^{k-2},\dots,6,2,1)\otimes\pi_{0,k}^*\mathcal O_X(2\cdot 3^{k-1})\bigl)^{\otimes a_k} \\
&\quad\otimes\pi_{k}^*\biggl(\mathcal O_{X_{k-1}}(a_1-2\cdot 3^{k-2}a_k,\dots,a_{k-2}-6a_k,a_{k-1}-2a_k) \\
&\quad\otimes\pi_{0,k-1}^*\mathcal O_X\bigl(2\cdot(a_1+\dots+a_k-3^{k-1}a_k)\bigr)\biggr).
\end{aligned}
$$
Therefore, we have to prove that 
$$
\begin{aligned}
&\mathcal O_{X_{k-1}}(a_1-2\cdot 3^{k-2}a_k,\dots,a_{k-2}-6a_k,a_{k-1}-2a_k) \\
&\quad\quad\otimes\pi_{0,k-1}^*\mathcal O_X\bigl(2\cdot(a_1+\dots+a_k-3^{k-1}a_k)\bigr)
\end{aligned}
$$
is nef.
Our chain of inequalities gives, for $1\le j\le k-2$, $a_j\ge 3^{k-j-1}a_k$ and $a_{k-1}\ge 2a_k$. Thus, condition (\ref{c}) is satisfied by the weights of
$$
\mathcal O_{X_{k-1}}(a_1-2\cdot 3^{k-2}a_k,\dots,a_{k-2}-6a_k,a_{k-1}-2a_k)
$$
and $2\cdot(a_1+\dots+a_k-3^{k-1}a_k)$ is exactly twice the sum of these weights. 
\end{proof}

\begin{rem}\label{rem1}
At this point it should be clear that  to prove Theorem \ref{existence} is sufficient to show the existence of an $n$-tuple $(a_1,\dots,a_n)$ satisfying condition (\ref{c}) and such that
\begin{equation}\label{*}
\begin{aligned}
\bigl(\mathcal O_{X_n}&(\mathbf a)\otimes\pi_{0,n}^*\mathcal O_X(2|\mathbf a|)\bigr)^{n^2} \\
&-n^2\bigl(\mathcal O_{X_n}(\mathbf a)\otimes\pi_{0,n}^*\mathcal O_X(2|\mathbf a|)\bigr)^{n^2-1}\cdot\pi_{0,n}^*\mathcal O_X(2|\mathbf a|)>0
\end{aligned}
\end{equation}
for $d=\deg X$ large enough, where $n^2=n+n(n-1)=\dim X_n$.

In fact, this would show the bigness of $\mathcal O_{X_n}(\mathbf a)\hookrightarrow\mathcal O_{X_n}(|\mathbf a|)$ and so the bigness of $\mathcal O_{X_n}(1)$.
\end{rem}

Now, we will explain how to compute this intersection number using the inductive structure of Demailly's tower.
\subsection{Cohomology ring of $X_k$}

Denote by $c_\bullet(E)$ the total Chern class of a vector bundle $E$. The short exact sequences (\ref{ses1}) and (\ref{ses2}) give us, for each $k>0$, the following formulae:
$$
c_\bullet(V_k)=c_\bullet(T_{X_k/X_{k-1}}) c_\bullet(\mathcal O_{X_k}(-1))
$$
and
$$
c_\bullet(\pi_k^*V_{k-1}\otimes\mathcal O_{X_k}(1))=c_\bullet(T_{X_k/X_{k-1}}),
$$
so that
\begin{equation}\label{chern1}
c_\bullet(V_k)=c_\bullet(\mathcal O_{X_k}(-1))c_\bullet(\pi_k^*V_{k-1}\otimes\mathcal O_{X_k}(1)).
\end{equation}
Let us call $u_j=c_1(\mathcal O_{X_j}(1))$ and $c_l^{[j]}=c_l(V_j)$. With these notations, (\ref{chern1}) becomes
\begin{equation}\label{chern2}
c_l^{[k]}=\sum_{s=0}^l\left[\binom{n-s}{l-s}-\binom{n-s}{l-s-1}\right]u_k^{l-s}\cdot\pi_k^*c_s^{[k-1]},\quad 1\le l\le r.
\end{equation}
Since $X_j$ is the projectivized bundle of line of $V_{j-1}$, we also have the polynomial relations
\begin{equation}\label{chern3}
u_j^r+\pi_j^*c_1^{[j-1]}\cdot u_j^{r-1}+\cdots+\pi_j^*c_{r-1}^{[j-1]}\cdot u_j+\pi_j^*c_{r}^{[j-1]}=0,\quad 1\le j\le k.
\end{equation}
After all, the cohomology ring of $X_k$ is defined in terms of generators and relations as the polynomial algebra $H^\bullet(X)[u_1,\dots,u_k]$ with the relations (\ref{chern3}) in which, utilizing recursively (\ref{chern2}), we have that $c_l^{[j]}$ is a polynomial with integral coefficients in the variables $u_1,\dots,u_j,c_1(V),\dots,c_l(V)$.

In particular, for the first Chern class of $V_k$, we obtain the very simple expression
\begin{equation}\label{c1}
c_1^{[k]}=\pi_{0,k}^*c_1(V)+(r-1)\sum_{s=1}^k\pi_{s,k}^*u_s.
\end{equation}

\subsection{Evaluation in terms of the degree}

For $X\subset\mathbb P^{n+1}$ a smooth projective hypersurface of degree $\deg X=d$, we have a short exact sequence
$$
0\longrightarrow T_X\longrightarrow T_{\mathbb P^{n+1}}|_X\longrightarrow \mathcal O_X(d)\longrightarrow 0;
$$
so we get the following relation for the total Chern class of $X$:
$$
(1+h)^{n+2}=(1+d\,h)c_\bullet(X),
$$
where $h=c_1(\mathcal O_{\mathbb P^{n+1}}(1))$ and $(1+h)^{n+2}$ is the total Chern class of $\mathbb P^{n+1}$. Thus, an easy computation shows that
$$
c_j(X)=c_j(T_X)=(-1)^jh^j\sum_{k=0}^j(-1)^k\binom{n+2}{k}d^{j-k},
$$
where $h\in H^2(X,\mathbb Z)$ is the hyperplane class. In particular
$$
c_j(X)=h^j\bigl((-1)^jd^j+o(d^j)\bigr),\quad j=1,\dots,n,
$$
and $o(d^j)$ is a polynomial in $d$ of degree at most $j-1$.

\begin{prop}\label{nef}
The quantities
$$
\begin{aligned}
&\bigl(\mathcal O_{X_k}(\mathbf a)\otimes\pi_{0,k}^*\mathcal O_X(2|\mathbf a|)\bigr)^{n+k(n-1)} \\
& -[n+k(n-1)]\bigl(\mathcal O_{X_k}(\mathbf a)\otimes\pi_{0,k}^*\mathcal O_X(2|\mathbf a|)\bigr)^{n+k(n-1)-1}\cdot\pi_{0,k}^*\mathcal O_X(2|\mathbf a|)
\end{aligned}
$$
and
$$
\mathcal O_{X_k}(\mathbf a)^{n+k(n-1)}
$$
are both polynomials in the variable $d$ with coefficients in $\mathbb Z[a_1,\dots,a_k]$ of degree at most $n+1$ and the coefficients of $d^{n+1}$ of the two expressions are equal.

Moreover this coefficient is a homogeneous polynomial in $a_1,\dots,a_k$ of degree $n+k(n-1)$ or identically zero.
\end{prop}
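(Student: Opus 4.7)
The plan is to work in the cohomology ring of $X_k$, which by (\ref{chern2})--(\ref{chern3}) is a quotient of $H^\bullet(X,\mathbb Z)[u_1,\dots,u_k]$ whose defining relations express each $u_j^n$ as a polynomial in $u_1,\dots,u_{j-1}$ and the Chern classes $c_l(T_X)$ alone; the coefficients of these relations are integers, so no $d$-dependence enters beyond that already carried by $c_l(T_X)$. Set $N := n+k(n-1) = \dim X_k$ and
$$
\alpha := c_1(\mathcal O_{X_k}(\mathbf a)) = \sum_{j=1}^k a_j\,\pi_{j,k}^* u_j, \qquad \beta := c_1(\pi_{0,k}^*\mathcal O_X(2|\mathbf a|)) = 2|\mathbf a|\,\pi_{0,k}^* h.
$$
Both $\alpha$ and $\beta$ are $\mathbb Z$-linear in the weights $a_1,\dots,a_k$, so every top intersection on $X_k$ of total degree $N$ in $\alpha,\beta$ is automatically a polynomial in $a_1,\dots,a_k$ homogeneous of degree $N$; this settles the homogeneity claim as soon as the bounds on the $d$-degree are in place.

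The algebraic identity
$$
(\alpha+\beta)^N - N(\alpha+\beta)^{N-1}\beta \;=\; (\alpha+\beta)^{N-1}\bigl(\alpha - (N-1)\beta\bigr) \;=\; \alpha^N - \sum_{j=2}^N (j-1)\binom{N}{j}\,\alpha^{N-j}\beta^j
$$
reduces the comparison of the two quantities in the statement to the control of $\int_{X_k}\alpha^{N-j}\beta^j$ for $j \ge 2$. Since $\beta = 2|\mathbf a|\,\pi_{0,k}^* h$, the projection formula yields
$$
\int_{X_k}\alpha^{N-j}\beta^j \;=\; (2|\mathbf a|)^j\int_X (\pi_{0,k})_*(\alpha^{N-j})\cdot h^j,
$$
where $(\pi_{0,k})_*(\alpha^{N-j})$ is a class on $X$ of codimension $n-j$ (vanishing for $j>n$).

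The core estimate is obtained by iterated application of the Segre-class pushforward $(\pi_j)_*(u_j^{n-1+\ell}) = s_\ell(V_{j-1})$, combined with the recursive formulas (\ref{chern2}) expressing $c_l(V_{j-1})$ in terms of $u_1,\dots,u_{j-1}$ and $c_l(T_X)$. After performing all $k$ pushforwards, the class $(\pi_{0,k})_*(\alpha^{N-j})$ is written as a polynomial in $h$ and $c_1(T_X),\dots,c_n(T_X)$ of total codimension $n-j$ on $X$. Substituting $c_l(T_X) = h^l P_l(d)$ with $\deg P_l = l$ and leading coefficient $(-1)^l$, and using $h^n = d$ together with $h^{n+1}=0$ on $X$, the intersection becomes $d$ times a polynomial in $d$ of degree at most $n-j$, so $\int_{X_k}\alpha^{N-j}\beta^j$ has $d$-degree at most $n-j+1$, with coefficients in $\mathbb Z[a_1,\dots,a_k]$.

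Specialising to $j=0$ bounds the $d$-degree of $\int_{X_k}\alpha^N$ by $n+1$; the same reasoning applied to each summand of $(\alpha+\beta)^N - N(\alpha+\beta)^{N-1}\beta$ expanded in powers of $\beta$ gives the identical bound for the first expression. For $j \ge 2$ the bound drops to $n-1$, so none of the correction terms in the identity above contributes to the coefficient of $d^{n+1}$, which is therefore the same for the two expressions; by the homogeneity observation of the first paragraph it is either identically zero or homogeneous of degree $N=n+k(n-1)$ in $a_1,\dots,a_k$. The only real technical obstacle is the bookkeeping in the iterated pushforward: one must verify that no power of $d$ can enter through the relations of $H^\bullet(X_k)$ themselves, which is precisely the content of (\ref{chern2})--(\ref{chern3}), the only source of $d$-dependence being the substitution $c_l(T_X) = h^l P_l(d)$ performed at the very last step on $X$.
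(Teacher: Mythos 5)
Your proof is correct and follows essentially the same route as the paper's: both arguments isolate the $d$-dependence in the Chern classes $c_l(T_X)$, observe that the correction terms distinguishing the two quantities carry positive powers of $\beta=2|\mathbf a|\,\pi_{0,k}^*h$, and conclude that each extra factor of $h$ drops the $d$-degree by at least one after substituting $c_l(T_X)=h^lP_l(d)$ and $h^n=d$. The only cosmetic difference is that you make the binomial expansion $(\alpha+\beta)^N - N(\alpha+\beta)^{N-1}\beta = \alpha^N - \sum_{j\ge 2}(j-1)\binom{N}{j}\alpha^{N-j}\beta^j$ explicit and phrase the elimination via Segre-class pushforwards, whereas the paper keeps the expansion implicit and eliminates directly with relations (\ref{chern2})--(\ref{chern3}); these are the same computation.
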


\begin{proof}
Set $\mathcal F_k(\mathbf a)=\mathcal O_{X_k}(\mathbf a)\otimes\pi_{0,k}^*\mathcal O_X(2|\mathbf a|)$ and $\mathcal G_k(\mathbf a)=\pi_{0,k}^*\mathcal O_X(2|\mathbf a|)$. Then we have
$$
\begin{aligned}
\mathcal F_k(\mathbf a)&^{n+k(n-1)}+[n+k(n-1)]\mathcal F_k(\mathbf a)^{n+k(n-1)-1}\cdot\mathcal G_k(\mathbf a)\\
&=\mathcal O_{X_k}(\mathbf a)^{n+k(n-1)}+\text{\rm terms which have $\mathcal G_k(\mathbf a)$ as a factor}.
\end{aligned}
$$
Now we use relations (\ref{chern2}) and (\ref{chern3}) to observe that
$$
\mathcal O_{X_k}(\mathbf a)^{n+k(n-1)}=\sum_{j_1+2j_2+\cdots+nj_n=n}P_{j_1\cdots j_n}^{[k]}(\mathbf a)\,c_1(X)^{j_1}\cdots c_n(X)^{j_n},
$$
where the $P_{j_1\cdots j_n}^{[k]}(\mathbf a)$'s are homogeneous polynomial of degree $n+k(n-1)$ in the variables $a_1,\dots,a_k$ (or possibly identically zero). Thus, substituting the $c_j(X)$'s with their expression in terms of the degree, we get
\begin{multline*}
\mathcal O_{X_k}(\mathbf a)^{n+k(n-1)} \\ =(-1)^n\left(\sum_{j_1+2j_2+\cdots+nj_n=n}P_{j_1\cdots j_n}^{[k]}(\mathbf a)\right)d^{n+1}+o(d^{n+1}),
\end{multline*}
since $h^n=d$. On the other hand, utilizing relations (\ref{chern2}) and (\ref{chern3}) on terms which have $\mathcal G_k(\mathbf a)$ as a factor, gives something of the form
$$
\sum_{\genfrac{}{}{0pt}{}{j_1+2j_2+\cdots+nj_n+i=n}{i>0}} Q_{j_1\cdots j_ni}^{[k]}(\mathbf a)\,h^i\cdot c_1(X)^{j_1}\cdots c_n(X)^{j_n},
$$
since $c_1(\mathcal G_k(\mathbf a))=|\mathbf a|h$ and $\mathcal G_k(\mathbf a)$ is always a factor. Substituting the $c_j(X)$'s with their expression in terms of the degree, we get here
$$
h^i\cdot c_1(X)^{j_1}\cdots c_n(X)^{j_n}=(-1)^{j_1+\cdots+j_n}\underbrace{h^n}_{=d}\cdot d^{j_1+\dots+j_n}=o(d^{n+1}).
$$
\end{proof}

At this point, we need an elementary lemma to deal with \lq\lq generic\rq\rq{} weights.

\begin{lem}\label{Zar}
Let $\mathfrak C\subset\mathbb R^k$ be a cone with nonempty interior. Let $\mathbb Z^k\subset\mathbb R^k$ be the canonical lattice in $\mathbb R^k$. Then $\mathbb Z^k\cap\mathfrak C$ is Zariski dense in $\mathbb R^k$.
\end{lem}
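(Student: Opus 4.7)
The plan is to exploit the scaling invariance of $\mathfrak{C}$ to produce arbitrarily large axis-aligned grids of lattice points inside $\mathfrak{C}$, and then invoke the elementary fact that a polynomial vanishing on a sufficiently large product grid must be identically zero.

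First, since $\mathfrak{C}$ has nonempty interior, I can choose $x_0 \in \mathfrak{C}$ and $r>0$ such that the open ball $B(x_0,r)$ is contained in $\mathfrak{C}$. Because $\mathfrak{C}$ is a cone, one has $t\cdot B(x_0,r)=B(tx_0,tr)\subset\mathfrak{C}$ for every $t>0$. Each such ball contains the axis-aligned box
$$
R_t=\prod_{j=1}^k\bigl[tx_{0,j}-tr/\sqrt{k},\;tx_{0,j}+tr/\sqrt{k}\bigr],
$$
whose side lengths $2tr/\sqrt{k}$ grow linearly with $t$.

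Next, suppose for contradiction that a nonzero polynomial $P\in\mathbb{R}[X_1,\dots,X_k]$ of degree $d$ vanishes on $\mathbb{Z}^k\cap\mathfrak{C}$. For $t$ large enough (concretely, as soon as $2tr/\sqrt{k}\ge d+1$) the box $R_t\subset\mathfrak{C}$ contains a product set
$$
S_t=\{n_{1,1},\dots,n_{1,d+1}\}\times\cdots\times\{n_{k,1},\dots,n_{k,d+1}\}\subset\mathbb{Z}^k,
$$
with $d+1$ distinct integers in each factor. Then $P$ vanishes on $S_t$, and a straightforward induction on $k$ — freezing the last $k-1$ coordinates and applying the one-variable fact that a degree-$d$ polynomial with $d+1$ roots vanishes — forces $P\equiv 0$, a contradiction. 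Hence no such $P$ exists and $\mathbb{Z}^k\cap\mathfrak{C}$ is Zariski dense in $\mathbb{R}^k$.

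I do not expect any real obstacle: the only step that requires even a moment's thought is the geometric observation that a cone with nonempty interior contains arbitrarily large axis-aligned boxes, and this is immediate from the scaling property of the cone combined with the fact that every Euclidean ball contains an inscribed cube.
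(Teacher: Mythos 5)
Your proof is correct and follows essentially the same route as the paper's: exhibit arbitrarily large axis-aligned cubes inside the cone, extract a product grid of $(d+1)^k$ lattice points, and apply the standard induction-on-dimension argument that a degree-$d$ polynomial vanishing on such a grid is identically zero. Your version merely spells out the scaling-invariance step (that the cone contains balls $B(tx_0,tr)$, hence inscribed cubes of side growing linearly in $t$) which the paper states without elaboration.
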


\begin{proof}
Since $\mathfrak C$ is a cone with nonempty interior, it contains cubes of arbitrary large edges, so $\mathbb Z^k\cap\mathfrak C$ contains a product of integral intervals $\prod[\alpha_i,\beta_i]$ with $\beta_i-\alpha_i>N$. By using induction on dimension, this implies that a polynomial $P$ of degree at most $N$ vanishing on $\mathbb Z^k\cap\mathfrak C$ must be identically zero. As $N$ can be taken arbitrary large, we conclude that  $\mathbb Z^k\cap\mathfrak C$ is Zariski dense.\end{proof}

\begin{rem}\label{effa}
This lemma is in fact the elementary key for the effective estimates on the degree. This will be explained in the next sections.
\end{rem}

\begin{cor}\label{bigness1}
If the top self-intersection $\mathcal O_{X_k}(\mathbf a)^{n+k(n-1)}$ has degree exac\-tly equal to $n+1$ in $d$ for some choice of $\mathbf a$, then $\mathcal O_{X_k}(m)\otimes\pi_{0,k}^*A^{-1}$ has a global section for all line bundle $A\to X$ and for all $d,m$ sufficiently large.
\end{cor}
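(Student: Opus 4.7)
The plan is to reduce the corollary to an application of the strong algebraic holomorphic Morse inequalities via the elementary Zariski-density Lemma \ref{Zar}. By Remark \ref{rem1}, it is enough to produce an admissible weight $\mathbf a\in\mathbb N^n$ (i.e.\ one satisfying condition (\ref{c})) such that the inequality (\ref{*}) holds; the strong Morse inequalities applied to the decomposition $\mathcal O_{X_n}(\mathbf a)=\mathcal F_n(\mathbf a)\otimes\mathcal G_n(\mathbf a)^{-1}$ supplied by the previous Proposition (a difference of two globally nef line bundles), twisted by the auxiliary bundle $\pi_{0,n}^*A^{-1}$, will then yield non-zero sections of $\mathcal O_{X_n}(m\mathbf a)\otimes\pi_{0,n}^*A^{-1}$ for every $m\gg 0$. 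Composing with the canonical injection $\mathcal O_{X_n}(m\mathbf a)\hookrightarrow\mathcal O_{X_n}(m|\mathbf a|)$ gives the sections of $\mathcal O_{X_n}(m')\otimes\pi_{0,n}^*A^{-1}$ in the statement, at least for $m'$ in the arithmetic progression $|\mathbf a|\mathbb N$; a standard twisting trick (absorbing a bounded residue $0\le r<|\mathbf a|$ into a larger ample line bundle) extends the conclusion to all sufficiently large $m'$.

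By Proposition \ref{nef}, the left-hand side of (\ref{*}) and $\mathcal O_{X_n}(\mathbf a)^{n^2}$ are polynomials in $d$ of degree at most $n+1$ sharing the same leading coefficient, which is a homogeneous polynomial $P(\mathbf a)$ of degree $n^2$ in the weights. The hypothesis of the corollary states precisely that $P\not\equiv 0$. Here Lemma \ref{Zar} is applied to the admissibility cone
$$
\mathfrak C=\{\mathbf a\in\mathbb R^n\mid a_1\ge 3a_2,\ldots,a_{n-2}\ge 3a_{n-1},\,a_{n-1}\ge 2a_n>0\},
$$
which has non-empty interior; thus $\mathfrak C\cap\mathbb Z^n$ is Zariski dense in $\mathbb R^n$, and since $P\not\equiv 0$ I can pick a weight $\mathbf a_0\in\mathfrak C\cap\mathbb Z^n$ with $P(\mathbf a_0)\ne 0$.

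Once such $\mathbf a_0$ is fixed, both sides of (\ref{*}) behave like $P(\mathbf a_0)\,d^{n+1}$ plus lower-order terms, so for every $d$ sufficiently large (\ref{*}) holds with strict inequality. The strong Morse inequalities then provide the required sections and, together with the tower-injection argument above, prove the corollary for $d,m\gg 0$.

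The main obstacle I foresee is ensuring that the sign of $P(\mathbf a_0)$ is positive, not merely non-zero: the Zariski-density argument alone only excludes vanishing of the leading coefficient. To upgrade to positivity one must exploit the nefness of $\mathcal F_n(\mathbf a_0)=\mathcal O_{X_n}(\mathbf a_0)\otimes\pi_{0,n}^*\mathcal O_X(2|\mathbf a_0|)$: its top self-intersection is $\ge 0$ and, by Proposition \ref{nef}, has leading $d$-coefficient equal to $P(\mathbf a_0)$, forcing $P(\mathbf a_0)\ge 0$, hence $P(\mathbf a_0)>0$ at the point chosen above. All remaining steps—verifying that the residual tower-injection and twisting arguments do not disturb the effectivity in $d$—are routine.
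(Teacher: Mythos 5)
Your proposal is correct and follows essentially the same route as the paper: Lemma \ref{Zar} to produce an admissible integral weight with nonvanishing leading $d$-coefficient, nefness of $\mathcal O_{X_k}(\mathbf a)\otimes\pi_{0,k}^*\mathcal O_X(2|\mathbf a|)$ to promote nonvanishing to strict positivity, and then the algebraic holomorphic Morse inequalities together with the injection $\mathcal O_{X_k}(\mathbf a)\hookrightarrow\mathcal O_{X_k}(|\mathbf a|)$. One cosmetic point: you write $k=n$ throughout, whereas the corollary (and its use in Corollary \ref{bigness2}) is stated for general $k$, but your argument is uniform in $k$ since Proposition \ref{nef}, Lemma \ref{globnef} and Lemma \ref{Zar} all apply verbatim.
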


\begin{proof}
The real $k$-tuples which satisfy condition (\ref{c}), form a cone with non-empty interior in $\mathbb R^k$.
Thus, by Lemma \ref{Zar}, there exists an integral $\mathbf a'$ satisfying condition (\ref{c}) and such that $\mathcal O_{X_k}(\mathbf a')^{n+k(n-1)}$ has degree exactly $n+1$ in $d$. For reasons similar to those in the proof of Proposition \ref{nef}, the coefficient of degree $n+1$ in $d$ of $\mathcal O_{X_k}(\mathbf a')^{n+k(n-1)}$ and $\bigl(\mathcal O_{X_k}(\mathbf a')\otimes\pi_{0,k}^*\mathcal O_X(2|\mathbf a'|)\bigr)^{n+k(n-1)}$ are the same; the second one being nef, this coefficient must be positive.

Now, by Proposition \ref{nef}, this coefficient is the same as the coefficient of degree $n+1$ in $d$ of
\begin{multline*}
\bigl(\mathcal O_{X_k}(\mathbf a')\otimes\pi_{0,k}^*\mathcal O_X(2|\mathbf a'|)\bigr)^{n+k(n-1)} \\
 -[n+k(n-1)]\bigl(\mathcal O_{X_k}(\mathbf a')\otimes\pi_{0,k}^*\mathcal O_X(2|\mathbf a'|)\bigr)^{n+k(n-1)-1}\cdot\pi_{0,k}^*\mathcal O_X(2|\mathbf a'|).
\end{multline*}
But then this last quantity is positive for $d$ large enough, and the corollary follows by an application of algebraic holomorphic Morse inequalities.
\end{proof}

\begin{cor}\label{bigness2}
For $k<n$, the coefficient of $d^{n+1}$ in the expression of 
$$
\mathcal O_{X_k}(\mathbf a)^{n+k(n-1)}
$$ 
is identically zero.
\end{cor}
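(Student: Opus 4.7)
The plan is to argue by contradiction, running the bigness criterion of Corollary~\ref{bigness1} backwards and using Diverio's vanishing theorem (Theorem~\ref{vanishing}) to rule out the jet differentials that a non-vanishing leading coefficient would produce. In other words, the statement is the numerical shadow, on the Demailly tower, of the fact that smooth projective hypersurfaces carry no invariant jet differentials of order $k<n$.

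Suppose for contradiction that for some $k<n$ the coefficient of $d^{n+1}$ in $\mathcal O_{X_k}(\mathbf a)^{n+k(n-1)}$ is not identically zero as an element of $\mathbb Z[a_1,\dots,a_k]$. The tuples $\mathbf a\in\mathbb R^k$ satisfying condition~(\ref{c}) form a cone with nonempty interior, so Lemma~\ref{Zar} produces an integral $\mathbf a\in\mathbb N^k$ satisfying~(\ref{c}) at which this leading coefficient remains non-zero; for such a weight, $\mathcal O_{X_k}(\mathbf a)^{n+k(n-1)}$ is a polynomial in $d$ of degree exactly $n+1$.

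At this point the argument of Corollary~\ref{bigness1} applies verbatim: by Proposition~\ref{nef} the Morse-theoretic difference
$$
\mathcal F_k(\mathbf a)^{n+k(n-1)} - [n+k(n-1)]\,\mathcal F_k(\mathbf a)^{n+k(n-1)-1}\cdot\mathcal G_k(\mathbf a)
$$
has the same coefficient of $d^{n+1}$, which is therefore positive for $d\gg 0$ (both $\mathcal F_k(\mathbf a)$ and $\mathcal G_k(\mathbf a)$ being nef, the sign is forced). The algebraic holomorphic Morse inequalities then furnish non-zero sections of $\mathcal O_{X_k}(m)\otimes\pi_{0,k}^{*}A^{-1}$ for every ample line bundle $A\to X$ and all $d,m$ sufficiently large. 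Choosing $A=\mathcal O_X(1)$ and invoking the direct image identification of Theorem~\ref{di} together with the projection formula, one obtains a non-zero section of $E_{k,m}T_X^{*}\otimes\mathcal O_X(-1)$; multiplying by any hyperplane section, which exists since $X\subset\mathbb P^{n+1}$, yields a non-zero element of $H^0(X,E_{k,m}T_X^{*})$.

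This contradicts Theorem~\ref{vanishing}, which asserts that $H^0(X,E_{k,m}T_X^{*})=0$ on a smooth projective hypersurface as soon as $0<k<n$. The key step here is the very last one: the corollary is in fact a numerical manifestation of Diverio's vanishing, transported along the chain of implications \emph{leading coefficient non-zero $\Rightarrow$ bigness of $\mathcal O_{X_k}(\mathbf a)$ $\Rightarrow$ existence of invariant jet differentials}. The only genuine difficulty I foresee is making sure each link in this chain is available in the range $k<n$ without modification; since Proposition~\ref{nef}, Lemma~\ref{Zar} and Theorem~\ref{di} make no restriction on the relation between $k$ and $n=\dim X$, there is in fact nothing more to check beyond invoking the vanishing theorem as a black box.
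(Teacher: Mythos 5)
Your proof is correct and takes essentially the same approach as the paper: the paper's proof is a one-sentence argument by contradiction, invoking Corollary~\ref{bigness1} to produce global sections of $\mathcal O_{X_k}(m)$ for $m$ large and then contradicting Theorem~\ref{vanishing}. You spell out the intermediate steps (Lemma~\ref{Zar}, Proposition~\ref{nef}, the direct image identification, untwisting by a hyperplane section), but the underlying chain of implications is identical.
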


\begin{proof}
Otherwise, we would have global sections of $\mathcal O_{X_k}(m)$ for $m$ large and $k<n$, which is impossible by Theorem \ref{vanishing}.
\end{proof}

\subsection{Bigness of $\mathcal O_{X_n}(1)$} Thanks to the results of the previous subsection, to show the existence of a global section of $\mathcal O_{X_n}(m)\otimes\pi_{0,n}^*A^{-1}$ for $m$ and $d$ large, we just need to show that $\mathcal O_{X_n}(\mathbf a)^{n^2}$ has degree exactly $n+1$ in $d$ for some $n$-tuple $(a_1,\dots,a_n)$.

The multinomial theorem gives
\begin{multline*}
(a_1\pi_{1,k}^*u_1+\cdots+a_ku_k)^{n+k(n-1)}\\ =\sum_{j_1+\dots+j_k=n+k(n-1)}\frac{(n+k(n-1))!}{j_1!\cdots j_k!}\,a_1^{j_1}\cdots a_k^{j_k}\, \pi_{1,k}^*u_1^{j_1}\cdots u_k^{j_k}.
\end{multline*}

We need two technical lemmas.

\begin{lem}\label{cruc1}
The coefficient of degree $n+1$ in $d$ of the two following intersections is zero:
\begin{itemize}
\item[$\bullet$] $\pi_{1,k}^*u_1^{j_1}\cdot\pi_{2,k}^*u_2^{j_2}\cdots u_k^{j_k}$ for all $1\le k\le n-1$ and $j_1+\dots+j_k=n+k(n-1)$
\item[$\bullet$] $\pi_{1,n-i-1}^*u_1^{j_1}\cdot\pi_{2,n-i-1}^*u_2^{j_2}\cdots u_{n-i-1}^{j_{n-i-1}}\cdot\pi_{0,n-i-1}^*c_1(X)^i$ for all $1\le i\le n-2$ and $j_1+\dots+j_{n-i-1}=(n-i-1)n+1$. 
\end{itemize}
\end{lem}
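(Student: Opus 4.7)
The plan for both bullets is to reduce them to Corollary \ref{bigness2}, which itself follows from Theorem \ref{vanishing} via the algebraic Morse inequalities, combined with the multinomial expansion of top self-intersections and the Zariski density Lemma \ref{Zar}.

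\textbf{First bullet.} I would expand the top self-intersection multinomially:
\[
\mathcal{O}_{X_k}(\mathbf a)^{n+k(n-1)} = \sum_{j_1+\cdots+j_k = n+k(n-1)} \binom{n+k(n-1)}{j_1,\ldots,j_k}\, a_1^{j_1}\cdots a_k^{j_k}\, \pi_{1,k}^* u_1^{j_1}\cdots u_k^{j_k}.
\]
Corollary \ref{bigness2} asserts that for $k<n$ the coefficient of $d^{n+1}$ in the left-hand side is identically zero as a polynomial in $\mathbf a$ on the cone (\ref{c}); by Lemma \ref{Zar} this cone of integer weights is Zariski dense in $\mathbb R^k$, so the $d^{n+1}$-coefficient vanishes identically as a polynomial in $\mathbf a \in \mathbb R^k$. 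Comparing coefficients of each monomial $a_1^{j_1}\cdots a_k^{j_k}$ (and using that the multinomial coefficients are nonzero) yields the vanishing of the $d^{n+1}$-coefficient of each individual $\pi_{1,k}^* u_1^{j_1}\cdots u_k^{j_k}$, as required.

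\textbf{Second bullet.} Set $k=n-i-1$. The idea is to absorb the ``external'' factor $c_1(X)^i$ into classes intrinsic to the jet tower via formula (\ref{c1}), which gives
\[
c_1(X) = c_1^{[k]} - (n-1)\sum_{s=1}^{k}\pi_{s,k}^* u_s.
\]
Expanding $c_1(X)^i$ multinomially, the term with $(c_1^{[k]})^0$, once multiplied by $\pi_{1,k}^* u^{j_\bullet}$, produces a sum of pure top $u$-monomials on $X_k$ of total degree $\dim X_k$, whose $d^{n+1}$-coefficients vanish by the first bullet applied at level $k<n$. For the terms with $(c_1^{[k]})^a$, $a\ge 1$, I would iteratively expand $c_1^{[k]}$ via (\ref{chern2}) as $(n-1)u_k+\pi_k^*c_1^{[k-1]}$, reducing step by step down to $c_1^{[0]}=c_1(X)$ and meanwhile using (\ref{chern3}) to bound the powers of each $u_j$ by $n-1$. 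The result is a decomposition of $\pi_{1,k}^* u^{j_\bullet}\cdot c_1(X)^i$ as a $\mathbb Z$-linear combination of pure top $u$-monomials on various $X_{k'}$ with $k'\le k<n$ (covered by the first bullet) plus correction terms carrying extra factors of $c_j(X)$; a careful degree-in-$d$ accounting — each $c_j(X)$ raising the $d$-degree by $j$ while simultaneously consuming $j$ cohomological slack — confirms that only the pure top $u$-monomial contributions can reach $d^{n+1}$, and these are ruled out by the first bullet.

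The main obstacle is the combinatorial bookkeeping in the second bullet: the interplay between (\ref{chern2}), (\ref{chern3}), and (\ref{c1}) produces many cross-terms, and one must ensure by induction on $i$ (and the decreasing ``level'' $k'$) that every such term either matches the framework of the first bullet or contributes below $d^{n+1}$. A potentially cleaner alternative is to extend Corollary \ref{bigness2} directly to the twisted bundle $\mathcal{O}_{X_k}(\mathbf a)\otimes\pi_{0,k}^*\mathcal{O}_X(a_0)$ for $a_0<0$ — using that $H^0(X,E_{k,m}T_X^*\otimes\mathcal{O}_X(ma_0))\hookrightarrow H^0(X,E_{k,m}T_X^*)=0$ via multiplication by a section of the ample line bundle $\mathcal{O}_X(-ma_0)$ — and then running the multinomial/Zariski-density argument with an additional weight attached to the hyperplane class $h$; this approach requires extra care for the nef decomposition underlying the Morse inequalities but would yield both bullets uniformly.
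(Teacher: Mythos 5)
Your argument for the first bullet is essentially the same as the paper's: Corollary \ref{bigness2} (equivalently, via Lemma \ref{Zar} and Corollary \ref{bigness1}, a contradiction with Theorem \ref{vanishing}) shows the $d^{n+1}$-coefficient of $\mathcal O_{X_k}(\mathbf a)^{\dim X_k}$ vanishes identically as a polynomial in $\mathbf a$, and since the multinomial coefficients are nonzero, comparing coefficients of the monomials $a_1^{j_1}\cdots a_k^{j_k}$ gives the vanishing termwise. This is correct.

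Your argument for the second bullet has a genuine gap, however, and the reason is that the expansion you propose is circular. Formula (\ref{c1}) states $c_1^{[k]}=\pi_{0,k}^*c_1(X)+(n-1)\sum_{s=1}^k\pi_{s,k}^*u_s$, so writing $c_1(X)=c_1^{[k]}-(n-1)\sum_s\pi_{s,k}^*u_s$ and then ``iteratively expanding $c_1^{[k]}$ via $(n-1)u_k+\pi_k^*c_1^{[k-1]}$'' all the way down to $c_1^{[0]}=c_1(X)$ simply reproduces the identity $c_1(X)=c_1(X)$: the binomial coefficients telescope and every term with $a<i$ cancels, so no genuine reduction takes place. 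Relation (\ref{c1}) is a rearrangement of a definition, not a nontrivial relation in $H^\bullet(X_k)$, and one cannot extract information from it alone. The second flaw is the ``degree-in-$d$ accounting.'' Since $c_j(X)=(-1)^jh^jd^j+o(d^j)$ and $h^n=d$, \emph{every} top monomial $c_1(X)^{j_1}\cdots c_n(X)^{j_n}$ with $j_1+2j_2+\cdots+nj_n=n$ equals $(-1)^nd^{n+1}+o(d^{n+1})$ (this is exactly the computation in the proof of Proposition \ref{nef}). So the ``correction terms carrying extra factors of $c_j(X)$'' do \emph{not} drop below $d^{n+1}$ — they contribute with the same sign and magnitude as the $c_1(X)^n$ term — and there is a genuine cancellation to prove, not a degree drop. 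Your suggested alternative (twisting by $\pi_{0,k}^*\mathcal O_X(a_0)$ with $a_0<0$) does not fill the gap either: after fiber integration the class $h^{j_0}$ contributes only $d^{n+1-j_0}$, so for $j_0>0$ the $d^{n+1}$-coefficient of each monomial is trivially zero and the Morse-inequalities vanishing constrains only the $j_0=0$ terms, which is again just the first bullet.

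The missing ingredient is the relation (\ref{chern3}), used at a level \emph{one higher} than the target, and this is what the paper does. To prove the $i$-th case one lifts to $X_{n-i}$ and starts from $\pi_{1,n-i}^*u_1^{j_1}\cdots\pi_{n-i}^*u_{n-i-1}^{j_{n-i-1}}\cdot u_{n-i}^n$ (for $i=1$ this is a pure $u$-monomial on $X_{n-1}$ covered by the first bullet; for $i\ge 2$ one works by induction, multiplying by $c_1(X)^{i-1}$). Relation (\ref{chern3}) on $X_{n-i}$ rewrites $u_{n-i}^n$ as $-\pi_{n-i}^*c_1^{[n-i-1]}\cdot u_{n-i}^{n-1}-\pi_{n-i}^*c_2^{[n-i-1]}\cdot u_{n-i}^{n-2}-\cdots$, and a degree count on $X_{n-i-1}$ kills every term with $c_l^{[n-i-1]}$ for $l\ge 2$. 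Only then does (\ref{c1}) enter, to split the surviving $c_1^{[n-i-1]}$ into $\pi_{0,n-i-1}^*c_1(X)$ and $u$-classes; integration along the fiber $\pi_{n-i}$ (which absorbs $u_{n-i}^{n-1}$) produces exactly the target $\pi_{1,n-i-1}^*u^{j_\bullet}\cdot\pi_{0,n-i-1}^*c_1(X)^i$ plus pure $u$-monomials covered by the first bullet. The crucial move — lifting one level and applying (\ref{chern3}) before descending — is precisely what your descent-only scheme omits, and without it no nontrivial identity is available.
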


\begin{proof}
The first statement is straightforward: if it fails to be true, we would find an $\mathbf a$ which satisfies the hypothesis of Corollary \ref{bigness1} for $k<n$, contradicting Corollary \ref{bigness2}.  

For the second statement we proceed by induction on $i$. Let us start with $i=1$. By the first part of the present lemma, we have that 
$$
\pi_{1,n-1}^*u_1^{j_1}\cdot\pi_{2,n-1}^*u_2^{j_2}\cdots\pi_{n-1}^*u_{n-2}^{j_{n-2}}\cdot u_{n-1}^n=o(d^{n+1}).
$$ 
On the other hand, relation (\ref{chern3}) gives
$$
\begin{aligned}
&\pi_{1,n-1}^*u_1^{j_1}\cdot\pi_{2,n-1}^*u_2^{j_2}\cdots\pi_{n-1}^*u_{n-2}^{j_{n-2}}\cdot u_{n-1}^n \\
&=\pi_{1,n-1}^*u_1^{j_1}\cdot\pi_{2,n-1}^*u_2^{j_2}\cdots\pi_{n-1}^*u_{n-2}^{j_{n-2}}\\
&\quad \cdot\bigl(-\pi_{n-1}^*c_1^{[n-2]}\cdot u_{n-1}^{n-1}-\cdots-\pi_{n-1}^*c_{n-1}^{[n-2]}\cdot u_{n-1}-\pi_{n-1}^*c_{n}^{[n-2]}\bigr) \\
&=-\pi_{1,n-1}^*u_1^{j_1}\cdot\pi_{2,n-1}^*u_2^{j_2}\cdots\pi_{n-1}^*u_{n-2}^{j_{n-2}}\cdot\pi_{n-1}^*c_1^{[n-2]}\cdot u_{n-1}^{n-1}
\end{aligned}
$$
and the second equality is true for degree reasons: 
$$
u_1^{j_1}\cdot u_2^{j_2}\cdots u_{n-2}^{j_{n-2}}\cdot c_l^{[n-2]},\quad l=2,\dots,n,
$$ 
\lq\lq lives\rq\rq{} on $X_{n-2}$ and has total degree $n+(n-2)(n-1)-1+l$ which is strictly greater than $n+(n-2)(n-1)=\dim X_{n-2}$, so that $u_1^{j_1}\cdot u_2^{j_2}\cdots u_{n-2}^{j_{n-2}}\cdot c_l^{[n-2]}=0$. Now, we use relation (\ref{c1}) and obtain in this way
$$
\begin{aligned}
&\pi_{1,n-1}^*u_1^{j_1}\cdot\pi_{2,n-1}^*u_2^{j_2}\cdots\pi_{n-1}^*u_{n-2}^{j_{n-2}}\cdot u_{n-1}^n\\
&=-\pi_{1,n-1}^*u_1^{j_1}\cdot\pi_{2,n-1}^*u_2^{j_2}\cdots\pi_{n-1}^*u_{n-2}^{j_{n-2}}\cdot u_{n-1}^{n-1}\\
&\qquad\cdot\biggl(\pi_{0,n-1}^*c_1(X)+(n-1)\sum_{s=1}^{n-2}\pi_{s,n-1}^*u_s\biggr)\\
&=-\pi_{1,n-1}^*u_1^{j_1}\cdot\pi_{2,n-1}^*u_2^{j_2}\cdots\pi_{n-1}^*u_{n-2}^{j_{n-2}}\cdot u_{n-1}^{n-1}\cdot\pi_{0,n-1}^*c_1(X)\\
&\quad-(n-1)u_{n-1}^{n-1}\cdot\sum_{s=1}^{n-2}\pi_{1,n-1}^*u_1^{j_1}\cdots\pi_{s,n-1}^*u_s^{j_s+1}\cdots\pi_{n-1}^*u_{n-2}^{j_{n-2}}.
\end{aligned}
$$ 
An integration along the fibers of $X_{n-1}\to X_{n-2}$ then gives
$$
\begin{aligned}
\pi_{1,n-2}^*u_1^{j_1}&\cdot\pi_{2,n-2}^*u_2^{j_2}\cdots u_{n-2}^{j_{n-2}}\cdot\pi_{0,n-2}^*c_1(X)\\
&=-(n-1)\cdot\sum_{s=1}^{n-2}\underbrace{\pi_{1,n-2}^*u_1^{j_1}\cdots\pi_{s,n-2}^*u_s^{j_s+1}\cdots u_{n-2}^{j_{n-2}}}_{=o(d^{n+1})\,\,\text{\rm by the first part of the lemma}}\\
&\quad\quad+o(d^{n+1})
\end{aligned}
$$
and so $\pi_{1,n-2}^*u_1^{j_1}\cdot\pi_{2,n-2}^*u_2^{j_2}\cdots u_{n-2}^{j_{n-2}}\cdot\pi_{0,n-2}^*c_1(X)=o(d^{n+1})$.

To complete the proof, observe that -- as before -- relations (\ref{chern3}) and (\ref{c1}) together with a completely similar degree argument give 
$$
\begin{aligned}
&\pi_{1,n-i}^*u_1^{j_1}\cdot\pi_{2,n-i}^*u_2^{j_2}\cdots u_{n-i}^{j_{n-i-1}}\cdot\pi_{0,n-i}^*c_1(X)^i\cdot u_{n-i}^n \\
&=-\pi_{1,n-i}^*u_1^{j_1}\cdot\pi_{2,n-i}^*u_2^{j_2}\cdots\pi_{n-i}^*u_{n-i-1}^{j_{n-i-1}}\cdot u_{n-i}^{n-1}\cdot\pi_{0,n-i}^*c_1(X)^{i+1}\\
&\quad-(n-1)u_{n-i}^{n-1}\cdot\sum_{s=1}^{n-i-1}\pi_{1,n-i}^*u_1^{j_1}\cdots\pi_{s,n-i}^*u_s^{j_s+1}\cdots\pi_{n-i}^*u_{n-i-1}^{j_{n-i-1}}.
\end{aligned}
$$
But 
$$
\pi_{1,n-i}^*u_1^{j_1}\cdot\pi_{2,n-i}^*u_2^{j_2}\cdots u_{n-i}^{j_{n-i-1}}\cdot\pi_{0,n-i}^*c_1(X)^i\cdot u_{n-i}^n=o(d^{n+1})
$$ 
by induction, and 
$$
\pi_{1,n-i}^*u_1^{j_1}\cdots\pi_{s,n-i}^*u_s^{j_s+1}\cdots\pi_{n-i}^*u_{n-i-1}^{j_{n-i-1}}=o(d^{n+1}),
$$ 
$1\le s\le n-i-1$, thanks to the first part of the lemma.
\end{proof}

\begin{lem}\label{cruc2}
The coefficient of degree $n+1$ in $d$ of 
$$
\pi_{1,n}^*u_1^n\cdot\pi_{2,n}^*u_2^n\cdots u_n^n
$$ 
is the same of the one of $(-1)^nc_1(X)^n$, that is $1$.
\end{lem}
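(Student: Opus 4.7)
The plan is to push the monomial $P_n := \pi_{1,n}^* u_1^n \cdot \pi_{2,n}^* u_2^n \cdots u_n^n$ down the Demailly tower $X_n \to X_{n-1} \to \cdots \to X_0 = X$ one stage at a time, keeping everything modulo terms of degree at most $n$ in $d$.

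For each $k$ I set $P_k := \pi_{1,k}^* u_1^n \cdots u_k^n$, so that $P_k = \pi_k^* P_{k-1} \cdot u_k^n$. Since the fiber of $\pi_k$ is $\mathbb{P}(V_{k-1,x}) \simeq \mathbb{P}^{n-1}$, one has $\pi_{k,*}(u_k^j) = 0$ for $j < n-1$ and $\pi_{k,*}(u_k^{n-1}) = 1$. Expanding $u_k^n$ via relation (\ref{chern3}) and applying the projection formula yields
$$\pi_{k,*}(P_k) = -c_1^{[k-1]} \cdot P_{k-1}.$$
Next, I would use (\ref{c1}) to split $c_1^{[k-1]} = \pi_{0,k-1}^* c_1(X) + (n-1)\sum_{s=1}^{k-1}\pi_{s,k-1}^* u_s$. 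The ``horizontal'' pieces $\pi_{s,k-1}^* u_s \cdot P_{k-1} = \pi_{1,k-1}^* u_1^n \cdots \pi_{s,k-1}^* u_s^{n+1} \cdots u_{k-1}^n$, possibly multiplied by a power of $\pi_{0,k-1}^* c_1(X)$ built up in earlier iterations, are exactly the monomials whose top coefficient in $d$ was killed in Lemma \ref{cruc1}: its first part handles the initial-stage terms (no factor of $c_1(X)$), and its second part (with exponent $i = n-k$) handles the terms produced at every subsequent stage down to $k = 2$; at $k=1$ the horizontal sum is empty.

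Iterating $n$ times one obtains, on $X$,
$$\pi_{0,n,*}(P_n) \equiv (-1)^n c_1(X)^n \pmod{o(d^{n+1})}.$$
Substituting $c_1(X) = -(d-n-2)\,h$ and $h^n = d$ gives
$$\int_X c_1(X)^n = (-1)^n (d-n-2)^n \cdot d = (-1)^n d^{n+1} + o(d^{n+1}),$$
so the coefficient of $d^{n+1}$ in $(-1)^n c_1(X)^n$ equals $1$, which is precisely the assertion of the lemma.

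The only real difficulty is bookkeeping: at the stage going from $X_k$ to $X_{k-1}$, the parasitic horizontal terms carry the factor $c_1(X)^{n-k}$ accumulated during the previous stages, so one must check that they fit Lemma \ref{cruc1}. Since its second part is restricted to $1 \le i \le n-2$, the three regimes $i = 0$ (handled by the first part), $1 \le i \le n-2$ (second part), and $i = n-1$ (empty horizontal sum on $X_0$) must patch together exactly over the $n$ stages $k = n, n-1, \ldots, 1$. This matching, while straightforward, is the only content beyond relations (\ref{chern3}) and (\ref{c1}).
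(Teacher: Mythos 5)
Your proof is correct and takes essentially the same route as the paper: both reduce each $u_k^n$ via relation (\ref{chern3}), split $c_1^{[\bullet]}$ using (\ref{c1}), and kill the parasitic horizontal terms with the two parts of Lemma \ref{cruc1}. The only difference is organizational --- you push forward one stage at a time via the projection formula, while the paper performs the same reductions on $X_n$ and integrates along the fibers once at the end --- but the content and bookkeeping (including the identification $i=n-k$) are identical.
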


\begin{proof}
An explicit computation yields:
$$
\begin{aligned}
\pi_{1,n}^*u_1^{n}\cdot \pi_{2,n}^* & u_2^n\cdots u_n^{n}\overset{(i)}= \pi_{1,n}^*u_1^{n}\cdot \pi_{2,n}^*u_2^n\cdots \pi_{n}^*u_{n-1}^{n}\bigl(-\pi_n^*c_1^{[n-1]}\cdot u_n^{n-1} \\ &\quad\quad-\cdots-\pi_n^*c_{n-1}^{[n-1]}\cdot u_n-\pi_n^*c_{n}^{[n-1]}\bigr) \\
&\overset{(ii)}=-\pi_{1,n}^*u_1^{n}\cdot \pi_{2,n}^*u_2^n\cdots\pi_{n}^* u_{n-1}^{n}\cdot u_n^{n-1} \cdot \pi_n^*c_1^{[n-1]} \\
&\overset{(iii)}=-\pi_{1,n}^*u_1^{n}\cdot \pi_{2,n}^*u_2^n\cdots \pi_{n}^*u_{n-1}^{n}\cdot u_n^{n-1} \\ &\quad\quad\cdot\pi_n^*\biggl(\pi_{0,n-1}^*c_1(X)+(n-1)\sum_{s=1}^{n-1}\pi_{s,n-1}^*u_s\biggr) \\
&\overset{(iv)}=-\pi_{1,n}^*u_1^{n}\cdot \pi_{2,n}^*u_2^n\cdots \pi_{n}^*u_{n-1}^{n}\cdot u_n^{n-1}\cdot\pi_{0,n}^*c_1(X) \\ &\quad\quad+o(d^{n+1})\\
&=\cdots \\
&\overset{(v)}=(-1)^n\pi_{0,k}^*c_1(X)^n\cdot\pi_{1,k}^*u_1^{n-1}\cdots u_n^{n-1}+o(d^{n+1}) \\
&\overset{(vi)}=(-1)^nc_1(X)^n+o(d^{n+1}).
\end{aligned}
$$
Let us say a few words about the previous equalities. Equality (i) is just relation (\ref{chern3}). Equality (ii) is true for degree reasons: $u_1^{n}\cdot u_2^n\cdots u_{n-1}^{n}\cdot c_l^{[n-1]}$, $l=2,\dots,n$, \lq\lq lives\rq\rq{} on $X_{n-1}$ and has total degree $n(n-1)+l$ which is strictly greater than $n+(n-1)(n-1)=\dim X_{n-1}$, so that $u_1^{n}\cdot u_2^n\cdots u_{n-1}^{n}\cdot c_l^{[n-1]}=0$. Equality (iii) is just relation (\ref{c1}). Equality (iv) follows from the first part of Lemma \ref{cruc1}: $u_1^n\cdots u_s^{n+1}\cdots u_{n-1}^n=o(d^{n+1})$. Equality (v) is obtained by applying repeatedly the second part of Lemma \ref{cruc1}. Finally, equality (vi) is simply integration along the fibers. The lemma is proved.
\end{proof}

Now, look at the coefficient of degree $n+1$ in $d$ of the expression
$$
\mathcal O_{X_n}(\mathbf a)^{n^2}=\bigl(a_1\pi_{1,n}^*u_1+\cdots+a_n u_n\bigr)^{n^2},
$$
where we consider the $a_j$'s as variables: we claim that it is a non identically zero homogeneous polynomial of degree $n^2$. To see this, we just observe that, thanks to Lemma \ref{cruc2}, the coefficient of the monomial $a_1^n\cdots a_n^n$ is $(n^2)!/(n!)^n$.

Hence there exists an $\mathbf a$ which satisfies the hypothesis of Corollary \ref{bigness1} for $k=n$, and the existence of jet differentials is proved.

\subsection*{Existence of jet differential with controlled vanishing}

In our applications, it will be crucial to be able to control in a more precise way the order of vanishing of these differential operators along the ample divisor. Thus, we shall need here a slightly different version of Theorem \ref{existence}.

\begin{thm}[\cite{D-M-R10}]\label{existenceKX}
Let $X\subset\mathbb P^{n+1}$ by a smooth complex hypersurface of degree $\deg X=d$. Then, for all positive rational numbers $\delta$ small enough, there exists a positive integer $d_n$ such that the following non-vanishing holds:
$$
H^0(X_n,\mathcal O_{X_n}(m)\otimes\pi^*_{0,n}K_X^{-\delta m})=H^0(X,E_{n,m}T^*_X\otimes K_X^{-\delta m})\ne 0,
$$
provided $d\ge d_n$ and $m$ is large and divisible enough.
\end{thm}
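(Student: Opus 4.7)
The plan is to mimic the proof of Theorem \ref{existence} but keeping track of the extra twist $K_X^{-\delta m}$. By the direct image formula of Theorem \ref{di} combined with the sheaf injection $(\pi_{0,n})_*\mathcal{O}_{X_n}(\mathbf a)\hookrightarrow\mathcal{O}(E_{n,|\mathbf a|}T_X^*)$ available when $\mathbf a$ satisfies condition (\ref{c}), it suffices to produce a nonzero section of
$$L_{\mathbf a,\delta}:=\mathcal O_{X_n}(\mathbf a)\otimes\pi_{0,n}^*K_X^{-\delta|\mathbf a|}$$
on $X_n$ for some admissible weight $\mathbf a$ and for a rational $\delta$ with $\delta|\mathbf a|\in\mathbb N$ (passing to $p\mathbf a$, $p$ divisible enough, will take care of integrality). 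Writing $K_X\simeq\mathcal O_X(d-n-2)$, I would then decompose $L_{\mathbf a,\delta}=F\otimes G^{-1}$, where
$$F=\mathcal O_{X_n}(\mathbf a)\otimes\pi_{0,n}^*\mathcal O_X(2|\mathbf a|),\qquad G=\pi_{0,n}^*\mathcal O_X\bigl((2+\delta(d-n-2))|\mathbf a|\bigr).$$
Here $F$ is nef by the proposition preceding Remark~\ref{rem1}, and $G$ is nef as the pull-back of an ample bundle for $d\ge n+2$.

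Next I would apply the strong algebraic holomorphic Morse inequalities to sufficiently divisible powers $L_{\mathbf a,\delta}^{\otimes m}$ on $X_n$ (which has dimension $n^2$). The existence of a section will follow once
$$F^{n^2}-n^2\,F^{n^2-1}\cdot G>0,$$
and to verify this I would isolate the coefficient of $d^{n+1}$ in both sides and arrange things so that it is strictly positive. This is the analogue of the positivity step in the proof of Theorem \ref{existence}, with the new feature that the choice of $\delta$ must be coupled with that of $\mathbf a$.

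The bookkeeping goes as follows. By Proposition \ref{nef}, the $d^{n+1}$--coefficient of $F^{n^2}$ equals that of $\mathcal O_{X_n}(\mathbf a)^{n^2}$, and by Lemmas \ref{cruc1} and \ref{cruc2} it is a non-identically-zero homogeneous polynomial $P(\mathbf a)$ of degree $n^2$ (the monomial $a_1^n\cdots a_n^n$ already carrying coefficient $(n^2)!/(n!)^n>0$). For the second term, one splits
$$c_1(G)=2|\mathbf a|\,h+\delta|\mathbf a|(d-n-2)\,h,$$
so $F^{n^2-1}\cdot G=F^{n^2-1}\cdot\pi_{0,n}^*\mathcal O_X(2|\mathbf a|)+\delta|\mathbf a|(d-n-2)\,F^{n^2-1}\cdot\pi_{0,n}^*\mathcal O_X(1)$. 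The first summand has $d^{n+1}$--coefficient equal to zero: this is precisely what is built into the second assertion of Proposition \ref{nef}, comparing $F^{n^2}$ with $F^{n^2}-n^2F^{n^2-1}\cdot\pi_{0,n}^*\mathcal O_X(2|\mathbf a|)$. Consequently the $d^{n+1}$--coefficient of $F^{n^2-1}\cdot G$ reduces to $\delta|\mathbf a|\,Q(\mathbf a)$, where $Q(\mathbf a)$ is the $d^n$--coefficient of $F^{n^2-1}\cdot\pi_{0,n}^*\mathcal O_X(1)$, a polynomial of degree $n^2-1$ in $\mathbf a$.

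To finish, I would invoke Lemma \ref{Zar} to find an integer weight $\mathbf a$ satisfying condition (\ref{c}) with $P(\mathbf a)>0$, then fix any rational $\delta>0$ so small that $P(\mathbf a)>n^2\,\delta\,|\mathbf a|\,|Q(\mathbf a)|$; for such $\delta$ the leading $d^{n+1}$--coefficient of $F^{n^2}-n^2 F^{n^2-1}\cdot G$ is strictly positive, so there exists $d_n=d_n(\mathbf a,\delta)$ for which the Morse inequality holds for every $d\ge d_n$, producing a section of $L_{\mathbf a,\delta}^{\otimes m}$ for all sufficiently large and divisible $m$. The main obstacle is the $d$-expansion of $F^{n^2-1}\cdot G$: one must extract exactly the $d^{n+1}$ coefficient and exploit the vanishing of the $d^{n+1}$--coefficient of $F^{n^2-1}\cdot\pi_{0,n}^*\mathcal O_X(1)$, which is what allows the $\delta$-correction to remain a small perturbation of the dominant positive term $P(\mathbf a)$ and thus does not spoil the positivity built in the proof of Theorem \ref{existence}.
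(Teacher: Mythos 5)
Your proposal is correct and takes essentially the same route as the paper: the identical decomposition $L_{\mathbf a,\delta}=F\otimes G^{-1}$ into the same pair of nef line bundles, the same application of the algebraic holomorphic Morse inequalities on $X_n$, and the same reduction via Proposition~\ref{nef} and Lemma~\ref{Zar} to picking an admissible weight $\mathbf a$. The only difference is cosmetic: where the paper concludes ``by continuity, for $\delta>0$ small enough,'' you unpack the perturbation by writing the $d^{n+1}$-coefficient explicitly as $P(\mathbf a)-n^2\delta|\mathbf a|\,Q(\mathbf a)$ and choosing $\delta$ accordingly.
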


\begin{rem}
Recall that for $X$ a smooth projective hypersurface of degree $d$ in $\mathbb P^{n+1}$, the canonical bundle has the following expression in terms of the hyperplane bundle:
$$
K_X\simeq \mathcal O_{X}(d-n-2).
$$
Thus, a non-zero section in $H^0(X,E_{n,m}T^*_X\otimes K_X^{-\delta m})$, has vanishing order at least $\delta m(d-n-2)$, seen as a section of $H^0(X,E_{n,m}T^*_X)$.
\end{rem}

\begin{proof}[Proof of Theorem \ref{existenceKX}]
For each weight $\mathbf{a}\in\mathbb N^n$ which satisfies (\ref{c}), we first of all express $\mathcal O_{X_n}(\mathbf{a})\otimes\pi^{*}_{0,n}K_X^{-\delta|\mathbf{a}|}$ as the difference of two nef line bundles:
\begin{multline*}
\mathcal O_{X_n}(\mathbf{a})\otimes\pi^{*}_{0,n}K_X^{-\delta|\mathbf{a}|}\\ 
=\bigl(
\mathcal O_{X_n}(\mathbf{a})\otimes\pi^*_{0,n}\mathcal O_X(2|\mathbf{a}|)\bigr)\otimes\bigl(\pi^*_{0,n}\mathcal O_X(2|\mathbf{a}|)\otimes\pi^{*}_{0,n}K_X^{\delta|\mathbf{a}|})\bigr)^{-1}.
\end{multline*}
This leads to evaluate, in order to apply holomorphic Morse inequalities, the following intersection product:
\begin{multline*}
\bigl(\mathcal O_{X_n}(\mathbf{a})\otimes\pi^*_{0,n}\mathcal O_X(2|\mathbf{a}|)\bigr)^{n^2}\\
-n^2\bigl(\mathcal O_{X_n}(\mathbf{a})\otimes\pi^*_{0,n}\mathcal O_X(2|\mathbf{a}|)\bigr)^{n^2-1}\cdot\bigl(\pi^*_{0,n}\mathcal O_X(2|\mathbf{a}|)\otimes\pi^{*}_{0,n}K_X^{\delta|\mathbf{a}|})\bigr).
\end{multline*}
After elimination, this intersection product gives back a polynomial in $d$ of degree less than or equal to $n+1$, whose coefficients are polynomial in $\mathbf{a}$ and $\delta$ of bidegree $(n^2,1)$, homogeneous in $\mathbf a$. Notice that, for $\delta=0$, this gives back (\ref{*}) for which we know that there exists a weight $\mathbf{a}$ satisfying condition (\ref{c}) such that  this polynomial has exactly degree $n+1$ and positive leading coefficient, which is what we need to achieve to proof. Thus, by continuity, for the same weight $\mathbf{a}$, for all $\delta>0$ small enough, we get the same conclusion.
\end{proof}

\section{Meromorphic vector fields}

The other ingredient in the proof is, as we have seen in the previous chapter, the existence of enough global meromorphic vector fields with controlled pole order on the space of vertical jets of the universal hypersurface: this result has been obtained in the case of arbitrary dimension in \cite{Mer09} and generalizes the corresponding versions in dimension $2$ and $3$ contained respectively in \cite{Pau08} and \cite{Rou07}.

Before stating the theorem, we fix once again the notations. Let $\mathcal X\subset\mathbb P^{n+1}\times\mathbb P^{N_d}$ be the universal projective hypersurface of degree $d$ in $\mathbb P^{n+1}$, whose moduli space is the projectivization 
$$
\mathbb P\bigl(H^0(\mathbb P^{n+1},\mathcal O(d))\bigr)=\mathbb P^{N_d-1},\quad N_d=\binom{n+d+1}{d};
$$
then we have two natural projections
$$
\xymatrix{
& \mathcal X \ar[dl]_{\text{pr}_1} \ar[dr]^{\text{pr}_2} &  \\
\mathbb P^{n+1} & & \mathbb P^{N_d-1}.}
$$
Consider the relative tangent bundle $\mathcal V\subset T_{\mathcal X}$ with respect to the second projection $\mathcal V=\ker(\text{pr}_2)_*$, and form the corresponding directed manifold $(\mathcal X,\mathcal V)$.

Now, let $p\colon J_n\mathcal V\to\mathcal X$ be the bundle of $n$-jets of germs of holomorphic curves in $\mathcal X$ tangent to $\mathcal V$, \emph{i.e.} of vertical jets, and consider the subbundle $J_n\mathcal V^{\text{\rm reg}}$ of regular $n$-jets of maps $f\colon(\mathbb C,0)\to\mathcal X$ tangent to $\mathcal V$ such that $f'(0)\ne 0$.  

\begin{thm}[\cite{Mer09}]\label{globgen}
The twisted tangent space to vertical $n$-jets
$$
T_{J_n\mathcal V}\otimes p^*\text{pr}_1^*\mathcal O_{\mathbb P^{n+1}}(n^2+2n)\otimes p^*\text{pr}_2^*\mathcal O_{\mathbb P^{N_d-1}}(1)
$$
is generated over $J_n\mathcal V^{\text{\rm reg}}$ by its global holomorphic sections.

Moreover, one may choose such global generating vector fields to be invariant with respect to the action of $\mathbb G_n$ on $J_n\mathcal V$.
\end{thm}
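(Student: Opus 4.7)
The plan is to adapt to arbitrary dimension $n$ and jet order $n$ the construction carried out in Chapter 5 for vertical $2$-jets on the universal surface. Fix affine coordinates by choosing the open set $U=\{Z_0\ne 0\}\times\{A_{0d\cdots 0}\ne 0\}$ in $\mathbb P^{n+1}\times\mathbb P^{N_d-1}$, with inhomogeneous coordinates $(z_1,\dots,z_{n+1})$ and $(a_\alpha)$. The chart $J_n\mathcal V_0$ of $J_n\mathcal V$ over $U\cap\mathcal X$ is then cut out in the affine space with coordinates $(z_j,a_\alpha,z^{(1)}_j,\dots,z^{(n)}_j)$ by the defining equation $P=\sum a_\alpha z^\alpha=0$ of $\mathcal X_0$ together with its $n$ successive formal derivatives with respect to a curve parameter. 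A general vector field decomposes as
$$
V=\sum_\alpha v_\alpha\,\partial/\partial a_\alpha+\sum_j v_j\,\partial/\partial z_j+\sum_{k=1}^n\sum_j\xi^{(k)}_j\,\partial/\partial z^{(k)}_j,
$$
and the tangency conditions to $J_n\mathcal V_0$ read as $n+1$ linear equations on the coefficients. After the construction is done on $J_n\mathcal V_0$, a direct change of affine charts tracks how the poles translate into $\mathcal O_{\mathbb P^{n+1}}$- and $\mathcal O_{\mathbb P^{N_d-1}}$-twists once vector fields are extended to all of $J_n\mathcal V$.

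I would then produce three families of global sections, exactly mirroring Paun's three families in the surface case. The first family consists of purely moduli-direction vector fields of the shape $V^{\mathbf p}_\alpha=\sum_{\beta\le\mathbf p}(-1)^{|\beta|}\binom{\mathbf p}{\beta}z^\beta\,\partial/\partial a_{\alpha-\beta}$, for tuples $\mathbf p$ of weight at most a controlled constant: these are automatically tangent to $\mathcal X_0$ (hence to $J_n\mathcal V_0$, since the extra equations are pure derivations of $P$ in $z$) and $\mathbb G_n$-invariant since they contain no jet variable; an affine change of chart shows that their pole order in $Z$ is at most the weight of $\mathbf p$. The second family consists of the horizontal shift vector fields $\partial/\partial z_j-\sum(\alpha_j+1)a_{\alpha+\delta_j}\,\partial/\partial a_\alpha$, with pole order $1$ in $A$ and $\mathbb G_n$-invariant for the same reason. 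Together with their permutations, these two families span a subspace of $T_{J_n\mathcal V}$ which maps onto a subspace of $T_{J_nT_{\mathbb P^{n+1}}}$ of small codimension and fills all the $\ker(d\mathrm{pr}_2)$-directions up to a finite-dimensional complement, which in turn is handled by solving a small linear system as at the end of \S 3 of Chapter 5.

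The third and crucial family must hit the jet directions $\partial/\partial z^{(k)}_j$ on $J_n\mathcal V^{\text{reg}}$. One looks for vector fields of the form $V_B=\sum p_\alpha(z,a,b)\,\partial/\partial a_\alpha+\sum_{k,j}\xi^{(k)}_j(z^{(\bullet)},b)\,\partial/\partial z^{(k)}_j$, where the $\xi^{(k)}_j$ are $\mathbb G_n$-equivariant universal polynomial combinations of $z',\dots,z^{(n)}$ (parametrized by an auxiliary $n\times n$ matrix $B$), and the $p_\alpha$'s are determined by the $n+1$ tangency equations. Over the regular jet locus, the linear system for the $p_\alpha$ is invertible by Cramer's rule with denominator the Wronskian $\det(z^{(i)}_{j})$; the numerators are polynomials of bounded total degree in $(z,z',\dots,z^{(n)})$ and of degree $1$ in $a$. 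Carefully bounding this degree and passing to projective charts yields a pole order of at most $n^2+2n$ in $Z$ and $1$ in $A$. The $\mathbb G_n$-invariance is verified by the same calculation as in the surface case: the ansatz for $\xi^{(k)}_j$ is dictated by the transformation rule for $(z',\dots,z^{(n)})$ under a $k$-jet of biholomorphism of $(\mathbb C,0)$.

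The main obstacle is precisely this third family. In the surface case the Wronskian is a $2\times 2$ determinant in $z',z''$ and a direct computation gives pole order $7$; in the general case the Wronskian is an $n\times n$ determinant in $z',\dots,z^{(n)}$, and the delicate point is the combined optimization of the ansatz for the jet-direction coefficients $\xi^{(k)}_j$ (which must be $\mathbb G_n$-equivariant, forcing their shape) together with the bound on the degree of the polynomial $p_\alpha$ solving Cramer's system. That the optimal pole order in $Z$ comes out exactly equal to $n^2+2n$ is the content of Merker's detailed calculation in \cite{Mer09}, and this is the step I would expect to be the main technical hurdle of the proof; once it is established, spanning at every regular $n$-jet follows by letting $B$ vary over $GL_n(\mathbb C)$ and combining with the first two families.
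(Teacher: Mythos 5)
Your third family of vector fields relies on Cramer's rule with denominator the full Wronskian determinant $\det\bigl(z^{(i)}_j\bigr)$ built from $z',\dots,z^{(n)}$. This determinant vanishes on the entire Wronskian locus $\Sigma$, which is a nontrivial subset of $J_n\mathcal V^{\text{\rm reg}}$ (a jet can perfectly well have $z'\ne 0$ while $z',\dots,z^{(n)}$ are linearly dependent, \emph{e.g.} $z''=0$). So the construction you describe would only yield global generation over $J_n\mathcal V^{\text{\rm reg}}\setminus\Sigma$, which is the \emph{weaker} version of Merker's result explicitly discussed right after the statement in the text: that version has pole order $(n^2+5n)/2$ and was enough for Rousseau's degeneracy theorem in dimension $3$, but it is not what Theorem \ref{globgen} asserts.

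The actual claim -- generation over \emph{all} of $J_n\mathcal V^{\text{\rm reg}}$, with pole order $n^2+2n$ -- is strictly stronger, and the paper points out that for $n=2$ the pole order goes from $7$ up to $8$ precisely because one removes the Wronskian hypothesis. So the two numbers you quote are in tension: if your linear system really were solved by dividing by the Wronskian, your pole bound should come out as $(n^2+5n)/2$ rather than $n^2+2n$, and your conclusion should be restricted to the complement of $\Sigma$. To get the stronger statement, Merker's construction must avoid inverting the full Wronskian system; roughly, one exploits only the nonvanishing of $z'$ (which is what characterizes $J_n\mathcal V^{\text{\rm reg}}$), at the cost of a slightly larger pole order in the $Z$-variables. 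Your sketch does not capture this and therefore has a genuine gap at exactly the place you yourself flag as the main hurdle.

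Apart from this, the set-up (affine charts, the purely moduli-direction family, the horizontal shift family, $\mathbb G_n$-invariance because those fields contain no jet variables) matches what one would expect in a generalization of Pa\v un's surface argument, and the paper indeed states that Merker's proof is \lq\lq in essence the same\rq\rq{} as the two-dimensional one; but the key new combinatorial ingredient is precisely the replacement of the Wronskian trick by a construction valid on the whole regular locus.
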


The proof of this theorem is in essence the same of the one presented above for the two dimensional case. Nevertheless the computational and combinatorial aspects, as one can guess, are much more involved. We refer to the original paper of Merker for a complete proof.

Note that this statement is stronger than the one described in the previous chapter: the global generation is over a bigger open subset of $J_n\mathcal V$, no wronskian locus appears. The price for this is a bigger order of poles (for $n=2$, the pole order here is $8$ instead of $7$), but this permits a more precise localization of the entire curves. 

There is also a weaker version of this theorem, which is the precise generalization of the statement in \cite{Pau08} and \cite{Rou07}, which gives better pole order. This version gives global generation outside the wronskian locus $\Sigma$, with pole order $(n^2+5n)/2$, and it suffices for instance to treat the case of threefold in projective $4$-space (see Corollary \ref{kob3}).

Of course, the pole order is important as far as the effective aspects on the degree of the hypersurfaces are concerned.

\section{Effective aspects}

In this section we shall try to outline the idea of the proof for the effective part of Theorem \ref{mainthm}. The interested reader can find all the details in \cite{D-M-R10}: we shall skip this here, since the combinatorics and the complexity of the computations are very involved.

Recall that we have to show that if $d\ge 2^{n^5}$, then we have the existence of the proper subvariety $Y\subset X$ absorbing the images of non constant entire curves, for $X\subset\mathbb P^{n+1}$ a generic smooth projective hypersurface of degree $d$.

As a byproduct of the proof of the theorem we gave above, we deduce that the bound for the degree depends on

\begin{itemize}
\item[(i)] the lowest integer $d_1$ such that $\delta> (n^2+2n)/(d_2-n-2)$, whenever $\deg X=d_1$; 
\item[(ii)] the lowest integer $d_2$ such that $H^0(X,E_{n,m}T^*_X\otimes K_X^{-\delta m})\ne 0$ for $m\gg 1$, whenever $\deg X=d_2$.
\end{itemize}

Then, $d_0=\max\{d_1,d_2\}$ will do the job.

\subsection{The strategy of the effective estimate}

The starting point of our effective estimate is the following, elementary lemma.
 
\begin{lem}
\label{d}
Let $p(z)=z^d+a_1\,z^{d-1}+\cdots+a_d\in\mathbb C[z]$ be a monic polynomial of degree $d$ and let $z_0$ be a root of $p$. Then
$$
|z_0|\le 2\max_{j=1,\dots,d}|a_j|^{1/j}.
$$
\end{lem}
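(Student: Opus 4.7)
The plan is to prove the bound by a standard argument by contradiction, assuming $|z_0|$ is larger than $2M$, where $M = \max_{j=1,\dots,d} |a_j|^{1/j}$, and deriving a contradiction from the equation $p(z_0)=0$.

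First, I would normalize the problem: set $M = \max_{j=1,\dots,d} |a_j|^{1/j}$, so that by definition $|a_j|\le M^j$ for every $j=1,\dots,d$. Then I would suppose, for contradiction, that $|z_0|>2M$, and use the root equation
$$
z_0^d = -a_1\, z_0^{d-1}-a_2\, z_0^{d-2}-\cdots -a_d
$$
to derive, after dividing through by $z_0^d$ (which is nonzero by assumption, since $M\ge 0$ and $|z_0|>2M\ge 0$ would force $|z_0|>0$, the case $M=0$ being trivial as it immediately implies $p(z)=z^d$ and hence $z_0=0$) and taking moduli, the inequality
$$
1 \le \sum_{j=1}^d \frac{|a_j|}{|z_0|^j}.
$$

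The key step is then a termwise estimate: for each $j$, one has $|a_j|\le M^j$ and $|z_0|^j>(2M)^j$, hence $|a_j|/|z_0|^j < 2^{-j}$. Summing the geometric series gives
$$
1\le \sum_{j=1}^d \frac{|a_j|}{|z_0|^j} < \sum_{j=1}^d \frac{1}{2^j} < 1,
$$
a contradiction. Therefore $|z_0|\le 2M$, which is the desired bound.

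There is no real obstacle here; the only mild subtlety is disposing of the degenerate case $M=0$ (in which all $a_j$ vanish and hence the only root is $z_0=0$, for which the inequality is trivial), and being careful that the strict inequality $|z_0|>2M$ combined with $|a_j|\le M^j$ gives a strict inequality termwise, which is what produces the contradiction against $1\le 1$.
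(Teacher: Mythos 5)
Your proof is correct and follows essentially the same route as the paper's: assume $|z_0|>2\max_j|a_j|^{1/j}$, divide $p(z_0)=0$ by $z_0^d$, bound each term $|a_j|/|z_0|^j$ by $2^{-j}$, and contradict $\sum_{j=1}^d 2^{-j}<1$. Your write-up is just more explicit about the degenerate case $M=0$ and about where the strict inequality enters, but the argument is the same.
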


\begin{proof}
Otherwise $|z_0|>2|a_j|^{1/j}$ for every $j=1,\dots,d$ and, from $-1=a_1/z_0+\cdots+a_d/z_0^d$ we would obtain
$$
1\le 2^{-1}+\cdots+2^{-d},
$$
contradiction.
\end{proof}

Recall that, in order to produce a global invariant jet differential with controlled vanishing order, we had to ensure the positivity of a certain intersection product, namely
\begin{multline*}
\bigl(\mathcal O_{X_n}(\mathbf{a})\otimes\pi^*_{0,n}\mathcal O_X(2|\mathbf{a}|)\bigr)^{n^2}\\
-n^2\bigl(\mathcal O_{X_n}(\mathbf{a})\otimes\pi^*_{0,n}\mathcal O_X(2|\mathbf{a}|)\bigr)^{n^2-1}\cdot\bigl(\pi^*_{0,n}\mathcal O_X(2|\mathbf{a}|)\otimes\pi^{*}_{0,n}K_X^{\delta|\mathbf{a}|})\bigr),
\end{multline*}
for some $\mathbf a$ satisfying (\ref{c}) and $\delta>0$ (cf. Theorem \ref{existenceKX}). This intersection product, after elimination, gives back a polynomial in the degree of the hypersurfaces (seen as an indeterminate) of degree $n+1$, whose coefficients are polynomial in $\mathbf a$ and $\delta$ of bidegree $(n^2,1)$, homogeneous in $\mathbf a$. Call it
$$
{\sf P}_{\mathbf a,\delta}(d)={\sf P}_{\mathbf a}(d)+\delta\,{\sf P}'_{\mathbf a}(d).
$$  
Observe once again that the above polynomial with $\delta=0$ is the result of the intersection product (\ref{*}), the positivity of which gives the existence of global invariant jet differentials with no control on their vanishing order.

Now, suppose we are able to choose the weight $\mathbf a$ \emph{explicitly, depending on $n$}, in such a way that it satisfies (\ref{c}) and that ${\sf P}_{\mathbf a}$ has positive leading coefficient and we have an explicit control of its coefficients in terms of $n$ only. This will be possible, as anticipated in Remark \ref{effa}, by the following basic observation.

\begin{prop}
The cube in $\mathbb R^n_{(a_1,\dots,a_n)}$ with integral edges defined by
\begin{multline*}
1\le a_n\le 1+n^2,\quad 3n^2\le a_{n-1}\le(3+1)n^2, \\
(3^2+3)n^2\le a_{n-2}\le (3^2+3+1)n^2,\quad\dots \\
(3^{n-1}+\cdots+3)n^2\le a_1\le (3^{n-1}+\cdots+3+1)n^2 
\end{multline*}
is contained in the cone defined by (\ref{c}) and there exists at least one $n$-tuple $\mathbf a$ of integers belonging to this cube with the property that ${\sf p}_{n+1,\mathbf a}$ is non zero.

Moreover, for such an $\mathbf a$ we have the effective, explicit control
$$
\max_{1\le j\le n}a_j=a_1\le\frac{3^n-1}2n^2\le\frac{3^n}2n^2.
$$
\end{prop}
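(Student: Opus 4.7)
The plan is to verify the three assertions of the proposition in turn, each reducing to an elementary check once the right observation is made.

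First, I would verify that the cube $\mathcal C$ defined in the statement is contained in the cone cut out by condition (\ref{c}). Writing the range for $a_{n-k}$ as $[\sigma_k n^2,(\sigma_k+1)n^2]$ with $\sigma_k=(3^{k+1}-3)/2$ for $k\ge 1$ (and $[1,1+n^2]$ for $k=0$), I would observe the arithmetic identity $3(\sigma_{k-1}+1)=\sigma_k$, so that $3$ times the maximum of $a_{n-k+1}$ equals the minimum of $a_{n-k}$ on the nose; this gives all inequalities $a_j\ge 3a_{j+1}$ for $1\le j\le n-2$ edge-by-edge. The two remaining conditions $a_{n-1}\ge 2a_n$ and $a_n>0$ reduce to $3n^2\ge 2(1+n^2)$ (valid for $n\ge 2$) and $1>0$.

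Second, and this is the heart of the matter, I would use the fact already established in this section (Lemma \ref{cruc2} and the multinomial expansion just after it) that ${\sf p}_{n+1,\mathbf{a}}$ is a \emph{non-zero} homogeneous polynomial of degree $n^2$ in $a_1,\dots,a_n$ --- the coefficient of $a_1^n\cdots a_n^n$ being $(n^2)!/(n!)^n$. Coupled with the elementary interpolation fact that any non-zero polynomial of total degree $N$ in $n$ variables takes a non-zero value at some integer point of any box $\prod_i[\alpha_i,\alpha_i+N]$ (proven by induction on the number of variables: restricting to a line parallel to a coordinate axis yields a univariate polynomial of degree $\le N$ with $N+1$ distinct integer zeros), this immediately produces the required integer point $\mathbf a\in\mathcal C$, since each edge of $\mathcal C$ has integer length exactly $n^2$ and therefore hosts $n^2+1$ integer values.

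Third, the effective bound is immediate: condition (\ref{c}) forces $a_1\ge a_j$ for all $j$ on $\mathcal C$, and the top of the range for $a_1$ is $(\sigma_{n-1}+1)n^2=\frac{3^n-1}{2}n^2$, whence the announced inequality.

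The only step with any content is the middle one; everything there hinges on the precise matching between the degree of ${\sf p}_{n+1,\mathbf{a}}$ (namely $n^2$) and the edge length of the cube $\mathcal C$ (also $n^2$), which is the very reason the cube has been designed with this exact size. Once this matching is recognized, the remainder of the argument is bookkeeping.
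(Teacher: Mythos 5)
Your proof is correct and follows the same route the paper intends: the paper dispatches this proposition with the one-liner ``this is just an explicit rephrasing of Lemma~\ref{Zar},'' and your argument is exactly that rephrasing, carried out carefully (the telescoping identity $3(\sigma_{k-1}+1)=\sigma_k$, the separate check $3n^2\ge 2(1+n^2)$ for the last inequality valid for $n\ge 2$, and the induction-on-variables interpolation argument matching the degree $n^2$ of ${\sf p}_{n+1,\mathbf a}$ against the edge length $n^2$ of the cube). Nothing is missing; the only substantive input from the surrounding text, namely that ${\sf p}_{n+1,\mathbf a}$ is a non-zero homogeneous polynomial of degree $n^2$ via the coefficient $(n^2)!/(n!)^n$ of $a_1^n\cdots a_n^n$ from Lemma~\ref{cruc2}, is correctly identified and invoked.
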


\begin{proof}
This is just an explicit rephrasing of Lemma \ref{Zar}.
\end{proof}

 In addition, suppose we have for this choice of the weight an explicit control of the coefficients of ${\sf P}_{\mathbf a}'$ and that its leading coefficient is negative (the latter trivially holds true since otherwise (\ref{*}) would be positive for $\delta\gg 1$, which is impossible, since $K_X$ is ample). Write
$$
{\sf P}_{\mathbf a}(d)=\sum_{k=0}^{n+1}{\sf p}_{k,\mathbf a}\,d^k,\quad {\sf P}_{\mathbf a}'(d)=\sum_{k=0}^{n+1}{\sf p}_{k,\mathbf a}'\,d^k,
$$ 
with $|{\sf p}_{j,\mathbf a}|\le{\sf E}_j$, $j=0,\dots,n$, ${\sf p}_{n+1,\mathbf a}\ge{\sf G}_{n+1}\ge 1$ and $|{\sf p}_{j,\mathbf a}'|\le{\sf E}_j'$, $j=0,\dots,n+1$, the aforesaid explicit, depending on $n$ only, bounds.

Next, choose $\delta$ to be one half of ${\sf G}_{n+1}/{\sf E}_{n+1}'$ so that the leading coefficient ${\sf p}_{n+1,\mathbf a}-\delta|{\sf p}_{n+1,\mathbf a}'|$ of ${\sf P}_{\mathbf a,\delta}(d)$ is bounded below by $1/2$. In order to apply Lemma \ref{d}, we divide ${\sf P}_{\mathbf a,\delta}(d)$ by its leading coefficient and thus we obtain that the biggest integral root of ${\sf P}_{\mathbf a,\delta}(d)$ is less than or equal to 
$$
d_2\overset{\text{def}}=2\max_{j=1,\dots,n+1}\left(2\,{\sf E}_j+\frac{{\sf G}_{n+1}}{{\sf E}_{n+1}'}{\sf E_{j}'} \right)^{\frac 1{n+1-j}}.
$$
In order to have not only the existence of global invariant jet differentials with controlled vanishing order, but also algebraic degeneracy, we have to insure condition (i) above to be satisfied, too. This means that $d$ must be greater than 
$$
d_1\overset{\text{def}}=n+2+2\,\frac {{\sf E}_{n+1}'}{{\sf G}_{n+1}}(n^2+2n).
$$
In conclusion, we would have the \emph{effective} estimate of Theorem \ref{mainthm} if 
\begin{equation}
\label{finalestimate}
\max\{d_1,d_2\} < 2^{n^{5}}.
\end{equation}

Thus, inequality (\ref{finalestimate}) amounts to bound the quantities ${\sf E}_j$, ${\sf E}_j'$, ${\sf G}_{n+1}$, $j=1,\dots,n+1$. For that, one has to control (\ref{*}) by estimating at each step the process of elimination of all the Chern classes living at each level of the jet tower, which happens to be of high algebraic complexity. The reason sits in the intertwining of four different combinatoric aspects: the presence of several relations shared by all the Chern classes of the lifted horizontal contact distributions $V_k$, the Newton expansion of large $n^2$-powers by the multinomial theorem, the differences of various multinomial coefficients and the appearance of many Jacobi-Trudy type determinants.

As said before, we refer to the original paper \cite{D-M-R10} for the details of these computations.

\backmatter

\bibliography{Bibliography}{}

\end{document}